\documentclass[10pt]{amsbook}
\usepackage[dvipsnames]{xcolor}
\usepackage{amssymb,amsmath,amsthm,amscd,mathrsfs,graphicx}
\usepackage[cmtip,all,matrix,arrow,tips,curve]{xy}
\usepackage[colorlinks=true]{hyperref}
\usepackage{tikz}
\usepackage{pdfpages}

\usepackage{hyphenat}
\hyphenation{Lo-oij-en-ga--Swier-stra}


\numberwithin{section}{chapter}
\numberwithin{equation}{chapter}
\newtheorem{teo}{Theorem}[chapter]
\newtheorem{pro}[teo]{Proposition}
\newtheorem{lem}[teo]{Lemma}
\newtheorem{cor}[teo]{Corollary}

\theoremstyle{definition}

\newtheorem{exa}[teo]{Example}

\theoremstyle{remark}
\newtheorem{rem}[teo]{Remark}

\newtheorem{conv}[teo]{Convention}

\newcommand{\calA}{\mathcal{A}}
\newcommand{\calB}{\mathcal{B}}
\newcommand{\calD}{\mathcal{D}}
\newcommand{\calE}{\mathcal{E}}
\newcommand{\calG}{\mathcal{G}}

\newcommand{\calP}{\mathcal{P}}
\newcommand{\calH}{\mathcal{H}}
\newcommand{\calO}{\mathcal{O}}
\newcommand{\calM}{\mathcal{M}}
\newcommand{\calN}{\mathcal{N}}

\newcommand{\calR}{\mathcal{R}}

\newcommand{\calT}{\mathcal{T}}
\newcommand{\calF}{\mathcal{F}}

\newcommand{\GL}{\operatorname{GL}}
\newcommand{\PGL}{\operatorname{PGL}}
\newcommand{\gl}{\mathfrak{gl}}
\newcommand{\SL}{\operatorname{SL}}

\newcommand{\Pic}{\operatorname{Pic}}

\newcommand{\Hom}{\operatorname{Hom}}
\newcommand{\Sym}{\operatorname{Sym}}

\newcommand{\Aut}{\operatorname{Aut}}

\newcommand{\diag}{\operatorname{diag}}

\renewcommand{\O}{\operatorname{O}}
\newcommand{\divisore}{\operatorname{div}}
\newcommand{\surf}{{\operatorname{surf}}}
\newcommand{\curv}{{\operatorname{curve}}}
\newcommand{\J}{{\mathfrak{j}}}

\newcommand{\fH}{\mathfrak{H}}

\newcommand{\AAA}{\mathbb{A}}
\newcommand{\ZZ}{\mathbb{Z}}
\newcommand{\QQ}{\mathbb{Q}}
\newcommand{\RR}{\mathbb{R}}
\newcommand{\CC}{\mathbb{C}}
\newcommand{\TT}{\mathbb{T}}
\newcommand{\PP}{\mathbb{P}}
\newcommand{\FF}{\mathbb{F}}
\newcommand{\SSS}{\mathbb{S}}

\newcommand{\Sat}[1][g]{{{\calA}_{#1}^{*}}}
\newcommand{\gquot}{/\!\!/}

\newcommand{\hM}{\widehat{\calM}}

\newcommand{\BG}{(\calB/\Gamma)^*}
\newcommand{\oBG}{\overline{\calB/\Gamma}}

\newcommand{\cDh}{{\calD}_{h}}
\newcommand{\cDn}{{\calD}_{n}}
\newcommand{\GIT}{\calM^{\operatorname{GIT}}}
\newcommand{\MK}{\calM^{\operatorname{K}}}


\newcommand\blfootnote[1]{%
  \begingroup
  \renewcommand\thefootnote{}\footnote{#1}%
  \addtocounter{footnote}{-1}%
  \endgroup
}


\begin{document}
\frontmatter


\title[Cohomology of the moduli of cubic threefolds]{Cohomology of the moduli space of cubic threefolds and its smooth models}

\author[S. Casalaina-Martin]{Sebastian Casalaina-Martin}
\address{University of Colorado, Department of Mathematics, Boulder, CO 80309}
\email{casa@math.colorado.edu}

\author[S. Grushevsky]{Samuel Grushevsky}
\address{Stony Brook University, Department of Mathematics, Stony Brook, NY 11794-3651}
\email{sam@math.stonybrook.edu}

\author[K. Hulek]{Klaus Hulek}
\address{Institut f\"ur Algebraische Geometrie, Leibniz Universit\"at Hannover,  30060 Hannover, Germany}
\email{hulek@math.uni-hannover.de}

\author[R. Laza]{Radu Laza}
\address{Stony Brook University, Department of Mathematics, Stony Brook, NY 11794-3651}
\email{rlaza@math.stonybrook.edu}

\begin{abstract}
We compute and compare the (intersection) cohomology of various natural geometric compactifications of the moduli space of cubic threefolds: the GIT compactification and its Kirwan blowup, as well as the Baily--Borel and toroidal compactifications of the ball quotient model, due to Allcock--Carlson--Toledo. Our starting point is Kirwan's method. We then follow by investigating the behavior of the cohomology under the birational maps relating the various models, using the decomposition theorem in different ways, and via a detailed study of the boundary of the ball quotient model. As an easy illustration of our methods, the simpler case of the moduli of cubic surfaces is discussed in an appendix.
\end{abstract}


\date{\today}

\maketitle
\tableofcontents

\mainmatter
\chapter{Introduction}\label{sec:intro}

\ 
\blfootnote{
2010 \emph{Mathematics Subject Classification}. 14J30, 14J10, 14L24, 14F25, 55N33, 55N25.

Research of the first author is  supported in part by grants from the Simons Foundation (317572) and the NSA (H98230-16-1-0053). Research of the second author is supported in part by NSF grants DMS-15-01265 and DMS-18-02116. Research of the third author is supported in part by DFG grant Hu-337/7-1. Research of the fourth author is supported in part by NSF grants DMS-12-54812 and DMS-18-02128. The first author would like to thank the Institut f\"ur Algebraische Geometrie at Leibniz Universit\"at for support during the Fall Semester 2017. The first and third authors are also grateful to MSRI Berkeley, which is supported by NSF Grant DMS-14-40140,  for providing excellent working conditions in the Spring Semester 2019. 

}
Cubic threefolds and their moduli are one of the most studied objects in algebraic geometry. In previous work we have investigated the relationship among various compactifications   of the moduli space $\calM$
of smooth cubic threefolds, and the purpose of this paper is now to determine the cohomology of these moduli spaces.
The first compactification which one naturally encounters is, as for all hypersurfaces, the GIT compactification $\GIT$ (as studied by Allcock~\cite{allcock} and Yokoyama~\cite{yokoyama}).
 It is interesting to note that recently Liu--Xu~\cite{LiuXu} showed that for cubic threefolds (and also for cubic surfaces) $\GIT$ is  equal to the moduli space  of $K$-stable cubics, thus
providing a differential-geometric perspective on the GIT moduli of cubics.

What makes the case of cubic threefolds especially interesting is the presence of two period maps which lead to further natural compactifications.
The first of these period maps is given by the intermediate Jacobian and was already studied by Clemens--Griffiths~\cite{cg}.  The Torelli theorem holds for this period map for cubic threefolds, and one obtains an immersion $\calM\hookrightarrow \calA_5$ into the moduli space $\calA_5$ of principally polarized abelian varieties of dimension $5$. Taking the closure  $\overline {IJ}\subset \overline\calA_5$ of the locus $IJ$ of intermediate Jacobians in suitable compactifications $\overline\calA_5$  of $\calA_5$, one obtains geometrically meaningful compactifications of $\calM$ (see~\cite{cubics}).

Perhaps even more surprising is that one can construct a $10$-dimensional ball quotient model $\calB/\Gamma$ of $\calM$, by using the periods of cubic fourfolds (cf.~Allcock--Carlson--Toledo~\cite{act}).
This ball quotient admits naturally the Baily--Borel compactification $\BG$ and the (unique) toroidal compactification~$\oBG$, which thus provide two further compactifications of the moduli of smooth cubic threefolds.
It is, in particular, these models which we will study in this paper.
The spaces $\GIT$ and $\BG$ are closely related, as explained in~\cite{act} and~\cite{ls}. Briefly, there exists a space $\widehat\calM$ dominating both $\GIT$ and $\BG$. In fact  $\widehat\calM$ plays two roles: on the
one hand it is the partial Kirwan blowup of the point $\Xi\in\GIT$
corresponding to the chordal cubic, and on the other hand it is the Looijenga $\QQ$-factorialization (cf.~\cite{l1}) associated to the hyperelliptic divisor $\calH_h^*\subset\BG$.
Both compactifications $\GIT$ and $\BG$ are singular. The toroidal compactification~$\oBG$ is a natural (partial) desingularization of~$\BG$, while a natural (partial) desingularization of $\GIT$ is provided by Kirwan's blowup $\MK$, which is smooth up to finite quotient singularities. By construction there is a factorization $\calM^K\to\widehat \calM\to \GIT$, as $\widehat \calM$ is nothing but an intermediary step in the construction of the Kirwan blowup $\calM^K$.
The relationship among these compactifications and $\overline{IJ}$ was the subject of our previous works~\cite{cml,cubics, prym}.

\smallskip

In this paper we investigate this relationship further by turning our attention to the cohomology of these moduli spaces.  More precisely, we determine the (intersection) cohomology of the compactifications $\GIT$, $\widehat\calM$, $\MK$, $\BG$ and $\oBG$:

\begin{teo}\label{teo:betti}
The Betti numbers of $\MK$ and $\oBG$, and the intersection Betti numbers of $\GIT$, $\widehat\calM$, and $\BG$ are as follows:
\begin{equation}
\renewcommand*{\arraystretch}{1.2}
\begin{array}{l|ccccccccccc}
\hskip2cm j&0&2&4&6&8&10&12&14&16&18&20\\\hline
\dim H^j(\MK)&1&4&6&10&13&15&13&10&6&4&1\\
\dim IH^j(\GIT)&1&1&2&3&4&5&4&3&2&1&1\\
\dim IH^j(\widehat\calM)&1&2&3&5&6&8&6&5&3&2&1\\
\dim IH^j(\BG)&1&2&3&5&6&7&6&5&3&2&1\\
\dim H^j(\oBG)&1&4&6&10&13&15&13&10&6&4&1
\end{array}
\end{equation}
while all the odd degree (intersection) cohomology vanishes.
\end{teo}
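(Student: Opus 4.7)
The plan is to begin with the Kirwan blowup $\MK$ and propagate the computation outward to the other four compactifications via the birational maps described in the excerpt. Since $\MK$ is Kirwan's partial desingularization of the GIT quotient $\GIT = \PP H^0(\PP^4,\calO(3)) \gquot \SL(5)$, Kirwan's algorithm expresses the rational cohomology of $\MK$ in terms of the $\SL(5)$-equivariant cohomology of the semistable locus, minus correction terms indexed by the strictly semistable strata with positive-dimensional stabilizer. The first step is therefore to enumerate these strata (which are known from the GIT analyses of Allcock and Yokoyama), compute the equivariant Poincar\'e series of each blowup center together with the equivariant Euler class of its normal bundle, and assemble these into Kirwan's formula. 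This yields the top row $\dim H^j(\MK)$.

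Next, I would descend from $\MK$ to $\widehat\calM$ and then to $\GIT$ by applying the decomposition theorem to the two contractions. Each is a blow-down of an explicit exceptional divisor over a known locus in the target (the chordal cubic point for $\widehat\calM\to\GIT$, and the remaining deep GIT strata for $\MK\to\widehat\calM$), and the fibers are determined by the local GIT model. When the map is semismall, $IH^*$ of the target appears as a direct summand of $H^*$ of the source; otherwise the non-trivial local systems on the centers contribute further summands that can be read off from the fiber geometry. Subtracting these summands from $H^*(\MK)$ produces the rows for $IH^*(\widehat\calM)$ and $IH^*(\GIT)$.

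For the ball quotient side, I would use the fact, already recalled in the excerpt, that $\widehat\calM$ also dominates $\BG$ as the Looijenga $\QQ$-factorialization associated to the hyperelliptic divisor $\calH_h^*\subset\BG$; its exceptional locus is the preimage of $\calH_h^*$. The decomposition theorem applied to $\widehat\calM\to\BG$ then determines $IH^*(\BG)$, provided the cohomology of $\calH_h^*$ (which is itself a ball quotient of lower dimension) and of the exceptional fibers is understood, for which a detailed study of the boundary of $\BG$ is needed. Finally, $\oBG\to\BG$ is the unique toroidal resolution: its exceptional divisors fiber in a toric fashion over the cusps and the hyperelliptic locus, so one last application of the decomposition theorem, combined with the explicit description of the boundary strata of $\oBG$ as covers of moduli of lower-dimensional ball quotients, gives $H^*(\oBG)$.

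The main obstacle, I expect, is the bookkeeping of strata at two places. On the GIT side, Kirwan's algorithm for cubic threefolds involves a nontrivial list of strictly semistable loci (chordal cubics, secant-type cubics, the various $A$- and $D$-singular strata, etc.), each requiring its own equivariant computation and each introducing a distinct correction into the decomposition theorem for $\MK\to\GIT$. On the ball quotient side, the cusps of $\BG$ and the hyperelliptic divisor each contribute separately, and aligning these contributions with the GIT picture through the common birational model $\widehat\calM$ is delicate. A useful consistency check at the end is that the rows for $\MK$ and $\oBG$ must coincide, since both are smooth modulo finite quotients and the two sides agree over the open moduli $\calM$; likewise, the row for $IH^*(\widehat\calM)$ computed from the GIT side must match the one computed from the ball quotient side.
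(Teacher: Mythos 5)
Your outline for $\MK$, $\GIT$, and $\widehat\calM$ follows the paper's route: compute $P_t^G(X^{ss})$ via the equivariantly perfect stratification, assemble Kirwan's blowup corrections for each connected stabilizer component to get $P_t(\MK)$, then subtract the $B_R(t)$ terms from Kirwan's intersection-cohomology formula to obtain $IP_t(\GIT)$, with $IP_t(\widehat\calM)$ recovered by truncating after the chordal-cubic correction. That part is sound.

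There is a real gap on the ball-quotient side. First, your description of $q\colon\widehat\calM\to\BG$ is incorrect: its exceptional locus is not the preimage of $\calH_h^*$ (over which $q$ is in fact an isomorphism), but the strict transform $\widehat\calT$ of the curve $\calT$, contracted to the single cusp $c_{2A_5}$; $q$ is a small semi-toric modification. Because $\widehat\calM$ has non-quotient (toric) singularities, the decomposition theorem for this small map is not automatic, and indeed $IP_t(\widehat\calM)\ne IP_t(\BG)$ even though $q$ is small. The paper sidesteps this by applying the decomposition theorem instead to $g\colon\MK\to\BG$, where the source is smooth up to finite quotients and $g$ is a blowup of the two cusp points; the contributions are then simply read off from the Betti numbers of the two exceptional divisors $D_{3D_4}$ and $D_{2A_5}$.

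Second, and more seriously, your description of the toroidal boundary is wrong. For a ball quotient all rational boundary components are $0$-dimensional, so $\oBG\to\BG$ blows up only the two cusps; the hyperelliptic divisor is an interior Heegner divisor and is untouched. The two toroidal boundary divisors $T_{2A_5}$ and $T_{3D_4}$ are not covers of lower-dimensional ball quotients: they are quotients of the $9$-dimensional abelian variety $(E_\omega)^9$ by finite groups $\O(2\calE_4+\calE_1)$ and $\O(\widetilde{3\calE_3})$ arising from the Eisenstein lattices associated to the cusps. Computing their rational cohomology requires showing (via a Chevalley-type theorem of Looijenga and Bernstein--Schwarzman adapted to Eisenstein lattices) that $(E_\omega\otimes_\calE\calE_k)/W(\calE_k)$ has the cohomology of a weighted projective space. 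This Eisenstein-lattice analysis, together with the careful identification of the two cusps $c_{2A_5}$ and $c_{3D_4}$ and their isometry groups, is essential input that your plan does not anticipate; without it the ``cohomology of the boundary strata'' step cannot be carried out.
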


\begin{conv}
As usual with these type of cohomological computations, the cohomology is always with $\QQ$ coefficients. This will be our convention throughout the paper.
\end{conv}

\begin{rem}
The easier related case of the moduli space of cubic surfaces is discussed in Appendix~\ref{sec:surfaces}. Specifically, as in the case of cubic threefolds, there exists both a GIT model (one of the standard examples in classical Invariant Theory) and a ball quotient model for the moduli space of cubic surfaces (due to Allcock--Carlson--Toledo~\cite{ACTsurf}; see also~\cite{DvGK}). However, in this lower-dimensional case, the two models are isomorphic (cf.~\cite{ACTsurf}). The cohomology of the GIT model and of its partial Kirwan desingularization were worked out by Kirwan as illustrations of her general theory (see~\cite{kirwanhyp}, and also~\cite{ZhangCubic}). On the other hand, to our knowledge, Theorem~\ref{T:CubSurfH}, which computes the cohomology of the associated toroidal compactification of the ball quotient model for cubic surfaces, and Theorem~\ref{T:Naruki}, which computes the cohomology of the Naruki compactification for the surface case, are new, and possibly of independent interest (see also Remark~\ref{rem_possible_iso} below).
\end{rem}

\begin{rem}\label{rem:sing}
Let us briefly comment on the singularities of the various compactifications that occur in our paper. First, by construction, $\MK$ and $\oBG$ have only finite quotient singularities. In particular, their cohomology coincides with their intersection cohomology (with $\QQ$ coefficients). The intermediate space $\widehat\calM$, which resolves the birational map $\GIT\dashrightarrow \BG$, has only toric singularities. In contrast, the two starting points of our analysis, the GIT quotient $\GIT$ and the Baily--Borel compactification $\BG$, have worse singularities. Specifically, the GIT quotient $\GIT$ has at worst finite quotient singularities along the stable locus $\calM^s$, and toric singularities along the GIT boundary, except for the  point $\Xi$ that corresponds to the chordal cubic threefold. Finally, $\BG$ has at worst finite quotient singularities in the interior $\calB/\Gamma$, but the singularities at the two isolated boundary points (the cusps) of $\BG$ are fairly complicated. The precise description of the (partial) resolutions $\MK\to \GIT$ and $\oBG\to \BG$ constitutes an important part of our paper (see esp. Sections~\ref{sec:prelim} and~\ref{sec:toroidal} respectively).
\end{rem}

\begin{rem}\label{rem_possible_iso}
We note that the first and the last row of this table are identical. This is to say, the Betti numbers of the two compactifications that are smooth up to finite quotient singularities --- the Kirwan blowup $\MK$ and the toroidal compactification $\oBG$ of the ball quotient --- coincide. This leads to the natural question of whether these two compactifications are in fact isomorphic. Geometrically, both $\MK$ and $\oBG$ are blowups of $\BG$ at the same two points which are the two cusps of $\BG$. However, it is unclear whether the blowup ideals are the same in both cases (see~\cite[\S5.1]{LOG2} for some related computations).
Similarly, in Appendix~\ref{sec:surfaces} we show that the Betti numbers of the Kirwan blowup and of the toroidal compactification for the moduli of cubic surfaces (see~\cite{ACTsurf}) are also equal (Theorem~\ref{T:CubSurfH}).
Even in this easier case, while we expect that the Kirwan blowup and the toroidal compactification for the moduli of cubic surfaces are in fact isomorphic, some subtle details remain to  be settled.
Answering this question will require methods very different from what we use in this paper, and we plan to return to this question in the future.
\end{rem}

In addition to the fact that the (intersection) Betti numbers of a moduli space are a basic invariant of interest, there are several further reasons for our interest in these numbers.
In particular, our work here provides a better understanding of  the geometry of the birational maps among the various compactifications of
the moduli space of cubic threefolds. In general it is a natural question to ask how different compactifications of a given moduli space, each often arising as the result of a natural compactification process, relate to each other. One way of understanding
such relations can be via the log-MMP with respect to a suitable linear combination of boundary divisors.
This is a very active subject of research, widely known as {\it the Hassett--Keel program}, in the case of the moduli space of curves ${\mathfrak M}_g$ (see eg.~\cite{HH1,HH2}).  The motivation is that the log-MMP allows one to interpolate between a known compactification (such as the Deligne--Mumford compactification $\overline{\mathfrak M}_g$) and a target compactification (such as the canonical model $\mathfrak M_g^{can}$ for $g\ge23$).  More recently, Laza and O'Grady~\cite{LOG2,LOG1} have used a variation of log-models to understand the relationship between the GIT and Baily--Borel compactifications for low degree (esp. quartic) $K3$ surfaces. It is natural to ask whether a similar picture arises for moduli spaces of cubics (see~\cite[Sect. 7]{cml} for some further discussion).
In particular, in this context the question raised by the remark above, of whether $\MK$ and $\oBG$ are in fact isomorphic, seems to be the natural starting point, and resolving it might give some indication of the properties of the log-MMP in this case.

In another direction, our results provide a geometric approach to computing the cohomology of an interesting ball quotient (the Allcock--Carlson--Toledo model $\calB/\Gamma$ for the moduli space of cubic threefolds) and its compactifications. First, since $\calB/\Gamma$ is a locally symmetric variety, there are several interesting questions related to its topology. One natural question is whether its cohomology is generated by arithmetic cycles, i.e., Shimura subvarieties, which in this case will be sub-ball quotients $\calB'/\Gamma'$.   Our results provide a starting point for  identifying some geometrically meaningful candidates for such subvarieties  (e.g., loci corresponding to cubic threefolds with specified singularities, or cubics with specified automorphisms), although we are far from being able to answer this question completely. Analogous questions were considered in the case of orthogonal modular varieties (also known as type IV or $K3$ type) under the heading of the Noether--Lefschetz conjecture. This was verified by Bergeron et al.~\cite{bergeron} who show that the cohomology of locally symmetric varieties of type IV is generated at least up to middle dimension by Shimura subvarieties.

Second, we note that one can also approach the computation of the  intersection cohomology of Baily--Borel compactifications via automorphic representations and trace formulae. This has been advanced very successfully  in the  case of the Satake compactification $\Sat$ of the moduli space of principally polarized abelian varieties, where the intersection cohomology is completely known for $g \leq 7$ (also for intersection cohomology with coefficients in any local system), see~\cite{hulektommasi}. This, as well as the work by Bergeron et al.,  relies on Arthur's endoscopic classification of automorphic representations of the symplectic group. In principle, Arthur's method can also be applied to the unitary group (i.e., the case of ball quotients) as was shown by Mok~\cite{mok}, but to the best of our knowledge the $10$-dimensional case which we treat here has not yet been approached by representation-theoretic methods.

Finally, while there has been some previous work computing the intersection cohomology of Baily--Borel compactifications of ball quotient models, in this paper we work out the cohomology of the \emph{toroidal} compactification.
To our knowledge, this is the first nontrivial example where the
 intersection cohomology of the toroidal compactification of an arithmetic ball quotient model of a moduli space has been computed.
The techniques should be applicable to other examples of interest.
In fact, as our ten-dimensional ball quotient is the largest of the ball quotient models related to natural moduli problems, the results should be immediately applicable in these other situations.  As mentioned above, in Appendix~\ref{sec:surfaces} we for instance apply our techniques to the ball quotient model of the moduli space of cubic surfaces.

Our approach takes as its starting point Kirwan's general theory (see~\cite{kirwan84,kirwanblowup}) of computing the (intersection) cohomology of GIT quotient spaces.
In her paper~~\cite{kirwanhyp} Kirwan uses her techniques to perform the computations for the cases of cubic and quartic surfaces.
Furthermore, Kirwan and her collaborators have done such computations for Baily--Borel compactifications of the moduli space of $K3$ surfaces of degree $2$ (see~\cite{KL1,KL2}) and the Deligne--Mostow ball quotients (see~\cite{KLW}).
Indeed, the largest Deligne--Mostow ball quotient, corresponding to $12$ points in $\PP^1$, is directly related to our analysis, as it corresponds to the hyperelliptic divisor $\calH_h^*$ in $\BG$.
However, our situation is that of the Baily--Borel compactification of the ball quotient $\BG$, which is of dimension $10$, and goes beyond the Deligne--Mostow examples.

While our basic setup is similar to these works, we encounter various new phenomena and complications, which make our computations considerably more intricate, and in particular require a careful analysis of the geometry of our situation. Combining Kirwan's machinery and geometric descriptions of various unstable and polystable loci (some available in the literature, but with further information deduced in this paper) allows us to compute the cohomology of  $\GIT$, $\widehat \calM$, and $\MK$.
Next we compute the cohomology of the Baily--Borel compactification $\BG$ by applying the decomposition theorem to the natural morphism $\MK\to\BG$.
We finally compute the cohomology  of the toroidal compactification~$\oBG$ by applying the decomposition theorem to the natural morphism $\oBG\to \BG$, which is the blowup of the two points which are the cusps of~$\BG$.
We note that, as for all ball quotients, there are no choices involved in the construction of the toroidal compactification.
The computation of the cohomology of the toroidal boundary divisors requires a careful analysis of the arithmetic and the geometry of the two cusps. This involves the theory of Eisenstein lattices and leads to a wealth of new geometric insights.
In particular, we are led to generalize a Chevalley type result due to Looijenga  and Bernstein--Schwarzman~\cite{Lroot, FMW, BS} to the case of Eisenstein lattices (see \S\ref{S:Eies-Chev}).
Furthermore, as an immediate and easy application of our techniques, we can for instance compute the cohomology of the toroidal compactification of the ball quotient model of the moduli space of cubic surfaces (Theorem~\ref{T:CubSurfH}).

Let us briefly go over the content of our paper. We start in Chapter~\ref{sec:prelim} with some preliminaries. Specifically, we first briefly review (\S\ref{subsec:modulicubic}) the work of Allcock~\cite{allcock} and Allcock--Carlson--Toledo~\cite{act} (see also Looijenga--Swierstra~\cite{ls}) on the moduli space of cubic threefolds and its two compact models $\GIT$ and $\BG$. We then review (\S\ref{subsec:kirwanth}) the basic framework of Kirwan's method (and fix the necessary notation). In particular, we introduce the space $\calM^K$, the Kirwan (orbifold) desingularization of the GIT model $\GIT$, that plays a key role in our analysis.

In Sections~\ref{sec:HXss} and~\ref{S:CohKirBl-II} we compute the cohomology of the Kirwan resolution $\calM^K$.
There are two main steps in the computation.
First is the computation of the equivariant cohomology of the semi-stable locus $X^{ss}$ in the Hilbert scheme of cubic threefolds (\S\ref{sec:HXss}). This is done by computing the usual Kempf stratification of the unstable locus, followed by an excision type argument. A key simplifying observation of Kirwan is that, for the purposes of eventually computing the intersection cohomology (or equivalently, cf. Remark \ref{rem:sing}, the cohomology) of $\MK$, one can safely ignore unstable strata of high codimension. In fact,  for the analogous computation in the case of quartic surfaces quartic surfaces discussed in~\cite{kirwanhyp}, all unstable strata can be ignored. To our surprise, this is no longer the case for the strata of unstable cubic threefolds, leading to some additional complications in the computation of $H^\bullet(\MK)$, since the locus of unstable cubic threefolds with a $D_5$ singularity plays a role.  The next step, after computing the equivariant cohomology of the semi-stable locus $X^{ss}$, is to compute some correction terms (\S~\ref{S:CohKirBl-II}) that arise from blowing up the loci of strictly polystable points in $X^{ss}$ in the construction of $\MK$.

Once the computation of the cohomology of $\MK$ is completed, Kirwan's setup allows one to in principle approach the computation of the {\em intersection} cohomology of the GIT compactification~$\GIT$. To do this, Kirwan sets up an appropriate application of a suitable equivariant version of the decomposition theorem. In order to apply this, one needs to solve separate GIT problems for actions on the tangent space of suitable normalizers of stabilizers of strictly semi-stable points. We perform this computation, and turn out to be lucky in that the suitable quotients of strictly semi-stable loci are two points and a $\PP^1$ in our case, which allows the computation of relevant intersection local systems. Along the way, we also determine the intersection cohomology of $\widehat \calM$ as an intermediate step. This is discussed in Chapter~\ref{sec:IHGIT}.

We then further descend the computations from $\MK$ to $\BG$.
To do this, we apply the decomposition theorem directly to the map $\MK\to \BG$. The crucial point here is that the
Kirwan blowup $\MK$ is smooth up to finite quotient singularities and that the map $\MK\to \BG$ is a blowup in two points whose preimages are divisors in $\MK$.
 The decomposition theorem then has a simple description in terms of the cohomology of these exceptional divisors.
Since most of the work in computing the cohomology of those exceptional divisors was already done in the computation of the intersection cohomology of $\GIT$, the computation becomes feasible.
This is discussed in Chapter~\ref{sec:IHball}.

Finally, in Chapter~\ref{sec:toroidal} we compute the intersection cohomology of the toroidal compactification $\oBG$. Since $\oBG$ is a smooth up to finite quotient singularities, this computation is also done by applying directly the decomposition theorem, this time to the morphism $\oBG\to\BG$, which is also a blowup of the two cusps in $\BG$, with the total space smooth (also up to finite quotient singularities). This requires computing the cohomology of the two exceptional toroidal divisors of $\oBG$, which get contracted  to the two cusps of $\BG$.
This turns out to be an interesting question in its own right, whose solution involves the theory of Eisenstein lattices as well as an equivariant version (Proposition~\ref{thm_L_Eis}) of a Chevalley type theorem of Looijenga~\cite{Lroot} and Bernstein--Schwarzman~\cite{BS}.

As  Kirwan's machinery involves computations with equivariant cohomology, for convenience we have summarized in Appendix~\ref{sec:equivcoh} the  properties of equivariant cohomology that we will use. To apply this general machinery, one still needs to determine various stabilizers, normalizers, their fixed point sets, etc. Such computations, though elementary, are quite lengthy and laborious. To streamline the flow of the text, we have gathered all such results in Appendix~\ref{S:Elem}. Finally, Appendix~\ref{sec:surfaces}  discusses the easier case of the moduli space  of cubic surfaces, where we prove that the cohomology of the Kirwan blowup, toroidal, and the Naruki compactifications are all equal.

\section*{Acknowledgements}
We thank Frances Kirwan for answering some questions concerning her work. The first author thanks Jonathan Wise for some discussions on equivariant cohomology. The fourth author thanks Kieran O'Grady and Rahul Pandharipande for some discussions on related issues regarding the cohomology of Baily--Borel compactifications of moduli of low degree K3 surfaces. We thank Mauro Fortuna for carefully reading the manuscript and pointing out some inaccuracies in previous drafts.
We finally thank the referee for a very thorough and extremely helpful reading of our manuscript.

\chapter{Preliminaries}\label{sec:prelim}
In this section, we will review some basic facts about the moduli of cubic threefolds (mostly due to Allcock~\cite{allcock} and Allcock--Carlson--Toledo~\cite{act}), and importantly, introduce the two main actors in our paper: the GIT quotient $\GIT$ and the ball quotient model $\BG$ (as well as their common resolution $\widehat \calM$). We then recall  Kirwan's resolution $\MK$ of $\GIT$, and explain its connection with $\widehat \calM$.

\section{Notation and conventions}

\subsection{The general setting}\label{SSS:KirSetUp} In order to keep our presentation consistent with that of~\cite{kirwanblowup, kirwanhyp, GIT}, and in order to discuss some of the details of Kirwan's construction, we first recall the general framework.
We start with a complex projective manifold $X\subseteq \PP^N$,  a complex reductive group $G$ acting algebraically on $X$, and  a $G$-linearization of the action on
 the very ample line bundle $L=\calO_{\PP^N}(1)|_X$.
A complex Lie group $G$ is reductive if and only if it is the complexification of a maximal compact subgroup,
and we fix one such subgroup $K$.
 We assume that the action and the linearization are induced by a faithful representation
$$
\rho:G\longrightarrow \GL(N+1,\CC)
$$
such that $\rho(K)\subset\operatorname{U}(N+1)$.     We fix a maximal algebraic torus $\TT\cong (\CC^*)^{N+1}$ in $G$, and a corresponding maximal compact torus $T$ in $K$, so that $T$ is a maximal compact subgroup of $\TT$.   Let $\alpha_0,\dots,\alpha_{N}\in \mathfrak t^\vee$ be the  weights of the representation of $K$, lying in the dual to the Lie algebra~$\mathfrak t$ of~$T$; if $(x_0:\dots :x_N)$ are the coordinates on $\PP^N=\PP\CC^{N+1}$ diagonalizing the action of $T$, then we associate to $x_i$ the weight $\alpha_i$.
We fix an inner product on the Lie algebra  $\mathfrak k$ of $K$ that is invariant under the adjoint action of $K$ (for example the Killing form), and use its restriction to $\mathfrak t$ to  identify $\mathfrak t=\mathfrak t^\vee$.
We also fix once and for all a positive Weyl chamber $\mathfrak t_+$.

\subsection{The case of  hypersurfaces}\label{S:hypeKirConv}
In this paper we will be  specializing to the case of  hypersurfaces of degree $d$ in $\PP^n$, and eventually to cubic threefolds.  To keep the notation consistent with the previous subsection, and in particular consistent with~\cite{kirwanhyp}, we take $X=\PP H^0(\PP^n,\calO_{\PP^n}(d))=\PP\operatorname{Sym}^d(\CC^{n+1})^\vee$, i.e., $X=\PP^N$ with $N=\binom{n+d}{d}-1$, and we take $G=\SL(n+1,\CC)$ acting via the natural representation on $\operatorname{Sym}^d(\CC^{n+1})^\vee$ induced by the canonical matrix action   on $(\CC^{n+1})^\vee=\mathbb C^{n+1}$.
This induces a linearization of the action for $\calO_{\PP^N}(1)$.
We note that the action of $G$ on $X$ is not faithful: the center of $\SL(n+1,\CC)$, which is isomorphic to $\ZZ/(n+1)\ZZ$, consisting of diagonal matrices with the same $(n+1)$-st root of unity along the diagonal, acts trivially on~$X$.

\begin{rem}
As is typical in this situation, there is some choice involved in picking the group $G$.  The choice of $\operatorname{SL}(n+1,\mathbb C)$ is preferable from the perspective of linearizations and GIT (see~\cite[p.33 and Prop.~1.4]{GIT}).  On the other hand, since the action of $\PP\GL(n+1,\mathbb C)$ on $X$ is faithful, and automorphisms of a hypersurface are identified with the stabilizer of the corresponding point under this action, it can frequently be convenient to work with  $\PP\GL(n+1,\mathbb C)$ when computing stabilizers.  Finally, it turns out that sometimes the stabilizers (and related groups) are easier to describe from the group theoretic perspective as subgroups of  $\GL(n+1,\mathbb C)$.   Since we can easily go back and forth among the various groups, we take $G=\SL(n+1,\mathbb C)$, so as to work well in the GIT setting, and be consistent with Kirwan's conventions.
\end{rem}

In this case $K=\operatorname{SU}(n+1)$, and $T\cong (S^1)^{n}$ is the subgroup of diagonal unitary matrices with determinant $1$.
The root system for $\operatorname{SU}(n+1)$ is of type $A_n$, with Weyl group the symmetric group $S_{n+1}$, and we fix a positive Weyl chamber $\mathfrak t_+$.  The Killing form on $\mathfrak {su}(n+1)$ is given by $A.B=2n\operatorname{tr}(AB)$; thus when restricted to the diagonal traceless matrices of $\mathfrak t$, identified as the hyperplane $\{(a_0,\dots,a_n)\in \RR^{n+1}: \sum a_i=0\} \subseteq \RR^{n+1}$, the inner product on $\mathfrak t$ is $2n$ times the standard inner product.
For simplicity, we will always use the standard inner product.
To describe the weights of the representation of $\operatorname{SU}(n+1)$ concretely,
we take as a basis for $\operatorname{Sym}^d(\CC^{n+1})^\vee$ the monomials of degree $d$.  As usual, we use the notation $x^I:=x_0^{i_0}\dots x_n^{i_n}$, where $I=(i_0,\dots,i_n)$ is a partition of $d$, to index our monomials.   A diagonal matrix $\operatorname{diag}(\lambda_0,\dots,\lambda_n)$ acts on $x^I$ by scaling by $\lambda_0^{i_0}\dots \lambda_n^{i_n}$,
and thus the index $I$ also gives the weight $\alpha_I$ associated to the coordinate $x^I$.
More precisely, the monomials
naturally sit as lattice points in the non-negative  quadrant of  $\ZZ^{n+1}$, and the monomials of fixed degree $d$ can be thought of as the lattice points of a simplex in the affine $n$-space whose defining equation is that the sum of coordinates is $d$.     We make the identification of monomials of degree $d$ with weights in $\mathfrak t\subseteq \RR^{n+1}$ explicit with the assignment $x^I\mapsto \alpha_I:= (i_0-d/(n+1),\dots,i_n-d/(n+1))$.

\subsection{The case of cubic threefolds}
The particular case of interest in this paper is the case of cubic threefolds.
As in the previous subsection, to fix the notation to match~\cite{kirwanhyp}, we set throughout the paper $d=3$ for the degree of the hypersurfaces, $n=4$ for dimension of the ambient $\PP^4$, $X=\PP^{34}=\PP\operatorname{Sym}^3(\CC^5)^\vee$ for the parameter space for cubic threefolds, and $G=\SL(5,\CC)$ for the reductive group acting on $X$ via change of coordinates, with the canonical linearization on $\calO_{\PP^{34}}(1)$.

\subsection{Strictly polystable points}
As before, let $G$ be a reductive group acting on a projective variety $X$ with a $G$-linearized ample line bundle $L$.
A point $x\in X$ is {\it semi-stable} if there exists an invariant section $\sigma\in H^0(X,L^m)^G$ (for some $m\in \ZZ_+$) such that $\sigma(x)\neq 0$.
We denote by $X^{ss}(L)$, or simply $X^{ss}$ if no confusion on $L$ is possible, the set of semi-stable points. A point $x\in X^{ss}(L)$ is {\it polystable} if the orbit $G\cdot x$ is closed in the locus of semi-stable points $X^{ss}(L)$. The stabilizer of a polystable point is a reductive group. We recall that the points of the GIT quotient $X\gquot_L G(=X^{ss}(L)/G)$ are in one-to-one correspondence with the orbits of the polystable points. Finally, a point $x\in X^{ss}(L)$ is {\it stable} if it is polystable with finite stabilizer. We denote by $X^s(L)\subset X^{ss}(L)$ (or simply $X^s$) the open subset of stable points. The quotient $X^s/G$ is a geometric quotient, in particular the points of  $X^s/G$ are in one-to-one correspondence with the $G$-orbits in $X^s(L)$.
We will use the terminology of {\it strictly polystable} points for polystable points that are strictly semi-stable (i.e., the point is polystable, and semi-stable, but is not stable).

The main tool for determining the semi-stable/stable points is Mumford's numerical criterion (e.g.~\cite[\S2.1]{GIT}). For the case relevant here, the cubic threefolds, a complete description of the semi-stable/polystable/stable points was done by Allcock~\cite{allcock} and Yokoyama~\cite{yokoyama}, as reviewed below.

\section[Standard compactifications $\GIT$ and $\BG$]{Moduli space of cubic threefolds and its standard compactifications $\GIT$ and $\BG$}\label{subsec:modulicubic}

\subsection{The GIT compactification $\GIT$}
With $X=\PP^{34}=\PP\operatorname{Sym}^3(\CC^5)^\vee$, the parameter space for cubic threefolds, and $G=\SL(5,\CC)$ acting via change of coordinates, as above,
the natural GIT compactification for the moduli space of cubic threefolds is denoted
$$\GIT:=X\gquot G\,\,.$$
Note that since projective space has Picard rank $1$, and $G=\SL(5,\CC)$, there is essentially a unique choice of linearization for defining the GIT quotient~\cite[Prop.~1.4, p.33]{GIT}.
The open subset parameterizing smooth cubics will be denoted throughout by $\calM$, and the stable locus will be denoted by $\calM^s=X^s/G$. Clearly, one has
$$\calM\subset \calM^s\subset \GIT\,,$$
and $\calM^s$ has at worst finite quotient singularities.

The GIT compactification $\GIT$ for cubic threefolds was analyzed by  Allcock in~\cite{allcock} and Yokoyama in~\cite{yokoyama}.
They showed that semi-stability of a cubic threefold is determined by its singularities (with almost no global information needed; this is quite special to this case).
In particular, all the semi-stable cubics have isolated singularities, with a single exception, the {\it chordal cubic}, i.e. the secant variety of a rational normal curve in $\PP^4$ (see~\eqref{eq:ch} below for an explicit equation). The chordal cubic is polystable, and we denote by $\Xi$ its orbit, which we view as a special point $\Xi\in \GIT$ of the GIT quotient.

For further reference, we summarize  the GIT analysis for cubic threefolds (cf.~\cite[Thms.~1.1 -- 1.4]{allcock}) as follows:

\begin{teo}[{GIT compactification for cubic threefolds,~\cite{allcock}}] \label{T:GITcub}
The following hold:
\begin{itemize}
\item[(1)] A cubic threefold is GIT stable if and only if it has at worst $A_1, \dots ,A_4$-singularities.
\item[(2)] The GIT boundary $\GIT - \calM^s$ consists of a rational curve $\calT$ and an isolated point $\Delta$.
\item[(3a)] The polystable orbit parameterized by  $\Delta$ corresponds to a cubic with $3D_4$-singularities,  given by equation~\eqref{eq:3D4}.
\item[(3b)] Under a suitable identification $\calT\cong \PP^1$, the polystable orbits parameterized by $\calT -\{0,1\}$ correspond to cubics with precisely $2A_5$-singularities (see~\eqref{eq:2A5} below for an explicit parameterization).
\item[(3b')] The special point $0\in \calT$ corresponds to a cubic with $2A_5+A_1$-singularities (i.e. the cubics with $2A_5$ singularities parameterized by $\calT$ can acquire an additional node for a special value of the parameter in $\calT\cong \PP^1$).
\item[(3b'')] The special point $1\in \calT$ corresponds to the  chordal cubic (in this situation, the $2A_5$ cubics specialize to a cubic with non-isolated singularities), i.e. the point $\Xi \in \GIT$ identified above.
\end{itemize}
\end{teo}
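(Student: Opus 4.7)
The plan is to apply Mumford's numerical criterion systematically. After conjugating, I may restrict attention to 1-parameter subgroups $\lambda$ of the diagonal maximal torus $\TT\subset\SL(5,\CC)$, and after a Weyl group conjugation, I may further assume the weights $(r_0,\dots,r_4)$ satisfy $r_0\ge\dots\ge r_4$ with $\sum r_j=0$. For a cubic $F=\sum_{|I|=3} a_I x^I$ the Hilbert--Mumford weight is then a linear function of the monomial weights $\langle\alpha_I,\lambda\rangle=\sum r_j i_j$, and the failure of (semi)stability translates into the existence of coordinates in which $F$ is supported on a suitable ``null set'' of monomials.

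Second, I would enumerate the maximal destabilizing null sets combinatorially. The inequalities $r_j\ge r_{j+1}$ and $\sum r_j=0$ cut out a rational polyhedral cone, and the condition that a given monomial $x^I$ have non-positive weight is linear in $\lambda$; this reduces the whole analysis to finitely many maximal null sets. The geometric content of~\cite{allcock, yokoyama} then enters in matching each such maximal null set with a local normal form for the singularities of the corresponding cubics. Here one shows that the singularities compatible with semi-stability are precisely the isolated ADE singularities of types $A_1,\dots,A_5$ and $D_4$ --- the latter only occurring in a $3D_4$ configuration --- together with the single non-isolated case of the chordal cubic, and that the $A_5$ and $3D_4$ strata account exactly for the strictly semi-stable locus. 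This yields~(1), along with the description of the GIT boundary as the union of the $2A_5$ and $3D_4$ loci.

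Third, to produce the structure in~(2)--(3), I would identify closed orbits by exhibiting an explicit 1-PS fixing each candidate polystable cubic up to scalar, and then verify minimality of the orbit closure via Luna's slice theorem. The $3D_4$ cubic~\eqref{eq:3D4} has a positive-dimensional reductive stabilizer and a single closed orbit, giving the isolated point $\Delta$. The $2A_5$ normal form~\eqref{eq:2A5} depends on a one-dimensional modulus whose closure in the GIT quotient is the rational curve $\calT\cong\PP^1$ of~(3b). The two distinguished values correspond to degenerations inside this family: at $0\in\calT$ the symmetry group enlarges and a new node appears, producing the $2A_5+A_1$ cubic of~(3b'); at $1\in\calT$ the two $A_5$ loci coalesce along a rational normal curve, producing the chordal cubic of equation~\eqref{eq:ch}, which is~(3b'').

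The main obstacle I anticipate is the combinatorial classification of maximal destabilizing 1-parameter subgroups and, above all, the translation from the numerical criterion into a clean statement purely in terms of local singularity type --- this is the heart of Allcock's analysis and is the feature which makes cubic threefolds exceptional among hypersurface moduli. Once the singularity characterization is in place, the identification of $\Delta$, $\calT$, and the two special points on $\calT$ reduces to a direct inspection of the normal forms~\eqref{eq:3D4}, \eqref{eq:2A5}, \eqref{eq:ch} and of the induced $\CC^*$-action on the strictly semi-stable slice, which is routine.
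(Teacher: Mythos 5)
The paper does not give a proof of this theorem: it is stated explicitly as a summary of Allcock's results (``cf.~\cite[Thms.~1.1 -- 1.4]{allcock}'') and is used as input for the rest of the paper, with the corresponding GIT analysis for cubic threefolds carried out in~\cite{allcock, yokoyama}. Your sketch is therefore being compared against Allcock's original argument rather than anything in this paper.

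As an outline of that argument, your proposal is in the right spirit and at the right level of generality. The reduction to diagonal $1$-PS via the Hilbert--Mumford criterion, the combinatorial enumeration of maximal destabilizing weight sets (this is precisely what Allcock packages as Lemma~3.1 and its figures, which are in fact used in Proposition~\ref{P:PtGXss} of the present paper for a different purpose), the translation of null sets into local normal forms for singularities, and the identification of closed orbits via reductive stabilizers and explicit normal forms such as~\eqref{eq:3D4},~\eqref{eq:2A5},~\eqref{eq:ch} --- all of this faithfully reflects the structure of Allcock's proof. You correctly identify the hard step as the dictionary between numerical (in)stability and singularity type, which is where essentially all of the work goes.

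Two cautions. First, your interim characterization of semi-stability as ``isolated ADE singularities of types $A_1,\dots,A_5$ and $D_4$, the latter only in $3D_4$ configuration, together with the chordal cubic'' is a simplification: Allcock's actual semi-stability statement carries additional conditions on how singularities sit relative to lines on the cubic (visible already in the precise phrasing of Theorem~\ref{T:GITcub}(3b), which requires that the semi-stable $A_2$-type conditions avoid certain axes in the cubic surface case and analogously here). If you want this to serve as a genuine proof you must either reproduce those conditions or cite the theorem you are reproving. Second, for the assertion that $\Delta$ is isolated and $\calT$ is a curve, invoking Luna's slice theorem is heavier machinery than Allcock actually uses; he establishes closedness of the $3D_4$ and $2A_5$ orbits directly and parameterizes $\calT$ by the invariant $C=4A/B^2$ of~\eqref{eq:2A5}, observing that $C=1$ yields the chordal cubic~\eqref{eq:ch} and $C=0$ yields the $2A_5+A_1$ cubic. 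Luna's slice theorem works, but it is not needed and is not what the cited source does.

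In short: the theorem is quoted, not proved, in this paper; your proposal is a faithful high-level reconstruction of Allcock's proof strategy, but the crucial combinatorics-to-singularities dictionary is asserted rather than established, and there are small imprecisions in the semi-stability characterization that would need attention in a complete write-up.
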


\begin{rem}\label{R:Alck-poly-form} In what follows, we will need explicit equations for the cubics in strictly polystable orbits. Specifically, we have the following (cf. ~\cite[Thm.~1.2]{allcock}):
\begin{enumerate}
\item The polystable orbit corresponding to the isolated boundary point $\Delta\in\GIT$ is the orbit consisting of cubics with three isolated $D_4$ singularities (a geometric condition that characterizes it, cf.~\cite[Thm. 5.4]{allcock}); one such cubic is given explicitly by the polynomial
\begin{equation}\label{eq:3D4}
  F_{3D_4}:=x_0x_1x_2+x_3^3+x_4^3,
\end{equation}
with zero set $V(F_{3D_4})$, which we will call the $3D_4$-cubic.
\item The curve $\calT\subset (\GIT -\calM^s)$ parameterizes strictly polystable  orbits given by polynomials of the  form
\begin{equation}\label{eq:2A5}
 F_{A,B}=Ax_2^3+x_0x_3^2+x_1^2x_4-x_0x_2x_4+Bx_1x_2x_3,
\end{equation}
with $A,B$ not simultaneously vanishing.
Specifically, one notes that the zero set $V(F_{A,B})$ is projectively equivalent to $V(F_{\lambda^2A,\lambda B})$ for any $\lambda\in\CC^*$. In fact, $V(F_{A,B})$ is projectively equivalent to $V(F_{A',B'})$ if and only if $A/B^2=A'/B'^2$.  Thus, $C:=4A/B^2$ can be taken as an affine parameter for the rational curve $\calT$.  The factor of $4$ is taken for numerical convenience:  if $C\notin\{0,1\}$, then the cubic $V(F_{A,B})$ has exactly two isolated $A_5$ singularities (a geometric condition that characterizes the cubics $V(F_{A,B})$, cf.~\cite[Thm. 5.7]{allcock}). If $C=0$ (equivalently $A=0$), then the cubic $V(F_{0,B})$ has in addition to the two $A_5$ singularities, an isolated $A_1$ singularity. Finally, if $C=1$ (e.g., $(A,B)=(1,-2)$), then the associated cubic $V(F_{1,-2})$ is the {\it chordal cubic}, i.e., the secant variety of the standard rational normal curve in $ \PP^4$ (which is singular precisely along the rational normal curve). Note that
\begin{equation}\label{eq:ch}
 F_{1,-2}=\det\begin{pmatrix} x_0&x_1&x_2\\ x_1&x_2&x_3\\ x_2&x_3&x_4\end{pmatrix},
\end{equation}
which makes the relationship to the standard rational normal curve in $\PP^4$ more transparent.
\end{enumerate}
\end{rem}

\subsection{The ball quotient model $\BG$} Looij\-enga--Swierstra~\cite{ls} and independently Allcock--Carlson--Toledo~\cite{act} have constructed a ball quotient model $\calB/\Gamma$, where $\calB$ is a $10$-dimensional complex ball, and $\Gamma$ is an arithmetic group acting on $\calB$, via the period map for cubic fourfolds. The following summarizes the essential aspects of the ball quotient model.
\begin{teo}[{The ball quotient model,~\cite{act} and~\cite{ls}}]
 Let $\calB/\Gamma$ be the ball quotient model of~\cite{act}. The following hold:
 \begin{itemize}
\item[(1)] The period map (defined via eigenperiods of cubic fourfolds)
 $$P:\calM\to \calB/\Gamma$$
 is an open embedding with the complement of the image being the union of two irreducible Heegner divisors $D_n:=\cDn/\Gamma$ (called the nodal divisor) and $D_h:=\cDh/\Gamma$ (called the  hyperelliptic divisor), where  $\cDn$ and $\cDh$ are $\Gamma$-invariant hyperplane arrangements.
 \item[(2)] The boundary of the Baily--Borel compactification $\BG$ consists of two cusps (i.e., $0$-dimensional boundary components), which we will call $c_{3D4}$ and $c_{2A_5}$.
 \end{itemize}
\end{teo}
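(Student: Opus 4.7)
The plan is to follow the strategy of Allcock--Carlson--Toledo~\cite{act} and Looijenga--Swierstra~\cite{ls}, which build the period map by passing through cyclic triple covers. Given a smooth cubic threefold $Y\subset \PP^4$, one forms the triple cover $X\to\PP^4$ branched along $Y$; this $X$ is a smooth cubic fourfold equipped with a canonical order-three automorphism $\sigma$. The eigenspace decomposition of $H^4_{\mathrm{prim}}(X,\CC)$ under $\sigma^*$ produces a polarized Hodge structure carrying an action of the Eisenstein ring $\ZZ[\omega]$, $\omega=e^{2\pi i/3}$, on the underlying lattice. The Hermitian form on the relevant eigenspace has signature $(1,10)$, so the associated period domain is a ten-dimensional complex ball $\calB$. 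The arithmetic group $\Gamma$ is taken to be the unitary group of the resulting Eisenstein lattice $\Lambda$.

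For part (1), injectivity of $P$ (hence the open embedding statement, using that source and target both have dimension $10$) would follow by combining Voisin's global Torelli theorem for cubic fourfolds with the fact that the triple-cover construction is reversible in the presence of $\sigma$. To identify the complement of the image as $D_n\cup D_h$, I would analyze one-parameter degenerations of smooth cubics: a nodal degeneration produces a vanishing class landing in a short-root Heegner divisor $D_n=\cDn/\Gamma$, while the ``chordal'' degeneration (to the secant variety of the rational normal curve) lands in a long-root Heegner divisor $D_h=\cDh/\Gamma$. Irreducibility of each divisor reduces to showing that $\Gamma$ acts transitively on the relevant sets of Hermitian vectors of fixed square-length in $\Lambda^\vee/\Lambda$, which is a concrete arithmetic check on the Eisenstein lattice.

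For part (2), one uses the general principle that the Baily--Borel compactification of a complex ball quotient has only zero-dimensional boundary components (``cusps''), which are in bijection with $\Gamma$-orbits of isotropic $\ZZ[\omega]$-lines in $\Lambda\otimes\QQ$. The task is then to enumerate these orbits: a Hermitian version of Witt's theorem reduces the question to classifying, up to isomorphism, the rank-one primitive isotropic $\ZZ[\omega]$-sublattices of $\Lambda$ together with the induced discriminant-form invariant. A direct computation on the specific lattice $\Lambda$ of ACT isolates exactly two orbits, yielding the two cusps.

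The main anticipated obstacle is not the existence of the two cusps per se, but rather the \emph{labeling} $c_{3D_4}$ and $c_{2A_5}$: one must match the two abstract cusps with the corresponding polystable strata of $\GIT$ described in Remark~\ref{R:Alck-poly-form}. This requires constructing explicit one-parameter families of smooth cubics specializing to the polynomials $F_{3D_4}$ and $F_{A,B}$, and computing the asymptotic behavior of the periods via a Clemens--Schmid / nilpotent-orbit analysis to pinpoint which isotropic line each degeneration approaches. This matching is precisely the content of the comparison carried out in~\cite{act} and~\cite{ls}, from which the labeling is inherited.
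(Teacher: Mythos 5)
This theorem is stated in the paper as a citation of results from Allcock--Carlson--Toledo~\cite{act} and Looijenga--Swierstra~\cite{ls}; the paper offers no proof of its own and treats it as a black box. Your sketch is a faithful reconstruction of the strategy in those references: the triple-cover construction with its order-three automorphism, the eigenperiod map into a $(1,10)$-signature Eisenstein ball, injectivity via Voisin's Torelli theorem for cubic fourfolds, identification of the complement as the union of the nodal and hyperelliptic Heegner divisors, irreducibility via a transitivity argument on vectors in the Eisenstein lattice, and the cusp count via $\Gamma$-orbits of primitive isotropic lines. Since the paper itself does not argue any of this, there is no internal proof to compare against; the appropriate treatment in the paper's context is exactly what it does, namely cite~\cite{act} and~\cite{ls}.

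One small observation worth flagging: you correctly identify the labeling of the two cusps as $c_{3D_4}$ and $c_{2A_5}$ as the delicate point, and propose a Clemens--Schmid / nilpotent-orbit analysis of one-parameter degenerations. That is indeed how the matching is established in~\cite{act}. The present paper, when it returns to the cusp identification later (Lemma~\ref{lem_inv_iso}), instead leverages the relation to the Baily--Borel compactification for cubic fourfolds and the classification of Type~II boundary components from~\cite{laza}: the cubic threefold with $3D_4$ (resp.~$2A_5$) singularities corresponds, under the $V(f_3)\mapsto V(f_3+x_5^3)$ construction, to a cubic fourfold with $3\widetilde E_6$ (resp.~$2\widetilde E_8$) singularities, and the lattice invariant $R((\calF^\perp/\calF)_\ZZ)$ then distinguishes the two cusps. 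This is a somewhat more arithmetic-combinatorial route to the matching than the direct limit-Hodge-theoretic approach you propose; both are valid, but if you want a derivation that fits the machinery developed in this paper, the lattice-invariant argument is cleaner.
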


The Baily--Borel compactification $\BG$ of the ball quotient model discussed above gives a projective compactification for the moduli space of cubic threefolds $\calM$. The main result of~\cite{act} and~\cite{ls} is that there is a simple birational relationship between the GIT and Baily--Borel models -- this is an essential result for our analysis. We summarize their results below:

\begin{teo}[{GIT to ball quotient comparison,~\cite{act} and~\cite{ls}}]\label{resgitball}
As above, let $\GIT$ be the GIT compactification of the moduli space of cubic threefolds. Let $\BG$ be the Baily--Borel compactification of the ball quotient model of~\cite{act}. Then there exists a diagram
$$
\xymatrix{
&\widehat{\calM} \ar[ld]_{p} \ar[rd]^{q}&\\
\ \ \GIT\ar@{-->}[rr]^{\overline P} &&  \BG \\
}
$$
resolving the birational map between $\GIT$ and $\BG$ such that:
\begin{itemize}
\item[(1)] $p:\widehat \calM\to \GIT$ is the Kirwan blowup of the point $\Xi\in \GIT$, corresponding to the chordal cubic (see \S\ref{S:KirBlUpDef} below, esp.~\eqref{diag_kirwanblowup}). The exceptional divisor $E:=p^{-1}(\Xi)$
 of this blowup is naturally identified with the moduli space of $12$ unordered points in $\PP^1$.
 \item[(2)] $q:\hM \to  \BG$ is a small semi-toric modification as constructed by Looijenga~\cite{l1}. The morphism $q$ is an isomorphism over the interior $\calB/\Gamma$ and one of the two cusps of  $\BG$, namely $c_{3D4}$.  The preimage under $q$ of the other cusp, $c_{2A5}$, is a curve, which is identified with the strict transform $\widehat {\calT}$ of $\calT\subset \GIT$ under $p$.
 \end{itemize}
In particular note that the period map $P:\calM\to \calB/\Gamma$  extends to a morphism $\overline P:\GIT -\lbrace \Xi\rbrace\to\BG$. Furthermore, the following hold:
 \begin{itemize}
  \item[(3)] Let $E\subset \hM$ be the exceptional divisor of the map $p$. Then the image $q(E)$ is the closure $D_h^*$ in $\BG$ of the hyperelliptic divisor $D_h\subset \mathcal B/\Gamma$,
  while $q$ is an isomorphism over $D_h$ (i.e., $q_{\mid q^{-1}(D_h)}:q^{-1}(D_h)\simeq D_h$).
 \item[(4)] $q$ is an isomorphism over the stable locus $\calM^s$ and in a neighborhood of the point $\Delta$, corresponding to the $3D_4$ cubic. The image under~$q$ of the locus of cubics with $A_1,\dots,A_4$-singularities is $(\cDn-\cDh)/\Gamma$  (equivalently,~$\overline P$ extends over~$\calM^s$ and $\overline P(\calM^s)=(\calB -\cDh)/\Gamma$).
 \item[(5)] $q$ maps $\Delta$ to the cusp $c_{3D4}$ of $\BG$, and the strict transform $\widehat {\calT}$ of the curve $\calT$ to the  cusp $c_{2A5}$.
  \end{itemize}
\end{teo}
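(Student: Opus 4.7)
The plan is to assemble the theorem from the work of Allcock--Carlson--Toledo~\cite{act} and Looijenga--Swierstra~\cite{ls}. The starting point is the ball period map $P\colon\calM\to\calB/\Gamma$, constructed as follows: to a smooth cubic threefold $T\subset\PP^4$ one associates its triple cover $Y\to\PP^4$ branched along $T$, which is a cubic fourfold with a $\mu_3$-action; the $\zeta$-eigenspace of $H^4(Y,\CC)$ has signature $(1,10)$, and modding by the monodromy yields the ball period. By~\cite{act}, $P$ is an open immersion whose image is the complement of the two Heegner divisors $D_n\cup D_h$.

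The first step is to extend $\overline P$ as far as possible into $\GIT$. The analysis of~\cite{act} shows that cubics with at most $A_1,\ldots,A_4$ singularities produce fourfolds with mild singularities, whose limit Hodge structure still yields a period in $\calB/\Gamma$, landing in $(\cDn\setminus\cDh)/\Gamma$; in particular, $\overline P$ extends across $\calM^s\setminus\calM$. A further analysis of the strictly polystable orbits, using Theorem~\ref{T:GITcub}, shows that $\overline P$ extends across $\Delta$ (mapping to the cusp $c_{3D_4}$) and across the open stratum of $\calT$ (mapping to $c_{2A_5}$), giving a morphism $\overline P\colon\GIT\setminus\{\Xi\}\to\BG$. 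The only indeterminacy point is $\Xi$: the cubic fourfold associated to the chordal cubic is non-normal, so the eigenperiod is genuinely undefined there.

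To resolve the indeterminacy, I would perform the partial Kirwan blowup of $\Xi$. Its stabilizer in $G=\SL(5,\CC)$ contains a copy of $\PGL(2,\CC)$ acting on $\PP^4$ via the rational normal curve embedding, and the normal slice to the orbit $G\cdot\Xi$ in $X^{ss}$ decomposes accordingly. Computing the weights, the GIT quotient of the projectivized normal slice by the reductive stabilizer is naturally identified with the moduli of $12$ unordered points on $\PP^1$ (the $12$ branch points cut out on the rational normal curve by a nearby cubic), which gives claim~(1). For claim~(2), I would invoke Looijenga's theory~\cite{l1} of $\QQ$-factorializations applied to the arrangement $\cDh$: the hyperelliptic divisor fails to be $\QQ$-Cartier at the cusp $c_{2A_5}$, and Looijenga's small semi-toric modification resolves this failure, leaving $\calB/\Gamma$ and the cusp $c_{3D_4}$ unaffected while introducing a rational curve over $c_{2A_5}$.

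The main obstacle is to match the two sides, i.e., to show that the partial Kirwan blowup of $\Xi$ and Looijenga's $\QQ$-factorialization for $\cDh$ produce the same space $\hM$. The key identification, carried out in~\cite{act} and~\cite{ls}, is that the exceptional divisor $E\cong M_{0,12}/S_{12}$ maps isomorphically onto the closure $D_h^*$ of the hyperelliptic divisor in $\BG$; this is consistent with the realization of $D_h^*$ as the $9$-dimensional Deligne--Mostow ball quotient parameterizing $12$ unordered points on $\PP^1$. Once this matching is in place, assertions (3), (4), (5) follow by tracking the images of the strictly polystable orbits along the diagram: the $A_1,\ldots,A_4$ locus maps to $(\cDn\setminus\cDh)/\Gamma$, the open part of $\calT$ to $c_{2A_5}$, and the $3D_4$-orbit to $c_{3D_4}$, as claimed.
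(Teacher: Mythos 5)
The paper does not actually prove Theorem~\ref{resgitball}: it is stated explicitly as a summary of the results of Allcock--Carlson--Toledo~\cite{act} and Looijenga--Swierstra~\cite{ls} (``We summarize their results below''), with no proof supplied in the body of the paper. There is therefore no ``paper's own proof'' to compare against; what you have written is a sketch of the argument as it appears in the cited references, not a reconstruction of anything in this manuscript.

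As a sketch of the external argument, your outline is essentially the right shape: the eigenperiod construction via the cyclic cubic fourfold, the extension of $\overline P$ across the $A_1$--$A_4$ locus and across $\Delta$ and the open part of $\calT$, the failure at $\Xi$ because the chordal cubic's triple cover is non-normal, the Kirwan blowup of $\Xi$ producing the $12$-points-on-$\PP^1$ divisor, and Looijenga's semi-toric $\QQ$-factorialization on the ball quotient side. The one place where you overreach is the claim that $E\cong M_{0,12}/S_{12}$ ``maps isomorphically onto $D_h^*$.'' The theorem asserts only that $q(E)=D_h^*$ and that $q$ is an isomorphism \emph{over the open part} $D_h$; whether $q|_E\colon E\to D_h^*$ is an isomorphism over the boundary $D_h^*\setminus D_h$ (which consists of cusps of $\BG$) is a separate point that you would need to argue --- e.g.\ via normality of $D_h^*$ and Zariski's main theorem once you know the map is quasi-finite there --- rather than assert. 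Also, a complete proof would have to actually carry out the matching you flag as ``the main obstacle'': showing that the reduced ideal defining the Kirwan center $G\cdot Z^{ss}_{R_c}$ descends to the same blowup ideal that defines Looijenga's $\QQ$-factorialization; your outline names this correctly as the crux but leaves it to the references, which is appropriate for a summary but not for a proof.
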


\section{The Kirwan blowup $\MK$ of the moduli space of cubic threefolds}\label{subsec:kirwanth}

\subsection{Introduction}
The first step towards understanding the cohomology of the GIT and ball quotient models for the moduli of cubic threefolds is to produce a common resolution (with at worst finite quotient singularities). For GIT quotients, Kirwan~\cite{kirwanblowup} gives a general algorithm that achieves this resolution. Roughly speaking, one considers the GIT boundary $\GIT-\calM^s$($={\calT}\cup\{\Delta\}$ in our situation) and stratifies it in terms of the connected components $R$ of the stabilizers of the associated polystable orbits.
Then, one proceeds by blowing up these strata, starting with the deepest one, in a way that will be explained in detail below.
In our situation, we will see that there are three strata: $\Xi$,  $\Delta$ (which are points) and ${\calT}-\{\Xi\}$ (which is a curve), with associated connected components of the stabilizers being  $\SL(2,\CC)$, $(\CC^*)^2$, and $\CC^*$, respectively.

\subsection{The Kirwan blowup in general}\label{SSS:kirBlUp}

We start with $X$, $G$, and $L$ as in  the general setup of \S~\ref{SSS:KirSetUp}.
Let $\calR$ be a set of representatives for the (finite) set of conjugacy classes of connected components of stabilizers of strictly polystable points in $X^{ss}$. Denote then $r$ the maximal dimension of the groups in $\calR$, and let then $\calR(r)\subseteq \calR$ be the representative of those subgroups that have dimension $r$. For a given $R\in\calR(r)$, we proceed as follows. If $r=0$, then there is nothing to do. Otherwise, set
 \begin{equation}\label{E:ZRss}
Z_R^{ss}:=\{x\in X^{ss}\mid R \textrm{ fixes } x\} \subset X^{ss}.
\end{equation}
Kirwan shows that for all $R\in \calR(r)$, the loci $G\cdot Z^{ss}_R$ are smooth and closed in $X^{ss}$~\cite[Lem.~5.11, Cor.~5.10]{kirwanblowup}. Now let $\hat \pi:\hat X\to X^{ss}$ be the blowup of $X^{ss}$ along  $G\cdot Z^{ss}_R$.
Note that since $G\cdot Z^{ss}_R$ only depends on the conjugacy class of $R$, the same is true for the blowup.

As $G$ acts on the center of the blowup, there is an induced action of $G$ on $\hat X$. Taking $E$ to be the exceptional divisor of the blowup $\hat\pi$, there is a choice of $d\gg 0$ such that $\hat L:=\hat \pi^*L^{\otimes d}\otimes \calO(-E)$ is ample and admits a $G$-linearization that makes the following statements true~\cite[Lem.~3.11, Lem.~6.11]{kirwanblowup} (see also~\cite{reichstein}).  Let  $\hat {\calR}$ be a set of representatives for the set of conjugacy classes of connected components of stabilizers of polystable points in the semi-stable locus $\hat X^{ss}$.  Then, up to replacing elements of $\hat{\calR}$ with conjugates, we have  $\hat{\calR}\subsetneq \calR$~\cite[Lem.~6.1]{kirwanblowup}.

Thus, by induction on the cardinality of the set $\calR$, we obtain the desired space $\pi:\widetilde X^{ss}\to X^{ss}$ by iteratively blowing up with respect to a smooth center, and then restricting to the semi-stable locus.  Moreover, $\widetilde X^{ss}$ is equipped with a  $G$-linearized ample line bundle $\widetilde L$, such that $G$ acts with finite stabilizers.  We define the Kirwan blowup to be the space $\widetilde X^{ss}\gquot_{\widetilde L}G$ ($=\widetilde X^{ss}/G$); up to isomorphism, this is independent of the choices~\cite[Rem.~6.8 and p.64]{kirwanblowup}.  The Kirwan blowup has at worst finite quotient singularities, and there  is a birational morphism~\cite[Cor.~6.7]{kirwanblowup}:
$$
\widetilde X^{ss} \gquot_{\widetilde L}G\longrightarrow X^{ss}\gquot_{ L}G.
$$

\begin{rem}\label{R:Stab+Strict}
For later reference, we recall two further facts regarding the map $\hat \pi:\hat X\to X^{ss}$, and the chosen linearization.
First, if $\hat x\in \hat X - E$, then $\hat x\in \hat X^{ss}$ if and only if $\overline {G\cdot \hat \pi(\hat x)}\cap G\cdot Z_R^{ss}=\emptyset$~\cite{reichstein}.
In other words, outside of the exceptional divisor, the effect of the blowup is to destabilize exactly those strictly semi-stable points that have orbit closure meeting the center of the blowup.
Second, for any $\hat R\in \hat {\calR}$ the locus $\hat Z^{ss}_{\hat R}\subseteq \hat X^{ss}$ is the strict transform of the locus $ Z^{ss}_{\hat R}\subseteq X^{ss}$ defined by viewing~$\hat R$ as an element of~$\calR$~\cite[Rem.~6.8]{kirwanblowup}.
 \end{rem}

\begin{rem}\label{R:pi-r-Def}
We also recall the following fact~\cite[Lem.~8.2]{kirwanblowup}:  If $R_1, R_2\in \calR(r)$ are different groups of maximal dimension among elements of~$\calR$, then $G\cdot Z^{ss}_{R_1}\cap G Z^{ss}_{R_2}=\emptyset$, and any $x$ in $G\cdot Z^{ss}_{R_2}$  remains semi-stable after $X^{ss}$ is blown up along $G\cdot Z^{ss}_{R_1}$. In particular we have~\cite[Cor.~8.3]{kirwanblowup}: the result of successively blowing up $X^{ss}$  along $G\cdot Z^{ss}_R$ for each $R \in  \calR(r)$ is the same as the blowup of $X^{ss}$ along
$\bigcup _{R\in \calR(r)}G\cdot Z^{ss}_R $.  Following the notation in~\cite[Cor.~8.3]{kirwanblowup}, we will denote this blowup by $\pi_r:X_r\to X^{ss}$.
Repeating the above process we obtain a sequence of blowups
$$\widetilde X^{ss}:=X_1^{ss}\stackrel{\pi_1}{\longrightarrow} X_{2}^{ss}\stackrel{\pi_{2}}{\longrightarrow}\dots\stackrel{\pi_{r-1}}{\longrightarrow} X^{ss}_r\stackrel{\pi_r}{\longrightarrow} X^{ss}=:X_{r+1}^{ss}\,\,.$$
Note that we allow some of these blowups to be the identity if there are no relevant subgroups in a given dimension. In short, $\pi_{j}$ is the blowup of the locus determined by the subgroups $R\in \calR$ of dimension $j$; i.e., by all $R\in\calR(j)$.  Note that in contrast,  if  $R_1\in \calR(r_1)$ and $R_2\in\calR(r_2)$ for $r_1\ne r_2$, then it may happen that $G\cdot Z^{ss}_{R_1}\cap G Z^{ss}_{R_2}\ne \emptyset$.
\end{rem}

\subsection{The Kirwan blowup of the moduli space of cubic threefolds}\label{S:KirBlUpDef}
We now implement the steps outlined in the previous subsection to construct the Kirwan blowup of the moduli space of cubic threefolds.
The first step is to enumerate the connected components of the stabilizers of polystable points.  In our situation, this is answered by the following proposition, where as is standard, we write $1$-PS for one-parameter subgroups:

\begin{pro}[The connected components of stabilizers~$R$] \label{pro:stabilizer0}
Let $V$ be a strictly polystable cubic threefold. Then the connected component $\operatorname{Stab}^0(V)$ of the identity in the stabilizer $\operatorname{Stab}(V)\subseteq \SL(5,\CC)$ is one of the following (up to conjugation):
\begin{itemize}
\item[(1)] The $1$-PS with weights $(2,1,0,-1,-2)$:
\begin{equation}\label{E:R2A5}
R_{2A5}:=\operatorname{Stab}^0(V(F_{A,B}))=\operatorname{diag}(\lambda^2,\lambda,1,\lambda^{-1},\lambda^{-2})\cong \CC^*,
\end{equation}
for $4A/B^2\ne 1$.
We have $\operatorname{Stab}^0(V)=R_{2A5}$ (up to conjugation) if and only if $V$ is in the orbit of $V(F_{A,B})$ with $4A/B^2\ne 1$; i.e., if and only if the cubic has exactly two $A_5$ singularities, or exactly two $A_5$ singularities and one $A_1$ singularity.   These are the cubic threefolds corresponding to points on the curve $(\calT-\lbrace \Xi\rbrace)\, \subseteq \GIT$.

\item[(2)] The three-dimensional group
\begin{equation}\label{E:Rc}
R_c:=\operatorname{Stab}^0(V(F_{-1,2}))\cong\PGL(2,\CC),
\end{equation}
given as the copy of $\PGL(2,\CC)$ embedded into $\SL(5,\CC)$ as the image of  the $\SL(2,\CC)$ representation  $\Sym^4(\CC^2)\cong \CC^5$  (see Appendix~\ref{sec:equivcoh} for more details on dealing with equivariant cohomology of $\GL(n+1,\CC)$ versus $\SL(n+1,\CC)$, and related issues).
We have $\operatorname{Stab}^0(V)=R_{c}$ (up to conjugation) if and only if $V$ is in the orbit of $V(F_{A,B})$ with $4A/B^2= 1$; i.e., if and only if the cubic is projectively equivalent to the chordal cubic.   These are the cubic threefolds corresponding to  the point  $\,\Xi \in \GIT$.

\item[(3)] The two-dimensional torus:
\begin{equation}\label{E:R3D4}
R_{3D_4}:=\operatorname{Stab}^0(V(F_{3D_4}))=\diag(s,t,(st)^{-1},1,1)\cong (\CC^*)^2.
\end{equation}
 We have $\operatorname{Stab}^0(V)=R_{3D_4}$ (up to conjugation) if and only if $V$ is in the orbit of $V(F_{3D_4})$; i.e., if and only if the cubic has exactly $3D_4$ singularities.  These are the cubic threefolds corresponding to  the point  $\,\Delta \in \GIT$.
\end{itemize}
Moreover, we have
\begin{equation}\label{E:Rcont}
R_{2A_5}\subset R_c, \ \ R_{c}\cap R_{3D_4}=1,
\end{equation}
with the inclusion on the left corresponding to the fact that  $\Xi\in \calT\subset \GIT$.
\end{pro}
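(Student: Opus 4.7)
The plan is to proceed case-by-case through the three types of strictly polystable orbits enumerated in Theorem~\ref{T:GITcub}(3) and Remark~\ref{R:Alck-poly-form}. For each orbit we will pick the explicit polynomial representative displayed in~\eqref{eq:3D4}--\eqref{eq:ch}, verify by a direct computation that the claimed subgroup sits inside $\Stab^0$, and then use the reductivity of $\Stab^0$ together with the geometry of the singular locus to show that no strictly larger connected subgroup can stabilize the polynomial. The containments~\eqref{E:Rcont} will then be read off from the explicit forms of the three subgroups.

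The lower bounds are all immediate from the explicit polynomials. For $F_{3D_4}=x_0x_1x_2+x_3^3+x_4^3$, each $\diag(s,t,(st)^{-1},1,1)$ manifestly fixes the polynomial and has determinant one, giving $R_{3D_4}\subseteq\Stab^0(V(F_{3D_4}))$. For $F_{A,B}$, the five monomials $x_2^3$, $x_0x_3^2$, $x_1^2x_4$, $x_0x_2x_4$, $x_1x_2x_3$ all have weight $0$ under the 1-PS with weights $(2,1,0,-1,-2)$, since $3\cdot 0=1\cdot 2+2\cdot(-1)=2\cdot 1+1\cdot(-2)=2+0+(-2)=1+0+(-1)=0$, so $R_{2A_5}\subseteq\Stab^0(V(F_{A,B}))$ for every $(A,B)$. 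For the chordal cubic, the determinantal description~\eqref{eq:ch} identifies $V(F_{1,-2})$ as the secant variety of the rational normal curve $\nu_4\colon\PP^1\hookrightarrow\PP\Sym^4(\CC^2)^\vee\cong\PP^4$; the standard $\SL(2,\CC)$-action on $\Sym^4\CC^2$ preserves $\nu_4(\PP^1)$ and hence its secant variety, and the kernel of the induced homomorphism $\SL(2,\CC)\to\SL(5,\CC)$ is $\{\pm 1\}$, so $R_c\cong\PGL(2,\CC)$ is contained in the stabilizer of the chordal cubic.

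For the upper bounds we exploit that $\Stab^0(V(F))$ is connected reductive and must preserve $\Sing(V(F))$. For $V(F_{A,B})$ with $4A/B^2\neq 1$ the singular locus is the pair of $A_5$-points $p_1,p_2\in\PP^4$ (together with an additional isolated $A_1$-point when $A=0$), each of which is fixed by $\Stab^0$; combining this with preservation of the tangent cones at $p_1,p_2$ reduces the analysis to the diagonal torus in suitable coordinates, and the monomial constraints (solving $c^3=ad^2=b^2e=ace=bcd=1$ together with $abcde=1$ for $g=\diag(a,b,c,d,e)$) cut out a $1$-dimensional subtorus which must coincide with $R_{2A_5}$. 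The $3D_4$ case follows the same pattern with the three $D_4$-points in place of $p_1,p_2$, and the three monomials of $F_{3D_4}$ carving out exactly the two-torus $R_{3D_4}$. For the chordal cubic the singular locus is itself the rational normal curve $\nu_4(\PP^1)$, whose projective automorphism group is $\PGL(2,\CC)$, so $\Stab^0\subseteq\PGL(2,\CC)=R_c$, whence equality. Equivalently, all of the above is contained in the tables of stabilizers of semi-stable cubic threefolds in Allcock~\cite{allcock} and Yokoyama~\cite{yokoyama}, which we will cite to avoid redundancy.

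For the containments~\eqref{E:Rcont}, restricting the $\Sym^4$-action of the diagonal torus $\diag(\mu,\mu^{-1})\subset\SL(2,\CC)$ to $\Sym^4\CC^2$ gives the matrix $\diag(\mu^4,\mu^2,1,\mu^{-2},\mu^{-4})\in\SL(5,\CC)$, which after the reparametrization $\lambda=\mu^2$ is precisely the generator of $R_{2A_5}$; this shows $R_{2A_5}\subset R_c$ and mirrors the incidence $\Xi\in\overline{\calT}$ of Theorem~\ref{T:GITcub}(3b$''$). The equality $R_c\cap R_{3D_4}=1$ then follows because any element of the intersection must simultaneously be of the form $\diag(\mu^4,\mu^2,1,\mu^{-2},\mu^{-4})$ and $\diag(s,t,(st)^{-1},1,1)$, forcing $\mu^{\pm 2}=\mu^{\pm 4}=1$ and hence $\mu=\pm 1$, the identity in $\PGL(2,\CC)$. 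The main obstacle in the argument is the upper bound for $V(F_{A,B})$: a priori a semisimple factor in $\Stab^0$ could interchange the weight spaces of $R_{2A_5}$ non-trivially, and excluding this is the step that leans most heavily on the singularity analysis (or on the explicit classification in~\cite{allcock,yokoyama}).
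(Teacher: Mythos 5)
Your proposal follows essentially the same route as the paper's: verify the lower bounds directly on the explicit normal forms, invoke the Allcock--Yokoyama classification for the upper bounds (the paper relegates the supporting matrix computations to Appendix~\ref{S:Elem}, which likewise leans on Allcock's classification for the general case), and read off the inclusions among the three groups from their explicit descriptions. Your diagonal-intersection argument for $R_c\cap R_{3D_4}=\{1\}$ correctly rests on the observation that the only elements of $R_c$ acting diagonally in the monomial basis come from the maximal torus of $\SL(2,\CC)$.
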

\begin{proof}
From the results of~\cite{allcock} describing polystable cubic threefolds (see Theorem~\ref{T:GITcub} and Remark~\ref{R:Alck-poly-form}, above), it suffices to consider the cubic threefolds of the form $V(F_{A,B})$~\eqref{eq:2A5}, for $A$ and $B$ not simultaneously zero, and $V(F_{3D_4})$~\eqref{eq:3D4}.
It is obvious that each of the groups listed above is connected and stabilizes the corresponding polystable orbit. For instance, $\PGL(2)$ acting on $\PP^4$ via the $\Sym^4$ representation fixes the standard rational normal curve.  Obviously, it will also fix the secant variety of that curve, which is precisely the chordal cubic.

The converse (i.e., the fact that $\operatorname{Stab}^0(V)$ is precisely as listed, and not larger) follows by a routine calculation.  Many straightforward computations with matrices will be relegated to Appendix~\ref{S:Elem}.  For the results here, see in particular  Proposition~\ref{P:App-R=SL2}, and Propositions~\ref{P:App-R=C*p1},\ref{P:App-R=C*p2},~\ref{P:App-R=C*2}.  The relationships~\eqref{E:Rcont}  among the $R$ are straightforward from the descriptions of the groups.
\end{proof}

Utilizing the notation from~\eqref{E:R2A5},~\eqref{E:Rc}, and~\eqref{E:R3D4}, it follows that for cubic threefolds we may take
\begin{equation}\label{def_calr}
\calR:=\{R_{2A_5},R_{3D_4},R_c\}\longleftrightarrow \{\CC^*, (\CC^*)^2,\PGL(2,\CC)\}
\end{equation}
as a set of representatives for the set of conjugacy classes of connected components of stabilizers of strictly  polystable cubic threefolds. For each $R\in \calR$, we have the corresponding fixed locus $Z^{ss}_R$, defined in~\eqref{E:ZRss}.
These loci can be described more explicitly:

\begin{pro}[The strata $Z_{R}^{ss}$]\label{P:ZRss}
For cubic threefolds, the fixed loci $Z_R^{ss}$~\eqref{E:ZRss} can be described as follows:
\begin{enumerate}
\item $Z^{ss}_{R_{2A_5}}$ is the set of cubic threefolds defined by the cubic  forms:
\begin{equation}\label{eq:ZR2A5}
F=a_0x_2^3+a_1x_0x_3^2+a_2x_1^2x_4+a_3x_0x_2x_4+a_4x_1x_2x_3,
\end{equation}
with  $a_1,a_2,a_3\ne 0$, $(a_0,a_4)\ne (0,0)$.   For $(A,B)\ne (0,0)$ we have $V(F_{A,B})\in  Z^{ss}_{R_{2A_5}}$, and conversely every cubic in $Z^{ss}_{R_{2A_5}}$ is  projectively equivalent to a cubic of the form $V(F_{A,B})$ with $(A,B)\ne (0,0)$.

\item $Z^{ss}_{R_c}=\{V(F_{1,-2})\}$, the chordal cubic in standard coordinates.

\item $Z^{ss}_{R_{3D_4}}$ is the set of cubics defined by equations of the form
$$
x_0x_1x_2+P_3(x_3,x_4)
$$
where $P_3(x_3,x_4)$ is an arbitrary homogeneous cubic with three distinct roots.
\end{enumerate}
Moreover,  we have the following relationships among the fixed loci:
\begin{equation}\label{E:ZssR-Rel}
Z^{ss}_{R_c}\subset Z^{ss}_{R_{2A5}}, \ \ \ \  Z^{ss}_{R_{2A_5}} \cap Z^{ss}_{R_{3D4}}=\emptyset.
\end{equation}
\end{pro}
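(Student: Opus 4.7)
The plan is to determine each fixed locus $Z^{ss}_R$ for $R\in\calR=\{R_{2A_5},R_{3D_4},R_c\}$ separately. Since each $R$ is reductive, its fixed locus in $\PP^{34}$ decomposes into a union of linear subspaces indexed by the characters of $R$, and among these only the trivial character can contribute to $Z^{ss}_R$: if a form $F$ is $R$-fixed of nontrivial weight, then the Hilbert--Mumford numerical criterion immediately destabilizes $F$, since some one-parameter subgroup of $R$ then acts with strictly positive weight on $\CC\cdot F$. Accordingly, the strategy is first to identify the weight-zero (i.e., $R$-invariant) subspace of $\Sym^3(\CC^5)^\vee$ for each $R$, and then to cut out its semi-stable locus using the Allcock classification (Theorem~\ref{T:GITcub} and Remark~\ref{R:Alck-poly-form}).

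The first step is the explicit enumeration of weight-zero monomials. For $R_{2A_5}=\diag(\lambda^2,\lambda,1,\lambda^{-1},\lambda^{-2})$, the condition $2i_0+i_1-i_3-2i_4=0$ on a degree-$3$ monomial $x^I$ yields exactly the five monomials appearing in~\eqref{eq:ZR2A5}; for $R_{3D_4}=\diag(s,t,(st)^{-1},1,1)$, the weight-zero space is spanned by $x_0x_1x_2$ together with all cubic monomials in $x_3,x_4$; and for $R_c\cong\PGL(2,\CC)$ acting via the $\Sym^4$-representation, classical binary-quartic invariant theory (with invariant ring $\CC[g_2,g_3]$, the generators having degrees $2$ and $3$) gives a one-dimensional space of $R_c$-invariant cubics. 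The unique (up to scalar) such invariant must be the chordal cubic, as this is manifestly $R_c$-invariant (being the secant variety to the $R_c$-invariant rational normal curve), which establishes~(2).

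The next step is to cut out the semi-stable locus inside each weight-zero subspace. For part~(3), writing $F=\alpha x_0x_1x_2+P_3(x_3,x_4)$, the case $\alpha=0$ forces the singular locus to contain the plane $\{x_3=x_4=0\}$ and hence to be non-isolated, making $F$ unstable; once $\alpha\ne 0$ one rescales to $\alpha=1$, and a repeated root of $P_3$ can be destabilized by a diagonal $1$-PS of the form $\diag(\lambda^a,\lambda^a,\lambda^a,\lambda^b,\lambda^c)$ with suitably chosen integers (giving strictly positive weights on the surviving monomials). Conversely, when $P_3$ has three distinct roots, the resulting cubic is $\SL(5,\CC)$-equivalent to the polystable cubic $F_{3D_4}$ of~\eqref{eq:3D4}. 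For part~(1), the subtorus of $\SL(5,\CC)$ centralizing $R_{2A_5}$ acts on the five-parameter family~\eqref{eq:ZR2A5}, and a direct computation shows that this action can be used to normalize $(a_1,a_2,a_3)=(1,1,-1)$ whenever these are all nonzero, after which $F$ becomes $F_{A,B}$ with $(A,B)=(a_0,a_4)$; conversely, the necessity of the conditions $a_1,a_2,a_3\ne 0$ and $(a_0,a_4)\ne(0,0)$ for semi-stability is proved by exhibiting destabilizing $1$-PSs in each excluded case (for instance, when $a_0=a_4=0$ the $1$-PS with weights $(3,0,-5,-1,3)$ gives strictly positive weights on the three remaining monomials $x_0x_3^2$, $x_1^2x_4$, $x_0x_2x_4$).

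Finally, the relationships~\eqref{E:ZssR-Rel} follow at once: the inclusion $R_{2A_5}\subset R_c$ from~\eqref{E:Rcont} yields $Z^{ss}_{R_c}\subseteq Z^{ss}_{R_{2A_5}}$, while the two sets of weight-zero monomials for $R_{2A_5}$ and $R_{3D_4}$ computed above are \emph{disjoint} as sets of monomials in $\Sym^3(\CC^5)^\vee$, so any form fixed by both tori would have to vanish identically, precluding any common fixed point in $\PP^{34}$. The main obstacle I expect is in part~(1): the bookkeeping of destabilizing $1$-PSs for each of the boundary cases in the $a_i$ parameters is somewhat lengthy and requires care, whereas the remaining arguments reduce cleanly to representation theory (for~(2)) and to Allcock's classification combined with elementary singularity analysis (for~(3)).
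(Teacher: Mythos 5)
Your proposal is correct and follows the same global strategy as the paper --- identify the $R$-invariant (weight-zero) cubic forms for each $R\in\calR$, then cut out the semi-stable locus --- but several steps are executed differently, and in places your route is cleaner. For~(2), you invoke classical binary-quartic invariant theory to see that $\Sym^3(\Sym^4\CC^2)^{\SL(2,\CC)}$ is one-dimensional (the degree-$3$ piece of $\CC[g_2,g_3]$), hence spanned by the catalecticant $F_{1,-2}$; the paper (Proposition~\ref{P:App-R=SL2}(3)) instead argues indirectly, using the inclusion $R_{2A_5}\subset R_c$ to place any $R_c$-fixed point inside $Z^{ss}_{R_{2A_5}}$ and then isolating the chordal cubic by the dimension of its stabilizer and the equality $N(R_c)=\operatorname{Stab}(V(F_{1,-2}))$. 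For the instability of the excluded coefficient loci, you exhibit destabilizing one-parameter subgroups by hand (e.g.\ weights $(3,0,-5,-1,3)$ for $a_0=a_4=0$, and, to fill in the case you flagged as the lengthiest, $(1,5,3,0,-9)$ works for $a_3=0$); the paper instead reads the unstable locus off Allcock's figures in the proof of Proposition~\ref{P:App-R=C*p2}(1), and for~(3) gives a closed-orbit argument in Proposition~\ref{P:App-R=C*2}(3): a form $x_0x_1x_2+P_3$ with $P_3$ degenerate lies in the closure of the closed orbit of $V(F_{3D_4})$ yet is not in that orbit, so must be unstable. Both routes require citing Allcock for the semi-stability of the $F_{A,B}$ and $F_{3D_4}$ orbits. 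Finally, for $Z^{ss}_{R_{2A_5}}\cap Z^{ss}_{R_{3D_4}}=\emptyset$, your observation that the two weight-zero monomial sets are disjoint subsets of the monomial basis is more elementary and direct than the paper's argument (Corollary~\ref{C:App-ZssR-rel}), which produces a $3$-torus inside $\operatorname{Aut}^0(V)$ and then rules this out using the classification of connected components of polystable stabilizers.
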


\begin{proof}
It is immediate to check that the groups $R_{2A_5}$, $R_{3D_4}$, and $R_{c}$ fix the corresponding loci $Z^{ss}_{R_{2A_5}}$, $Z^{ss}_{R_{3D_4}}$, and $Z^{ss}_{R_{c}}$, respectively.   It is a straightforward check that these are in fact the full fixed loci; see also Propositions~\ref{P:App-R=C*p1},\ref{P:App-R=C*p2},\ref{P:App-R=SL2},~\ref{P:App-R=C*2}.
The relationships~\eqref{E:ZssR-Rel} among the $Z^{ss}_R$ are a straightforward consequence of the descriptions above.  See also Corollary~\ref{C:App-ZssR-rel}.
\end{proof}

For the Kirwan blowup, we are actually interested in the orbits
$$
G\cdot Z^{ss}_R;
$$
in other words the loci of cubic threefolds that are projectively equivalent to the cubics in a given stratum.

\begin{cor}[The orbits $G\cdot Z^{ss}_R$]\label{C:ZRss}
For cubic threefolds, the orbits of the fixed loci $Z_R^{ss}$   can be described as follows:
\begin{enumerate}
\item $G\cdot Z^{ss}_{R_{2A_5}}$ is the set of polystable cubics   projectively equivalent to a $2A_5$ cubic, a $2A_5+A_1$ cubic, or a chordal cubic; i.e., projectively equivalent to a cubic of the form $V(F_{A,B})$ with $(A,B)\ne (0,0)$.

\item $G\cdot Z^{ss}_{R_c}$ is the set of polystable cubics   projectively equivalent to the chordal cubic; i.e., projectively equivalent to $V(F_{1,-2})$.

\item $G\cdot Z^{ss}_{R_{3D_4}}$ is the set of polystable cubics with $3D_4$ singularities; i.e.,   projectively equivalent to $V(F_{3D_4})$.
\end{enumerate}
Moreover,
we have the following relationships among the orbits:
\begin{equation}\label{E:GZssR-Rel}
G\cdot Z^{ss}_{R_c}\subset G\cdot Z^{ss}_{R_{2A5}}, \ \ \ \   G\cdot Z^{ss}_{R_{2A_5}} \cap G\cdot  Z^{ss}_{R_{3D4}}=\emptyset.
\end{equation}
\end{cor}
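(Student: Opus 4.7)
The plan is to deduce the corollary almost directly from Proposition \ref{P:ZRss} combined with the explicit classification of strictly polystable orbits from Theorem \ref{T:GITcub} and Remark \ref{R:Alck-poly-form}. By definition, $G\cdot Z^{ss}_R$ is the set of cubics projectively equivalent to a cubic in $Z^{ss}_R$, so each assertion (1)--(3) amounts to identifying all cubics in $Z^{ss}_R$ up to the $\SL(5,\CC)$-action.

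For (1), Proposition \ref{P:ZRss}(1) already records that every cubic in $Z^{ss}_{R_{2A_5}}$ is projectively equivalent to some $V(F_{A,B})$ with $(A,B)\neq(0,0)$, and conversely each $V(F_{A,B})$ with this condition lies in $Z^{ss}_{R_{2A_5}}$. By Remark \ref{R:Alck-poly-form}(2), the cubics $V(F_{A,B})$ are exactly the $2A_5$ cubics (when $C\notin\{0,1\}$), the $2A_5+A_1$ cubic (when $C=0$), and the chordal cubic (when $C=1$), so (1) follows. Part (2) is immediate: $Z^{ss}_{R_c}=\{V(F_{1,-2})\}$ is a single point, so $G\cdot Z^{ss}_{R_c}$ is just the orbit of the chordal cubic. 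For (3), I would invoke the elementary fact that any binary cubic form $P_3(x_3,x_4)$ with three distinct roots is $\GL(2,\CC)$-equivalent to $x_3^3+x_4^3$; extending this change of coordinates trivially on $x_0,x_1,x_2$ produces an element of $\SL(5,\CC)$ (up to a scalar that can be absorbed into the remaining coordinates) sending any cubic of the form described in Proposition \ref{P:ZRss}(3) to $V(F_{3D_4})$.

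The relationships in \eqref{E:GZssR-Rel} are then formal consequences. The inclusion $G\cdot Z^{ss}_{R_c}\subset G\cdot Z^{ss}_{R_{2A_5}}$ is obtained by applying $G\cdot(-)$ to the inclusion $Z^{ss}_{R_c}\subset Z^{ss}_{R_{2A_5}}$ from \eqref{E:ZssR-Rel}. For the disjointness $G\cdot Z^{ss}_{R_{2A_5}}\cap G\cdot Z^{ss}_{R_{3D_4}}=\emptyset$, I would observe that projective equivalence preserves the singularity type of a cubic hypersurface, whereas cubics in $G\cdot Z^{ss}_{R_{2A_5}}$ have singularity type $2A_5$, $2A_5+A_1$, or non-isolated (chordal), while cubics in $G\cdot Z^{ss}_{R_{3D_4}}$ have singularity type $3D_4$; these lists are disjoint.

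There is no real obstacle here: the corollary is a transparent translation of Proposition \ref{P:ZRss} from the fixed loci $Z^{ss}_R$ to their $G$-saturations, once one cites Remark \ref{R:Alck-poly-form} to identify the cubics $V(F_{A,B})$ with the polystable orbits they parameterize. The only substantive input beyond the proposition is the normal form for binary cubics with three distinct roots used in part (3), and the observation that $G$-orbits are singularity-type strata, used for the disjointness.
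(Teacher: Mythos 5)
Your proof is correct and takes essentially the same approach as the paper, which simply cites Proposition~\ref{P:ZRss}(1)--(3), deduces the inclusion from~\eqref{E:ZssR-Rel}, and notes the cubics in (1) and (3) are not projectively equivalent. The only difference is that you spell out the normal-form argument for part~(3) and the singularity-type invariance for the disjointness, details the paper leaves implicit but which match the intended reasoning.
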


\begin{proof} (1)--(3) follow directly from Proposition~\ref{P:ZRss}(1)--(3).
The first inclusion of~\eqref{E:GZssR-Rel} follows directly from that of~\eqref{E:ZssR-Rel}.    The equality on the right  follows from (1)--(3), since  the cubics in question are not projectively equivalent.
\end{proof}

Now recall that the  Kirwan desingularization process consists of successively blowing up $X^{ss}$ along the (strict transforms of the) loci $G\cdot Z_R^{ss}$  in order of $\dim R$, to obtain a smooth space $\widetilde X^{ss}$, and then taking the induced  GIT quotient  $\widetilde{X}^{ss}\gquot_{\widetilde L} G$ with respect to a particular linearization. We denote the resulting desingularization~$\MK$ and refer to it as {\it the Kirwan blowup} of $\GIT$.
Concretely, in our situation, this translates into a diagram:

\begin{equation}\label{diag_kirwanblowup}
\resizebox{\textwidth}{!}{
\xymatrix@C=1em{
\widetilde X^{ss}\ar@{=}[d]\\ (\operatorname{Bl}_{ {G\cdot Z^{ss}_{R_{2A_5},2}}}(X_2^{ss}))^{ss}\ar[r]\ar@{->}[d] & X_2^{ss}=(\operatorname{Bl}_{{G\cdot Z^{ss}_{R_{3D_4}}}}( X_3^{ss}))^{ss}\ar[r]\ar@{->}[d]& X^{ss}_3=(\operatorname{Bl}_{G\cdot Z^{ss}_{R_{c}}}(X^{ss}))^{ss}\ar[r] \ar@{->}[d]& X^{ss}\ar[d]\\
\MK \ar[r]& \widehat {\widehat {\calM}\,\,} \ar[r]& \widehat{\calM} \ar[r]&\GIT
}
}
\end{equation}

Here ${{G\cdot Z^{ss}_{R_{2A_5},2}}}$ is the strict transform of the orbit ${{G\cdot Z^{ss}_{R_{2A_5}}}}$.

The Kirwan blowup $\MK$ is obtained by first blowing up the point $\Xi\in \GIT$ corresponding to the chordal cubic, followed by blowing up the point $\Delta$ (which is not affected by the first blowup), and then finally blowing up the strict transform $\widehat {\calT}$ of ${\calT}\subset \GIT$.
To be precise, we must  specify the blowup ideals corresponding to the blowups on the lower line of \eqref{diag_kirwanblowup}. These are obtained by descent modulo the action of $G$ from $X^{ss}$ of the reduced ideals defining the blowup $\widetilde {X}^{ss}\to X^{ss}$.
Note that the last two blowups commute (thus their order is irrelevant). Also, the blowup of $\Xi$ (i.e., the first blowup) coincides
with the blowup $\widehat \calM$ constructed by Allcock--Carlson--Toledo~\cite{act} in order to resolve the birational period map $\overline P:\GIT\dashrightarrow \BG$ (i.e., the space discussed above in  Theorem~\ref{resgitball}).   Indeed, in the Kirwan blowup, in light of Corollary~\ref{C:ZRss}(2), the first step is to blowup $X^{ss}$ along the orbit of the chordal cubic, and then take the GIT quotient with respect to a particular linearization, which is exactly the construction in~\cite[\S 3]{act}.
The space $\widehat {\widehat {\calM}\,\,}$ is an auxiliary space from our perspective.

\section{The toroidal compactification}
As with any locally symmetric space, the ball quotient $\calB/\Gamma$ has not only the Baily--Borel compactification $\BG$, but also a toroidal compactification, which is thus another
natural birational model of $\calM$. While typically
the construction of toroidal compactifications depends on certain choices, this is not the case for ball quotients. Recall that the cusps are in $1:1$ correspondence
with $\Gamma$-orbits of rational isotropic subspaces of the vector space on which the group $\Gamma$ acts. Since ball quotients are related to hermitian forms
of signature $(1,n)$, the only possibility is given by isotropic lines. This means on the one hand that the Baily--Borel compactification $\BG$ is obtained from the ball quotient $\calB/\Gamma$
by adding finitely many (in our case -- two) points, that is $0$-dimensional cusps, as we have discussed above. On the other hand, from a toric point of view, we are in a $1$-dimensional situation, which
allows no choices. We shall denote the (unique) toroidal compactification by $\oBG$. It comes with a natural morphism $\oBG \to \BG$.
We shall discuss this in more detail in Chapter~\ref{sec:toroidal}.

In summary we have the following diagram illustrating the relationships among all the models of the moduli space of cubic threefolds we have discussed so far:
\begin{equation}\label{E:BirDiagMod}
\xymatrix{
&\MK\ar[ldd]_\pi\ar[d]^f\ar[rdd]^g\ar@{<-->}[r]^{}&\oBG\ar[dd]\\
&\widehat\calM\ar[ld]^{p}\ar[rd]_q\\
\GIT \ar@{-->}[rr]^{\overline P} &&\BG.
}
\end{equation}
While $\MK$ and $\oBG$ can both be viewed as blowups of the two points in $\BG$ corresponding to the two cusps of the Baily--Borel compactification,
we do not know whether the Kirwan blowup $\MK$ and the toroidal compactification $\oBG$ are isomorphic (see Remark~\ref{rem_possible_iso}). This seems to us an interesting question in its own right, which we plan to revisit in the future.

\chapter[Equivariant cohomology of the semi-stable locus]{The cohomology of the Kirwan blowup, part I: \\ equivariant cohomology of the semi-stable locus}\label{sec:HXss}
Following Kirwan, we will compute the intersection cohomology of the GIT quotient $\GIT$ by first computing the cohomology of the Kirwan blowup $\MK$.  The first step in computing the cohomology of the Kirwan blowup is to compute the equivariant cohomology of the semi-stable locus.  This is accomplished by constructing an equivariantly perfect stratification ~\cite[p.17]{kirwan84} of the unstable locus, and then using the Thom--Gysin sequence.  We review the precise setup in this section, and perform this step for the case of cubic threefolds.

\section[The equivariantly perfect stratification]{The equivariantly perfect stratification and the equivariant cohomology of the semi-stable locus in general}\label{S:PXss}

\subsection{Defining the equivariantly perfect stratification $S_\beta$}  \label{S:E-P-S-def}
We return to the general setup of \S~\ref{SSS:KirSetUp}, and
recall Kirwan's equivariantly perfect stratification of the unstable locus in $X$, which will allow us to compute the equivariant cohomology of the semi-stable locus.
Our presentation follows~\cite[Ch.8 \S 7]{GIT}, and serves primarily to fix notation.
In addition, one of the main  points of the review in this  section is that it is difficult to explain the terms in Kirwan's formulas in the case of cubic threefolds without describing the construction, and partially explaining the proofs.

To define the stratification we first  define an indexing set  $\calB$. This consists of the points in the closure $\overline{\mathfrak t}_+$ of the positive Weyl chamber that can be characterized as
follows: they are the closest point to the origin of the convex hull of a nonempty set of the weights $\alpha_0,\dots,\alpha_N$~\cite[Def.~3.13, and \S8 p.59]{kirwan84}.
Using the inner product on $\mathfrak t$ (fixed in \S~\ref{SSS:KirSetUp}), and the corresponding  norm $||\cdot ||$, we  define   for each $\beta \in \calB$~\cite[p.173]{GIT},~\cite[Exa.~3.11, Thm.~12.26]{kirwan84}:
\begin{align}
 \label{E:Zbeta} Z_\beta&:=\{(x_0:\dots :x_N)\in X\subseteq \PP^N: x_j=0 \text { if } \alpha_j.\beta \ne ||\beta ||^2\}\\
\label{E:Ybeta}Y_\beta &:=\{(x_0:\dots :x_N)\in X\subseteq \PP^N: x_j=0 \text { if } \alpha_j.\beta < ||\beta ||^2,\\ \nonumber 
&\ \ \ \ \ \ \ \ \ \  \text{ and  } \exists \ x_i \ne 0 \text{ s.t. } \alpha_i.\beta =||\beta ||^2\}.
\end{align}
Since $Z_\beta$ sits in projective space, for any point $(x_0:\dots :x_N)\in Z_\beta$ there exists some $x_i\ne 0$ with $\alpha_i.\beta =||\beta ||^2$.  Thus we have $Z_\beta\subseteq Y_\beta$,
and in fact there is a retraction $$p_\beta :Y_\beta \to Z_\beta$$ that sends $x_i$ to $0$ if $\alpha_i.\beta >||\beta ||^2$ (see~\cite[p.42, Def.~12.18]{kirwan84} and~\cite[p.173]{GIT}).

\begin{rem}  To get a geometric sense of the spaces $Z_\beta$ and $Y_\beta$, it can be helpful to consider the special case of hypersurfaces of degree $d$ in $\mathbb P^n$. 
This case is described in detail in~\S~\ref{Sec-Hyp-2}.
\end{rem}

For each $\beta\in \calB$ we set  $K_\beta$ to be the stabilizer of $\beta$ under the adjoint action of the maximal compact subgroup  $K$ on its Lie algebra  $\mathfrak k$ (recall $\beta \in \mathfrak t\subseteq \mathfrak k$)~\cite[Def.~4.8]{kirwan84},~\cite[p.169]{GIT}.   There is an action of $K_\beta$ on $Z_\beta$~\cite[p.25]{kirwan84}, and a particular linearization of the action of the complexification of $K_\beta$ on $Z_\beta$ that is  defined in~\cite[\S 8.11]{kirwan84}, and with respect to which we obtain a semi-stable locus $Z^{ss}_\beta$.  One defines~\cite[p.173]{GIT},~\cite[(11.2), Def.~12.20]{kirwan84}:
\begin{align}
Y_\beta^{ss}&:=p_\beta ^{-1}(Z_\beta^{ss})\\
\label{E:SbetaDef} S_\beta &:= G\cdot Y_\beta ^{ss}.
\end{align}
It is a fact  that
\begin{equation}\label{E:GbPbYb}
S_\beta  \cong G\times_{P_\beta }Y_{\beta}^{ss}
\end{equation}
where $P_\beta$ is the parabolic subgroup of $G$ that is the product of the stabilizer $K_\beta$ and the Borel subgroup $B$ associated to the choice of $T$ and $\mathfrak t^+$~\cite[p.173]{GIT},~\cite[Lem.~6.9 and \S 12]{kirwan84}.
 In fact, the parabolic subgroup  $P_\beta$ can also be described as the subgroup of $G$ that preserves $Y_\beta^{ss}$~\cite[Lem.~13.4]{kirwan84}.

An equivalent algebraic definition of $Z_\beta^{ss}$, and hence of  $Y_\beta^{ss}$ and   $S_\beta$, is given in~\cite[Def.~12.8, Def.~12.14, Def.~12.20]{kirwan84}.  For any $x=(x_0:\dots:x_N)\in X\subseteq \PP^N$, we denote by $C(x)\subseteq \mathfrak t$ the convex hull of the collection of weights $\alpha_i$ such that  $x_i\ne 0$; we define $\beta(x)$ to be the closest point to the origin in $C(x)$.   Then for $\beta\ne 0$ we have the following description, summarizing and slightly rephrasing the discussion of~\cite[\S 12]{kirwan84}:
\begin{align}\label{E:HesseZss}
Z_\beta^{ss}&=\left\{x\in Z_\beta :   \beta(x)=\beta,    \text{ and for all } g \in G,     \ ||\beta(gx)||\le  ||\beta|| \right\}.
\end{align}

We will also use the fact  that~\cite[Lem.~12.13]{kirwan84}:
\begin{equation}\label{E:S0P0}
S_0=X^{ss}
\text{ and }
P_0=G.
\end{equation}
Finally it is shown in~\cite[Lem.~12.15, 12.16]{kirwan84} that the $S_\beta$ define a  $G$-equivariant stratification
\begin{equation}\label{E:SbStrat}
X=\bigsqcup_{\beta\in\calB}S_\beta=X^{ss}\sqcup \bigsqcup_{0\ne \beta\in\calB}S_\beta.
\end{equation}

 We end by observing that one can use~\eqref{E:GbPbYb} to conclude that, if nonempty,  $S_\beta$ has  dimension
\begin{equation}\label{E:DimSb}
\dim S_\beta = \dim G/P_\beta+\dim Y_\beta.
\end{equation}
We call the right hand side of~\eqref{E:DimSb} the expected dimension of $S_\beta$, and denote this as $\dim_{\operatorname{exp}} S_\beta$.

\begin{rem}\label{R:SbPOSET}
We order the strata $S_\beta$  as a  POSET in the usual way, via inclusions of  closures; i.e., $S_{\beta'}\le S_{\beta }$ if $\overline S_{\beta'} \subseteq \overline S_{\beta}$.
The maximal stratum is $S_0=X^{ss}$, if it is nonempty.  More generally, we can make a POSET out of $\calB$ by setting $\beta'\le \beta$ if $\overline Y_{\beta'}\subseteq \overline Y_{\beta}$, and then if $S_{\beta }$ is nonempty, the inequality  $\beta'<\beta$ implies $S_{\beta'} <S_{\beta }$.   Indeed, $S_{\beta}$ nonempty implies that $Y^{ss}_{\beta }$ is a dense open subset of $\overline Y_{\beta}$,   and consequently $\overline Y_{\beta'}\subseteq \overline {Y_{\beta}^{ss}}$, so  that $\overline S_{\beta'} \subseteq  \overline S_{\beta}$.   Note also that if $\beta '<\beta$, then since $S_{\beta '}\subseteq G \overline Y_{\beta '}\subseteq G\cdot \overline Y_\beta$, we have $\dim S_{\beta'}\le \dim G/P_\beta+\dim Y_\beta$.    In other words, we can say that if $\beta '<\beta$, then $\dim_{\operatorname{exp}}S_{\beta '} \le \dim_{\operatorname{exp}}S_{\beta}$, and if $S_{\beta}$ is nonempty, the inequality is strict.
\end{rem}

\subsection{Equivariant cohomology of the semi-stable locus}
The Thom--Gysin sequence relating the cohomology of a manifold $Y$, a closed submanifold $Z$ of $Y$, and its complement $Y-Z$, is a long exact sequence of the form
$$
\dots \to H^{i-d}(Z;\QQ)\to H^i(Y;\QQ)\to H^i(Y-Z;\QQ)\to H^{i+1-d}(Z;\QQ)\to \dots
$$
where $d$ is the codimension of $Z$ in $Y$.  The existence of such a sequence implies the following identity for Poincar\'e polynomials:
$$
t^dP_t(Z)-P_t(Y)+P_t(Y-Z)=(1+t)Q(t)
$$
where $Q(t)\in \QQ[t]$ has nonnegative coefficients.
Applying this to the stratification~\eqref{E:SbStrat} we obtain the following identities for Poincar\'e polynomials and  equivariant Poincar\'e polynomials (i.e., the Poincar\'e polynomials for equivariant cohomology):
\begin{align*}
P_t(X)&=\sum_{\beta}t^{2d(\beta)}P_t(S_{\beta})-(1+t)Q(t),\\
P^G_t(X)&=\sum_{\beta}t^{2d(\beta)}P_t^G(S_{\beta})-(1+t)Q^G(t),
\end{align*}
where the polynomials $Q(t),Q^G(t)\in \QQ[t]$  have nonnegative coefficients, and
\begin{equation}\label{E:d(beta)Def}
d(\beta):=\operatorname{codim}_{\CC}S_\beta =\dim X-(\dim G-\dim P_\beta+\dim Y_{\beta}),
\end{equation}
where the equality on the right holds provided $S_\beta$ is nonempty.
One then shows that the stratification is $G$-equivariantly perfect~\cite[p.17]{kirwan84}, implying that $Q^G(t)=0$, so that we have:
\begin{align}\label{E:KD-(3.1)-0}
P^G_t(X)&=\sum_{\beta}t^{2d(\beta)}P_t^G(S_{\beta}).
\end{align}
The key point in showing that the stratification is equivariantly perfect is to consider a degenerate Morse function $f:X\to \RR$ given as the composition of the induced moment map $\mu:X\to \mathfrak k^\vee$~\cite[(2.7)]{kirwan84}, with the modulus $||-||:\mathfrak k^\vee \to \RR$, induced by the Killing form.    The strata $S_\beta$, $Y_\beta$, and $Z_\beta$ then have interpretations with respect to the gradient flow to the  critical sets for $f$~\cite[Thm.~12.26]{kirwan84}, and one then uses techniques from Morse theory and symplectic geometry to establish that the stratification is equivariantly perfect~\cite[Thm.~6.18]{kirwan84}.

Finally we observe that equation~\eqref{E:KD-(3.1)-0} can be rewritten as~\cite[Eq.~3.1]{kirwanhyp}:
\begin{equation}\label{E:KD-(3.1)}
P_t^G(X^{ss})=P_t(X)P_t(BG)-\sum_{0\neq \beta \in \calB} t^{2d(\beta)}P_t^G(S_{\beta}),
\end{equation}
using~\eqref{E:S0P0}, and a result of Kirwan~\cite[Prop.~5.8]{kirwan84} on equivariant cohomology with respect to compact Lie groups acting on symplectic manifolds   (see  formulas~\eqref{E:K-EC-1} and~\eqref{E:AS-EC-1}),  to write $P_t^G(X)=P_t(X)P_t(BG)$.  Note that if $S_\beta$ is empty, our convention is that $t^{2d(\beta)}P_t^G(S_\beta)=0$.

\begin{rem}\label{R:PD-deg}  The computation of $P_t^G(X^{ss})$ is
an intermediate step in computing the intersection cohomology of the GIT quotient $X/\!\!/_{\calO(1)}G$, and the cohomology of the Kirwan blowup.  Both of these cohomology theories satisfy Poincar\'e duality, and therefore in these applications it suffices to compute $P_t^G(X^{ss})$ up to degree equal to the complex dimension of the GIT quotient.
Thus, estimating the dimensions  via~\eqref{E:DimSb}, one may in some cases ignore many if not all  of the  strata $S_\beta$ in~\eqref{E:KD-(3.1)}.
\end{rem}

\section[The equivariant cohomology of the semi-stable locus]{The equivariant cohomology of the locus of semi-stable cubic threefolds} \label{S:ECSScubics}

\subsection{Some observations for hypersurfaces}
\label{Sec-Hyp-2}

Before moving to the case of cubic threefolds, we start by making a few observations that hold for all hypersurfaces.
  We continue using the notation from  \S \ref{S:hypeKirConv}.  Recall that $X=\PP\operatorname{Sym}^d(\CC^{n+1})^\vee$ is the Hilbert space of hypersurfaces of degree $d$ in $\mathbb P^n$, we have identified the Lie algebra of the maximal torus of $\operatorname{SU}(n+1)$  as $\mathfrak t=\{(a_0,\dots,a_n)\in \RR^{n+1}: \sum a_i=0\} \subseteq \RR^{n+1}$, the inner product on $\mathfrak t$ is taken to be the standard inner product, and we make the identification of monomials of degree $d$ with weights in $\mathfrak t\subseteq \RR^{n+1}$ via the assignment $x^I\mapsto \alpha_I:= (i_0-d/(n+1),\dots,i_n-d/(n+1))$.  The Weyl group of $\operatorname{SU}(n+1)$ is the symmetric group  $S_{n+1}$ acting on $\mathfrak t$ by its generators, the reflections in the coordinate hyperplanes. 
 The indexing set
$\calB\subseteq \overline{\mathfrak t}_+$ consists of the points in $\overline{\mathfrak t}_+$ that can be described as the closest point to the origin of the convex hull of a nonempty set of the weights $\alpha_0,\dots,\alpha_N$, which  themselves can be viewed as lattice points in a simplex.

The sets $Z_\beta$ and $Y_\beta$ are defined as sets of polynomials (up to scaling) where only certain monomials are allowed to appear with non-zero coefficients.    More precisely, $Z_\beta$ is the linear subspace of $\PP\operatorname{Sym}^d(\CC^{n+1})^\vee$ determined by the vanishing of the coefficients of the monomials $x^I$ whose weight $\alpha_I$ does not lie in the affine space orthogonal to $\beta$ (i.e., the coefficient of $x^I$ is zero if $\alpha_I.\beta\ne ||\beta^2||$), and $Y_\beta$ is an open subset of the linear subspace of $\PP^N$ determined by the vanishing of the coefficients of the monomials $x^I$ whose weight does not lie on the positive side of the affine space orthogonal to $\beta$.
Said another way, $Z_\beta$ is the linear span of the monomials $x^I$ with weights $\alpha_I$ lying in the affine space orthogonal to $\beta$ (i.e., the span of the monomials $x^I$ with $\alpha_I.\beta= ||\beta^2||$), and $Y_\beta$ is the set of polynomials that are linear combinations of the $x^I$ with weights $\alpha_I$ lying on the non-negative side of the affine space orthogonal to $\beta$, and have at least one  monomial $x^I$ appearing with non-zero coefficient that has weight $\alpha_I$ lying in the affine space orthogonal to $\beta$. A similar description of $Z^{ss}_\beta$ follows from~\eqref{E:HesseZss}.

 We observe also that for hypersurfaces, from the definition of a parabolic subgroup,
 it follows that the parabolic subgroup  $P_\beta$ can  be described as the subgroup of $G$ that preserves the linear subspace of $\PP\operatorname{Sym}^d(\CC^{n+1})^\vee$  that is the closure of  $Y_\beta$; this can make the explicit computation of $P_\beta$ easier.

 We define $d(\beta):=\operatorname{codim}_{\CC}S_\beta$,  so that if $S_\beta$ is nonempty, we obtain the convenient combinatorial dimension count from~\eqref{E:DimSb}:
\begin{align}\label{E:Sbcodim}
d(\beta) &=n(\beta)-\dim G/P_\beta
\end{align}
where
$n(\beta)=\dim \PP\operatorname{Sym}^d(\CC^{n+1})^\vee-\dim Y_\beta$ is    the number  of weights $\alpha_I$ such that
$\beta.\alpha_I <||\beta ||^2$~\cite[p.47]{kirwanhyp}; i.e., the number of weights lying on the negative side of the affine space orthogonal to $\beta$.  In other words, the expected codimension of $S_\beta$ is $d_{\operatorname{exp}}(\beta)=n(\beta)-\dim G/P_\beta $.

\begin{rem}[Estimating $\dim P_\beta$]\label{R:dimPbbest}
Clearly a key point is to estimate the dimension of $P_\beta$.
To this end, recall that if there is a decomposition of the vector space  $\CC^{n+1}=W\oplus W'$, and a parabolic subgroup $P$ of $\SL(n+1,\CC)$ contains the subgroup $\SL(W)\oplus \operatorname{Id}_{W'}$,
 then the flag associated to $P$ has as its smallest vector space a vector space of dimension at least $\dim W$.   In other words, in appropriate bases, $P$ must be block upper-diagonal with a block of size at least  $\dim W$ so that the dimension of $P$ must be at least $\binom{\dim W}{2}$ more than the dimension of the Borel subgroup of upper triangular matrices in $\SL(n+1,\CC)$ (which has dimension $\binom{n+2}{2}-1$).
\end{rem}

\subsection{The case of cubic threefolds}
We now implement all this in the case of cubic threefolds.  For dimension estimates as in Remark~\ref{R:dimPbbest}, note that
the Borel subgroup of upper triangular matrices has dimension $14$ in this case.
\begin{pro}[Equivariant cohomology of the semi-stable locus]\label{P:PtGXss}
For the moduli of cubic threefolds, the only unstable stratum $S_\beta$ that contributes to formula~\eqref{E:KD-(3.1)}, modulo $t^{11}$, is the complex codimension $5$ stratum corresponding to general  $D_5$ cubics
 (corresponding to the case (b) in~\cite[Lem.~3.1]{allcock}), which only contributes its equivariant $H^0$, so that finally
\begin{equation}\label{eq:cohxss_our}\begin{aligned}
P_t^G(X^{ss})&\equiv 1+t^2+2t^4+3t^6+5t^8+6t^{10}  &\mod t^{11}.
\end{aligned}
\end{equation}
\end{pro}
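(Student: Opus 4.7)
The plan is to apply Kirwan's identity~\eqref{E:KD-(3.1)} and show that, modulo $t^{11}$, only one of the Kempf--Ness--Kirwan strata of $X\setminus X^{ss}$ makes a nontrivial contribution. The computation splits cleanly into two pieces: the free term $P_t(X)P_t(BG)$, and the sum of correction terms $t^{2d(\beta)}P_t^G(S_\beta)$ over the nonzero strata.

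First I would compute $P_t(X)P_t(BG)$ modulo $t^{11}$. Here $X=\PP^{34}$, so $P_t(X)\equiv \sum_{i=0}^{5}t^{2i}\pmod{t^{11}}$; and since $G=\SL(5,\CC)$,
\[
P_t(BG) \;=\; \prod_{i=2}^{5}\frac{1}{1-t^{2i}} \;\equiv\; 1+t^4+t^6+2t^8+2t^{10}\pmod{t^{11}}.
\]
A short multiplication gives
\[
P_t(X)\,P_t(BG)\;\equiv\; 1+t^2+2t^4+3t^6+5t^8+7t^{10}\pmod{t^{11}},
\]
which matches~\eqref{eq:cohxss_our} except for the coefficient of $t^{10}$, where exactly a single $t^{10}$ must be removed by the unstable correction.

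Second, I would invoke Allcock's Kempf analysis of the unstable locus~\cite[Lem.~3.1]{allcock}, which enumerates every nonzero $\beta\in\calB$ arising for cubic threefolds and identifies each $Z^{ss}_\beta$ as a specific space of normal-form cubics (distinguished by the generic singularity type, or a non-isolated singular locus, of the cubic; examples include $D_5$, $E_6$, $J_{2,0}$, and various cubics with large positive-dimensional singular loci). For each such $\beta$ I would compute the complex codimension $d(\beta)=n(\beta)-\dim G/P_\beta$ via~\eqref{E:Sbcodim}: here $n(\beta)$ is the combinatorial count of degree-$3$ monomials in $5$ variables lying on the negative side of the hyperplane $\{\alpha\cdot\beta=\|\beta\|^2\}$, while $\dim P_\beta$ is read off from the block structure of the parabolic that preserves $\overline{Y_\beta}$, via Remark~\ref{R:dimPbbest}. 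The key assertion I need to verify, case by case through Allcock's list, is that $d(\beta)\ge 6$ for every stratum except the one in Allcock's case~(b), for which $d(\beta)=5$ and the generic member of $Z^{ss}_\beta$ is a cubic with a $D_5$ singularity.

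Finally, since the $D_5$ stratum $S_{\beta_{D_5}}=G\cdot Y^{ss}_{\beta_{D_5}}$ is irreducible and therefore connected, its equivariant Poincar\'e polynomial has constant term~$1$, so its contribution $t^{10}\,P_t^G(S_{\beta_{D_5}})$ reduces to exactly $t^{10}$ modulo $t^{11}$. Subtracting this from the free term computed above yields~\eqref{eq:cohxss_our}. The main obstacle is the case-by-case codimension verification in step three: formula~\eqref{E:Sbcodim} a priori only bounds the true codimension from below by the expected value when $S_\beta$ is nonempty, and one must carefully check both that the predicted codimensions in Allcock's list are in fact attained, and that no other stratum (in particular, none with an unusually ``small'' parabolic $P_\beta$) sneaks in with $d(\beta)\le 5$. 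This is where essentially all of the geometric content of the proposition lies.
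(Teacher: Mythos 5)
Your computation of $P_t(X)\,P_t(BG)\equiv 1+t^2+2t^4+3t^6+5t^8+7t^{10}\pmod{t^{11}}$ is correct, and the overall strategy --- use Allcock's Kempf analysis, estimate $d(\beta)$ via~\eqref{E:Sbcodim} and Remark~\ref{R:dimPbbest}, and conclude that exactly one stratum of codimension $5$ contributes $t^{10}$ --- is exactly the paper's. However, your last paragraph papers over two pieces of structure that do real work, and one of them is not correctly framed.

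First, Allcock's Lemma~3.1 does \emph{not} enumerate all $0\ne\beta\in\calB$; it lists only the \emph{maximal} linear spaces $\overline Y_\beta$, i.e.\ the maximal elements of the POSET introduced in Remark~\ref{R:SbPOSET}. To rule out some non-maximal $\beta'$ ``sneaking in with $d(\beta')\le 5$'' you need the monotonicity fact from that remark: if $\beta'<\beta$ then $d_{\operatorname{exp}}(\beta')\ge d_{\operatorname{exp}}(\beta)$ (the expected dimension of $S_{\beta'}$ is bounded above by $\dim G/P_\beta + \dim Y_\beta$). Once you have this, it suffices to estimate $d_{\operatorname{exp}}$ for the six maximal cases in Allcock's list, which is exactly what the six dimension counts in the paper do.

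Second, your phrasing ``formula~\eqref{E:Sbcodim} a priori only bounds the true codimension from below'' is backwards: if $S_\beta$ is nonempty, then by~\eqref{E:DimSb} the formula gives the codimension \emph{exactly}; the real issue is nonemptiness, and in particular whether the case~(b) stratum is realized and has $d(\beta)=5$ rather than $\ge 5$. The paper deliberately does not prove nonemptiness of $S_\beta$ for the $\beta$ of case~(b) directly. Instead, it argues geometrically that the locus of cubics with an isolated $D_5$ singularity is irreducible of complex codimension $5$ (via surjectivity of the family of cubics onto the versal deformation of $D_5$, cf.~\cite[Fact~3.12]{cubics}), is contained in the unstable locus, and must therefore fill out a single stratum $S_{\beta'}$ with $\beta'\le\beta$. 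Combined with the estimates $d_{\operatorname{exp}}(\beta)\ge 6$ for cases (a),(c)--(f) and $d_{\operatorname{exp}}(\beta)\ge 5$ for case~(b), this forces $d(\beta')=5$ and uniqueness of the codimension-$5$ stratum, without ever having to check that the naive expected codimension for the $\beta$ of case (b) is attained. Your connectivity argument for $H^0_G(S_{\beta'})=1$ at the end is fine.
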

\begin{proof}
We are claiming that for the moduli of cubic threefolds, the only unstable stratum $S_\beta$ that contributes to formula~\eqref{E:KD-(3.1)}, modulo $t^{11}$, is the complex codimension $5$ stratum corresponding to general  $D_5$ cubics
 (corresponding to the case (b) in~\cite[Lem.~3.1]{allcock}), which only contributes its equivariant $H^0$, so that we have 
\begin{equation*}
\begin{aligned}
P_t^G(X^{ss})&\equiv P_t(X)P_t(B\SL(5,\CC))-t^{10}&\mod t^{11}\, \\
&\equiv (1-t^2)^{-1}(1-t^4)^{-1}(1-t^6)^{-1} (1-t^{8})^{-1}(1-t^{10})^{-1}-t^{10} &\mod t^{11}\,\\
&\equiv 1+t^2+2t^4+3t^6+5t^8+6t^{10}  &\mod t^{11}.
\end{aligned}
\end{equation*}

We now explain this.  To begin, recalling that
$P_t(X)=P_t(\PP^{34})\equiv (1-t^2)^{-1}\mod t^{11}$ and that   $ P_t(B\SL(5,\CC)) =  (1-t^4)^{-1}(1-t^6)^{-1} (1-t^{8})^{-1}(1-t^{10})^{-1} $ (e.g., Example~\eqref{E:BSUn}),
we can write~\eqref{E:KD-(3.1)} as
\begin{equation}\label{eq:cohxssProp}
P_t^G(X^{ss})\equiv (1-t^2)^{-1}(1-t^4)^{-1} \dots (1-t^{10})^{-1} -\sum_{0\neq \beta \in \calB} t^{2d(\beta)}P_t^G(S_{\beta})
\ \ \mod t^{11}.
\end{equation}
We will show that  the strata $S_\beta$, $\beta\ne 0$,  have complex codimension $d(\beta)$ at least $5$, and that there is exactly one stratum of complex codimension $5$.  This stratum can then  only contribute its equivariant $H^0$, which we will see is  $1$-dimensional, completing the proof.

The basic tool we will use is the dimension count for the $S_\beta$ given in~\eqref{E:Sbcodim}, and for  convenience we rewrite  this with the specific numerics we have here.  If we set $r(\beta)$ to be the number of weights $\alpha$ such that $\beta .\alpha \ge ||\beta||^2$, and set $p(\beta )=\dim P_\beta$, then it follows from~\eqref{E:Sbcodim}  that if $S_\beta$ is nonempty, then
\begin{align}\label{E:dbetacubic}
d(\beta)=(35 -r(\beta))-\dim G+p(\beta)&=11+p(\beta)-r(\beta)\\
&\ge 25-r(\beta). \label{E:dbetacubic>}
\end{align}
From~\eqref{E:dbetacubic>}, if $r(\beta)< 20$, then $d(\beta)>5$, so that $S_\beta$ cannot contribute to~\eqref{eq:cohxssProp}.
As before, we call the right hand side of~\eqref{E:dbetacubic} the expected codimension of $S_\beta$, i.e., the codimension of $S_\beta$, provided it is nonempty, and denote it by $d_{\operatorname{exp}}(\beta)$.

For our analysis, we will proceed to estimate $d_{\operatorname{exp}}(\beta)$, starting from the maximal $\beta$;  i.e., we partially order the elements of $\calB$ by setting  $\beta'\le \beta $ if $\overline Y_{\beta'}\subseteq \overline Y_\beta$, and  start  with the  (possibly empty)  strata  $S_\beta$, $\beta \ne 0$,  such that the associated linear spaces $\overline  Y_\beta$ are maximal (among the $\overline Y_\beta$ with $\beta \ne 0$); see also Remark~\ref{R:SbPOSET}.    These   maximal  $\overline  Y_\beta$  can  be described  as maximal linear spaces spanned by monomials destabilized by some $1$-PS,   and are classified by
Allcock     in~\cite[Lem.~3.1]{allcock}.  In terms of Allcock's notation, $r(\beta)$ is the  number  of black dots in the corresponding diagram in~\cite[Lem.~3.1]{allcock}, and the linear space $\overline  Y_{\beta}$ is given by the span of the monomials corresponding to those black dots. We now compute the  expected codimension $d_{\operatorname{exp}}(\beta)$   for all the cases in~\cite[Lem.~3.1]{allcock}, enumerating in the same way as in the reference:

\begin{enumerate}
\item[(a)] Let $S_\beta$ correspond to~\cite[Fig.~3.1(a)]{allcock}. One computes the number of black dots in~\cite[Fig.~3.1(a)]{allcock}   to be $r(\beta) = 21$. For the dimension of the parabolic subgroup, it is also easy to see from
\cite[Fig.~3.1(a)]{allcock} that
if we write $(\CC^5)^\vee=\CC\langle x_1,x_2,x_3\rangle\oplus \CC\langle x_0,x_4\rangle$, then
 the parabolic subgroup $P_\beta$ contains the subgroup $\SL(\CC\langle x_1,x_2,x_3\rangle)\oplus \operatorname{Id}_{\CC\langle x_0,x_4\rangle}$, so that $\dim P_\beta$ is at least $14+\binom{3}{2}=17$ (Remark~\ref{R:dimPbbest}).
 Thus from~\eqref{E:dbetacubic} we have that   $d_{\operatorname{exp}}(\beta)\ge 11+17-21=7$.

\item[(b)]  Now let $S_\beta$ correspond to~\cite[Fig.~3.1(b)]{allcock}.  Here one computes $r = 21$, while the parabolic subgroup can permute $x_3$ and $x_4$, and thus has a $2\times 2$ block on the diagonal, so that $p\ge 14+1=15$.  Thus  $d_{\operatorname{exp}}(\beta)\ge 5$.

\item[(c)] Here $r = 20$, while the parabolic subgroup $P_\beta$ contains the subgroup $\SL(\CC\langle x_2,x_3\rangle) \oplus \operatorname{Id}_{\CC\langle x_0,x_1,x_4\rangle}$.  Thus we have $p\ge 15$,
so that  from~\eqref{E:dbetacubic} we have $d_{\operatorname{exp}}(\beta)\ge 6$.

\item[(d)]  Here $r= 18$,  so that as pointed out above in~\eqref{E:dbetacubic>}, the minimal  estimate  $p\ge 14$ suffices to give $d_{\operatorname{exp}}(\beta)\ge 7$.

\item[(e)] Here $r=22$, while the parabolic subgroup can permute $x_2,x_3,x_4$, so that $p\ge 17$, and thus $d_{\operatorname{exp}}(\beta)\ge 6$.

\item[(f)]  Here $r = 19$, so again we can use the minimal estimate $d_{\operatorname{exp}}(\beta)\ge 6$ in~\eqref{E:dbetacubic>}.  (Considering the parabolic subgroup more carefully, one can see that  one can permute $x_1$ and $x_2$, so that $p\ge 15$ and $d_{\operatorname{exp}}(\beta )\ge 7$.)
\end{enumerate}
We now turn our attention to the expected codimension of the strata $S_{\beta '}$, $\beta'\ne 0$, that do not arise in the list above.    The point is that for any $0\ne \beta'\in \calB$, we have $\beta '\le \beta $ for one of the $\beta$ in the list above, and
if $\beta'<\beta$, then $d(\beta' )\ge d_{\operatorname{exp}}(\beta')\ge d_{\operatorname{exp}}(\beta)$ (see, e.g.,  Remark~\ref{R:SbPOSET}).

For clarity, we summarize what we have shown, and what we will prove to establish the proposition:
\begin{enumerate}
\item For any $0\ne \beta'\in \calB$ we have shown that $d(\beta')\ge 6$, unless $\beta'\le \beta$ with $\beta$ as in case  (b) above, in which case we have $d(\beta')\ge 5$.

\item We claim that there is exactly one $\beta '\le \beta$ with $\beta$ as in case (b) above such that $d(\beta')=5$.

\item For the stratum $S_{\beta'}$ in  (2), we claim that the general point corresponds to a $D_5$ cubic,  and that $\dim H^0_G(S_{\beta'})=1$.
\end{enumerate}

If $S_\beta$ were known to be non-empty in case (b), then (2) would follow trivially from (1).  We find it is easier to establish the weaker statement (2) directly and  argue  geometrically.
Let $\beta$ be as in~\cite[Fig.~3.1(b)]{allcock}. This corresponds to the case \cite[Thm.~3.3(iii)]{allcock}: the cubic  threefold contains  a singularity of nullity $2$ and Milnor number $\ge 5$. In others words, the cubic has a double point whose projectivized tangent cone (a quadric) has corank $2$. This excludes the possibility of $A_k$ singularities (as they have corank $1$). Since the Milnor number is at least $5$, we also exclude the $D_4$ case, leading to $D_5$ singularities or worse.  In fact, it is immediate to see\footnote{Namely, this case corresponds to singularities of cubic threefolds with an affine equation $x_3^2+x_4^2+x_1x_2^2+(h.o.t.)$, where higher order terms is with respect to weights $(1/3,1/3,1/2,1/2)$ (see \cite[p. 216]{allcock}). We then note that $x_3^2+x_4^2+x_1x_2^2+2x_3x_1^2$ is analytically equivalent to $(x_3')^2+x_4^2+x_1x_2^2-x_1^4$, which is precisely the normal form for $D_5$ in $4$ variables.} that a generic point of $\overline Y_\beta$ (corresponding to a  generic linear combination of the given set of monomials) gives a cubic threefold with an isolated  $D_5$ singularity.  Moreover, $G \cdot\overline Y_\beta$ contains an open subset of  the cubics with an isolated $D_5$ singularity; i.e., a general cubic with an isolated $D_5$ singularity is projectively equivalent to one in $\overline Y_\beta$.     Using the versal deformation space of a $D_5$ singularity  and checking that locally around a $D_5$ cubic the space of cubics maps surjectively onto the versal deformation space (see e.g.,~\cite[Fact 3.12]{cubics}),
 it follows that  the locus $G\cdot \overline Y_\beta$  has complex codimension $5$ in the space of cubics.  Thus there is a codimension $5$ locus in the non-semi-stable locus $X-X^{ss}$.
 As the $S_\beta$ with $\beta\ne 0$, stratify the non-semi-stable locus,
and have codimension at most $5$, a Zariski open subset of $G\cdot\overline Y_\beta$  must be an open subset in
some stratum, say $S_{\beta'}$.   For dimension reasons, the only possibility is $\beta'\le \beta$ with $\beta$ as in (b) in the list above.  For this stratum we have $d(\beta')=5$,    and clearly since $G\cdot\overline Y_\beta$ is connected of codimension $5$ there can be no other stratum $S_{\beta ''}$, $\beta''\ne \beta'$,  with $d(\beta'')=5$.

The only thing left to show is (3), that $\dim H^0_G(S_{\beta'})=1$.
We have that $Y_{\beta'}^{ss}$ is connected, being a Zariski open subset of a projective space.  Consequently,  $S_{\beta'} =G\cdot Y_{\beta'}^{ss}$ is connected.  Setting $EG\to BG$ to be the universal principal bundle,
   since
$S_{\beta'} \times _G EG $ is the quotient of the connected space    $S_{\beta'} \times EG\sim_{\text{hom}}S_{\beta'}$, and is therefore itself connected, it follows finally that
$\dim H^0_G(S_{\beta'})=1$.
\end{proof}

\chapter{The cohomology of the Kirwan blowup, part II}\label{S:CohKirBl-II}

The next, and final, step in computing the cohomology of the Kirwan blowup is to compute some ``correction'' terms arising from the blowups.  The key point is that there is an equivariant version of the formula for the cohomology of a blowup, which inductively reduces the problem to the setup of the previous section, namely, computing the equivariant cohomology of semi-stable loci.  The subtle point is that these computations all reduce to computations on the exceptional divisors, and then with some more work, to computations on a general normal fiber to each stratum that is blown up.  This allows one to compute everything essentially on the original space, making the process feasible in examples. We start by reviewing the general setup, and then specialize to the case of cubic threefolds.  One of the main  points of the review in this  section is that it is difficult to explain the terms in Kirwan's formulas in the case of cubic threefolds without describing the construction, and partially explaining the proofs.

\section{The correction terms in general}

\subsection{The correction terms for a single blowup}\label{SSS:CorTermsI}
It is notationally much easier to explain the correction terms after a single blowup.  We start with this case, and then in the next subsection explain what the formulas are for multiple blowups.

We start here in the situation of \S~\ref{SSS:kirBlUp}, where we have fixed a maximal dimensional connected component $R\in \calR$ of the stabilizer of a strictly polystable point, taken the blowup
\begin{equation}\label{E:PiHatDef-4.1}
\hat \pi:\hat X\to X^{ss}
\end{equation}
along the locus $G\cdot Z^{ss}_R$~\eqref{E:ZRss}, and chosen a linearization of the action on an ample line bundle $\hat L$ on $\hat X$,  as described in \S~\ref{SSS:kirBlUp}.
For simplicity, we further assume that $Z_R^{ss}$ is connected.

The first observation is that  from the standard argument about cohomology of blowups of smooth loci~\cite[p.605]{GrHa78}, adapted to the $G$-equivariant setting, one has~\cite[p.67]{kirwanblowup}:
\begin{equation}\label{E:KirYXss}
P_t^G(\hat X)=P^G_t(X^{ss})+P^G_t(\PP\calN)-P^G_t(G\cdot Z_R^{ss})
\end{equation}
where $\PP\calN$ is the projectivization of
\begin{equation}\label{E:NNbundDef}
\calN:=\text{the normal bundle to the orbit } G\cdot Z^{ss}_R
\end{equation}
 (i.e., $\PP\calN$ is the exceptional divisor).
Next, the standard Leray spectral sequence argument for the cohomology of a projective bundle~\cite[Prop.~p.606]{GrHa78}, adapted to the $G$-equivariant setting, gives
 $P_t^G(\PP \calN)=P^G_t(G\cdot Z_R^{ss})P_t(\PP^{\operatorname{rk}(\calN)-1})$~\cite[p.67]{kirwanblowup}, so that we have
~\cite[Lem.~7.2]{kirwanblowup}
\footnote{There is a typo in~\cite[Lem.~7.2]{kirwanblowup}: $P_t^G(Z_R^{ss})$ should be $P_t^G(G\cdot Z_R^{ss})$.}
\begin{align}\label{E:KirL7.2}
P_t^G(\hat X)=P^G_t(X^{ss})+P_t^G(G\cdot Z_R^{ss})(t^2+\dots +t^{2(\operatorname{rk}\calN-1)})
\end{align}
where $\operatorname{rk}\calN$ is the complex rank of the vector bundle, which is equal to the codimension of $G\cdot Z_R^{ss}$.

Using~\eqref{E:KD-(3.1)} applied to $\hat X$, with indexing set $\calB_{\hat X}$, and strata $S_{\hat X,\hat \beta}$,
and substituting
 into~\eqref{E:KirL7.2}, we obtain
\begin{align}
P_t^G(\hat X^{ss})&=P^G_t(X^{ss})+P_t^G(G\cdot Z_R^{ss})(t^2+\dots +t^{2(\operatorname{rk}\calN-1)})-\!\!\! \sum_{0\ne \hat \beta \in \calB_{\hat X}}\!\!t^{2d(\hat X,\hat \beta)}P^G_t(S_{\hat X,\hat \beta}),
\end{align}
where the  complex codimension of the stratum $S_{\hat X,\hat \beta}$ in $\hat X$
is given by $d(\hat X,\hat \beta)$  as defined in~\eqref{E:d(beta)Def}, but now applied to the blowup $\hat X$.
Now let
\begin{align}
\label{E:NDef} N&:=N(R)\\
\label{E:StabbDef} \operatorname{Stab}_G\hat \beta&:=G_{\hat \beta}
\end{align}
be the normalizer of $R$ in $G$, and the stabilizer of $\hat \beta$ in $G$ under the adjoint action, respectively.
Then $G\cdot Z^{ss}_R=G\times_N Z^{ss}_R$, and $S_{\hat X,\beta}=G\times_{N\cap \operatorname{Stab}_G\hat \beta }(Z_{\hat \beta}^{ss}\cap \hat \pi^{-1}Z^{ss}_R)$~\cite[p.72]{kirwanblowup}, where $Z_{\hat \beta}^{ss}\subseteq \hat X$ is defined as in~\eqref{E:HesseZss}.    Therefore we obtain (see~\eqref{E:AB-EC-4})
\cite[(7.13)]{kirwanblowup}
\footnote{There are two typos in \cite[(7.13)]{kirwanblowup}: the formula is missing a $-P_t^N(Z^{ss}_R)$,  and the sign before the sum in the formula should be negative.  The former is noted in \cite[p.50]{kirwanhyp}, while the latter is essentially noted in \cite[(3.4)]{kirwanhyp}.}
\begin{align}
P_t^G(\hat X^{ss})&=P^G_t(X^{ss})\\
&+P_t^N(Z_R^{ss})(t^2+\dots +t^{2(\operatorname{rk}\calN-1)})\\
\label{E:ExtraTerm00} &- \sum_{0\ne \hat \beta \in \calB_{\hat X}}t^{2d(\hat X,\hat \beta)}P^{N\cap \operatorname{Stab}_G\hat \beta }_t(Z_{\hat \beta}^{ss}\cap \hat \pi^{-1}Z^{ss}_R).
\end{align}

We now come to the more subtle point of relating the final term~\eqref{E:ExtraTerm00} to the representation on the normal slice to the orbit.
Let
\begin{equation}\label{E:xinZssR}
x\in Z^{ss}_R
\end{equation}
be a generic point, and let $\calN_x$ be the normal space to $G\cdot Z^{ss}_R$ in $X^{ss}$ at $x\in G\cdot Z^{ss}_R$; i.e., the fiber of $\calN$ at $x$.  Then we obtain a representation
\begin{equation}\label{E:rhoDef}
\rho:R\to \GL(\calN_x).
\end{equation}
We take $\TT_R$ to be the restriction of the maximal torus $\TT$ of $G$ under the inclusion  $R\subseteq G$, with maximal compact tori $T_R$ and $T$, respectively.  This gives an inclusion of (real) Lie algebras $\mathfrak t_R\subseteq \mathfrak t$, and we use the metric on $\mathfrak t_R$ \emph{induced from that of $\mathfrak t$}. (Recall that originally in the setup we were allowed to take any $\operatorname{Ad}$-invariant metric on $\mathfrak t$, but here we \emph{must} take the induced metric on $\mathfrak t_R$.)
Let
\begin{equation}\label{E:BrhoDef}
\mathcal B(\rho)
\end{equation}
be the indexing set for the induced stratification of $\mathbb P\mathcal N_x$: that is, $\mathcal B(\rho)$ is the set of all $\beta$ in a fixed positive Weyl chamber in  $\mathfrak t_R$ such that $\beta$ is the closest point to $0$ of the convex hull of a nonempty set of weights of the representation $\rho$.

Let $S_{\beta'}(\rho)$ for
\begin{equation}\label{E:BBrhoDef}
\beta'\in \calB(\rho)
\end{equation}
be the associated $R$-stratification of $\PP\calN_x$ (as defined in \S~\ref{S:E-P-S-def} and~\eqref{E:SbetaDef}).
Kirwan shows
that for $\hat \beta \in \calB_{\hat X}$, naturally in $\mathfrak t$, we may actually take $\hat \beta \in \mathfrak t_R\subseteq \mathfrak t$~\cite[Proof of Lem.~7.9, p.70]{kirwanblowup}, and  that there is a surjective
\footnote{
For surjectivity of the map \eqref{E:Brho-Bhat}, we are assuming that $\hat X$ is the full Kirwan blow-up.  Otherwise,  \eqref{E:Brho-Bhat} will not be surjective, and later, in \eqref{E:finExtraT1}, we would find that there was a further sum of the form in \eqref{E:ExtraTerm00}, corresponding to those $\hat \beta\in \mathcal B_{\hat X}$ that are not in the image of \eqref{E:Brho-Bhat}.  This is further clarified in \eqref{eq:ARcontribution} and \eqref{E:finExtraT}.
}
 map~\cite[p.73, Lem.~7.6, Lem.~7.9]{kirwanblowup}
\begin{equation}\label{E:Brho-Bhat}
\calB(\rho)\to \calB_{\hat X}
\end{equation}
taking $\beta'\in \calB(\rho)$ to the unique element $\hat \beta\in \calB_{\hat X}$ with $\beta'$ in its Weyl group orbit $W(G)\cdot \hat \beta$; here we are identifying  $W(G)=W(K)$, where $K$ is the maximal compact subgroup.   Note that we prefer to work with Weyl group orbits in the Lie algebra $\mathfrak t$ of the (real) maximal torus $T$, whereas Kirwan prefers to work with the equivalent notion of adjoint orbits in the Lie algebra $\mathfrak k$ of  $K$.
Given $\hat \beta\in \calB_{\hat X}$, the Weyl group orbit $W(G)$ of $\hat \beta$ decomposes into a finite number of $W(R)$ orbits.  There is a unique $\beta'\in \calB(\rho)$ in each $W(R)$ orbit contained in $W(G)\cdot \hat \beta$~\cite[Proof of Lem.~7.9, p.71]{kirwanblowup}.
 We let
 \begin{equation}\label{E:wbRGDef}
 w( \beta',R,G)
\end{equation} be the number of  $\beta' \in \calB (\rho)$ that lie in $W(G)\cdot \hat \beta$, i.e., the number of elements in the fiber of~\eqref{E:Brho-Bhat} containing $\beta'$, which is also  equal to  the number of $W(R)$ orbits  contained in the $W(G)$ orbit of~$\hat \beta$~\cite[p.68]{kirwanblowup}\footnote{There is a typo in~\cite[\S 3, p.49]{kirwanhyp}; the definition there is meant to read: $w(\beta,R,G)$ is the number of $R$-adjoint orbits contained in the $G$-adjoint orbit of $\beta$.}.

The fiber  $\hat \pi^{-1}(x)$ of $\hat \pi: \hat X \to  X^{ss}$  can be naturally identified with the projective space
$\PP\calN_x$. For $\hat \beta\in \calB_{\hat X}$, we have~\cite[Lem.~7.9]{kirwanblowup}
$$
S_{\hat X,\hat \beta}\cap \PP\calN_x=\bigcup_{\beta '\in \calB(\rho)\cap  \operatorname{Ad}(G)\hat \beta} S_{\beta'}(\rho);
$$
i.e., the union is over the $\beta'$ in the fiber of~\eqref{E:Brho-Bhat} over $\hat \beta$.
Kirwan proves in~\cite[Lem.~7.11]{kirwanblowup} that the codimensions $d(\hat X,\hat \beta)$ and $d(\PP\calN_x,\beta')$ of the associated strata $S_{\hat X,\hat \beta}$ and $S_{\beta'}(\rho)$ are equal.

Now given $\beta'\in \calB(\rho)$, let
$\hat \beta$ be the unique element in  $\calB_{\hat X}$ with $\beta'$ in its  $W(G)$ orbit;  i.e., the image of $\beta '$ under~\eqref{E:Brho-Bhat}.  Let
\begin{equation}\label{E:StabGb'}
\operatorname{Stab}_G\beta':=G_{\beta'}
\end{equation}
be the stabilizer of $\beta'$ under the \emph{$G$-adjoint action}.
In general $N\cap \operatorname{Stab}_G\hat \beta \ne N\cap \operatorname{Stab}_G\beta '$; they may differ by conjugation by an element of the Weyl group $W(G)$.  However, as both groups are conjugate subgroups of $N=N(R)$, they have well-defined actions on $Z^{ss}_R$ (since $N(R)$ preserves $Z^{ss}_R$).  Moreover, replacing $Z^{ss}_{\hat \beta}$ ($=:Z^{ss}_{\hat X,\hat \beta}$) with the isomorphic locus
\begin{equation}\label{E:ZsshatXbb'}
Z^{ss}_{\hat X,\beta'}
\end{equation}
defined by the element $\beta'\in W(G)\cdot\hat \beta$ (i.e., via~\eqref{E:HesseZss}), we obtain a well-defined action of $N\cap \operatorname{Stab}_G\beta'$ on $Z^{ss}_{\hat X,\beta'}\cap \hat \pi^{-1}Z^{ss}_R$, such that
$$
P^{N\cap \operatorname{Stab}\hat \beta }_t(Z_{\hat \beta}^{ss}\cap \hat \pi^{-1}Z^{ss}_R)=P^{N\cap \operatorname{Stab}_G \beta' }_t(Z_{\hat X, \beta'}^{ss}\cap \hat \pi^{-1}Z^{ss}_R).
$$

In summary, we have~\cite[(7.15)]{kirwanblowup},~\cite[(3.2), (3.4)]{kirwanhyp}:

\begin{flalign}
P_t^G(\hat X^{ss})
\label{E:finHss}&=P^G_t(X^{ss})& \!\!\!\!\!\!\!\!\!\!\!\!\!\!\!\text{``semi-stable locus''}\\
\label{E:finMainT1} &+P_t^N(Z_R^{ss})(t^2+\dots +t^{2(\operatorname{rk}\calN-1)}) & \text{``main term"}\\
\label{E:finExtraT1} &-\!\!\!\!\sum_{0\ne \beta'\in \calB(\rho)}\!\!\frac{1}{w(\beta',R,G)}t^{2d(\PP\calN_x,\beta')}P^{N\cap \operatorname{Stab}_G \beta' }_t(Z_{\hat X , \beta'}^{ss}\cap \hat \pi^{-1}Z^{ss}_R)\!\!\!\!\!\!
&\text{``extra term"}
\end{flalign}
For the formula above, recall that $Z^{ss}_R$ is defined in~\eqref{E:ZRss}, and the rest of the terms are defined above in this subsection: $\hat X^{ss}$~\eqref{E:PiHatDef-4.1},  $N$
\eqref{E:NDef}, $\calN$~\eqref{E:NNbundDef}, $\rho$~\eqref{E:rhoDef}, $\beta '$~\eqref{E:BBrhoDef}, $\calB(\rho)$~\eqref{E:BrhoDef}, $w(\beta ',R,G)$~\eqref{E:wbRGDef}, $x\in Z^{ss}_R$~\eqref{E:xinZssR}, $\operatorname{Stab}_G{\beta'}$~\eqref{E:StabGb'}, and $Z^{ss}_{\hat X,\beta'}$~\eqref{E:ZsshatXbb'}.

The goal in deriving  the formulas~\eqref{E:finHss},~\eqref{E:finMainT1},~\eqref{E:finExtraT1}  above is to try to reduce the computation of the equivariant cohomology of the blowup $\hat X^{ss}$ to certain computations on $X$.  The remark below can be quite helpful in this regard.

\begin{rem} \label{R:PiFibration}
The restriction $\hat \pi : (Z_{\hat X, \beta'}^{ss}\cap \hat \pi^{-1}Z^{ss}_R) \to Z^{ss}_R$   is an $(N\cap \operatorname{Stab}_G \beta')$-equivariant
fibration with fibers isomorphic to  $Z^{ss}_{\beta'}(\rho)$ (as defined in~\eqref{E:Zbeta},~\eqref{E:HesseZss} for the representation $\rho$)~\cite[8.11]{kirwan84}~\cite[p.50]{kirwanhyp}.
Moreover, if for instance  $N\cap \operatorname{Stab}_G  \beta'$ acts transitively on $Z^{ss}_R$,
then letting  $(N\cap \operatorname{Stab}_G \beta')_x$ be the stabilizer of the general point $x\in Z^{ss}_R$~\eqref{E:xinZssR}
in $N\cap \operatorname{Stab}_G \beta'$, we have   (e.g.,~\eqref{E:PGfib})
$$
P_t^{N\cap \operatorname{Stab}_G \beta'}(Z^{ss}_{\hat X, \beta'}\cap \hat \pi^{-1}Z^{ss}_R)=P^{(N\cap \operatorname{Stab}_G \beta')_x}(\hat \pi^{-1}(x))=P^{(N\cap \operatorname{Stab}_G \beta')_x}(Z^{ss}_{\beta'}(\rho)).
$$
Note that with the transitive group action on $Z^{ss}_R$, we may take any point  $x\in Z^{ss}_R$ to make our computation, since this will only change the computations up to conjugate groups, which will not affect the final outcome.
\end{rem}

\subsection{The correction terms in general} Having reviewed the case of a single blowup, we now   give the formulas  for the cohomology of the full inductive blowup, $\widetilde X^{ss}$.
We use the notation from \S~\ref{SSS:kirBlUp} and especially Remark~\ref{R:pi-r-Def}.
The relevant formula for computing the cohomology of $\widetilde X^{ss}$, generalizing~\eqref{E:finHss} to the full blowup,  is now~\cite[Eq.~3.2]{kirwanhyp}
\begin{equation}\label{eq:ARcontribution}
  P_t^G(\widetilde X^{ss})=P_t^G(X^{ss})+\sum_{R\in \calR} A_R(t).
\end{equation}
For any $R\in\calR$ the term $A_R(t)$ in~\eqref{eq:ARcontribution} records the change of the Betti numbers under the blowup $\pi_{\dim R}:X_{\dim R}\to X_{\dim R+1}$, as defined in Remark~\ref{R:pi-r-Def}; more precisely, one decomposes $\pi_{\dim R}$ into individual blowups of the loci $G\cdot Z^{ss}_{R,\dim R+1}$, where $Z^{ss}_{R,\dim R+1}$ is the strict transform of $Z^{ss}_R$ in $X^{ss}_{\dim R+1}$, and these are the correction terms for that blowup.

The explicit formula for these terms, generalizing~\eqref{E:finMainT1} and~\eqref{E:finExtraT1},  is given by~\cite[Eq.~3.4]{kirwanhyp}
\begin{align}
&A_R(t):=\\
\label{E:finMainT} &P_t^{N(R)}(Z_{R,\dim R+1}^{ss})(t^2+\dots +t^{2(\operatorname{rk}\calN_R-1)}) & \text{``main term"}\\
\label{E:finExtraT} &- \sum_{0\ne \beta'\in \calB_R(\rho)}\frac{1}{w(\beta',R,G)}t^{2d(\PP\calN_x,\beta')}P^{N(R)\cap \operatorname{Stab}_G \beta' }_t(Z^{ss}_{\beta ',R})& \text{``extra term"}
\end{align}
where $Z^{ss}_{\beta ',R}:=Z_{X_{\dim R}, \beta'}^{ss}\cap  \pi_{\dim R}^{-1}Z^{ss}_{R,\dim R+1}$.
These terms, as well as all of the other  terms  above, are described  in \S~\ref{SSS:CorTermsI} (see especially the references after~\eqref{E:finExtraT1}, and also Remark~\ref{R:PiFibration}, and  Remark~\ref{R:pi-r-Def}).

\section[The main correction terms for cubic threefolds]{The \emph{main correction} terms for cubic threefolds}
We now compute the main terms~\eqref{E:finMainT} for the case of cubic threefolds.  We have taken  $\calR=\{R_{2A_5}\cong \CC^*, R_{3D_4}\cong (\CC^*)^2, R_c\cong \PGL(2,\CC)\}$.
Since we have also already worked out the loci $Z^{ss}_R$ in Proposition~\ref{P:ZRss}, the main point is to understand the normalizers $N(R)$, and their action on the $Z^{ss}_R$.
We will work in the order of descending dimension of $R$, following the Kirwan blowup process.

\subsection{The main correction term for $R_c\cong \PGL(2,\CC)$, the chordal cubic case}
As we have seen, the first step in the Kirwan blowup process is to blow up the locus corresponding to chordal cubics.  We start by describing the main term~\eqref{E:finMainT}  for this blowup.

\begin{pro}[Main term for the chordal cubic]\label{P:MT-ChC}  For the group $R_c\cong \PGL(2,\CC)$, the main term~\eqref{E:finMainT} is given by
\begin{align*}
P^{N(R_c)}_t(Z^{ss}_{R_c})(t^2+\dots +t^{2(\operatorname{rk}\calN_{R_c}-1)}) &=(1-t^4)^{-1}(t^2+\dots+t^{24})\\
&= t^2+t^4+2t^6+2t^8+3t^{10}\mod t^{11}.
\end{align*}
\end{pro}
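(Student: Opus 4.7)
The plan is to split the main term into its two factors and compute each separately: the equivariant Poincar\'e polynomial $P^{N(R_c)}_t(Z^{ss}_{R_c})$ and the rank of the normal bundle $\calN_{R_c}$ to the orbit $G\cdot Z^{ss}_{R_c}$. Since $R_c$ has the largest dimension among the groups in $\calR$, no prior blowup in the Kirwan process has taken place when it is the center, so $Z^{ss}_{R_c,\dim R_c+1}=Z^{ss}_{R_c}$ and no strict-transform issue arises.

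For the first factor, Proposition~\ref{P:ZRss}(2) identifies $Z^{ss}_{R_c}$ with the single point $\{V(F_{1,-2})\}$, so the $N(R_c)$-action on it is trivial and $P^{N(R_c)}_t(Z^{ss}_{R_c})=P_t(BN(R_c))$. The plan is to identify $N(R_c)$ up to a finite group: since $R_c\cong \PGL(2,\CC)$ sits inside $\SL(5,\CC)$ via the irreducible representation $\Sym^4(\CC^2)\cong\CC^5$, Schur's lemma forces the centralizer of $R_c$ in $\GL(5,\CC)$ to consist only of scalars, so the centralizer in $\SL(5,\CC)$ is the finite group $\mu_5$. Combined with $\operatorname{Out}(\PGL(2,\CC))=1$, this shows that $N(R_c)$ has connected component $R_c$ and $N(R_c)/R_c$ is finite. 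Since finite groups do not contribute to rational equivariant cohomology, we conclude
\[
P_t(BN(R_c))=P_t(B\PGL(2,\CC))=P_t(B\SO(3))=(1-t^4)^{-1},
\]
using the well-known presentation $H^\ast(B\SO(3);\QQ)=\QQ[p_1]$ with $|p_1|=4$.

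For the second factor, by definition $\operatorname{rk}\calN_{R_c}=\codim_\CC(G\cdot Z^{ss}_{R_c}\subset X^{ss})$. The orbit of the chordal cubic has dimension $\dim G-\dim\Stab^0(V(F_{1,-2}))=\dim\SL(5,\CC)-\dim R_c=24-3=21$ by Proposition~\ref{pro:stabilizer0}(2), so the codimension in $X^{ss}\subset\PP^{34}$ is $34-21=13$, giving the geometric-series factor $t^2+t^4+\cdots+t^{24}$. Multiplying and truncating
\[
(1-t^4)^{-1}(t^2+t^4+\cdots+t^{24})=(1+t^4+t^8+\cdots)(t^2+t^4+t^6+t^8+t^{10}+\cdots)
\]
modulo $t^{11}$ yields the claimed $t^2+t^4+2t^6+2t^8+3t^{10}$.

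The only nontrivial step is the identification of $N(R_c)$, but the irreducibility of the $\Sym^4$ representation and the triviality of $\operatorname{Out}(\PGL(2,\CC))$ make this essentially immediate; everything else is a dimension count and a polynomial expansion, so no serious obstacle is anticipated.
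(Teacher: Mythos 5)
Your proposal is correct, and its overall structure is the same as the paper's: observe that $Z^{ss}_{R_c}$ is a point, identify $N(R_c)$ up to a finite central piece, and count dimensions of the orbit of the chordal cubic. Where you diverge is in how $N(R_c)$ is identified: the paper's Lemma~\ref{L:RcNorm} refers the reader to an explicit matrix computation in the appendix (Proposition~\ref{P:App-R=SL2}, showing via the $\operatorname{Sym}^4$ upper-triangular matrices that the normalizer decomposes as $\mu_5\times\PGL(2,\CC)$), while you give a slicker representation-theoretic shortcut --- Schur's lemma for the irreducible $\operatorname{Sym}^4(\CC^2)$ forces the centralizer to be the scalars $\mu_5$, and triviality of (algebraic) outer automorphisms of $\PGL(2,\CC)$ forces the normalizer to map onto $\PGL(2,\CC)$ via inner automorphisms. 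Both arrive at the same conclusion; your route avoids the appendix entirely at the cost of needing to quote two standard facts from Lie theory, whereas the paper's appendix computation also serves later uses (e.g. describing the extra involution explicitly).

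One phrase you should tighten: ``Since finite groups do not contribute to rational equivariant cohomology'' is not quite right as stated. In general, for a disconnected group $N$ with identity component $N^0$ and finite $\pi_0 N$, one has $H^\bullet(BN;\QQ)=H^\bullet(BN^0;\QQ)^{\pi_0 N}$ (the paper's~\eqref{E:AS-EC-2}), so the finite quotient \emph{can} contribute by acting on the cohomology of $BN^0$. The reason it does not here is that $\pi_0 N(R_c)\cong\mu_5$ consists of scalar matrices, hence is central in $\SL(5,\CC)$, so its conjugation action on $R_c$ --- and therefore on $H^\bullet(BR_c)$ --- is trivial. You already derived the centrality from Schur's lemma, so the fix is only to say this one sentence; as written the claim reads as if any finite $\pi_0$ could be dropped, which is false and is exactly the subtle point one has to keep track of at the $R_{3D_4}$ and $R_{2A_5}$ steps.
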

This will follow directly from the following lemma:
\begin{lem}[Proposition~\ref{P:App-R=SL2}]\label{L:RcNorm}
For $R_c$, the group $\PGL(2,\CC)$ embedded in $\SL(5,\CC)$ via its $\Sym^4\mathbb C^2$ ($\cong \mathbb C^5$) representation,
the normalizer $N(R_c)$ of $R_c$ in $\SL(5,\CC)$  is a split  central extension
\begin{equation}\label{E:NRcCent}
1\to\mu_5\to N(R_c)\to \PGL(2,\CC)\to 1,
\end{equation}
where $\mu_5$ is the group of $5$-th roots of unity. \qed
\end{lem}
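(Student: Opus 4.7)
\emph{Proof proposal.} The proof is a direct group-theoretic calculation based on two standard facts: Schur's lemma applied to the irreducible representation $\Sym^4\CC^2$, and the triviality of $\operatorname{Out}(\PGL(2,\CC))$.

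\emph{Identifying the kernel.} The embedding $R_c\hookrightarrow\SL(5,\CC)$ factors through the irreducible representation $\Sym^4\CC^2$ of $\SL(2,\CC)$, which descends to $\PGL(2,\CC)$ because $-I\in\SL(2,\CC)$ acts as $(-1)^4=1$. By Schur's lemma the centralizer of $R_c$ in $\GL(5,\CC)$ consists of scalar matrices $\CC^*\cdot I$; intersecting with $\SL(5,\CC)$ yields exactly $\mu_5\cdot I$. Since scalars are central in $\SL(5,\CC)$, this subgroup is automatically central in $N(R_c)$.

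\emph{Identifying the quotient and the splitting.} Conjugation defines a homomorphism $\varphi\colon N(R_c)\to\Aut(R_c)$ whose kernel is precisely the centralizer $\mu_5\cdot I$ computed above. Because $\operatorname{Out}(\PGL(2,\CC))$ is trivial (the Dynkin diagram of type $A_1$ has no nontrivial symmetries), the image of $\varphi$ lies in $\operatorname{Inn}(R_c)$, and since $R_c$ has trivial center we have $\operatorname{Inn}(R_c)\cong R_c\cong\PGL(2,\CC)$. The inclusion $R_c\subset N(R_c)$ provides a section: the composition $R_c\hookrightarrow N(R_c)\xrightarrow{\varphi}\operatorname{Inn}(R_c)\cong R_c$ is the canonical isomorphism. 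This simultaneously shows that $\varphi$ surjects onto $\PGL(2,\CC)$ and that the inclusion of $R_c$ splits the extension, proving the lemma.

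I do not anticipate substantial obstacles; the main content is the Schur-lemma centralizer computation and the triviality of the outer automorphism group of $\PGL(2,\CC)$. Some mild care is needed to distinguish between the normalizer in $\GL(5,\CC)$ (where the centralizer would be $\CC^*$) and in $\SL(5,\CC)$ (where the determinant condition cuts it down to $\mu_5$); this is what accounts for the central $\mu_5$ as opposed to a $\CC^*$ in the stated extension.
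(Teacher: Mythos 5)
Your proof is correct, and it takes a genuinely different and more conceptual route than the paper's. The paper (Proposition~\ref{P:App-R=SL2}) proceeds by an explicit matrix calculation: it first quotes Allcock for the description of $\operatorname{Stab}(V(F_{1,-2}))$ as a split extension $\mu_5\times\PGL(2,\CC)$, and then shows $N(R_c)=\operatorname{Stab}(V(F_{1,-2}))$ by two computational claims --- that any normalizing element can be multiplied by an element of $R_c$ to land in the diagonal torus $\TT^4$ (using conjugacy of maximal tori), and then an explicit inspection of how diagonal matrices can conjugate the unipotent matrices $\Sym^4\left(\begin{smallmatrix}1&t\\0&1\end{smallmatrix}\right)$. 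Your argument instead isolates exactly the two structural inputs that make the result true: Schur's lemma (which pins down the kernel of the conjugation map $N(R_c)\to\Aut(R_c)$ as the scalars, giving $\mu_5$ in $\SL(5,\CC)$) and triviality of the algebraic outer automorphism group of $\PGL(2,\CC)$ (which forces the image to be $\operatorname{Inn}(R_c)\cong R_c$, with the tautological section). This gives $N(R_c)=\mu_5\times R_c$ in a few lines, with no reference to Allcock and no matrix computation, and generalizes more readily. The only point worth flagging for a careful write-up is that the automorphisms of $R_c$ arising by conjugation inside $\SL(5,\CC)$ are automatically \emph{algebraic} automorphisms, which is why the Dynkin-diagram argument for $\operatorname{Out}=1$ applies; you implicitly use this when passing from $\Aut(R_c)$ to $\operatorname{Inn}(R_c)$, and it deserves a word. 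One also needs $R_c\cap\mu_5=1$ (immediate since $\mu_5$ is central in $\SL(5,\CC)$ while $Z(R_c)$ is trivial) to conclude that the extension is not merely split but a direct product; you state this implicitly via the section.
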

The proof of the lemma is elementary, with all necessary computations given in Proposition~\ref{P:App-R=SL2}.

\begin{proof}[Proof of Proposition~\ref{P:MT-ChC}]
We saw in Proposition~\ref{P:ZRss} that $Z^{ss}_{R_c}$ consists of a single point, $V(F_{-1,2})$; i.e., the chordal cubic.
The stabilizer of $V(F_{-1,2})$  has connected component equal to $R_c$, so that the dimension of $G\cdot Z^{ss}_{R_c}=\dim G-3=21$.
Thus the rank of the normal bundle to the orbit  $G\cdot Z^{ss}_{R_c}$ is  $\operatorname{rk}\calN_{R_c}=34-21=13$.

Next we compute $P^{N(R_c)}_t(Z^{ss}_{R_c})$.
Since $Z^{ss}_{R_c}$ is a point, we have
\begin{align*}
H^\bullet_{N(R_c)}(Z^{ss}_{R_c})&=H^\bullet(B(N(R_c)))\\
&=H^\bullet (B\mu_5)\otimes H^\bullet (B\PGL(2,\CC))& (\eqref{E:NRcCent}, \eqref{E:K-AS-EC-cent})\\
&= H^\bullet (B\PGL(2,\CC))\\
&=\QQ[c] & (\text{Example~\ref{E:BSUn}, \ref{E:H-PGL}})
\end{align*}
where $\deg c=4$.   In other words $P^{N(R_c)}_t(Z^{ss}_{R_c})=(1-t^4)^{-1}=1+t^4+t^8+\dots$.
\end{proof}

\subsection{The main correction term for $R_{3D_4}\cong (\CC^*)^2$, the $3D_4$ case}
\begin{pro}[Main term for the $3D_4$ cubic]\label{P:MT-3D4}
For the group $R_{3D_4}\cong (\CC^*)^2$,  the main term~\eqref{E:finMainT} is given by
\begin{align*}
P^{N(R_{3D_4})}_t(Z^{ss}_{R_{3D_4,3}})(t^2+\dots +t^{2(\operatorname{rk}\calN_{R_{3D_4}}-1)})&= (1-t^4)^{-1}(1-t^6)^{-1}(t^2+\dots +t^{22})\\
&=t^2+t^4+2t^6+3t^8+4t^{10} \mod t^{11}.
\end{align*}
\end{pro}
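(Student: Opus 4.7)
The plan is to mirror the proof of Proposition~\ref{P:MT-ChC}, computing the rank $\operatorname{rk}\calN_{R_{3D_4}}$ (which fixes the polynomial factor) and the equivariant Poincar\'e polynomial $P_t^{N(R_{3D_4})}(Z^{ss}_{R_{3D_4,3}})$ separately, then multiplying and truncating modulo $t^{11}$.

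For the rank of the normal bundle, I would first observe that by Corollary~\ref{C:ZRss} the orbits $G\cdot Z^{ss}_{R_c}$ and $G\cdot Z^{ss}_{R_{3D_4}}$ are disjoint (they parameterize cubics with different singularity types), so the strict transform $Z^{ss}_{R_{3D_4,3}}\subset X_3^{ss}$ coincides with $Z^{ss}_{R_{3D_4}}$. Next, by Proposition~\ref{P:ZRss}(3) every cubic in $Z^{ss}_{R_{3D_4}}$ has the form $x_0x_1x_2+P_3(x_3,x_4)$ with $P_3$ having three distinct roots, and a direct Jacobian computation confirms that every such cubic has exactly three $D_4$ singularities, located at $[1{:}0{:}0{:}0{:}0]$, $[0{:}1{:}0{:}0{:}0]$, $[0{:}0{:}1{:}0{:}0]$. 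Since all $3D_4$ cubics form a single $G$-orbit by Theorem~\ref{T:GITcub}(3a), we have $G\cdot Z^{ss}_{R_{3D_4}}=G\cdot V(F_{3D_4})$, which has dimension $\dim G-\dim R_{3D_4}=22$. Hence $\operatorname{rk}\calN_{R_{3D_4}}=34-22=12$, yielding the polynomial factor $t^2+t^4+\dots+t^{22}$.

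For the equivariant cohomology factor, the key claim is that $N(R_{3D_4})$ acts transitively on $Z^{ss}_{R_{3D_4}}$: if $F=g\cdot V(F_{3D_4})\in Z^{ss}_{R_{3D_4}}$, then $R_{3D_4}$ stabilizes $F$ and therefore sits inside $\Stab^0(F)=gR_{3D_4}g^{-1}$; as both subgroups are connected and $2$-dimensional they agree, forcing $g\in N(R_{3D_4})$. Consequently $Z^{ss}_{R_{3D_4}}\cong N(R_{3D_4})/\Stab(V(F_{3D_4}))$, and by the standard equivariant cohomology of a homogeneous space (see Appendix~\ref{sec:equivcoh}),
$$
P_t^{N(R_{3D_4})}(Z^{ss}_{R_{3D_4}})=P_t\bigl(B\Stab(V(F_{3D_4}))\bigr).
$$
Presenting $R_{3D_4}$ as the subtorus $\{z_0z_1z_2=1\}\subset(\CC^*)^3$ (parameterizing $\diag(z_0,z_1,z_2,1,1)$), we have $H^*(BR_{3D_4};\QQ)=\QQ[y_0,y_1,y_2]/(y_0+y_1+y_2)$ with $\deg y_i=2$. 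The component group of $\Stab(V(F_{3D_4}))$ acts on $R_{3D_4}$ through the $S_3$ permuting $(x_0,x_1,x_2)$ (odd permutations being realized inside $\SL(5,\CC)$ by composing with the transposition of $x_3,x_4$, which preserves $x_3^3+x_4^3$ and commutes with $R_{3D_4}$), while any further finite automorphisms act only on the $(x_3,x_4)$ block and so act trivially on $R_{3D_4}$. Taking $S_3$-invariants in the three variables $y_i$ produces $\QQ[e_2,e_3]$ with $\deg e_i=2i$, whence $P_t^{N(R_{3D_4})}(Z^{ss}_{R_{3D_4}})=(1-t^4)^{-1}(1-t^6)^{-1}$.

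Multiplying the two factors and truncating modulo $t^{11}$ then yields $t^2+t^4+2t^6+3t^8+4t^{10}$, as claimed. The main point requiring careful bookkeeping is the precise description of the finite part of $\Stab(V(F_{3D_4}))$ and its action on $R_{3D_4}$---in particular that the full $S_3$ is realized inside $\SL(5,\CC)$ and that no additional finite symmetries act nontrivially on $R_{3D_4}$; the elementary matrix computations underlying this should be recorded in Appendix~\ref{S:Elem}.
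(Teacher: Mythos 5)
Your proposal is correct and reaches the same answer, but the key computation of $P_t^{N(R_{3D_4})}(Z^{ss}_{R_{3D_4}})$ takes a genuinely different and somewhat more direct route than the paper's proof.

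Both arguments agree on the preliminary steps: the strict transform is unaffected since the $3D_4$ and chordal loci are disjoint, the orbit $G\cdot V(F_{3D_4})$ has dimension $22$ so $\operatorname{rk}\calN_{R_{3D_4}}=12$, and transitivity of $N(R_{3D_4})$ on $Z^{ss}_{R_{3D_4}}$ reduces the equivariant Poincar\'e polynomial to $P_t(B\Stab(V(F_{3D_4})))$. The divergence is in how this last quantity is computed. The paper deliberately replaces $\Stab(V(F_{3D_4}))$ by the $\GL(5,\CC)$-stabilizer $\GL_{V(F_{3D_4})}$ (which is one dimension larger), precisely because $\GL_{V(F_{3D_4})}\cong D\rtimes(S_3\times S_2)$ is an honest split semidirect product whereas the analogous sequence $1\to D\to\Stab\to S_3\times S_2\to 1$ does not split; it then uses the central extension $1\to\CC^*\to\GL_{V(F_{3D_4})}\to\Aut\to 1$ and $1\to\mu_5\to\Stab\to\Aut\to 1$ to compare, computes $H^\bullet(B\TT^3)^{S_3\times S_2}$ by an explicit matrix/character calculation (the $3$-dimensional $S_3$-representation turns out to be isomorphic to the standard one), and finally divides out $P_t(B\CC^*)=(1-t^2)^{-1}$. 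You instead apply $H^\bullet(BH)=H^\bullet(BH^0)^{\pi_0 H}$ \emph{directly} to $H=\Stab(V(F_{3D_4}))$, $H^0=R_{3D_4}$; this formula (equation~\eqref{E:AS-EC-2}) does not require a splitting, only normality of $H^0$ with finite quotient, which the identity component always satisfies. The payoff of your approach is that by presenting $R_{3D_4}$ as $\{z_0z_1z_2=1\}\subset(\CC^*)^3$, the conjugation action of $\pi_0\Stab$ on $H^\bullet(BR_{3D_4})=\QQ[y_0,y_1,y_2]/(e_1)$ is \emph{manifestly} the permutation action of $S_3$ — no character computation needed — and the invariants $\QQ[e_2,e_3]$ drop out immediately. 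What the paper's detour buys instead is that the explicit description of $\GL_{V(F_{3D_4})}\cong D\rtimes(S_3\times S_2)$ is useful elsewhere (the $\pi_0 N$-action on $\PP(\calN_x)/\!\!/R$ in Proposition~\ref{P:BRt-3D4} uses the same analysis), so the work is not wasted in the global scheme of the paper. Your argument is valid; as you note, the one place that needs the elementary matrix verification (that $\pi_0\Stab$ surjects onto $S_3$ and that the remaining components of $\pi_0\Stab$ commute with $R_{3D_4}$) should be recorded, and it is in fact covered by Proposition~\ref{P:App-R=C*2}.
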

Similarly to the chordal cubic case, this will follow directly from the elementary, but laborious, computations leading to the descriptions of the geometry involved. We will record the results here, while the full proofs are given in Proposition~\ref{P:App-R=C*2} of the Appendix. We first recall the notation $\SSS_n$ for the ``generalized permutation matrices of size $n$'', which explicitly are the matrices one obtains in $\GL(n,\CC)$ by permuting the columns of some diagonal matrix. Moreover, we adopt the convention that when we write an explicit form of a collection of matrices, and then write that it lies in a certain group, that this may impose an extra condition (eg.,~for a $5\times 5$ matrix, we may write $\in\SL$ to impose that it has determinant one, if it is not automatic from the form of the matrix). We finally record that $R_{3D_4}$ is isomorphic to $(\CC^*)^2$, and given in coordinates by
\begin{equation}\label{E:R3D4main}
R_{3D_4}=\operatorname{diag}(s,t,s^{-1}t^{-1},1,1)\cong (\CC^*)^2\,.
\end{equation}
\begin{lem}[{Proposition~\ref{P:App-R=C*2}}]\label{L:R3D4Norm1}
In the notation above:
\begin{enumerate}
\item The normalizer  $N(R_{3D_4})$ of $R_{3D_4}$ in $\SL(5,\CC)$ is
$$
N(R_{3D_4})=
\left\{\left(
\begin{array}{c|c}
\SSS_3&\\ \hline
&\GL_2\\
\end{array}
\right)\in \SL(5,\CC)\right\}\,.
$$
\item The fixed  locus $Z_{R_{3D_4}}^{ss}$ is unchanged under the first blowup:
$Z_{R_{3D_4,3}}^{ss}=Z_{R_{3D_4}}^{ss}$.

\item The normalizer acts on $Z_{R_{3D_4}}^{ss}$ transitively: $Z^{ss}_{R_{3D_4}}=N(R_{3D_4}) \cdot \{V(F_{3D_4})\}$. \qed
\end{enumerate}
\end{lem}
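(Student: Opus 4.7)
The three parts of the lemma are essentially independent and of a different flavor. The plan is to dispatch part (1) by standard weight-space considerations, part (2) by a disjointness argument using Corollary~\ref{C:ZRss}, and part (3) by exhibiting an explicit element of the normalizer that moves a given fixed cubic to the reference one $V(F_{3D_4})$.

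For (1), I would start from the weight decomposition of $\CC^5$ under $R_{3D_4}=\diag(s,t,(st)^{-1},1,1)$: the three one-dimensional weight spaces $\langle e_0\rangle,\langle e_1\rangle,\langle e_2\rangle$ carry the three distinct non-trivial characters $s$, $t$, $(st)^{-1}$, while $\langle e_3,e_4\rangle$ is the trivial weight space. An element $g\in\SL(5,\CC)$ normalizes $R_{3D_4}$ if and only if conjugation by $g$ induces an automorphism of this torus; equivalently, $g$ must send each weight space to another weight space whose character differs by an element of the Weyl group of the torus. The trivial weight space is canonically preserved, so $g$ acts on $\langle e_3,e_4\rangle$ by an arbitrary element of $\GL_2$. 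The three non-trivial characters can only be permuted among themselves, and every permutation in $S_3$ is realized by an element of the Weyl group of $R_{3D_4}$ inside $\SL(5,\CC)$ (indeed $S_3$ acts on the character lattice of $(\CC^*)^2$ via the obvious representation on $\{s,t,(st)^{-1}\}$), so $g$ must restrict to a generalized permutation matrix $\SSS_3$ on $\langle e_0,e_1,e_2\rangle$. The extra $\SL$ condition is just the global determinant-one constraint, giving the stated block form.

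For (2), by the construction in diagram~\eqref{diag_kirwanblowup}, the first blowup $X_3^{ss}\to X^{ss}$ is along $G\cdot Z^{ss}_{R_c}$, and $Z^{ss}_{R_{3D_4,3}}$ is by definition the strict transform of $Z^{ss}_{R_{3D_4}}$. By the inclusion $Z^{ss}_{R_c}\subset Z^{ss}_{R_{2A_5}}$ from~\eqref{E:ZssR-Rel} (equivalently $G\cdot Z^{ss}_{R_c}\subset G\cdot Z^{ss}_{R_{2A_5}}$) together with the disjointness $G\cdot Z^{ss}_{R_{2A_5}}\cap G\cdot Z^{ss}_{R_{3D_4}}=\emptyset$ from~\eqref{E:GZssR-Rel}, the blowup locus does not meet $Z^{ss}_{R_{3D_4}}$ even after saturating by the $G$-orbit, hence the blowup is an isomorphism on a neighborhood of $Z^{ss}_{R_{3D_4}}$ and the strict transform coincides with $Z^{ss}_{R_{3D_4}}$ itself.

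For (3), by Proposition~\ref{P:ZRss}(3) an arbitrary element of $Z^{ss}_{R_{3D_4}}$ can be written as
\[
F=\alpha\,x_0x_1x_2+P_3(x_3,x_4)
\]
with $\alpha\neq 0$ (projectively we may assume this after applying~(1), since $x_0x_1x_2$ must appear for the cubic to be GIT semi-stable, as otherwise $x_0,x_1,x_2$ would generate a linear subsystem of coefficients vanishing, violating Allcock's criterion) and $P_3$ a binary cubic with three distinct roots. Since the space of binary cubics with three distinct roots is a single orbit under $\GL(2,\CC)$ (the cross-ratio of three unordered points on $\PP^1$ is trivial), some element $h\in \GL_2$ acting on $(x_3,x_4)$ sends $P_3$ to $x_3^3+x_4^3$. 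Embedding $h$ as the lower block and taking the upper block to be a diagonal matrix $\diag(a_0,a_1,a_2)$, the determinant condition $a_0a_1a_2\det(h)=1$ leaves one free parameter among $(a_0,a_1,a_2)$ which is used to rescale $\alpha$ to $1$. The resulting element of $N(R_{3D_4})$ carries $F$ to $F_{3D_4}$, proving transitivity.

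The main obstacle, as often in this paper, is the bookkeeping in (1): one must check that every $S_3$ permutation of the three non-trivial characters is actually realized by a normalizing element of $\SL(5,\CC)$ (the $\GL_2$ factor gives enough flexibility to adjust signs so that the overall determinant lies in $\SL$), and that no further hidden symmetries enlarge the group. This is routine but has to be spelled out matrix by matrix, which is exactly what Proposition~\ref{P:App-R=C*2} in the appendix does.
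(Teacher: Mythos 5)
Your proof is correct, and in fact in one place it is more careful than what the appendix of the paper writes down. Let me go part by part.

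For part (1), your weight-space argument (the three distinct nontrivial characters on $\langle e_0\rangle,\langle e_1\rangle,\langle e_2\rangle$ can only be permuted, and the trivial weight space $\langle e_3,e_4\rangle$ is preserved) is a conceptual rephrasing of exactly the matrix computation the paper does in the proof of Proposition~\ref{P:App-R=C*2}: there one writes out $\diag(s_1,s_2,s_1^{-1}s_2^{-1},1,1)\cdot n=n\cdot\diag(t_1,t_2,t_1^{-1}t_2^{-1},1,1)$ and deduces the block shape entry by entry, plus the observation that the induced automorphism $(s_1,s_2)\mapsto (t_1,t_2)$ of $\TT^2$ must permute $\{s_1,s_2,s_1^{-1}s_2^{-1}\}$. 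Same content, somewhat cleaner packaging; either is fine.

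For part (2), you should be aware the appendix does \emph{not} actually spell this statement out: Proposition~\ref{P:App-R=C*2} proves the normalizer computation, the description of $Z^{ss}_{R_{3D_4}}$, and the transitivity, but the strict-transform assertion is left implicit. Your disjointness chain $G\cdot Z^{ss}_{R_c}\subset G\cdot Z^{ss}_{R_{2A_5}}$ and $G\cdot Z^{ss}_{R_{2A_5}}\cap G\cdot Z^{ss}_{R_{3D_4}}=\emptyset$ from~\eqref{E:GZssR-Rel} is the right input. One small caution: ``the blowup is an isomorphism near $Z^{ss}_{R_{3D_4}}$'' gives you the identification of underlying spaces, but by Remark~\ref{R:Stab+Strict} you must also rule out that points of $Z^{ss}_{R_{3D_4}}$ become unstable in $X_3^{ss}$. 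That is where the saturation-by-$G$ remark in your argument does real work: points of $Z^{ss}_{R_{3D_4}}$ are polystable, so their orbits are already closed in $X^{ss}$ and hence their orbit closures do not meet $G\cdot Z^{ss}_{R_c}$. It would be worth making that explicit rather than folding it into ``even after saturating by the $G$-orbit.''

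For part (3), your argument is not just an acceptable alternative --- it is the correct one. The paper's proof asserts that the matrices $\bigl(\begin{smallmatrix}\operatorname{Id}_3&0\\0&\SL_2\end{smallmatrix}\bigr)$ act transitively on $Z^{ss}_{R_{3D_4}}$, but that subgroup has complex dimension $3$ while $Z^{ss}_{R_{3D_4}}$ (the open subset of $\PP^4$ where the $x_0x_1x_2$-coefficient and the discriminant of $P_3$ are both nonzero) has complex dimension $4$; in the identification $Z^{ss}_{R_{3D_4}}\cong\{P_3:\ \operatorname{disc}(P_3)\neq 0\}\subset\operatorname{Sym}^3\CC^2$, the $\SL_2$-orbits are the level sets of the discriminant, so that copy of $\SL_2$ alone cannot be transitive. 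You correctly move the binary cubic by the full $\GL_2$ in the lower block (which is transitive on binary cubics with distinct roots because it has an open dense orbit, since the $\GL_2$-stabilizer of $x_3^3+x_4^3$ is finite) and then absorb the resulting scalar ambiguity, together with the $\SL(5)$ determinant constraint, into the diagonal part of the $\SSS_3$ block. Your wording ``one free parameter among $(a_0,a_1,a_2)$'' undersells slightly what is going on --- the requirement $a_0a_1a_2=\mu/\alpha$ together with $a_0a_1a_2\det(h)=1$ is a single compatibility condition on $\det(h)$, which is satisfied by rescaling $h$ by a suitable $\lambda\in\CC^*$ --- but the mechanism is sound and worth stating in this expanded form.
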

We will moreover need to know various other stabilizer groups. We denote by $\operatorname{Stab}(V(F_{3D4}))\subset\SL(5,\CC)$
the stabilizer of the cubic with equation $F_{3D4}$, denote $\operatorname{Aut}(V(F_{3D4}))$ its stabilizer in $\PGL(5,\CC)$, and let $\GL_{V(F_{3D4})}$ be the stabilizer in $\GL(5,\CC)$. We furthermore denote $D:=\{\diag(\lambda _0,\lambda _1,\lambda_2,\lambda_3,\lambda_4): \lambda_0\lambda_1\lambda_2=\lambda_3^3=\lambda_4^3\}$ an auxiliary group for these computations, which can be explicitly written as the direct product
\begin{equation}\label{E:D=Txmu}
D=\TT^3\times \mu_3
\end{equation}
of the torus $\TT^3=\diag(\lambda_0,\lambda_1,\lambda_0^{-1}\lambda_1^{-1}\lambda_3^3,\lambda_3,\lambda_3)\cong (\CC^*)^3$ and the group $\mu_3=\diag(1,1,1,1,\zeta^i)\cong \ZZ/3\ZZ$ where $\zeta$ is a primitive $3$-rd root of unity.
\begin{lem}[{Proposition~\ref{P:App-R=C*2}}]\label{L:R3D4Norm2}
The groups defined above are as follows.
\begin{enumerate}
\item The group $\operatorname{Stab}(V(F_{3D4}))$ is equal to
\begin{equation}\label{E:App-GF3D4}
\operatorname{Stab}(V(F_{3D4}))=
\left\{\left(
\begin{array}{c|c}
\SSS_3&\\ \hline
&\SSS_2\\
\end{array}
\right)\in \SL(5,\CC): \lambda_0\lambda_1\lambda_2=\lambda_3^3=\lambda_4^3\right\},
\end{equation}
where the $\lambda_i$ is the unique non-zero element in the $i$-th row.

\item There are central extensions
\begin{equation}\label{E:St3D4Ct}
1\to \mu_5\to \operatorname{Stab}(V(F_{3D_4})) \to \operatorname{Aut}(V(F_{3D_4}))\to 1\,,
\end{equation}
\begin{equation}\label{E:GLSt3D4Ct}
1\to \CC^*\to \GL_{V(F_{3D_4})}\to \operatorname{Aut}(V(F_{3D_4}))\to 1\,.
\end{equation}
\item There is an isomorphism
\begin{equation}\label{E:GLStDs3s2}
\GL_{V(F_{3D_4})}\cong D\rtimes (S_3\times S_2)\,,
\end{equation}
where the action of $S_3\times S_2$ on $D$ is to permute the entries; $S_3$ permutes the first three entries $\lambda_0,\lambda_1,\lambda_2$, and $S_2$, the last two, $\lambda_3,\lambda_4$. \qed
\end{enumerate}
\end{lem}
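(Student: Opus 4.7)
All three parts of the lemma hinge on one elementary computation: classifying the linear maps preserving $F_{3D_4}=x_0x_1x_2+x_3^3+x_4^3$ up to a scalar. For part (1), my starting point is that the singular locus of $V(F_{3D_4})$ consists exactly of the three coordinate points $[e_0], [e_1], [e_2]$, by an immediate partial-derivative computation. Any $A \in \operatorname{Stab}(V(F_{3D_4}))$ must permute this triple, which forces $A \cdot e_i = \lambda_i e_{\sigma(i)}$ for $i = 0, 1, 2$, some $\sigma \in S_3$ and $\lambda_i \in \CC^*$. Consequently $A$ takes the block form $\bigl(\begin{smallmatrix} A_1 & B \\ 0 & A_2 \end{smallmatrix}\bigr)$ with $A_1 \in \SSS_3$, $B$ a $3 \times 2$ block, and $A_2 \in \GL(2,\CC)$. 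I would then expand $F_{3D_4}(Ax) = \lambda F_{3D_4}(x)$ and compare coefficients: mixed monomials $x_i x_j x_k$ with $i, j \in \{0,1,2\}$, $i \neq j$, and $k \in \{3,4\}$ can only arise from the expansion of $y_0 y_1 y_2$ (where $y_\ell = (Ax)_\ell$), and each has coefficient of the form (product of nonzero $\lambda$'s) $\cdot B_{\ast, k}$, forcing $B = 0$. Since $y_3^3 + y_4^3$ then involves only $x_3, x_4$, the constraint on $A_2 \in \GL(2, \CC)$ is that it preserves $x_3^3 + x_4^3$ up to a scalar; its image in $\PGL(2,\CC)$ must therefore permute the three projective roots $[1 : -\zeta^k]$ ($k = 0, 1, 2$), and one verifies that the generators $z \mapsto \zeta z$ and $z \mapsto 1/z$ of this $S_3$-stabilizer both lie in $\SSS_2$, giving $A_2 \in \SSS_2$. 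The remaining scalar identity reduces to $\lambda_0 \lambda_1 \lambda_2 = \lambda_3^3 = \lambda_4^3$, where $\lambda_i$ is the unique nonzero entry of the $i$-th row, yielding \eqref{E:App-GF3D4}.

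For part (2), both extensions are restrictions of the standard central extensions $1 \to \mu_5 \to \SL(5,\CC) \to \PGL(5,\CC) \to 1$ and $1 \to \CC^* \to \GL(5,\CC) \to \PGL(5,\CC) \to 1$. One only needs to observe that scalar matrices act on any cubic form by the cube of the scalar and hence stabilize every cubic hypersurface, so $\mu_5 \subset \operatorname{Stab}(V(F_{3D_4}))$ and $\CC^* \subset \GL_{V(F_{3D_4})}$; centrality is automatic because these scalar subgroups lie in the center of the ambient group.

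For part (3), running the argument of part (1) without the $\SL$-determinant constraint identifies $\GL_{V(F_{3D_4})}$ with the block-diagonal generalized permutation matrices satisfying $\lambda_0 \lambda_1 \lambda_2 = \lambda_3^3 = \lambda_4^3$. Every such element factors uniquely as the product of a diagonal matrix in $D$ with a pure permutation matrix in $S_3 \times S_2$, where $S_3$ acts on $\{e_0, e_1, e_2\}$ and $S_2$ on $\{e_3, e_4\}$. Normality of $D$ is immediate: conjugating a diagonal matrix by a block-permutation only reorders its entries within each block, and the defining constraint on $D$ is symmetric under such reorderings. The resulting conjugation action of $S_3 \times S_2$ on $D$ is precisely the stated permutation action on diagonal entries, completing the semidirect decomposition.

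The main technical burden lies in part (1), specifically in the coefficient bookkeeping that rules out the upper-right block $B$ and identifies $A_2$ as a generalized permutation matrix. Once (1) is secured, parts (2) and (3) are essentially formal consequences of standard central extensions and an internal diagonal/permutation decomposition.
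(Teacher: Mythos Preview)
Your proposal is correct and follows essentially the same route as the paper's proof in Proposition~\ref{P:App-R=C*2}: constrain the matrix form by the action on the three singular points, kill the off-diagonal $3\times 2$ block by inspecting the monomials $x_ax_bx_c$ with $0\le a<b\le 2$ and $c\in\{3,4\}$, reduce $A_2$ to $\SSS_2$ via its action on $x_3^3+x_4^3$, and then read off the semidirect product structure from the diagonal/permutation factorization. The only cosmetic difference is that the paper first computes $\GL_{V(F_{3D_4})}$ and obtains $\operatorname{Stab}(V(F_{3D_4}))$ by intersecting with $\SL(5,\CC)$, whereas you do it in the opposite order.
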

The proofs of the two lemma above are by (long) direct computations, given in Proposition~\ref{P:App-R=C*2} in the Appendix.

\begin{rem}
The reason for introducing $\GL_{V(F_{3D_4})}$ is that while there is a short exact sequence $1\to D\to \operatorname{Stab}(V(F_{3D_4}))\to S_3\times S_2\to 1$, this sequence does not split.  For the purposes of computing equivariant cohomology, it is just as easy to work with central extensions, see~\eqref{E:K-AS-EC-cent}, and so we work with $\GL_{V(F_{3D_4})}$, where the surjection splits, giving an easy semi-direct product with which to work.
\end{rem}
\begin{proof}[Proof of Proposition~\ref{P:MT-3D4}]
For brevity, we write $R=R_{3D_4}$ and $N=N(R)$.  Since by Lemma~\ref{L:R3D4Norm1}(1) the group~$N$ acts transitively on $Z^{ss}_R$, we have $\dim G\cdot Z^{ss}_R=\dim G\cdot \{V(F_{3D_4})\}=24-2=22$.
Thus the rank of the normal bundle to the orbit  $G\cdot Z^{ss}_{R}$ is  $\operatorname{rk}\calN_{R}=34-22=12$.

Next we compute $P^N_t(Z^{ss}_{R})$.
 From Lemma~\ref{L:R3D4Norm1}(3), we have
\begin{align*}
H^\bullet_N(Z^{ss}_{R})&=H^\bullet (B\operatorname{Stab}(V(F_{3D_4})))\,.
\end{align*}
At the same time we have
\begin{align*}
H^\bullet(B\CC^*)\otimes H^\bullet (B&\operatorname{Stab}(V(F_{3D_4})))\\
&=H^\bullet(B\CC^*)\otimes  H^\bullet (B\operatorname{Aut}(V(F_{3D_4})))&(\eqref{E:St3D4Ct},~\eqref{E:K-AS-EC-cent})\\
&=H^\bullet (B \GL_{V(F_{3D_4})})&(\eqref{E:GLSt3D4Ct})\\
&=H^\bullet(BD)^{S_3\times S_2}& (\eqref{E:GLStDs3s2}, \ \eqref{E:AS-EC-2})\\
&=(H^\bullet(B(\TT^3\times \mu_3)))^{S_3\times S_2} & (\eqref{E:D=Txmu})\\
&=H^\bullet(B(\TT^3))^{S_3\times S_2} & (\eqref{Exa:SemDirExa})\\
&=\QQ[c_1^{(1)},c_1^{(2)},c_1^{(3)}]^{S_3\times S_2},&
\end{align*}
with degree $c_1^{(i)}=2$.
The action of $S_3\times S_2$ is given as follows. First, we observe that the action is obtained from the action of $S_3\times S_2$ on the torus $\TT^3$ (e.g., Example~\ref{Exa:SemDirExa}), via the identifications~\eqref{E:D=Txmu} and~\eqref{E:GLStDs3s2}.
Concretely, $\TT^3=\diag(\lambda_0,\lambda_1,\lambda_0^{-1}\lambda_1^{-1}\lambda_3^3,\lambda_3,\lambda_3)\cong (\CC^*)^3=\diag(\lambda_0,\lambda_1,\lambda_3)$.
The action of $S_3\times S_2$ on $\mathbb T^3$ is to permute the entries; $S_3$ permutes the first three entries, and $S_2$, the last two.
Consequently, the $S_2$ factor acts trivially on $\mathbb T^3$.
To describe the action of the $S_3$ factor on $\mathbb T^3\cong (\CC^*)^3=\diag(\lambda_0,\lambda_1,\lambda_3)$, let us denote $S_3=\langle \delta,\gamma : \delta^2=\sigma^3=1,\ \delta \sigma =\sigma^2\delta\rangle$ the standard presentation of $S_3$.
Then
  $\delta(\lambda_0,\lambda_1,\lambda_3)=(\lambda_1,\lambda_0,\lambda_3)$, and $\sigma(\lambda_0,\lambda_1,\lambda_3)=(\lambda_0^{-1}\lambda_1^{-1}\lambda_3^3,\lambda_0,\lambda_3)$.
The action of $S_3$ on the symmetric algebra $\QQ[c_1^{(1)},c_1^{(2)},c_1^{(3)}]$ is induced by the action of $S_3$ on the vector space $\QQ\langle c_1^{(1)},c_1^{(2)},c_1^{(3)}\rangle$, and so we see that $\delta$ and $\sigma$ act by
$$
\delta =
\left(
\begin{array}{ccc}
0&1&0\\
1&0&0\\
0&0&1
\end{array}
\right)\ \ \hbox{and}
\ \ \sigma =
\left(
\begin{array}{ccc}
-1&1&0\\
-1&0&0\\
3&0&1
\end{array}
\right).
$$
At this point, one may use Molien's formula, or simply observe via the characters that the representation of $S_3$ given by the matrices above is isomorphic to the standard representation, which decomposes as the direct sum of the trivial representation and the representation of $S_3$ as the dihedral group acting on the plane.
In any case,
we obtain the generating function for $\QQ[c_1^{(1)},c_1^{(2)},c_1^{(3)}]^{S_3\times S_2}$ to be
$$
(1-t^2)^{-1}(1-t^4)^{-1}(1-t^6)^{-1}
$$
Thus putting everything together, we have
\begin{align*}
P^N_t(Z^{ss}_{R})&=(1-t^2)\cdot (1-t^2)^{-1}(1-t^4)^{-1}(1-t^6)^{-1} =(1-t^4)^{-1}(1-t^6)^{-1}\\
&\equiv 1+ t^4+t^6+t^8+t^{10} \mod t^{11}.
\end{align*}
\end{proof}

\subsection{The main correction term for $R_{2A_5}\cong \CC^*$, the $2A_5$ case}
\begin{pro}[Main term for $2A_5$ cubics]\label{P:MT-2A5}  For the group $R_{2A_5}\cong \CC^*$, the main term~\eqref{E:finMainT} is given by
\begin{align*}
P^{N(R_{2A_5})}_t(Z^{ss}_{R_{2A_5,2}})(t^2+\dots +t^{2(\operatorname{rk}\calN_{R_{2A_5}}-1)})&=(1-t^4)^{-1}(1+t^2)(t^2+\dots+t^{18})\\
&\equiv t^2+2t^4+3t^6+4t^8+5t^{10} \mod t^{11}.
\end{align*}
\end{pro}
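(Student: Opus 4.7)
The plan is to follow the same four-step template as Propositions~\ref{P:MT-ChC} and~\ref{P:MT-3D4}, treating the normalizer, the rank of the normal bundle, the strict transform, and finally the equivariant Poincar\'e polynomial. First, I would identify $N(R_{2A_5})\subset\SL(5,\CC)$. Since $R_{2A_5}=\diag(\lambda^2,\lambda,1,\lambda^{-1},\lambda^{-2})$ has five distinct weights, the centralizer of $R_{2A_5}$ equals the full diagonal torus $\TT\cong(\CC^*)^4$; the normalizer extends this only by the Weyl element inverting the character of $R_{2A_5}$, namely (up to sign) the antidiagonal permutation $w$ swapping $x_0\leftrightarrow x_4$, $x_1\leftrightarrow x_3$, and fixing $x_2$. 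Thus $N(R_{2A_5})\cong \TT\rtimes\langle w\rangle$, via a routine matrix computation that would be relegated to Appendix~\ref{S:Elem}. For the rank, Proposition~\ref{pro:stabilizer0} gives generic $G$-stabilizer $R_{2A_5}$, so the generic $G$-orbit has dimension $23$, and combined with the one-dimensional moduli $Z^{ss}_{R_{2A_5}}/N(R_{2A_5})\cong\calT\cong\PP^1$, one obtains $\dim G\cdot Z^{ss}_{R_{2A_5}}=24$, whence $\operatorname{rk}\calN_{R_{2A_5}}=10$, matching the range $t^2+\dots+t^{18}$ in the claim.

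Next, for the strict transform: Corollary~\ref{C:ZRss} gives $G\cdot Z^{ss}_{R_{2A_5}}\cap G\cdot Z^{ss}_{R_{3D_4}}=\emptyset$, so the $3D_4$-blowup has no effect on $Z^{ss}_{R_{2A_5}}$, and $Z^{ss}_{R_{2A_5,2}}=Z^{ss}_{R_{2A_5,3}}$ is the strict transform under the chordal blowup alone. The intersection $Z^{ss}_{R_{2A_5}}\cap G\cdot Z^{ss}_{R_c}$ is the codimension-one $N(R_{2A_5})$-orbit of the chordal cubic inside $Z^{ss}_{R_{2A_5}}\cong\PP^4$, corresponding to the single point $C=1$ in $\calT$.

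For the equivariant Poincar\'e polynomial, I would exploit the $N(R_{2A_5})$-equivariant projection $Z^{ss}_{R_{2A_5}}\to\calT\cong\PP^1$ (with trivial action on the base). The generic fiber is a single $N(R_{2A_5})$-orbit whose stabilizer contains $R_{2A_5}$ together with the involution $w$, which preserves $V(F_{A,B})$ because $F_{A,B}$ is manifestly symmetric under $x_0\leftrightarrow x_4$, $x_1\leftrightarrow x_3$, and which acts on $R_{2A_5}$ by inversion. Rationally,
\begin{equation*}
H^*(B(R_{2A_5}\rtimes\langle w\rangle);\QQ)=H^*(BR_{2A_5};\QQ)^{\ZZ/2\ZZ}=\QQ[c_1^2],
\end{equation*}
with $c_1^2$ of degree~$4$, contributing $(1-t^4)^{-1}$. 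Combining this fiber contribution with $P_t(\calT)=1+t^2$ via the equivariant Leray spectral sequence (which degenerates by the usual equivariant perfection arguments) yields $P^{N(R_{2A_5})}_t(Z^{ss}_{R_{2A_5,2}})=(1-t^4)^{-1}(1+t^2)$, and multiplying by $t^2+\dots+t^{18}$ gives the stated formula.

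The main obstacle lies in the last step: justifying that passage to the strict transform followed by restriction to the semi-stable locus $X^{ss}_3$ preserves this Poincar\'e polynomial. Since the chordal center meets $Z^{ss}_{R_{2A_5}}$ in codimension one, the strict transform is na\"ively isomorphic to $\PP^4$ as a variety, but the modified linearization on the blowup might destabilize parts of the exceptional divisor sitting over the chordal sub-orbit. A careful Luna slice analysis at a chordal point, using the representation $\rho\colon R_c\to\GL(\calN_x)$ on the normal slice, should verify that the semi-stability restriction is cohomologically invisible and that the Leray computation goes through unchanged; this is the delicate part of the argument.
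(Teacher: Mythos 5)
Your proposal is substantively correct and reaches the same answer, with the same normalizer $\TT^4\rtimes\langle w\rangle$ and the same rank $\operatorname{rk}\calN_{R_{2A_5}}=10$, but the organization of the equivariant Poincar\'e polynomial computation is genuinely different from the paper's. The paper's proof uses a purely group-theoretic decomposition of $H^\bullet_N(Z^{ss}_R)$: first $H^\bullet_N = (H^\bullet_{\TT^4})^{\ZZ/2\ZZ}$ via~\eqref{E:AS-EC-2}, then $H^\bullet_{\TT^4}(Z^{ss}_R) = H^\bullet(BR)\otimes H^\bullet_{\TT^4/R}(Z^{ss}_R)$ by~\eqref{E:K-AS-EC-cent} (since $R$ is central in $\TT^4$ and acts trivially on $Z^{ss}_R$), then $H^\bullet_{\TT^4/R}(Z^{ss}_R)=H^\bullet(\PP^1)$ because $\TT^4/R$ acts with finite stabilizers, and finally the $\ZZ/2\ZZ$-invariants are taken globally by explicitly computing the action on $c_1$ and $h$. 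You instead propose a Leray spectral sequence for $(Z^{ss}_R)_N\to \calT$, computing the fiber $H^\bullet(BN_x)=\QQ[c_1^2]$ by passing the $\ZZ/2\ZZ$-invariance to the stabilizer of a generic point. This is defensible but requires checking that the relevant constructible sheaf on $\calT$ is constant (there are larger stabilizers over $C=0,\infty$, so this is not purely a simply-connectedness argument) and that the spectral sequence degenerates; the paper's route bypasses both points by reducing to a quotient by a group acting with finite stabilizers. Your rank computation, via ``generic orbit is $23$-dimensional plus $1$-parameter moduli,'' is a valid alternative to the paper's argument via the subgroup $G'\subset G$ preserving $Z^{ss}_R$.

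Your final paragraph raises a concern that is already resolved by the cited general theory, so no Luna slice analysis is needed. By Remark~\ref{R:Stab+Strict} (Kirwan, and Reichstein), the strict transform of $Z^{ss}_{R_{2A_5}}$ under the chordal blowup is automatically the fixed locus $\hat Z^{ss}_{R_{2A_5}}$ in the new semi-stable locus; there is no separate stability check to perform. Combined with the fact from Lemma~\ref{L:R2A5Norm}(2) (Proposition~\ref{P:App-R=C*p2}(2)) that $G\cdot Z^{ss}_{R_c}$ meets $Z^{ss}_{R_{2A_5}}$ in a Cartier divisor inside the smooth $Z^{ss}_{R_{2A_5}}$, the strict transform is isomorphic to $Z^{ss}_{R_{2A_5}}$, which is all one needs. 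So the ``delicate part'' you flag is a non-issue once the right general result is invoked.
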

This will follow directly from the following lemma, which will be proven by direct elementary computations, given in Propositions~\ref{P:App-R=C*p1} and~\ref{P:App-R=C*p2} in the Appendix. We recall  from Proposition~\ref{P:ZRss} that $Z^{ss}_{R_{2A_5}}$ is the set of semi-stable cubics defined by equations of the form $a_0x_2^3 + a_1x_0x_3^2 + a_2x_1^2x_4 +a_3 x_0x_2x_4 + a_4x_1x_2x_3=0$.
\begin{lem}[{Propositions~\ref{P:App-R=C*p1} and~\ref{P:App-R=C*p2}}]\label{L:R2A5Norm} For $R_{2A_5}=\operatorname{diag}(\lambda^{2},\lambda ,1,\lambda ^{-1},\lambda ^{-2})\cong \CC^*$:
\begin{enumerate}
\item  The normalizer $N(R_{2A_5})$ of $R_{2A_5}$ in $\SL(5,\CC)$ is equal to
the subgroup  of $\SL(5,\CC)$ that is the semi-direct product
\begin{equation}\label{E:LR2A5Norm}
N(R_{2A_5})\cong \TT^4\rtimes \ZZ/2\ZZ
\end{equation}
of the maximal torus $\TT^4$, and the involution $\tau:x_i\mapsto x_{4-i}$, with the semi-direct product given by the homomorphism
$$ \tau \mapsto \left(\diag(\lambda_0,\lambda_1,\lambda_2,\lambda_3,\lambda_4)\mapsto \diag(\lambda_4,\lambda_3,\lambda_2,\lambda_1,\lambda_0)\right)\,\,.$$

\item The orbit of the chordal cubic $G\cdot Z^{ss}_{R_c}$ meets $Z^{ss}_{R_{2A_5}}$
precisely in the divisor defined by the equation $4a_0a_1a_2-a_3a_4^2=0$.
Thus the strict transform $Z_{R_{2A_5},2}^{ss}$ is isomorphic to $Z^{ss}_{R_{2A_5}}$.

\item The quotient $Z_{R_{2A_5}}^{ss}/\TT^4$ is isomorphic to $\PP^1$. \qed
\end{enumerate}
\end{lem}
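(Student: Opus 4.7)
The plan is to handle the three parts in sequence; each reduces to an elementary but lengthy matrix computation, carried out in Propositions~\ref{P:App-R=C*p1} and~\ref{P:App-R=C*p2}.

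For (1), the strategy is to exploit the fact that the weights $2,1,0,-1,-2$ of $R_{2A_5}$ acting on $\CC^5$ are distinct, so all weight spaces $\CC\cdot e_i$ are one-dimensional. Any $g\in\SL(5,\CC)$ normalizing $R_{2A_5}$ induces an algebraic automorphism of $R_{2A_5}\cong\CC^*$, which must be either the identity or inversion (the only algebraic automorphisms of $\CC^*$ preserving the given weight multiset). In the first case $g$ preserves every weight line $\CC\cdot e_i$ and is therefore forced to be diagonal, so $g\in\TT^4$. In the second case $g$ swaps the $\pm 2$ lines and the $\pm 1$ lines and preserves $\CC\cdot e_2$, so $g=\tau\cdot\diag(\mu_0,\ldots,\mu_4)$ with $\tau\colon x_i\mapsto x_{4-i}$. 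The semidirect product relation is then immediate from the direct computation $\tau\cdot\diag(\mu_0,\ldots,\mu_4)\cdot\tau^{-1}=\diag(\mu_4,\mu_3,\mu_2,\mu_1,\mu_0)$.

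For (2), I would first observe from Corollary~\ref{C:ZRss} (specifically the disjointness $G\cdot Z^{ss}_{R_{3D_4}}\cap G\cdot Z^{ss}_{R_{2A_5}}=\emptyset$ in~\eqref{E:GZssR-Rel}) that the $3D_4$ blowup does not touch $Z^{ss}_{R_{2A_5}}$, so only the intersection with $G\cdot Z^{ss}_{R_c}$ is relevant. Using the explicit parametrization of $Z^{ss}_{R_{2A_5}}$ by $(a_0,\ldots,a_4)$ from Proposition~\ref{P:ZRss}, a direct weight computation shows that the monomial $a_0a_1a_2/(a_3a_4^2)$ is the essentially unique $\TT^4$-invariant ratio of degree zero. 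Comparing with the Allcock normal form $F_{A,B}$ (where $a_0=A$, $a_1=a_2=1$, $a_3=-1$, $a_4=B$) and the chordal characterization $4A/B^2=1$ from Remark~\ref{R:Alck-poly-form}, the chordal locus in $Z^{ss}_{R_{2A_5}}$ is cut out by a single equation of the form $4a_0a_1a_2-a_3a_4^2=0$ (up to sign conventions). Since this is a Cartier divisor in the smooth variety $Z^{ss}_{R_{2A_5}}$, blowing up along it is an isomorphism, so the strict transform $Z^{ss}_{R_{2A_5},2}$ of $Z^{ss}_{R_{2A_5}}$ in $X_2^{ss}$ is isomorphic to $Z^{ss}_{R_{2A_5}}$.

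For (3), the kernel of the $\TT^4$-action on $Z^{ss}_{R_{2A_5}}$ contains $R_{2A_5}$ (by the very definition of $Z^{ss}_{R_{2A_5}}$) as a one-dimensional subgroup, together with the finite center $\mu_5\subset\SL(5,\CC)$; these intersect trivially, so $\TT^4$ acts on $Z^{ss}_{R_{2A_5}}$ through an effective three-dimensional torus, giving a one-dimensional quotient. The invariant of (2) defines a morphism $\pi\colon Z^{ss}_{R_{2A_5}}\to\PP^1$, $(a_i)\mapsto(a_0a_1a_2:a_3a_4^2)$, well-defined everywhere on $Z^{ss}_{R_{2A_5}}$ because $a_1,a_2,a_3\neq 0$ and $(a_0,a_4)\neq(0,0)$ prevent simultaneous vanishing. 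To check $\pi$ is the quotient map, I would verify that each fiber is a single $\TT^4$-orbit: after a $\TT^4$-normalization every point of $Z^{ss}_{R_{2A_5}}$ is equivalent to some $V(F_{A,B})$ with $(A,B)\in\CC^2\setminus\{0\}$, unique up to the residual scaling $(A,B)\mapsto(\lambda^2A,\lambda B)$ of Remark~\ref{R:Alck-poly-form}; the resulting weighted projective line $\PP(2,1)$ is isomorphic to $\PP^1$ via the coordinate $4A/B^2$, matching $\pi$. The main obstacle throughout is bookkeeping — tracking precise torus weights and sign conventions so the chordal divisor equation appears exactly as stated, and ruling out extra discrete elements of $\SL(5,\CC)$ normalizing $R_{2A_5}$ beyond those in $\TT^4\rtimes\langle\tau\rangle$ — but the eigenspace argument in (1) closes the latter cleanly because the weights are distinct.
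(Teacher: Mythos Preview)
Your proposal is correct and essentially follows the paper's proof. For (1) and (2) your argument matches the paper's almost verbatim: the dichotomy ``induced automorphism of $\CC^*$ is identity or inversion'' and the subsequent reduction to diagonal matrices is exactly what Proposition~\ref{P:App-R=C*p1}(4) does, and for (2) the paper likewise reduces to the torus orbit and the chordal condition $4A/B^2=1$ (with an explicit torus computation in place of your invariant-theoretic identification of $a_0a_1a_2/(a_3a_4^2)$). The only noticeable difference is in (3): you construct the explicit quotient map $(a_i)\mapsto(a_0a_1a_2:a_3a_4^2)$ and verify its fibers are single $\TT^4$-orbits, whereas the paper argues softly that the quotient is one-dimensional, normal, unirational, and compact (using the surjection from the $F_{A,B}$ family), hence $\PP^1$. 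Your explicit map is a bit more informative; the paper's argument is quicker but less constructive.
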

For the last item we note that $\TT^4/\CC^*$ acts on $Z_{R_{2A_5}}^{ss}$ with finite stabilizers, and so the quotient $Z_{R_{2A_5}}^{ss}/\TT^4=Z_{R_{2A_5}}^{ss}/(\TT^4/\CC^*)$ is a well-defined variety.

\begin{proof}[Proof of Proposition~\ref{P:MT-3D4}]
For brevity, write $R=R_{2A_5}$ and $N=N(R)$.  In Lemma~\ref{L:R2A5Norm}(2), we saw that $Z^{ss}_{R_{},2}=Z^{ss}_{R_{}}$, and that $Z^{ss}_{R_{}}$ has dimension $4$.   Now consider the subgroup $G'\subseteq \SL(5,\CC)$ consisting of those $g$ such that $g\cdot Z^{ss}_R\subseteq Z^{ss}_R$.  This group $G'$ has dimension $4$.  Indeed, we have $N\subseteq G'$, so that $\dim G'\ge 4$.  On the other hand, the stabilizer of a general point of $Z^{ss}_R$ is $1$-dimensional (it has connected component equal to $R$), so that
if $\dim G'\ge 5$, then the dimension of the orbit of a general point would be $\ge 5-1=\dim Z^{ss}_R$.  But then there would be a Zariski dense subset of $Z^{ss}_R$ corresponding to projectively equivalent cubics.
It follows that   $\dim G\cdot Z^{ss}_R=\dim Z^{ss}_R+\dim G -\dim G' =4+24-4=24$.
Thus the rank of the normal bundle to the orbit  $G\cdot Z^{ss}_{R}$ is  $\operatorname{rk}\calN_{R}=34-24=10$.

Next we compute $P^N_t(Z^{ss}_{R})$.
 From Lemma~\ref{L:R2A5Norm}, we have
\begin{align*}
H^\bullet_N(Z^{ss}_{R})&=\left(H^\bullet _{\TT^4}(Z^{ss}_R)\right)^{\ZZ/2\ZZ}& (\eqref{E:LR2A5Norm}, \ \eqref{E:AS-EC-2})\\
&=(H^\bullet(BR)\otimes H^\bullet_{\TT^4/R}(Z^{ss}_R))^{\ZZ/2\ZZ} & (\eqref{E:K-AS-EC-cent}) \\
&=(H^\bullet(BR)\otimes H^\bullet(\PP^1))^{\ZZ/2\ZZ} & (\text{Lemma~\ref{L:R2A5Norm}}(3),\ \eqref{E:AS-EC-FinG})\\
&=(\QQ[c_1] \otimes \QQ[h]/h^2)^{\ZZ/2\ZZ} &
\end{align*}
where $\deg c_1=\deg h=2$.
Now one must trace through the constructions to find the action of $\ZZ/2\ZZ=\langle \tau\rangle$ on the polynomial ring.  The action on the cohomology of $BR$ is induced by the action on $R$, which can easily be seen to be given by $\lambda\mapsto \lambda^{-1}$.  Thus the action of $\tau$ on $c_1$ is given by $\tau c_1=-c_1$.  The action of $\tau$ on the cohomology of $\PP^1$  is induced by the action on $\PP^1$.  The action of $\tau$ on $Z^{ss}_R$ is given by $\tau (a_0:\dots :a_4)=(a_0:a_2:a_1:a_3:a_4)$.  Using the locus $\{V(F_{A,B}):(A,B)\ne (0,0)\}\subseteq Z^{ss}_R$ (i.e., $a_1=a_2=-a_3=1$), one sees that the action on the quotient $\PP^1=Z^{ss}_R/\TT^4$ is trivial.  Thus the action of $\tau$ on $h$ is trivial.  Thus we have $ (\QQ[c_1] \otimes \QQ[h]/h^2)^{\ZZ/2\ZZ} =\QQ[c_1^2]\otimes \QQ[h]/(h^2)$.  Thus $P_t^N(Z^{ss}_R)=(1-t^4)^{-1}(1+t^2)$.
\end{proof}

\section[The extra correction terms for cubic threefolds]{The \emph{extra correction terms} for cubic threefolds}
Having computed the {\em main terms} of the contributions $A_R(t)$ given by~\eqref{E:finMainT}, to finish the computation of $H^\bullet (\MK)$ following Kirwan's method it thus remains to compute the \emph{extra terms} given by~\eqref{E:finExtraT}.  A key point is to describe for each $R$ the representation $\rho:R\to \operatorname{Aut}(\calN_x)$ on the normal slice to the orbit $G\cdot Z^{ss}_R$ at a generic point $x\in Z^{ss}_R$.  We start in \S~\ref{S:TanOrb} by reviewing a general approach to computing the tangent space to an orbit for the case of hypersurfaces.  We then utilize this in the case of cubic threefolds, and consequently obtain the extra terms.

\subsection{Tangent spaces to orbits for hypersurfaces}\label{S:TanOrb}
Let $F\in H^0(\PP^n,\calO_{\PP^n}(d))$ the form defining a hypersurface $V(F)\subseteq \PP^n$.  We wish to describe the tangent space to the orbit $\GL(n+1,\CC)\cdot F$.

\begin{rem} We will ultimately be interested in the normal space to the orbit $\SL(n+1,\CC)\cdot \{V(F)\}$ in $\PP H^0(\PP^n,\calO_{\PP^n}(d))$.  However, since the normal space of any submanifold  $Y$ in projective space $\PP(V)$ can, via the
Euler sequence,  be identified with the  normal space to its cone $C(Y)$ in $V$, we may instead consider the $\GL(n+1,\CC)$ orbit of $F$ in  $H^0(\PP^n,\calO_{\PP^n}(d))$, rather than the $\SL(n+1,\CC)$ orbit of $V(F)$ in $\PP H^0(\PP^n,\calO_{\PP^n}(d))$.
\end{rem}

To compute the tangent space to the $\GL(n+1,\CC)$ orbit of $F$ we  work with the Lie algebra $\gl(n+1,\CC)$ and use the exponential map $\exp: \gl(n+1,\CC) \to \GL(n+1,\CC)$.
If $e \in \gl(n+1,\CC)$ then taking the derivative $\frac{d}{dt}\left.(\exp(te)F)\right|_{t=0}$ gives a tangent vector $t_e$  in the tangent space to the orbit $\GL(n+1,\CC)\cdot F$. Taking a basis of $\gl(n+1,\CC)$ we then obtain generators for the tangent space
to the orbit $\GL(n+1,\CC)\cdot F$.
Concretely, with respect to the coordinates $(x_0:\dots:x_n)$, numbering the rows and columns of matrices from $0$ to $n$, we then take as generators for $\gl(n+1,\CC)$ the elementary matrices $e_{ij}$ for all $0\le i, j\le n$, where $e_{ij}$ is the matrix with all zero entries except for the $ij$-the entry which is one. Given a form $F$, we then denote
$$
(DF)_{ij}:=\frac{d}{dt}\left.\left(\exp(te_{ij})F\right)\right|_{t=0}.
$$
We denote by $DF$ the associated matrix with entries $(DF)_{ij}$.  Finally, we conclude that the tangent space to the orbit $\GL(n+1,\CC)\cdot F$ is given by the span of the entries of the matrix $DF$.

We implement this now in the case of polystable cubic threefolds:
\begin{exa}[Tangent spaces to the orbits of strictly polystable cubic threefolds]\label{Exa:D}
For a strictly polystable cubic threefold defined by a cubic form $F$, the tangent space to the orbit $\GL(5,\CC)\cdot F$ is given by the span of the entries of the matrix $DF$.  In particular we have:

\begin{enumerate}
\item For $F=F_{A,B}$, $(A,B)\ne (0,0)$, the matrix $DF_{A,B}$ is given by
$$
 DF_{A,B}=
 $$
$$
\scriptstyle
\hskip-1cm \left(\begin{smallmatrix}
 x_0x_3^2-x_0x_2x_4&x_1x_3^2-x_1x_2x_4&x_2x_3^2-x_2^2x_4&x_3^3-x_2x_3x_4&x_3^2x_4-x_2x_4^2\\
 2x_0x_1x_4+Bx_0x_2x_3&2x_1^2x_4+Bx_1x_2x_3&2x_1x_2x_4+Bx_2^2x_3&2x_1x_3x_4+Bx_2x_3^2&2x_1x_4^2+Bx_2x_3x_4\\
 3Ax_0x_2^2-x_0^2x_4+Bx_0x_1x_3&3Ax_1x_2^2-x_0x_1x_4+Bx_1^2x_3&3Ax_2^3-x_0x_2x_4+Bx_1x_2x_3&3Ax_2^2x_3-x_0x_3x_4+Bx_1x_3^2&3Ax_2^2x_4-x_0x_4^2+Bx_1x_3x_4\\
 2x_0^2x_3+Bx_0x_1x_2&2x_0x_1x_3+Bx_1^2x_2&2x_0x_2x_3+Bx_1x_2^2&2x_0x_3^2+Bx_1x_2x_3&2x_0x_3x_4+Bx_1x_2x_4\\
 x_0x_1^2-x_0^2x_2&x_1^3-x_0x_1x_2&x_1^2x_2-x_0x_2^2&x_1^2x_3-x_0x_2x_3&x_1^2x_4-x_0x_2x_4
 \end{smallmatrix}\right)
 $$
To quickly determine all linear equations satisfied by the entries of the matrix $DF$, we note that since $F_{A,B}$ is preserved by the action of $\CC^*=R_{2A_5}$, any relation decomposes under the action, given by~\eqref{E:R2A5}, into linear equations among monomials of the same weight. By inspection the weights in the above matrix under the above action range from $+4$ in the bottom left to corner to $-4$ in the top right corner, with entries along each diagonal going down-and-right having the same weight. Then within each diagonal to determine possible linear relations among the entries one first checks if any monomial is repeated. One easily sees then that for $4A/B^2\ne 1$ and $(A,B)\ne (0,0)$ the only possible relation could be in weight $0$, i.e.~among the five entries on the main diagonal. By looking at which monomials repeat in which entries, one finally sees that for $4A/B^2\ne 1$ and  $(A,B)\ne (0,0)$ the only linear relation satisfied by the entries of $DF_{A,B}$ is 
\begin{equation}\label{eq:rel*}
  2(DF_{A,B})_{00}+(DF_{A,B})_{11}-(DF_{A,B})_{33}-2(DF_{A,B})_{44}=0\,.
\end{equation}

\item For $F_{1,-2}$, i.e., $A=1$ and $B=-2$, in addition to the linear relation~\eqref{eq:rel*}, one sees that the entries of $DF_{1,-2}$ satisfies two additional linear relations in weights $1$ and $-1$:
\begin{equation}\label{eq:relSL}\begin{aligned}&(DF_{1,-2})_{10}+2(DF_{1,-2})_{21}+3(DF_{1,-2})_{32}+4(DF_{1,-2})_{43}=0; \\ &(DF_{1,-2})_{34}+2(DF_{1,-2})_{23}+3(DF_{1,-2})_{12}+4(DF_{1,-2})_{01}=0\,,
\end{aligned}
\end{equation}
and no further relations.

\item For $F=F_{3D_4}$, the matrix $DF_{3D_4}$ is given by
$$
 DF_{3D_4}=\begin{pmatrix}
 x_0x_1x_2&x_1^2x_2&x_1x_2^2&x_1x_2x_3&x_1x_2x_4\\
 x_0^2x_2&x_0x_1x_2&x_0x_2^2&x_0x_2x_3&x_0x_2x_4\\
 x_0^2x_1&x_0x_1^2&x_0x_1x_2&x_0x_1x_3&x_0x_1x_4\\
 3x_0x_3^2&3x_1x_3^2&3x_2x_3^2&3x_3^3&3x_3^2x_4\\
 3x_0x_4^2&3x_1x_4^2&3x_2x_4^2&3x_3x_4^2&3x_4^3
 \end{pmatrix}\,.
$$
Since all entries of this matrix are monomial, the only possible linear relations are pairwise equalities, up to a constant factor. One sees that the only monomial that repeats more than once is $x_0x_1x_2$, and thus the set of linear relations satisfied by the entries of $DF_{3D_4}$ is
$$
  (DF_{3D_4})_{00}=(DF_{3D_4})_{11}=(DF_{3D_4})_{22}.
$$
\end{enumerate}
\end{exa}

\subsection{The extra correction term for $R_c\cong \PGL(2,\CC)$, the chordal cubic case}

\begin{pro}[Extra term for the chordal cubic] \label{P:ET-ChC}

 For the group $R_c\cong \PGL(2,\CC)$, the extra term~\eqref{E:finExtraT} is given by
\begin{align}\label{E:PET-ChC}
\sum_{0\ne \beta'\in \calB(\rho)}\frac{1}{w(\beta',R_c,G)}t^{2d(\PP\calN_x,\beta')}P^{N(R_c)\cap \operatorname{Stab} \beta' }_t(Z_{\beta ',R_c}^{ss})&=0 \mod t^{11}.
\end{align}
\end{pro}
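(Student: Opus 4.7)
The plan is to bound the codimension of each unstable stratum $S_{\beta'}(\rho)$ in $\PP\calN_x$ and show it is large enough that the contribution of each term in the sum starts in degree $\geq 12$, and hence vanishes modulo $t^{11}$. The essential computation is to identify the representation $\rho\colon R_c\to \GL(\calN_x)$, after which the bound becomes a routine application of the combinatorial formula~\eqref{E:Sbcodim} from \S\ref{Sec-Hyp-2}.

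First I would identify $\rho$ explicitly. Under the identification $\CC^5 \cong \Sym^4 \CC^2$ defining the embedding $\PGL(2,\CC)\hookrightarrow \SL(5,\CC)$, the classical plethysm gives the $\SL(2)$-decomposition
\[
\Sym^3(\Sym^4\CC^2)\;\cong\; V_{12}\oplus V_8\oplus V_6\oplus V_4\oplus V_0,
\]
where $V_d:=\Sym^d\CC^2$, so dimensions are $13+9+7+5+1=35$. Since the chordal cubic $F_{1,-2}$ is $\SL(2)$-invariant, it spans the $V_0$-summand. Computing the tangent direction to the orbit $\SL(5)\cdot F_{1,-2}$ via the $\SL(2)$-decomposition $\sl(5)=V_8\oplus V_6\oplus V_4\oplus V_2$ with stabilizer $\pgl(2)=V_2$, the orbit tangent space projects in $T_{V(F_{1,-2})}\PP^{34}=V_{12}\oplus V_8\oplus V_6\oplus V_4$ to the summand $V_8\oplus V_6\oplus V_4$ of dimension~$21$ (matching $\dim G\cdot V(F_{1,-2})=21$ from Proposition~\ref{P:MT-ChC}). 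Consequently
\[
\calN_x\;\cong\;\Sym^{12}\CC^2 \qquad\text{as }R_c\text{-representation,}
\]
i.e.\ $\rho$ is the natural $\PGL(2,\CC)$-action on binary forms of degree~$12$, and the associated GIT problem on $\PP\calN_x=\PP^{12}$ is precisely the classical one of $12$ unordered points on $\PP^1$ (in agreement with the description of the exceptional divisor $E=p^{-1}(\Xi)$ in Theorem~\ref{resgitball}(1)).

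Next I would enumerate $\calB(\rho)$ and estimate codimensions. Since the weights of the diagonal torus of $\SL(2,\CC)$ on $\Sym^{12}\CC^2$ are $\{-12,-10,\dots,10,12\}$, the elements of $\calB(\rho)$ in the positive Weyl chamber are precisely the minima of positive subsets, namely
\[
\calB(\rho)\setminus\{0\}=\{2,4,6,8,10,12\}.
\]
For each such $\beta'$, the parabolic $P_{\beta'}\subset\SL(2,\CC)$ is the Borel, so $\dim G/P_{\beta'}=1$. Applying~\eqref{E:Sbcodim} with $n(\beta')$ equal to the number of weights $<\beta'$, one obtains the expected codimensions
\[
d_{\operatorname{exp}}(\beta')=n(\beta')-1\;=\;6,\,7,\,8,\,9,\,10,\,11 \qquad \text{for } \beta'=2,\,4,\,6,\,8,\,10,\,12,
\]
respectively. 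In particular $d(\PP\calN_x,\beta')\geq d_{\operatorname{exp}}(\beta')\geq 6$ for every $0\neq \beta'\in\calB(\rho)$.

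Finally, since $P^{N(R_c)\cap \operatorname{Stab}_G\beta'}_t(Z_{\beta',R_c}^{ss})$ has nonnegative coefficients starting in degree $0$, each summand of~\eqref{E:PET-ChC} is a series with lowest nonzero term in degree $\geq 2\cdot 6=12$, and hence is $\equiv 0\pmod{t^{11}}$; the multiplicities $\tfrac{1}{w(\beta',R_c,G)}$ are harmless positive rationals. Summing the finitely many terms, the whole extra correction vanishes modulo $t^{11}$, establishing the proposition. The only step where anything subtle could arise is the identification of $\rho$ with $\Sym^{12}\CC^2$ and the correct bookkeeping of the projective (versus affine) tangent direction; once that is in place, all codimension bounds are purely combinatorial and no stratum need be shown to be nonempty since empty strata contribute $0$ anyway.
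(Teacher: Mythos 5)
Your proof is correct, and the overall strategy --- identify the representation $\rho$ on $\calN_x$, then bound every unstable stratum by $d(\PP\calN_x,\beta')\ge 6$ so that each term of the extra sum starts in degree $\ge 12$ --- is the same as the paper's. The one genuinely different ingredient is how you establish $\calN_x\cong\Sym^{12}\CC^2$. You use the plethysm $\Sym^3(\Sym^4\CC^2)\cong V_{12}\oplus V_8\oplus V_6\oplus V_4\oplus V_0$ together with the decomposition $\sl(5)\cong V_8\oplus V_6\oplus V_4\oplus V_2$ and Schur's lemma (all summands are multiplicity one and pairwise distinct, so the image of the orbit map is forced) to conclude that the orbit tangent space is $V_8\oplus V_6\oplus V_4$, hence $\calN_x=V_{12}$. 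The paper (Lemma~\ref{L:Rc-Nx-Rep}) instead writes down the matrix $DF_{1,-2}$ from Example~\ref{Exa:D}(1), tabulates torus weights of the $25$ entries, accounts for the three linear relations~\eqref{eq:rel*},~\eqref{eq:relSL}, and subtracts these weights from the weights on $\CC^{35}$. Your route is cleaner and structurally transparent; what the paper's approach buys is uniformity with the $R_{3D_4}$ and $R_{2A_5}$ cases in Lemmas~\ref{L:R3D4-Nx-Rep} and~\ref{L:R2A5-Nx-Rep}, where $R$ is a torus and no plethysm shortcut is available, so the explicit $DF$ computation is unavoidable anyway. Once $\rho$ is identified, both arguments apply~\eqref{E:Sbcodim} identically; you even record the exact expected codimensions $d_{\operatorname{exp}}(\beta')=5+\beta'/2$ rather than just the paper's bound $d(\beta')\ge 6$, which is a (harmless) strengthening.
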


Recall that in the formula above $x$ is a general point of $Z^{ss}_{R_c}$ (in this case, since $Z^{ss}_{R_c}$ consists of a single point, $x$ corresponds to the chordal cubic), $\calN_x$ is the fiber of the normal bundle to the orbit $G\cdot Z^{ss}_{R_c}$ at $x$, and $\rho:R_c\to \operatorname{Aut}(\calN_x)$ is the induced representation.   The proof of the proposition will consist of showing that the codimension $d(\PP\calN_x,\beta')$ of any stratum $S_{\beta'}(\rho)$ for $0\ne \beta'\in \calB(\rho)$ is at least $6$.  This will follow from the following lemma, describing the representation $\rho$.

\begin{lem}\label{L:Rc-Nx-Rep}
For $R_c=\PGL(2,\CC)$, $\dim\calN_x=13$, and the representation $\rho$ of $R_c$ on $\calN_x$ is the one induced by the $\SL(2,\CC)$-representation $\Sym^{12}\CC^2$, where $\CC^2$ is the standard two-dimensional representation.  Consequently the weights of the action of the maximal torus $T\cong \CC^*$ in $R_c$ are
$$
-12,-10,-8,-6,-4,-2,0,2,4,6,8,10,12.
$$
\end{lem}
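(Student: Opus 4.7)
The plan is to identify $\calN_x\cong \Sym^{12}\CC^2$ as an $\SL(2,\CC)$-representation via a highest-weight comparison, and then read off the torus weights. First, following the remark preceding Example~\ref{Exa:D}, I would identify $\calN_x$ with the normal space to the $\GL(5,\CC)$-orbit of the chordal form $F := F_{1,-2}$ inside the affine cone $\Sym^3(\CC^5)^\vee$; concretely, $\calN_x$ is the quotient of $\Sym^3(\CC^5)^\vee$ by the image of the orbit differential $\mu\colon \gl(5,\CC)\to \Sym^3(\CC^5)^\vee$, $e\mapsto e\cdot F$. Both source and target carry natural $\SL(2,\CC)$-actions via the embedding $\SL(2,\CC)\hookrightarrow \SL(5,\CC)$ used to construct $R_c$, and $\mu$ is $\SL(2,\CC)$-equivariant; hence $\calN_x$ is a $13$-dimensional $\SL(2,\CC)$-module (the dimension $13$ already having been recorded in Proposition~\ref{P:MT-ChC}).

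Writing $V=\CC^2$ for the standard representation, the self-duality of irreducible $\SL(2,\CC)$-representations gives $\SL(2,\CC)$-equivariant identifications $\CC^5\cong (\CC^5)^\vee\cong \Sym^4V$, and hence $\Sym^3(\CC^5)^\vee\cong \Sym^3(\Sym^4V)$ together with $\gl(5,\CC)\cong \Sym^4V\otimes \Sym^4V$. By Clebsch--Gordan the latter decomposes as
$$\Sym^4V\otimes \Sym^4V \;\cong\; \Sym^8V\oplus \Sym^6V\oplus \Sym^4V\oplus \Sym^2V\oplus \Sym^0V,$$
so in particular the highest weight appearing in $\mu(\gl(5,\CC))\subset \Sym^3(\Sym^4V)$ is at most $8$.

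The key step is then a highest-weight comparison. The weight-$12$ subspace of $\Sym^3(\Sym^4V)$ is one-dimensional, spanned by $f_4^3$ where $f_4\in \Sym^4V$ is a weight-$4$ highest weight vector; since $f_4$ is annihilated by the positive root of $\mathfrak{sl}(2,\CC)$, so is $f_4^3$, which is therefore a highest weight vector of weight $12$ generating a copy of the irreducible $\Sym^{12}V$. Because $12>8$, this copy meets $\mu(\gl(5,\CC))$ trivially and so injects into the $13$-dimensional quotient $\calN_x$. For dimension reasons $\calN_x\cong \Sym^{12}V$ as $\SL(2,\CC)$-modules, and since $12$ is even the representation descends to $R_c=\PGL(2,\CC)$. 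The weights of the standard torus $\{\diag(s,s^{-1})\}\subset \SL(2,\CC)$ on $\Sym^{12}V$ are $12,10,\dots,-10,-12$, completing the list. The principal obstacle in this plan is just the highest-weight comparison; once it is in place, the rest is pure dimension-counting together with the remark about the center.
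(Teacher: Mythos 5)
Your argument is correct, and it takes a genuinely different route from the paper's proof. The paper computes the $\CC^*$-weights on $\CC^{35}=\Sym^3(\CC^5)^\vee$ with multiplicities, computes the weights on the generators of the tangent space to the orbit (the entries of the matrix $DF_{1,-2}$), adjusts for the three linear relations~\eqref{eq:rel*},~\eqref{eq:relSL}, and reads off the weights of $\calN_x$ as the set-theoretic difference. Your approach replaces this explicit linear-algebra computation with a highest-weight argument: since $\gl(5,\CC)\cong\Sym^4V\otimes\Sym^4V$ has highest $\SL(2,\CC)$-weight $8$, no copy of $\Sym^{12}V$ can appear in the image of the orbit map, while the one-dimensional weight-$12$ subspace of $\Sym^3(\Sym^4V)$ generates such a copy; by the dimension count already established in Proposition~\ref{P:MT-ChC}, this copy must be all of $\calN_x$. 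What each buys: the paper's method is entirely elementary and self-contained (and produces, as a by-product, the explicit relations on $DF$ that are reused for the $R_{2A_5}$ case in Lemma~\ref{L:R2A5-Nx-Rep}); your method requires no inspection of $DF$ at all and makes the $\Sym^{12}$ structure — and hence the identification of the exceptional divisor with the GIT of $12$ points on $\PP^1$ — conceptually transparent. As a minor amplification, one can push your argument to a complete decomposition $\Sym^3(\Sym^4V)\cong\Sym^{12}V\oplus\Sym^8V\oplus\Sym^6V\oplus\Sym^4V\oplus\Sym^0V$ with $\ker\mu\cong\Sym^2V=\mathfrak{sl}(2,\CC)$, which recovers the orbit dimension $22$ and confirms the quotient is exactly $\Sym^{12}V$ without invoking the externally known $\dim\calN_x=13$; but the highest-weight shortcut as you present it is already complete given the dimension.
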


\begin{proof}
It suffices to determine the restriction of $\rho$  to the  maximal torus $\TT$ in $\SL(2,\CC)$ (induced by the homomorphism $\SL(2,\CC)\to \PGL(2,\CC)$).
Recall that $Z^{ss}_{R_c}=\{V(F_{1,-2})\}$, and so to describe $\calN_x$, we must simply describe the normal space to the orbit $G\cdot V(F_{1,-2})$ at $V(F_{1,-2})$.

The maximal torus  $\TT=\diag(t,t^{-1})$ in $\SL(2,\CC)$ acts on coordinates $(x_0:\dots:x_4)$ diagonally by $(t^4:t^2:1:t^{-2}:t^{-4})$. Thus it multiplies each cubic monomial by some power of $t$, so that each monomial is thus an eigenspace for the action of $\TT$.
Thus $T_x\CC^{35}=\CC^{35}$
decomposes as a sum of one-dimensional representations of $\TT$ with the following multiplicities of weights
$$
(\pm 12) \times 1, (\pm 10) \times 1, (\pm 8) \times 2, (\pm 6) \times 3, (\pm4) \times 4, (\pm 2) \times 4, (0) \times 5.
$$
The tangent space to the orbit $G\cdot V(F_{1,-2}) $ is generated by the entries of the matrix in Example~\ref{Exa:D}(1).
Each binomial spans an eigenspace for the action of $\TT$, and  weights of the action of $\TT$ on these generators can be computed  directly to be equal to
\begin{equation*}
(\pm 8) \times 1, (\pm 6) \times 2, (\pm4) \times 3, (\pm 2) \times 4, ( 0) \times 5.
\end{equation*}
Now the relation $(\ref{eq:rel*})$ is among the weight $0$ generators, and thus we may drop one of them in forming a basis of the tangent space.  The two relations~\eqref{eq:relSL} are among generators of weights $2$ and $-2$, respectively, so we can also drop one generator of weight $2$ and $-2$.
In summary, the weights for $\TT$ on the tangent space to the orbit are given by
\begin{equation}\label{equ:w2}
(\pm 8) \times 1, (\pm 6) \times 2, (\pm4) \times 3, (\pm 2) \times 3, ( 0) \times 4.
\end{equation}
Taking the complement of the set of weights of the representation on the tangent space to the orbit in the set of weights of the representation on $\CC^{35}$ gives the weights of the action on the normal space, proving the lemma.
\end{proof}

\begin{rem}
We note that in fact this result already follows from the geometry as described in~\cite{act}, where it was shown that the exceptional divisor in $\MK$ corresponding to the chordal cubic is in fact the locus of Jacobians of hyperelliptic curves of genus five, which is thus the moduli space of twelve points on $\PP^1$, which is exactly the GIT quotient for the $12$-th symmetric power of the standard representation of $\SL(2,\CC)$ on $\CC^2$.
\end{rem}

\begin{proof}[Proof of Proposition~\ref{P:ET-ChC}]
From the description of the weights of $\rho$ in Lemma~\ref{L:Rc-Nx-Rep}, we see that we can take $\calB(\rho)=\{0,2,4,6,8,10,12\}$.
We can estimate the codimension $d(\beta')$ for $\beta'\in \calB(\rho)$ using~\eqref{E:Sbcodim}; i.e., $d(\beta')=n(\beta')-\dim ( R_{c}/P_{\beta'})$, where $n(\beta')$ is the number of weights less than $\beta'$, namely $6+\beta'/2$, and $P_{\beta'}$ is the associated parabolic subgroup.   One can check that $P_{\beta'}$ is equal to the $2$-dimensional Borel subgroup consisting of upper triangular matrices; however, it suffices for our purposes to observe that $P_{\beta'}$ contains the Borel.   Thus $d(\beta')\ge (6+\beta'/2)-3+2\ge 6$.
Thus the terms in~\eqref{E:PET-ChC} begin in degree $\ge 2d(\beta')=12$, and are zero modulo $t^{11}$.
\end{proof}

\subsection{The extra correction term for $R_{3D_4}\cong (\CC^*)^2$, the $3D_4$ case}

\begin{pro}[Extra term for the $3D_4$ cubic] \label{P:ET-3D4}

 For the group $R_{3D_4}\cong (\CC^*)^2$, the extra term~\eqref{E:finExtraT} is given by
\begin{align*}
\scriptstyle
-\sum_{0\ne \beta'\in \calB(\rho)}\frac{1}{w(\beta',R_{3D_4},G)}t^{2d(\PP\calN_x,\beta')}P^{N(R_{3D_4})\cap \operatorname{Stab} \beta' }_t(Z_{\beta', R_{3D_4}}^{ss})&=-t^8-2t^{10} \mod t^{11}.
\end{align*}
\end{pro}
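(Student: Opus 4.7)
The plan is to apply~\eqref{E:finExtraT} after (i)~identifying the weights of the representation $\rho:R_{3D_4}\to\GL(\calN_x)$ at a generic $x\in Z^{ss}_{R_{3D_4}}$, (ii)~enumerating the relevant $\beta'\in\calB(\rho)$, and (iii)~computing each contribution via Remark~\ref{R:PiFibration}. Take $x=V(F_{3D_4})$: by Example~\ref{Exa:D}(3), the tangent space to $\GL_5\cdot F_{3D_4}$ is spanned by the entries of $DF_{3D_4}$ modulo the relations $(DF_{3D_4})_{00}=(DF_{3D_4})_{11}=(DF_{3D_4})_{22}$. Subtracting the resulting $R_{3D_4}$-weights from those of $\operatorname{Sym}^3(\CC^5)^\vee$ yields the twelve weights of $\rho$: in character coordinates $(a,b)\in\ZZ^2$, these are $(1,0),(0,1),(-1,-1)$ of multiplicity~$1$; $(2,0),(0,2),(-2,-2)$ of multiplicity~$2$; and $(3,0),(0,3),(-3,-3)$ of multiplicity~$1$. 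They lie on three rays cyclically permuted by the $\ZZ/3\ZZ\subset S_3\subset N(R_{3D_4})$, and the inner product induced from $\mathfrak{t}$ has norm squared $\tfrac{2}{3}(a^2-ab+b^2)$ on $\mathfrak{t}_{R_{3D_4}}^\vee$.

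Since $R_{3D_4}$ is a torus, $P_{\beta'}=R_{3D_4}$ and~\eqref{E:Sbcodim} reduces $d(\PP\calN_x,\beta')$ to the number (with multiplicity) of weights $\alpha$ with $\alpha\cdot\beta'<\|\beta'\|^2$; only strata with $d\le5$ contribute modulo~$t^{11}$. A direct case analysis, streamlined by the $\ZZ/3\ZZ$-symmetry, shows that exactly two $W(G)=S_5$-orbits of relevant nonzero $\beta'$ occur: an orbit of three elements through $\beta'_1=(1/2,1/2)$, the closest point to the origin of the segment between the weights $(1,0)$ and $(0,1)$, with $d=4$; and an orbit of six elements through $\beta'_2=(4/7,5/7)$, the closest point on the line through $(2,0)$ and $(0,1)$, with $d=5$. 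The counts $w(\beta'_i,R_{3D_4},G)=3$ and~$6$ follow by counting the $\mathfrak{t}_{R_{3D_4}}$-points in the $W(G)$-orbits in $\mathfrak{t}$ of $(1/6,1/6,-1/3,0,0)$ and $(1/7,2/7,-3/7,0,0)$, respectively. All other nonzero $\beta'\in\calB(\rho)$ yield $d\ge6$ and so are negligible modulo $t^{11}$.

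For each $\beta'_i$, the subgroup $N(R_{3D_4})\cap\operatorname{Stab}_G\beta'_i$ has the same dimension as $N(R_{3D_4})$ (only the finite $S_3$-piece is refined), so it still acts transitively on $Z^{ss}_{R_{3D_4}}$ by the argument of Lemma~\ref{L:R3D4Norm1}(3). Remark~\ref{R:PiFibration} then reduces the equivariant computation to the fiber $Z^{ss}_{\beta'_i}(\rho)$. For $\beta'_1$, the weights on the supporting hyperplane are just $(1,0)$ and $(0,1)$ (each of multiplicity~$1$), so $Z_{\beta'_1}(\rho)\cong\PP^1$ and $Z^{ss}_{\beta'_1}(\rho)\cong\PP^1\setminus\{0,\infty\}\cong\CC^*$; since $R_{3D_4}$ acts on this $\CC^*$ via a surjective character with $1$-dimensional stabilizer, $H^\bullet_{R_{3D_4}}(\CC^*)\cong H^\bullet(B\CC^*)$, and using Lemma~\ref{L:R3D4Norm2} to check that the finite part of $(N\cap\operatorname{Stab}\beta'_1)_x/R_{3D_4}$ acts trivially on the degree-$2$ generator, the contribution is $1+t^2+O(t^4)$. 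For $\beta'_2$, the weights $(2,0)$ (multiplicity~$2$) and $(0,1)$ (multiplicity~$1$) lie on the hyperplane, so $Z_{\beta'_2}(\rho)\cong\PP^2$ and $Z^{ss}_{\beta'_2}(\rho)\cong\CC^2\setminus\{0\}$ is connected, with contribution $1+O(t^4)$.

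Assembling: the $\beta'_1$-orbit contributes $3\cdot\tfrac{1}{3}\,t^8(1+t^2)\equiv t^8+t^{10}\pmod{t^{11}}$ and the $\beta'_2$-orbit contributes $6\cdot\tfrac{1}{6}\,t^{10}\equiv t^{10}\pmod{t^{11}}$, summing to $t^8+2t^{10}$; the minus sign in~\eqref{E:finExtraT} then yields $-t^8-2t^{10}\pmod{t^{11}}$ as claimed. The main obstacle lies in the finite-group triviality check for the $\beta'_1$-contribution: the group $(N(R_{3D_4})\cap\operatorname{Stab}_G\beta'_1)_x/R_{3D_4}$ is a finite extension involving the cyclic/symmetric symmetries of $x_3^3+x_4^3$, and one must use the explicit structure of $\operatorname{Stab}(V(F_{3D_4}))$ from Lemma~\ref{L:R3D4Norm2} to ensure its action on $H^2_{R_{3D_4}}(\CC^*)$ is trivial; without this, an invariants argument could reduce the $t^{10}$-coefficient and change the final answer.
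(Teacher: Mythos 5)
Your proposal follows essentially the same strategy as the paper's proof: identify the weights of $\rho$ on $\calN_x$, enumerate the relevant $\beta'\in\calB(\rho)$ in codimension $4$ and $5$, compute $w(\beta',R,G)=3$ and $6$ by counting Weyl conjugates landing in $\mathfrak{t}_{R_{3D_4}}$, and use Remark~\ref{R:PiFibration} to reduce to a computation on the fiber $Z^{ss}_{\beta'}(\rho)$. The coordinate system you adopt (characters $(a,b)\in\ZZ^2$ with metric $\tfrac{2}{3}(a^2-ab+b^2)$) and the orbit representatives differ from the paper's Lemma~\ref{L:R(rho)3D4p1} choices, but these differences are purely notational and the numerical claims (codimensions $4$ and $5$, orbit sizes $3$ and $6$, $Z^{ss}_{\beta'_1}\cong\CC^*$, $Z^{ss}_{\beta'_2}\cong\AAA^2\setminus\{0\}$) all check out and agree with Lemmas~\ref{L:R3D4-Nx-Rep}, \ref{L:R(rho)3D4p1}, and \ref{L:R(rho)3D4p2}.

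The gap you flag in the final paragraph is genuine, and you have correctly identified it as the only delicate step. Your route — computing $H^\bullet_{R_{3D_4}}(Z^{ss}_{\beta'_1})\cong H^\bullet(B\CC^*)=\QQ[c]$ via the one-dimensional stabilizer and then taking invariants under the finite quotient $(N\cap\operatorname{Stab}_G\beta'_1)_x/R_{3D_4}$ — requires verifying that no element of that finite group inverts the stabilizing $\CC^*$, which would send $c\mapsto -c$ and kill the $t^{10}$ contribution. The paper sidesteps this invariance question by a cleaner argument: Lemma~\ref{L:R(rho)3D4p2}(4), with the details in Lemma~\ref{L:App-R(rho)3D4a}(5), shows that the full group $(N\cap\operatorname{Stab}_G\beta'_1)_x$ acts \emph{transitively} on $Z^{ss}_{\beta'_1}\cong\CC^*$ with point-stabilizer explicitly identified as the \emph{direct} product $(\CC^*\times\mu_{15})\times(S_2\times S_2)$. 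Since the finite factors commute with the $\CC^*$, one reads off $H^\bullet(B(\ldots))=\QQ[c]$ with no further invariance computation. Your pointer to Lemma~\ref{L:R3D4Norm2} is not quite the right reference for this; the needed structural input is the point-stabilizer computation in Lemma~\ref{L:App-R(rho)3D4a}(5), not the global stabilizer $\operatorname{Stab}(V(F_{3D_4}))$.
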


Recall that in the formula above, $x$ is a general point of $Z^{ss}_{R_{3D_4}}$ (in this case, since $Z^{ss}_{R_{3D_4}}$ consists of a single $N(R_{3D_4})$ orbit, so we may take $x=V(F_{3D_4})$), $\calN_x$ is the fiber of the normal bundle to the orbit $G\cdot Z^{ss}_{R_c}$ at $x$, and $\rho:R_c\to \operatorname{Aut}(\calN_x)$ is the induced representation.  We start with the following lemma, describing the representation $\rho$.

\begin{lem}\label{L:R3D4-Nx-Rep}
For $R_{3D_4}=\diag(\lambda_0,\lambda_1,\lambda_2,1,1)\cap \SL(5,\CC)\cong (\CC^*)^2$, and $x=V(F_{3D_4})$, we have $\dim\calN_x=12$,  with an explicit basis given by
\begin{equation}\label{E:3D4-Nx-Bas}
x_0^3,\  x_1^3,\  x_2^3,\
x_0^2x_3,\  x_1^2x_3, \ x_2^2x_3,\
x_0^2x_4, \ x_1^2x_4, \ x_2^2x_4,\
x_0x_3x_4, \ x_1x_3x_4, \ x_2x_3x_4.
\end{equation}
Each element of this basis is an eigenvector for the action of $R$, so that under the inclusion $\mathfrak t_R=\{(\alpha_0,\alpha_1,\alpha_2,0,0): \sum \alpha_i=0\}\subseteq \mathfrak t=\{(\alpha_0,\dots ,\alpha_4): \sum \alpha_i=0\}\subseteq  \RR^5$, and the identification $\mathfrak t_R=\mathfrak t_R^\vee$  induced by the metric from $\mathfrak t$, the weights of the representation $$\rho:R_{3D_4}\to \operatorname{Aut}(\calN_x)$$ in the order of the basis elements above, are equal to (see also Figure~\ref{F:curve-weights}):
\begin{equation}
\begin{array}{ccc}
(2,-1,-1,0,0),&  (-1,2,-1,0,0),  & (-1,-1,2,0,0),\\
(4/3,-2/3,-2/3,0,0),& (-2/3,4/3,-2/3,0,0),& (-2/3,-2/3,4/3,0,0) \\
 (4/3,-2/3,-2/3,0,0),& (-2/3,4/3,-2/3,0,0),&
  (-2/3,-2/3,4/3,0,0), \\
   (2/3,-1/3,-1/3,0,0), & (-1/3,2/3,-1/3,0,0),& (-1/3,-1/3,2/3,0,0).
\end{array}
\end{equation}
\end{lem}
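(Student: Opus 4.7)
The plan is to identify $\calN_x$ as an explicit subspace of degree-$3$ forms, by using the Euler sequence to reduce the question to an affine computation. Specifically, the normal to $G\cdot V(F_{3D_4})\subseteq \PP^{34}$ at $V(F_{3D_4})$ is canonically isomorphic to the normal to the affine cone orbit $\GL(5,\CC)\cdot F_{3D_4}\subseteq \CC^{35}$ at $F_{3D_4}$, so it suffices to work inside $\CC^{35}$. By Example~\ref{Exa:D}(3), the tangent space to the latter orbit is the linear span of the entries of the matrix $DF_{3D_4}$, and the only linear relations among these entries are $(DF_{3D_4})_{00}=(DF_{3D_4})_{11}=(DF_{3D_4})_{22}$. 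Since each entry is a scalar multiple of a single monomial, this span coincides with the span of the $23$ distinct monomials appearing in $DF_{3D_4}$.

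The first step is then to check, by direct inspection of the matrix in Example~\ref{Exa:D}(3), that the complement inside the $35$ degree-$3$ monomials in $x_0,\dots,x_4$ consists of precisely the $12$ monomials listed in~\eqref{E:3D4-Nx-Bas}. The dimension count is consistent with Proposition~\ref{pro:stabilizer0}(3), which gives $\dim G\cdot V(F_{3D_4})=24-\dim R_{3D_4}=22$, so $\dim\calN_x=34-22=12$. Since monomials are simultaneous eigenvectors for the diagonal torus $R_{3D_4}$, each of these $12$ basis elements is automatically a weight vector for the representation $\rho$, which reduces the second half of the lemma to a character calculation.

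The second step is to compute the weights, under the identification $\mathfrak t_R=\mathfrak t_R^\vee$ induced from the restriction of the standard inner product on $\mathfrak t$. The character of $R_{3D_4}$ on a monomial $x_0^{i_0}x_1^{i_1}x_2^{i_2}x_3^{i_3}x_4^{i_4}$ is the linear functional on $\mathfrak t_R=\{(a_0,a_1,a_2,0,0):a_0+a_1+a_2=0\}$ sending $(a_0,a_1,a_2,0,0)$ to $i_0a_0+i_1a_1+i_2a_2$. Identifying this functional with a vector $(v_0,v_1,v_2,0,0)\in\mathfrak t_R$ via the restricted inner product amounts to solving $\sum v_ja_j=\sum i_ja_j$ for all $(a_0,a_1,a_2)$ with $a_0+a_1+a_2=0$, which forces $v_j-i_j$ to be constant in $j$; the condition $v_0+v_1+v_2=0$ then pins down $v_j=i_j-s/3$ with $s=i_0+i_1+i_2$. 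Substituting $s=3,2,2,1$ for the four families in~\eqref{E:3D4-Nx-Bas} (cubes of $x_j$; squares of $x_j$ times $x_3$; squares of $x_j$ times $x_4$; and $x_jx_3x_4$, for $j\in\{0,1,2\}$) reproduces exactly the list of weights in the statement.

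The whole argument is essentially routine bookkeeping built on Example~\ref{Exa:D}(3); the only mildly delicate point is that the metric on $\mathfrak t_R$ is induced from $\mathfrak t$ rather than, say, the Killing form of $R_{3D_4}$ itself, so that the weight of $x^I$ in $\mathfrak t_R$ is the orthogonal projection of the full $\mathfrak t$-weight $(i_j-3/5)_{j=0}^{4}$ onto $\mathfrak t_R$, which by the above computation simplifies to $(i_0-s/3,i_1-s/3,i_2-s/3,0,0)$. There is no substantive obstacle.
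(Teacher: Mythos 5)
Your proposal is correct and follows essentially the same route as the paper: read off the basis of $\calN_x$ as the monomials missing from the entries of $DF_{3D_4}$ (Example~\ref{Exa:D}(3)), then compute the weights by orthogonally projecting the full $\mathfrak{t}^\vee$-weights onto $\mathfrak{t}_R$ under the induced metric identification. The only cosmetic difference is that you derive the formula $v_j=i_j-s/3$ directly from the defining property of the restricted inner product, whereas the paper writes out the projection $(\alpha_0,\dots,\alpha_4)\mapsto(\alpha_j-\tfrac{1}{3}\sum_{i=0}^2\alpha_i,0,0)$ and plugs in; as you note at the end, these are the same computation.
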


\begin{proof}
The basis for $\calN_x$ comes directly from Example~\ref{Exa:D}(3).   The identification of $\mathfrak t_R=\mathfrak t_R^\vee$ is given by the composition $\mathfrak t_R^\vee \hookrightarrow \mathfrak t^\vee \stackrel{\sim}{\to}\mathfrak t \twoheadrightarrow \mathfrak t_R$, where the last map is the orthogonal projection, and the rest is immediate.  This gives the weights above.  Indeed, for a basis monomial $x^I$, writing the group as $\diag(e^{i\alpha_0},e^{i\alpha_1},e^{i\alpha_2},1,1)$, the associated weight as a linear map  (viewed as either a linear map in $\mathfrak t_R^\vee$ or $\mathfrak t^\vee$) is given by $I.\alpha$.  The orthogonal projection is given by   $(\alpha_0,\dots,\alpha_4)\mapsto (\alpha_0-\frac{1}{3}\sum_{i=0}^2\alpha_i,\alpha_1-\frac{1}{3}\sum_{i=0}^2\alpha_i,\alpha_2-\frac{1}{3}\sum_{i=0}^2\alpha_i,0,0)$.  For instance, the monomial $x_0^2x_3$ has index $I=(2,0,0,1,0)$, and the orthogonal projection is then $(\frac{4}{3},-\frac{2}{3},-\frac{2}{3},0,0)$.
\end{proof}

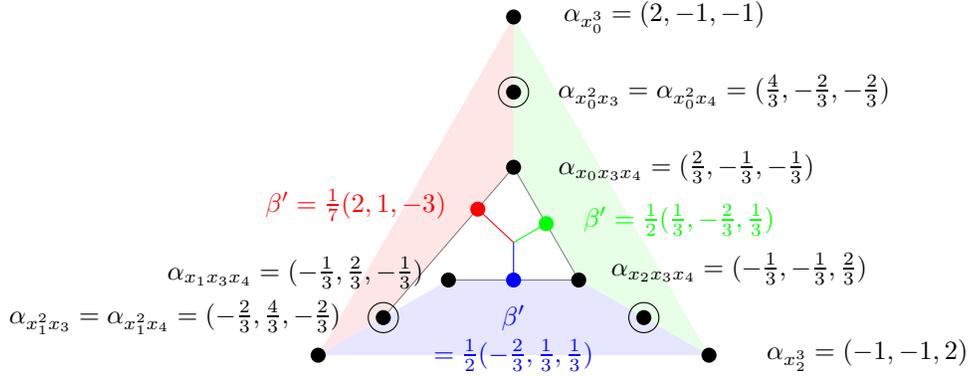
\begin{figure}[!htb]
  \begin{tikzpicture}
  \draw [black,opacity=.5] (90:1cm) -- (330:1cm);
  \draw [black,opacity=.5] (90:1cm) -- (210:2cm);
  \draw [black,opacity=.5] (330:1cm) -- (210:1cm);
   \draw [green,opacity=1] (0,0) -- (30: .5cm);
    \draw [blue,opacity=1] (0,0) -- (-90: .5cm);
   \draw [red,opacity=1] (0,0) -- (137: .65cm);
   \fill[color=red, opacity=0.1] (90:1cm)--(90:3cm)--(210:3cm)--(210:2cm);
   \fill[green, opacity=0.1] (90:3cm)--(330:3cm)--(330:1cm)--(90:1cm);
   \fill[blue, opacity=0.1]  (330:3cm)--(330:1cm)--(210:1cm)--(210:3cm);

 \foreach\ang in {90,210,330}{
    \fill [black,opacity=1] (\ang:1cm) circle (.1 cm); }
 \foreach\ang in {90,210,330}{
    \fill [black,opacity=1] (\ang:2cm) circle (.1 cm);
    \draw  (\ang:2cm) circle (.2 cm);
    }
    \foreach\ang in {90,210,330}{
    \fill [black,opacity=1] (\ang:3cm) circle (.1 cm); }

    \fill [green,opacity=1] (30: .5cm) circle (.1 cm);
      \fill [red,opacity=1] (137: .65cm) circle (.1 cm);
       \fill [blue,opacity=1] (-90: .5cm) circle (.1 cm);

    \node at (2.2cm, .3cm)  (dd)    {\textcolor{green}{$\beta'=\frac{1}{2}(\frac{1}{3},-\frac{2}{3},\frac{1}{3})$}};
    \node at (0cm, -1cm)  (dd)    {\textcolor{blue}{$\beta'$}};
      \node at (0cm, -1.5cm)  (dd)    {\textcolor{blue}{$=\frac{1}{2}(-\frac{2}{3},\frac{1}{3},\frac{1}{3})$}};
    \node at (-2.1cm, .5cm)  (dd)    {\textcolor{red}{$\beta'=\frac{1}{7}(2,1,-3)$}};
     \node at (2cm, 3cm)  (dd)    { $\alpha_{x_0^3}=(2,-1,-1)$};
      \node at (2.8cm,2cm)  (dd)    { $\alpha_{x_0^2x_3}=\alpha_{x_0^2x_4}=(\frac{4}{3},-\frac{2}{3},-\frac{2}{3})$};
          \node at (2.3cm, 1cm)  (dd)    { $\alpha_{x_0x_3x_4}=(\frac{2}{3},-\frac{1}{3},-\frac{1}{3})$};
        \node at (3cm, -.35cm)  (dd)    { $\alpha_{x_2x_3x_4}=(-\frac{1}{3},-\frac{1}{3},\frac{2}{3})$};
      \node at (4.7cm, -1.5cm)  (dd)    { $\alpha_{x_2^3}=(-1,-1,2)$};
      \node at (-2.9cm,-.4cm)  (dd)    { $\alpha_{x_1x_3x_4}=(-\frac{1}{3},\frac{2}{3},-\frac{1}{3})$};
      \node at (-4.5cm,-1cm)  (dd)    { $\alpha_{x_1^2x_3}=\alpha_{x_1^2x_4}=(-\frac{2}{3},\frac{4}{3},-\frac{2}{3})$};

\end{tikzpicture}

\caption{
A sample codimension~$4$ element is given above in blue as $\textcolor{blue}{\beta'}= \frac{1}{2}(-\frac{2}{3},\frac{1}{3},\frac{1}{3})\in \calB$.
Another sample codimension~$4$ element is given in green as $\textcolor{green}{\beta'}= \frac{1}{2}(\frac{1}{3},-\frac{2}{3},\frac{1}{3})\in \calB$.   A sample codimension~$5$ element is given in red as $\textcolor{red}{\beta'}= \frac{1}{7}(2,1,-3)\in \calB$.  We have dropped the last two $0$-coordinates for brevity.
 }
 \label{F:curve-weights}
\end{figure}
We now move to describe the indexing set $\calB(\rho)$ associated to the representation $\rho$, as well as various groups and loci associated to the elements $\beta'\in \calB(\rho)$. First we note that since $R$ is the two-dimensional torus, the Weyl chamber is all of $\RR^2$. By construction, the indexing set $\calB(\rho)$ associated to the representation $\rho:R_{3D_4}\to \operatorname{Aut}(\calN_x)$ is then the set of points $\beta'\in\mathfrak t_R=\{(\alpha_0,\alpha_1,\alpha_2,0,0): \sum \alpha_i=0\}$
that can be described as the closest point to the origin (with respect to the standard metric in $\RR^5$) in a convex hull of the weights. The codimension of the associated stratum $S_{\beta'}$ is then  equal to the number of weights lying on the same side as the origin from the orthogonal complement to $\beta'$. The situation is described by the following lemma, and the corresponding sets and weights are depicted in Figure~\ref{F:curve-weights}.
\begin{lem}\label{L:R(rho)3D4p1}
For the group $R=R_{3D_4}\cong (\CC^*)^2$, all codimension $4$ and $5$ strata are as follows.
\begin{enumerate}
\item[(a)]  There are $3$ codimension~$4$ elements $\beta'\in \calB$: $\frac{1}{2}(-\frac{2}{3},\frac{1}{3},\frac{1}{3})$ (in \color{blue}{blue }\color{black}  in Figure~\ref{F:curve-weights});
$\frac{1}{2}(\frac{1}{3},-\frac{2}{3},\frac{1}{3})$ (in \color{green}green \color{black} in the figure); and
$\frac{1}{2}(\frac{1}{3}, \frac{1}{3},-\frac{2}{3})$ (not shown in the figure). For each of these, $w(\beta ',R,G)=3$;

\item[(b)] There are $6$ codimension~$5$ elements $\beta'\in \calB'$:  $\frac{1}{7}(1,2,-3)$;  $\frac{1}{7}(2,-3,1)$;  $ \frac{1}{7}(-3,1,2)$; $\frac{1}{7}(-3,2,1)$; $ \frac{1}{7}(1,-3,2)$;
 $\frac{1}{7}(2,1,-3)$ (the last of them shown in \color{red}{red }\color{black}  in Figure~\ref{F:curve-weights}). For each of these, $w(\beta',R,G)=6$.
\end{enumerate}
Moreover, in each of these two cases, all the elements $\beta'$ are in the same orbit of the Weyl group of $G=\SL(5,\CC)$.
\end{lem}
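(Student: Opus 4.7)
The plan is to enumerate $\mathcal{B}(\rho)$ directly, exploiting the two-dimensionality of $\mathfrak{t}_R$ and the $S_3$-symmetry that permutes the three rays on which the $12$ weights (counted with multiplicity; $9$ distinct points) of $\rho$ lie. Since $R_{3D_4}$ is a torus, $P_{\beta'}=R_{3D_4}$ for every $\beta'$, so by \eqref{E:Sbcodim} the codimension $d(\PP\mathcal{N}_x,\beta')$ equals $n(\beta')$, the number of weights $\alpha$ (with multiplicity) such that $\alpha\cdot\beta' < ||\beta'||^2$. Because in a $2$-dimensional space the closest point to the origin of a polygon lies either on a vertex or on an edge, and because $\beta'$ is itself the closest point to the origin of the convex hull of those weights that achieve equality $\alpha\cdot\beta'=||\beta'||^2$, it suffices to check convex hulls of singletons and of edges joining pairs of distinct weight-points.

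Writing $W_i,V_i,U_i$ for the outer, middle (multiplicity~$2$), and inner weight-points on the $i$-th ray, the $S_3$-orbits of singletons give three cases, and the $S_3$-orbits of unordered pairs give nine cases, of which the three pairs on the same ray collapse to singletons already considered. An elementary computation of the closest point to the origin and the corresponding $n(\beta')$ for each orbit representative yields: singletons $\{W_i\},\{V_i\},\{U_i\}$ give codimensions $11,9,8$ respectively, while the non-degenerate pairs $\{W_iW_j\},\{V_iV_j\},\{U_iU_j\},\{W_iV_j\},\{W_iU_j\},\{V_iU_j\}$ (with $i\ne j$) give codimensions $10,6,4,8,7,5$ respectively. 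For the key codim-$4$ case, the midpoint of the segment between $U_1$ and $U_2$ is $\beta'=\tfrac{1}{2}(-\tfrac{2}{3},\tfrac{1}{3},\tfrac{1}{3},0,0)$ with $||\beta'||^2=\tfrac{1}{6}$, and direct inspection shows that exactly the $4$ weights attached to $x_0^3,x_0^2x_3,x_0^2x_4,x_0x_3x_4$ lie strictly below; for the key codim-$5$ case, the closest point to the origin on the segment from $V_1$ to $U_0$ is $\beta'=\tfrac{1}{7}(2,1,-3,0,0)$ with $||\beta'||^2=\tfrac{2}{7}$, and exactly $5$ weights lie strictly below. These identifications, together with the sizes of the $S_3$-orbits, yield exactly the three and six elements listed in (a) and (b).

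For the final claims, since $W(R_{3D_4})$ is trivial, $w(\beta',R,G)$ equals the cardinality of $\mathcal{B}(\rho)\cap W(G)\cdot\beta'$. The Weyl group $W(G)=S_5$ permutes the five coordinates, and elements of $\mathfrak{t}_R$ are those with the last two coordinates equal to zero; thus only the $S_3$-subgroup permuting the first three coordinates preserves $\mathfrak{t}_R$. For the codim-$4$ element, the first three entries are $-\tfrac{1}{3},\tfrac{1}{6},\tfrac{1}{6}$, yielding $3$ distinct permutations; for the codim-$5$ element, the first three entries $\tfrac{2}{7},\tfrac{1}{7},-\tfrac{3}{7}$ are all distinct, yielding $3!=6$. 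In each codimension all listed elements differ only by $S_3$-permutations of the first three coordinates, and hence lie in a single $W(G)=S_5$-orbit, as claimed.

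The main expected obstacle is ensuring completeness of the enumeration of convex hulls; but since the configuration lives in a $2$-dimensional plane and is governed by the $S_3$-symmetry, this reduces to a finite and routine case analysis as outlined above.
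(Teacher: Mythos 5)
Your proposal is substantively correct and takes the same route the paper gestures at: the paper's proof consists of a one-sentence assertion that the result is ``easy to check from~\eqref{E:HesseZss}'' and the picture, while you actually carry out the finite enumeration. Your observations are all right: for a torus $R$ one has $P_{\beta'}=R$, so the (expected) codimension $d(\PP\calN_x,\beta')$ equals $n(\beta')$; in a $2$-plane the closest point of a convex polygon to the origin lies on a $0$- or $1$-face, so it suffices to run over singletons and pairs of weights, modulo the $S_3$-symmetry; and $W(R_{3D_4})$ being trivial makes $w(\beta',R,G)$ the count of $\calB(\rho)\cap S_3\cdot\beta'$ inside $\mathfrak t_R$, giving $3$ and $6$ respectively. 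I checked your key closest-point computations (the midpoint of $U_1U_2$ at $\tfrac12(-\tfrac23,\tfrac13,\tfrac13)$ with $n(\beta')=4$, the foot of the perpendicular from the origin on the segment $V_1U_0$ at $\tfrac17(2,1,-3)$ with $n(\beta')=5$), and they are correct; the multiplicity-two middle points $V_i$ are counted correctly.

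There is one genuine (if small) gap: the formula $d(\beta')=n(\beta')$ gives the \emph{expected} codimension, and equals the \emph{actual} codimension of $S_{\beta'}$ only when $S_{\beta'}$ is non-empty, equivalently when $Z^{ss}_{\beta'}\neq\emptyset$ (cf.\ the discussion around~\eqref{E:Sbcodim}). You enumerate the $\beta'$ with $n(\beta')\in\{4,5\}$ but never check that the resulting strata are non-empty, so in principle some of your codim-$4$ or codim-$5$ candidates could be empty and hence should be discarded. The paper addresses this explicitly (``It is easy to check from~\eqref{E:HesseZss} that \ldots the strata $S_{\beta'}$ are non-empty''), and non-emptiness is also corroborated by the explicit descriptions of $Z^{ss}_{\beta'}$ appearing in the companion Lemma~\ref{L:R(rho)3D4p2}. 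You should add a sentence verifying non-emptiness, for instance by exhibiting a point of $Z^{ss}_{\beta'}$ in each case (as is done one lemma later), or by directly invoking the criterion~\eqref{E:HesseZss} for tori.
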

\begin{proof}
To find $\calB(\rho)$ one observes that since $R$ is a torus, the Weyl chamber is all of $\RR^2$.   It is easy to check from~\eqref{E:HesseZss} that since $R$ is a torus, the strata $S_{\beta'}$ are non-empty for the weights as given and shown in the picture. The fact that all 3 elements in each case lie in the same orbit of the Weyl group is also immediate, since $W(G)=S_5$ acts by permuting the entries, which preserves $\mathfrak t_R$ only for the subgroup $S_3$ permuting the first three entries.  The weights also then easily follow.
\end{proof}
We now further describe the relevant fixed point sets and the action of the stabilizers. Since all $\beta'$ in the case (a) or in case (b) lie in the same orbit of the Weyl group, it is enough to work with one representative for each case.
\begin{lem}[{Lemmas~\ref{L:App-R(rho)3D4a} and \ref{L:App-R(rho)3D4b}}]\label{L:R(rho)3D4p2} In the notation above:
\begin{enumerate}
\item  For $\beta'=\frac{1}{2}(-\frac{2}{3},\frac{1}{3},\frac{1}{3})$ (case (a)), we have
$$
Z^{ss}_{\beta'}=\{[a:b]\in \PP\CC\langle  x_1x_3x_4,x_2x_3x_4\rangle: a\ne 0, b\ne 0 \}\cong \CC^*.
$$

\item For $\beta'= \frac{1}{7}(2,1,-3)$ (case (b)), we have
$$
Z^{ss}_{\beta'}=\{[a:b:c]\in \PP\CC\langle  x_0x_3x_4,x_1^2x_3, x_1^2x_4\rangle: a\ne 0,\ \text{and}\  (b,c)\ne (0,0) \}\cong \AAA^2-\{0\}.
$$

\item For either $\beta'$, the group $N\cap \operatorname{Stab}_G\beta'$ acts transitively on $Z^{ss}_R$.

\item The action of $(N\cap \operatorname{Stab}_G\beta')_x$ on $Z^{ss}_{\beta'}$ is induced by change of coordinates, via the inclusion $(N\cap \operatorname{Stab}_G\beta')_x\subseteq \SL(5,\CC)$ and the description of the loci above in terms of cubic forms.
 For $\beta '=\frac{1}{2}(-\frac{2}{3},\frac{1}{3},\frac{1}{3})$ (case (a)), the group $(N\cap \operatorname{Stab}_G\beta')_x$ acts transitively on $Z^{ss}_{\beta '}$, and the stabilizer of the point $(1:1)\in Z^{ss}_{\beta'}$ is given explicitly
 as $(\CC^*\times \mu_{15})\times(S_2\times S_2)$. \qed
\end{enumerate}
\end{lem}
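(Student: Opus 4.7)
The plan is to verify each assertion by a direct computation. Parts~(1) and~(2) apply the definitions~\eqref{E:Zbeta} and~\eqref{E:HesseZss} to the representation $\rho:R_{3D_4}\to\operatorname{Aut}(\calN_x)$ of Lemma~\ref{L:R3D4-Nx-Rep}, while parts~(3) and~(4) use the structural descriptions of $N(R_{3D_4})$ and $\operatorname{Stab}(V(F_{3D_4}))$ in Lemmas~\ref{L:R3D4Norm1} and~\ref{L:R3D4Norm2}. Since $R_{3D_4}$ is a torus, the indexing vector $\beta'$ and the twelve weights all lie in $\mathfrak{t}_R$, making the analysis entirely combinatorial. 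The detailed verifications are carried out in Lemmas~\ref{L:App-R(rho)3D4a} and~\ref{L:App-R(rho)3D4b} of the appendix; here I only sketch the approach.

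For parts~(1) and~(2), I identify $Z_{\beta'}$ as the projectivization of the span of those monomials in $\calN_x$ whose weight $\alpha_I$ satisfies $\alpha_I\cdot\beta'=\|\beta'\|^2$, and $Z^{ss}_{\beta'}$ as the locus of points whose non-zero coordinates have $\beta'$ as the closest point to the origin of the convex hull of their weights. For case~(a), $\|\beta'\|^2=\tfrac{1}{6}$, and only $x_1x_3x_4$ and $x_2x_3x_4$ achieve this pairing; $\beta'$ is the midpoint of their two weights and orthogonal to their difference, so $Z^{ss}_{\beta'}\subset\PP^1$ reduces to the points with both coordinates non-zero, i.e.~$\CC^*$. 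For case~(b), $\|\beta'\|^2=\tfrac{2}{7}$; the three attaining monomials $x_0x_3x_4$, $x_1^2x_3$, $x_1^2x_4$ have only two distinct weights (the latter two coincide), with $\beta'$ lying on and perpendicular to the segment joining them; this yields $Z^{ss}_{\beta'}=\{[a:b:c]\in\PP^2: a\ne 0,\ (b,c)\ne(0,0)\}\cong\AAA^2\setminus\{0\}$.

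For part~(3), recall from Lemma~\ref{L:R3D4Norm1}(1) that $N(R_{3D_4})$ has block form $\SSS_3\oplus\GL_2$ inside $\SL_5$, while $\operatorname{Stab}_G\beta'$ is the Levi subgroup of the parabolic $P_{\beta'}$, block-diagonal according to the eigenvalue-multiplicities of $\beta'$. For either of our two choices of $\beta'$, the intersection $N\cap\operatorname{Stab}_G\beta'$ still contains the full $\GL_2$-factor acting on $(x_3,x_4)$ together with enough toric freedom on $(x_0,x_1,x_2)$ to rescale $x_0x_1x_2$ arbitrarily. Since every element of $Z^{ss}_R$ has the form $x_0x_1x_2+P_3(x_3,x_4)$ with $P_3$ having three distinct roots, and $\GL_2$ acts transitively on such binary cubic forms (up to a scalar absorbed by the diagonal action on $(x_0,x_1,x_2)$), this subgroup already acts transitively on $Z^{ss}_R$.

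For part~(4), the stabilizer $(N\cap\operatorname{Stab}_G\beta')_x$ of $x=V(F_{3D_4})$ is obtained by intersecting the description from part~(3) with $\operatorname{Stab}(V(F_{3D_4}))$ as described in Lemma~\ref{L:R3D4Norm2}(1); it takes block form $(\diag(\lambda_0)\oplus\SSS_2)\oplus\SSS_2$ subject to $\lambda_0\lambda_1\lambda_2=\lambda_3^3=\lambda_4^3$ and $\det=1$. Fixing the point $(1:1)\in Z^{ss}_{\beta'}$, which corresponds to the cubic $x_1x_3x_4+x_2x_3x_4$, forces $\lambda_1=\lambda_2$; both involutions then act trivially on $[a:b]$, since the $(x_1,x_2)$-swap exchanges the equal coefficients $a,b$ and the $(x_3,x_4)$-swap preserves $x_3x_4$. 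The remaining constraints leave $\lambda_1\in\CC^*$ free and confine $\lambda_3$ to a set of $15$ possibilities (with $\lambda_4$ then determined by the determinant), yielding the stated $(\CC^*\times\mu_{15})\times(S_2\times S_2)$. The main subtlety, relegated to the appendix, is checking that the group assembles as a \emph{direct} product rather than a twisted one, which comes down to the involutions commuting with the $\CC^*\times\mu_{15}$ factor.
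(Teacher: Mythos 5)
Your proposal is correct and follows essentially the same route as the paper's own proof, which is carried out by direct computation in the appendix (Lemmas~\ref{L:App-R(rho)3D4a} and~\ref{L:App-R(rho)3D4b}): parts~(1) and~(2) by computing the weight pairings $\alpha_I\cdot\beta'$ against the twelve weights of Lemma~\ref{L:R3D4-Nx-Rep} and applying~\eqref{E:HesseZss}, part~(3) by observing that the subgroup $N_0=\TT^3\oplus\GL_2\subseteq N\cap\operatorname{Stab}_G\beta'$ already acts transitively on $Z^{ss}_R$ (via $\GL_2$ on binary cubics with distinct roots plus the toric scaling you mention), and part~(4) by intersecting with $\operatorname{Stab}(V(F_{3D_4}))$ and imposing $\lambda_1=\lambda_2$. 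Your summary matches the paper in substance and in the identification of where the subtleties lie.
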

We note that for case (b), more details on the action of $(N\cap \operatorname{Stab}_G\beta')_x$ on $Z^{ss}_{\beta'}$ will not be needed.

The proof of the lemma is elementary, with all necessary computations given in Lemmas~\ref{L:App-R(rho)3D4a} and \ref{L:App-R(rho)3D4b} in the Appendix, for the cases (a) and (b), respectively.

\begin{proof}[Proof of Proposition~\ref{P:ET-3D4}]
Recall that we are trying to compute the extra terms~\eqref{E:finExtraT} contributed by the $3D_4$ locus:
\begin{align*}
 &- \sum_{0\ne \beta'\in \calB(\rho)}\frac{1}{w(\beta',R,G)}t^{2d(\PP\calN_x,\beta')}P^{N\cap \operatorname{Stab}_G \beta' }_t(Z^{ss}_{\beta',R}).
\end{align*}
As we saw in Lemma~\ref{L:R(rho)3D4p1}, there are $3$ elements $\beta'\in \calB(\rho)$ with $d(\PP\calN_x,\beta')=4$, and $6$ with $d(\PP\calN_x,\beta')=5$.  The other $\beta'\ne 0$ have $d(\PP\calN_x,\beta')\ge 6$, and will not contribute modulo $t^{11}$, and so we ignore them.    We also saw that the $3$ codimension~$4$ (resp.~$6$ codimension~$5$) elements $\beta'$ were all in the same Weyl group of $G=\SL(5,\CC)$ orbit, and so we are free to work with one representative from each orbit.    Finally, we showed  for the $\beta'$ of codimension~$4$ and $5$ that $N\cap \operatorname{Stab}_G\beta'$ acts transitively on $Z^{ss}_R$, so that by Remark~\ref{R:PiFibration},
$$
P^{N\cap \operatorname{Stab}_B\beta'}_r(Z^{ss}_{\beta',R})=P_t^{(N\cap \operatorname{Stab}_B\beta')_x}(Z^{ss}_{\beta'}).
$$
Let us consider first $P_t^{(N\cap \operatorname{Stab}_B\beta')_x}(Z^{ss}_{\beta'})$ for the codimension~$5$ loci.  Since we have from Lemma~\ref{L:R(rho)3D4p2}(2) that $Z^{ss}_{\beta'}$ is connected, it follows that  $P_t^{(N\cap \operatorname{Stab}_B\beta')_x}(Z^{ss}_{\beta'})=1+O(t)$.
Thus the codimension $5$ extra term is $-\sum_{i=1}^6 \frac{1}{6}t^{10} (1+\dots)$.
 Now let us consider  $P_t^{(N\cap \operatorname{Stab}_B\beta')_x}(Z^{ss}_{\beta'})$ for the codimension~$4$ loci.  Take the representative $\beta'=\frac{1}{2}(-\frac{2}{3},\frac{1}{3},\frac{1}{3})$.  By Lemma~\ref{L:R(rho)3D4p2}(4), the action of $(N\cap \operatorname{Stab}_B\beta')_x$ on $Z^{ss}_{\beta'}$ is transitive, with the stabilizer equal to $(\CC^*\times \mu_{15})\times(S_2\times S_2)$.  Thus $P_t^{(N\cap \operatorname{Stab}_B\beta')_x}(Z^{ss}_{\beta'})=P_t(B\CC^*)=(1-t^2)^{-1}$.
In summary, we have
\begin{align*}
(\text{``extra term'' codim 4})\ \ \ \ \ &-\sum_{i=1}^3 \frac{1}{3}t^{8} (1+t^2+\dots) \\
(\text{``extra term'' codim 5})\ \ \ \ \ &-\sum_{i=1}^6 \frac{1}{6}t^{10} (1+\dots)
\end{align*}
This completes the proof of the proposition.
\end{proof}

\subsection{The extra correction term for $R_{2A_5}\cong \CC^*$, the $2A_5$ case}

\begin{pro}[Extra term for the $2A_5$ cubics] \label{P:ET-2A5}
For the group $R_{2A_5}\cong \CC^*$, the extra term~\eqref{E:finExtraT} is given by
\begin{align*}
-\sum_{0\ne \beta'\in \calB(\rho)}\frac{1}{w(\beta',R_{2A_5},G)}t^{2d(\PP\calN_x,\beta')}P^{N(R_{2A_5})\cap \operatorname{Stab} \beta' }_t(Z_{\beta',R_{2A_5}}^{ss})&\equiv -t^{10} \mod t^{11}.
\end{align*}
\end{pro}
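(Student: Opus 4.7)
The plan is to follow the template of Proposition~\ref{P:ET-3D4}: first compute the weights of the representation $\rho\colon R_{2A_5}\to\operatorname{Aut}(\calN_x)$ at a generic $x\in Z^{ss}_{R_{2A_5}}$, then identify those $\beta'\in\calB(\rho)$ whose associated stratum in $\PP\calN_x$ has codimension at most $5$, and finally evaluate the corresponding factor in~\eqref{E:finExtraT}.

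For the weights of $\rho$, take $x=V(F_{A,B})$ with $(A,B)\ne(0,0)$ and $4A/B^2\ne 1$. Since $R_{2A_5}=\operatorname{diag}(\lambda^{2},\lambda,1,\lambda^{-1},\lambda^{-2})$ acts on the coordinates with weights $(2,1,0,-1,-2)$, the induced weights on the $35$-dimensional space of cubic monomials are symmetric under $k\mapsto -k$, with multiplicities $5,4,4,3,2,1,1$ for $k=0,1,2,3,4,5,6$. The tangent space $\gl(5,\CC)\cdot F_{A,B}$ is spanned by the entries $(DF_{A,B})_{ij}$ of Example~\ref{Exa:D}(1), of weight $w_i-w_j\in\{-4,\dots,4\}$, with multiplicities $5,4,3,2,1$ for $|k|=0,\dots,4$; the sole linear relation among these is the weight-$0$ relation~\eqref{eq:rel*}. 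Passing to $\mathfrak{sl}(5,\CC)\cdot[F_{A,B}]\subset T_{[F]}\PP^{34}$ removes one further weight-$0$ direction (the line through $F_{A,B}$), and quotienting by the one additional weight-$0$ tangent direction to $Z^{ss}_{R_{2A_5}}$ inside $G\cdot Z^{ss}_{R_{2A_5}}$ removes one more. The result is that $\calN_x$ is ten-dimensional, with weights exactly $\pm 2,\pm 3,\pm 4,\pm 5,\pm 6$, each of multiplicity one.

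Since $R_{2A_5}$ is a rank-one torus, its Weyl chamber is all of $\mathfrak t_R\cong\RR$, and $\calB(\rho)\setminus\{0\}$ is exactly the set of these ten weights. The parabolic $P_{\beta'}$ equals $R_{2A_5}$ for every nonzero $\beta'$, so formula~\eqref{E:Sbcodim} gives $d(\PP\calN_x,\beta')=n(\beta')$, the number of weights $\alpha$ with $\alpha\cdot\beta'<\|\beta'\|^{2}$; for $\beta'=\pm k$ with $k\ge 2$ this equals $k+3$. Hence only $\beta'=\pm 2$ have codimension $\le 5$; all other nonzero $\beta'$ contribute at order $\ge t^{12}$ and may be dropped modulo $t^{11}$.

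For the two remaining strata $\beta'=\pm 2$, viewed in $\mathfrak t_R\subset\mathfrak t$ they correspond to $\pm\tfrac{1}{5}(2,1,0,-1,-2)$, exchanged by the involution $x_i\mapsto x_{4-i}$ in $W(G)=S_5$ (which is precisely the generator of the $\ZZ/2$-factor of $N(R_{2A_5})$ in Lemma~\ref{L:R2A5Norm}(1)). They thus share the same image $\hat\beta\in\mathfrak t^+$ under~\eqref{E:Brho-Bhat}, giving $w(\beta',R_{2A_5},G)=2$. Since all five coordinates of $\beta'$ are distinct, $\operatorname{Stab}_G\beta'=\TT^4$, and therefore $N(R_{2A_5})\cap\operatorname{Stab}_G\beta'=\TT^4$. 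The stratum $Z^{ss}_{\beta'}(\rho)\subset\PP\calN_x$ is the single point spanned by the unique weight-$\beta'$ coordinate line, so by Remark~\ref{R:PiFibration} together with Lemma~\ref{L:R2A5Norm}(2) the locus $Z^{ss}_{\beta',R_{2A_5}}$ is a section of the $\PP\calN$-bundle over the (connected, Zariski open) base $Z^{ss}_{R_{2A_5}}\subset\PP^4$, and in particular is nonempty and connected. This forces $P^{\TT^4}_t(Z^{ss}_{\beta',R_{2A_5}})\equiv 1\mod t$, and substituting into~\eqref{E:finExtraT} gives the claimed $-2\cdot\tfrac{1}{2}\cdot t^{10}\equiv -t^{10}\mod t^{11}$. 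The only non-routine step in the argument is the weight bookkeeping for $\rho$; once the weights $\pm 2,\dots,\pm 6$ are in hand, the rank-one nature of $R_{2A_5}$ reduces everything else to elementary combinatorics.
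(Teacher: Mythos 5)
Your proof is correct and follows essentially the same route as the paper: compute the weights of $\rho$ on $\calN_x$ to be $\pm 2,\dots,\pm 6$, observe that only $\beta'=\pm 2$ give strata of codimension $\le 5$, compute $w(\beta',R_{2A_5},G)=2$ from the involution $x_i\mapsto x_{4-i}$, and conclude via connectedness of $Z^{ss}_{\beta',R_{2A_5}}$. The only cosmetic differences are that you do the weight accounting in $T_{[F]}\PP^{34}$ whereas the paper lifts $T_x(G\cdot Z^{ss}_R)$ to $\CC^{35}$ and takes the complement there, and that you correctly identify $Z^{ss}_{\beta'}(\rho)$ as a single point, whereas the paper writes ``$\cong\CC$'' (which appears to be a slip, though both statements yield the needed connectedness).
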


This will follow from the next lemma.

\begin{lem}\label{L:R2A5-Nx-Rep}
For $R_{2A_5}=\diag(\lambda^2,\lambda,1,\lambda^{-1},\lambda^{-2})\cong \CC^*$, $\dim\calN_x=10$, and the weights of the representation $\rho$ of $R_{2A_5}$ on $\calN_x$ are
$$
-6,-5,-4,-3,-2,2,3,4,5,6.
$$
\end{lem}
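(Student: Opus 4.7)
The plan is to run exactly the same argument that proves Lemma~\ref{L:Rc-Nx-Rep} (the chordal cubic case), making the necessary bookkeeping adjustments for the fact that $Z^{ss}_{R_{2A_5}}$ is now positive-dimensional. As in \S\ref{S:TanOrb}, I will pass to the cone: using the Euler sequence, the normal space $\calN_x$ to $G\cdot Z^{ss}_R$ at $x=V(F_{A,B})$ in $\PP^{34}$ is identified with the normal space to $\GL(5,\CC)\cdot \widetilde{Z}^{ss}_R$ at $F_{A,B}$ inside $\CC^{35}=\operatorname{Sym}^3(\CC^5)^\vee$, where $\widetilde{Z}^{ss}_R$ is the five-dimensional linear space $\CC\langle x_2^3,\,x_0x_3^2,\,x_1^2x_4,\,x_0x_2x_4,\,x_1x_2x_3\rangle$ coming from Proposition~\ref{P:ZRss}(1). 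Thus it suffices to compute the weights of $R_{2A_5}$ on the quotient
\[
\calN_{F_{A,B}}\;=\;\CC^{35}\big/\bigl(\gl(5,\CC)\cdot F_{A,B}+T_{F_{A,B}}\widetilde{Z}^{ss}_R\bigr).
\]

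Under the $1$-PS $R_{2A_5}=\operatorname{diag}(\lambda^2,\lambda,1,\lambda^{-1},\lambda^{-2})$ every monomial $x^{I}$ is an eigenvector of weight $2i_0+i_1-i_3-2i_4$. A direct count of cubic monomials gives the weight multiplicities on $\CC^{35}$:
\[
(\pm 6)\times 1,\ (\pm 5)\times 1,\ (\pm 4)\times 2,\ (\pm 3)\times 3,\ (\pm 2)\times 4,\ (\pm 1)\times 4,\ (0)\times 5.
\]
The entries of the matrix $DF_{A,B}$ displayed in Example~\ref{Exa:D}(1) span $\gl(5,\CC)\cdot F_{A,B}$, and the $(i,j)$-entry is an eigenvector of weight $w_j-w_i$ for $(w_0,\dots,w_4)=(2,1,0,-1,-2)$; so the $25$ entries contribute the multiplicities $1,2,3,4,5,4,3,2,1$ across weights $4,3,2,1,0,-1,-2,-3,-4$. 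For $4A/B^2\neq 1$ and $(A,B)\neq (0,0)$, Example~\ref{Exa:D}(1) asserts that the only linear relation among these entries is \eqref{eq:rel*}, which lies in weight $0$; hence $\gl(5,\CC)\cdot F_{A,B}$ has dimension $24$ with weight-$0$ part of dimension $4$ and all other weight multiplicities unchanged.

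Next, the $5$ generators of $T_{F_{A,B}}\widetilde{Z}^{ss}_R$ all lie in weight $0$ and together span the full weight-$0$ subspace of $\CC^{35}$. Adding them therefore contributes exactly one new dimension on top of $\gl(5,\CC)\cdot F_{A,B}$, giving $\dim(\gl(5,\CC)\cdot F_{A,B}+T_{F_{A,B}}\widetilde{Z}^{ss}_R)=25=\dim_\CC\calC(G\cdot Z^{ss}_R)$, consistent with the dimension count $\dim G\cdot Z^{ss}_R=24$ established in the proof of Proposition~\ref{P:MT-2A5}. Subtracting weight-by-weight then yields nonzero multiplicities exactly for weights $\pm 2,\pm 3,\pm 4,\pm 5,\pm 6$, each with multiplicity $1$, for a total of $\dim\calN_x=10$, as claimed.

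The only step requiring care is the verification that the relation \eqref{eq:rel*} is the unique relation among the entries of $DF_{A,B}$ for $4A/B^2\neq 1$; this is exactly the content of Example~\ref{Exa:D}(1) (a weight-by-weight inspection, using that any relation respects the $R_{2A_5}$-decomposition). Everything else is a bookkeeping exercise identical in spirit to the chordal case, but slightly different numerically because here $R_{2A_5}\subset R_c$ acts by the sub-torus of $\PGL(2,\CC)$ acting via $\Sym^4\CC^2$, so the weights we recover are precisely the even weights $\pm 2,\pm 4,\pm 6$ inherited from $\Sym^{12}\CC^2$, together with the odd weights $\pm 3,\pm 5$ that appear because the $\PGL(2,\CC)$-equivariance is broken.
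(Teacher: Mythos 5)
Your proof is correct and follows essentially the same approach as the paper's: compute the weight multiplicities of $R_{2A_5}$ on $\CC^{35}$ and on the lift to $\CC^{35}$ of $T_x(G\cdot Z^{ss}_R)$ (using the entries of $DF_{A,B}$, the unique relation \eqref{eq:rel*}, and a contribution from $Z^{ss}_R$), then take the difference. Two small remarks: where the paper singles out the one extra weight-$0$ vector $\tfrac{d}{dA}F_{A,B}=x_2^3$ coming from the $1$-dimensional quotient $Z^{ss}_R/G$, you instead add the whole $5$-dimensional weight-$0$ subspace $T_{F_{A,B}}\widetilde{Z}^{ss}_R$ and observe it contributes exactly one new dimension (a harmless variant giving the same count), and your closing comment about inheriting the even weights from $\Sym^{12}\CC^2$ is imprecise (under the identification $\lambda=t^2$ the $13$-dimensional chordal normal space has $R_{2A_5}$-weights $0,\pm1,\dots,\pm6$, not only even ones) but is inessential to the argument.
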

\begin{proof}
The proof is essentially the same as that of Lemma~\ref{L:Rc-Nx-Rep}.
The vector space $T_x\CC^{35}=\CC^{35}$
decomposes as a sum of one-dimensional representations of $R=R_{2A_5}$ with the following multiplicities of weights
$$
(\pm 6) \times 1, (\pm 5) \times 1, (\pm 4) \times 2, (\pm 3) \times 3, (\pm2) \times 4, (\pm 1) \times 4, (0) \times 5.
$$
The tangent space to the orbit  $\GL(5,\CC)\cdot F_{A,B} $ of a general $F_{A,B}$ is generated by the entries of the matrix in Example~\ref{Exa:D}(1).
Each binomial spans an eigenspace for the action of $R$, and  weights of the action of $R$ on these generators can be computed  directly to be equal to
\begin{equation*}
(\pm 4) \times 1, (\pm 3) \times 2, (\pm 2) \times 3, (\pm 1) \times 4, ( 0) \times 5.
\end{equation*}
Now the relation $(\ref{eq:rel*})$ is among the weight $0$ generators, and thus we may drop one of them in forming a basis of the tangent space.
In summary, the weights for $R$ on the tangent space to the orbit  $\GL(5,\CC) \cdot F_{A,B}$ are given by
$$
(\pm 4) \times 1, (\pm 3) \times 2, (\pm 2) \times 3, (\pm 1) \times 4, ( 0) \times 4.
$$

 Recall, however, that the relevant normal space $\calN_x$ is the normal space in $X$ to the orbit $G\cdot Z_R^{ss}$ of the
fixed set $Z_R^{ss}$.   We know
that $Z_R^{ss}/G$ is one-dimensional (corresponding to the curve $\calT$ of cubics). Thus the tangent space $T_x(G\cdot Z_R^{ss})$, when lifted to $\CC^{35}$,  is the sum of $T_{F_{A,B}}(\GL(5,\CC)\cdot F_{A,B})$ together with a tangent vector representing the direction along $Z_R^{ss}/G$; in other words a tangent vector which comes from varying $4A/B^2$. As such a tangent vector we can take the deformation which simply deforms the cubic $F_{A,B}$ by changing the coefficient $A$. Clearly the derivative in this direction is equal to $\frac{d}{dA}F_{A,B}=x_2^3$.
This is weight $0$ (and, as expected,  does not lie in the span of the weight~$0$ space of the orbit). Thus the lift to $\CC^{35}$ of the tangent space to the orbit  $G\cdot Z^{ss}_R$ is given by a space with weights
$$
(\pm 4) \times 1, (\pm 3) \times 2, (\pm 2) \times 3, (\pm 1) \times 4, ( 0) \times 5.
$$
Taking the complement of the set of weights of the representation on the tangent space in the set of weights of the representation on $\CC^{35}$ gives the weights of the representation on the normal space.
\end{proof}

\begin{proof}[Proof of Proposition~\ref{P:ET-2A5}]
From the description of the weights of $\rho$ in Lemma~\ref{L:R2A5-Nx-Rep}, we see that we can take $\calB(\rho)= \{\pm 2,\pm 3,\pm 4,\pm 5,\pm 6\}$.
We can compute  $d(\beta'):=\operatorname{codim}_{\mathbb C}\dim S_{\beta'}$ for $\beta'\in \calB(\rho)$ using the definition~\eqref{E:SbetaDef};
i.e.,  $d(\beta')=n(|\beta'|)$, where $n(|\beta'|)$ is the number of weights less than $|\beta'|$, namely $5+|\beta'|-2\ge 5$.
Thus the terms in the formula in Proposition~\eqref{P:ET-2A5} begin in degree $\ge 2d(\beta')=10$, and  we must only compute for $\beta'=\pm 2$.

 One immediately obtains $w(\beta',R_{2A_5},G)=2$.
Finally, it is easy to see that $Z^{ss}_{\beta',R}$ is connected, since $Z^{ss}_R$ is connected, and one can check that $Z^{ss}_{\beta'}\cong \CC$ is connected.  Thus we have $P^{N(R_{2A_5})\cap \operatorname{Stab}_G\beta'}_t(Z^{ss}_{\beta',R})=1+\dots$, completing the proof.
\end{proof}

\section{Putting the terms together to compute the cohomology of $\calM^K$}\label{S:MK-Coh-sum}

We now put together the results in the previous sections to complete  the proof of Theorem~\ref{teo:betti} for $\MK$.  Recall that $\MK$  has only finite quotient singularities, so the cohomology satisfies Poincar\'e duality.  Consequently, as $\dim \MK=10$, it suffices to compute $P_t(\MK)$ modulo $t^{11}$.  We have:

\begin{align*}
&P_t(\calM^K)=P_t^G(\widetilde X^{ss}) \equiv\\
& 1+t^2+2t^4+3t^6+5t^8+6t^{10}&\text{(Semi-stable locus, Prop.~\ref{P:PtGXss})}\\
&\ +t^2+t^4+2t^6+2t^8+3t^{10}&\text{(Main term, chordal
cubic, Prop.~\ref{P:MT-ChC})}\\
&\ +t^2+t^4+2t^6+3t^8+4t^{10} &\text{(Main term, $3D_4$ cubic, Prop.~\ref{P:MT-3D4})}\\
&\ +t^2+2t^4+3t^6+4t^8+5t^{10}&\text{(Main term, $2A_5$ cubics, Prop.~\ref{P:MT-2A5})}\\
&\ -0&\text{(Extra term, chordal cubic, Prop.~\ref{P:ET-ChC})}\\
&\  -t^8-2t^{10}&\text{(Extra term, $3D_4$ cubic, Prop.~\ref{P:ET-3D4})}\\
&\ -t^{10}&\text{(Extra term, $2A_5$ cubics, Prop.~\ref{P:ET-2A5})}\\
\equiv& 1+4t^2+6t^4+10t^6+13t^8+15t^{10} \mod t^{11}.\hskip-1cm
\end{align*}

This completes the proof of Theorem~\ref{teo:betti} for $\MK$.

\chapter{The intersection cohomology of the GIT moduli space $\GIT$}\label{sec:IHGIT}
Our next goal is to compute the intersection cohomology of the GIT moduli space $\GIT$ by comparing this with the (intersection) cohomology of the Kirwan blowup $\MK$. We recall that $\MK$ is smooth up to finite quotient singularities, which  implies that
cohomology and intersection cohomology coincide. The starting point lies in Kirwan's techniques~\cite{kirwanrational1,kirwanhyp}, which in turn use the decomposition theorem in a subtle way. To carry this out
requires a thorough understanding of the geometric situation, and it turns out that this analysis is rather involved.
However, these geometric details will come in handy also  in Chapter~\ref{sec:IHball} where we compute the
intersection cohomology of the Baily--Borel compactification $\BG$.

\section[The Kirwan blowup to the GIT quotient, in general]{Obtaining the intersection cohomology of the GIT quotient from the cohomology of the Kirwan blowup, in general}
\subsection{Intersection cohomology for a single blowup}\label{SSS:IC1}
As before, it is notationally easier to explain the formulas after a single blowup.  We start with this case, and then in the next subsection explain what the formulas are for a sequence of blowups.

We start again in the situation of \S~\ref{SSS:kirBlUp}, where we have fixed a maximal dimensional connected component $R\in \calR$ of the stabilizer of a strictly polystable point, taken the blowup
\begin{equation}\label{E:piHatDef}
\hat \pi:\hat X\to X^{ss}
\end{equation}
along the locus $G\cdot Z^{ss}_R$~\eqref{E:ZRss}, and chosen a linearization of the action on an ample line bundle $\hat L$ on $\hat X$,  as described in \S~\ref{SSS:kirBlUp}.
For simplicity, we further assume that $Z_R^{ss}$ is connected (which is the case for all $R$ for the moduli of cubic threefolds, as computed in the previous section); see~\cite[Rem.~1.19]{kirwanrational1} for the necessary modifications if $Z_R^{ss}$ is disconnected.
Considering GIT quotients, we have the following diagram~\cite[Diag.~1]{kirwanrational1} summarizing the situation:
$$
\xymatrix@C=1em@R=1em{
\hat X^{ss}\ar@{->>}[rrrrr] \ar@{->>}[ddddd]_{\hat \pi}&&&&&\hat X/\!\!/_{\hat L}G \ar@{->>}[ddddd]^{\hat \pi_G}\\
&E^{ss}\ar@{^(->}[lu] \ar@{->>}[rrr] \ar@{->>}[ddd]_{\hat \pi}&&&E/\!\!/_{\hat L}G \ar@{^(->}[ru] \ar@{->>}[ddd]^{\hat \pi_G}&\\
&&\hat {\calN}\ar@{->>}[lu] \ar@{->>}[r] \ar@{->>}[d]_{T(\hat \pi)}&\hat{\calN}/\!\!/_{\hat L}G \ar@{->>}[ru] \ar@{->>}[d]^{T(\hat \pi)_G} &&\\
&&\calN\ar@{->>}[ld] \ar@{->>}[r]&\calN/\!\!/_LG\ar@{->>}[rd]&&\\
&G\cdot Z^{ss}_R \ar@{^(->}[ld] \ar@{->>}[rrr]&&&Z_R/\!\!/_{L}N \ar@{^(->}[rd]&\\
X^{ss}\ar@{->>}[rrrrr]&&&&&X/\!\!/_LG\\
}
$$

Most of the notation in the diagram has been introduced before, but here we recall some of the definitions, and explain the remaining notation.  We have set $\calN$ to be the normal bundle to  $G\cdot Z^{ss}_R$ in $X^{ss}$, we defined $E$ to be the exceptional divisor of the blowup $\hat \pi:\hat X\to X^{ss}$, $E^{ss}$ to be the intersection of $E$ with $\hat X^{ss}$, and set   $\hat {\calN}$ to be the normal bundle to $E^{ss}$ in $\hat X^{ss}$.  The morphism $T(\hat \pi):\hat{\calN}\to \calN$ is induced by the differential of $\hat \pi$.  The $G$-actions extend naturally to all of the spaces in the diagram, and the linearizations are induced via pull-back along the respective morphisms.
The group $N$ is defined to be the normalizer of $R$, and we have identified $G\cdot Z^{ss}_R/\!\!/_LG=Z^{ss}_R/\!\!/_LN$ via the identification $G\cdot Z^{ss}_R=G\times_ N Z^{ss}_R$~\cite[p.~72]{kirwanblowup}.

The goal is to use the decomposition theorem  of Beilinson, Bernstein, Deligne, and Gabber~\cite{bbdg} to compare the intersection cohomology groups  $IH^\bullet(\hat X/\!\!/_{\hat L}G)$ and $IH^\bullet(X/\!\!/_LG)$.  The interaction between the general theory and  Kirwan's results is outlined in~\cite[Rem.~2.2, p.~484]{kirwanrational1}.  Here we will review the outline of Kirwan's argument, as this helps clarify the meaning of terms appearing in the formulas, particularly when we move to the specific case of cubic threefolds.

Immediately from the decomposition theorem, since $\hat \pi_G$ is a birational morphism, one has that $IH^\bullet(X/\!\!/_LG)$ is a direct summand of $IH^\bullet(\hat X/\!\!/_{\hat L} G)$, and thus the goal is to determine the extra summands precisely.  In other words, denoting $IP_t$ the intersection Poincar\'e polynomial, we can write
\begin{equation}\label{E:BR(t)-intro}
IP_t(X/\!\!/_LG)=IP_t(\hat X/\!\!/_{\hat L}G)-B_R(t),
\end{equation}
for some polynomial $B_R(t)$ with non-negative integral coefficients, and our aim is to compute this polynomial.

The first observation is the following.  Given a fibred product diagram
\begin{equation}\label{E:MV-setup}
\xymatrix{
E\ar@{^(->}[r] \ar@{->}[d]&\hat U \ar@{^(->}[r] \ar[d]&\hat V \ar[d]^f\\
C\ar@{^(->}[r]&U\ar@{^(->}[r]&V\\
}
\end{equation}
where  $f:\hat V\to V$ is a birational  morphism of projective varieties that is an isomorphism on the complement of a closed subvariety $C\subseteq V$, and  $U\subseteq V$ is an open neighborhood of $C$, then~\cite[Lem.~2.8]{kirwanrational1}:
\begin{equation}\label{E:IH-Lem1}
\dim IH^i(V)=\dim IH^i(\hat V)-\dim IH^i(\hat U)+\dim IH^i(U).
\end{equation}
We note that this is a slightly more general statement than \cite[Lem.~2.8]{kirwanrational1}, but that proof goes through unchanged to prove \eqref{E:IH-Lem1}.  

In our situation, we apply this for $U$ being an open neighborhood of $Z_R/\!\!/_LN$ in $X/\!\!/_LG$, so that $\hat U:=\hat \pi_G^{-1}(U)$ is its inverse image in $\hat X/\!\!/_{\hat L}G$, which is an open neighborhood of $E/\!\!/_{\hat L}G$
in $\hat X/\!\!/_{\hat L}G$, so that we obtain
\begin{equation}\label{E:KR1-L2.8}
\dim IH^i(X/\!\!/_LG)=\dim IH^i(\hat X/\!\!/_{\hat L}G)-\dim IH^i(\hat U)+\dim IH^i(U).
\end{equation}
Kirwan then shows that there is an open neighborhood $U$ as above that is homeomorphic to $\calN/\!\!/_LG$, and furthermore such that its preimage $\hat U$ is homeomorphic to $\hat{\calN}/\!\!/_{\hat L}G$~\cite[Lem.~2.9, and p.~487]{kirwanrational1}.  This establishes~\cite[Cor.~2.11]{kirwanrational1}:
\begin{equation}\label{E:KR1-C2.11}
\dim IH^i(X/\!\!/_LG)=\dim IH^i(\hat X/\!\!/_{\hat L}G)-\dim IH^i(\hat{\calN}/\!\!/_{\hat L}G)+\dim IH^i(\calN/\!\!/_LG).
\end{equation}

Now we use the fact that $\calN/\!\!/_LG\cong (\calN|_{Z^{ss}_R})/\!\!/_LN$ and $\hat {\calN}/\!\!/_{\hat L}G\cong (\hat{\calN}|_{\hat \pi^{-1}Z^{ss}_R})/\!\!/_{\hat L}N$
\cite[p.~493, and 1.7, p.~476]{kirwanrational1}, and the fact that the intersection cohomology of the quotient by a finite group is the subset of intersection cohomology that is invariant under the finite group~\cite[Lem.~2.12]{kirwanrational1}, to conclude that
\begin{align}
\nonumber  IH^\bullet(\calN/\!\!/_LG)&\cong [IH^\bullet((\calN|_{Z^{ss}_R})/\!\!/_LN_0)]^{\pi_0N}\\
\label{E:KR1-Ep493(2)}
IH^\bullet(\hat{\calN}/\!\!/_{\hat L}G)&\cong [IH^\bullet((\hat{\calN}|_{\hat \pi^{-1}Z^{ss}_R})/\!\!/_{\hat L}N_0)]^{\pi_0N}
\end{align}
where $N_0\subset N$ is the connected component of the identity, and $\pi_0N=N/N_0$ is the group of connected components of~$N$.
A Leray spectral sequence argument for the morphisms $(\calN|_{Z^{ss}_R})/\!\!/_LN_0\to Z_R/\!\!/_LN_0$ and $(\hat{\calN}|_{\hat \pi^{-1}Z^{ss}_R})/\!\!/_{\hat L}N_0\to Z_R/\!\!/_LN_0$  yields
\cite[p.~493, Lem.~2.15, Prop.~2.13]{kirwanrational1}:
\begin{align}
IH^\bullet((\calN|_{Z^{ss}_R})/\!\!/_LN_0)&\cong IH^\bullet(\calN_x/\!\!/R)\otimes H^\bullet(Z_R/\!\!/_LN_0)\\
\label{E:KR1-Lem2.15}
 IH^\bullet((\hat{\calN}|_{\hat \pi^{-1}Z^{ss}_R})/\!\!/_{\hat L}N_0)&\cong IH^\bullet( \hat {\calN_x} /\!\!/R)\otimes H^\bullet(Z_R/\!\!/_LN_0)
\end{align}
We emphasize here that we do not projectivize $\mathcal N_x$ or $\hat {\mathcal N}_x$.  
Here, as in~\eqref{E:xinZssR},  $x$ is a general point of $Z^{ss}_R$, and the fiber $\calN_x$ has an action $\rho:R\to \GL(\calN_x)$ as described in~\eqref{E:rhoDef}, which  is used as the linearization.  The quotient $\calN_x/\!\!/R$ is defined to be $\operatorname{Spec}(\operatorname{Sym}^\bullet \calN_x^\vee)^R$, the spectrum of the invariant ring (which is why we are not indicating a linearization in the notation), and similarly for $\hat {\calN_x}/\!\!/R$.
We are also using the fact established in the proof of~\cite[Prop.~2.13]{kirwanrational1} that $Z_R/\!\!/_LN_0$ has at worst finite quotient singularities, so that its cohomology and intersection cohomology are equal.

Combining this with~\eqref{E:KR1-Ep493(2)} yields~\cite[2.20, p.~494, Lem.~2.15]{kirwanrational1}:
\begin{align}
\label{E:KR1-E2.22(1)}
IH^\bullet(\calN/\!\!/_LG)&\cong [IH^\bullet(\calN_x/\!\!/R)\otimes H^\bullet(Z_R/\!\!/_LN_0)]^{\pi_0 N}\\
\label{E:KR1-E2.22(2)}
IH^\bullet(\hat {\calN}/\!\!/_{\hat L}G)&\cong [IH^\bullet( \hat {\calN_x} /\!\!/R)\otimes H^\bullet(Z_R/\!\!/_LN_0)]^{\pi_0 N}.
\end{align}

Finally one shows that $IH^\bullet(\hat{\calN}_x/\!\!/R)\cong IH^\bullet(\PP(\calN_x)/\!\!/R)$~\cite[Lem.~2.15]{kirwanrational1}, and that there is a natural surjection
$$
IH^i(\PP(\calN_x)/\!\!/R)\to IH^i(\calN_x/\!\!/R)
$$
whose kernel is isomorphic to $IH^{i-2}(\PP(\calN_x)/\!\!/R)$ if $i\le \dim \PP(\calN_x)/\!\!/R$ and to $IH^i(\PP(\calN_x)/\!\!/R)$ otherwise
\cite[Cor.~2.17]{kirwanrational1}.  Putting this all together we have~\cite[Prop.~2.1]{kirwanrational1}:
\begin{align}
\nonumber \dim IH^i(X/\!\!/_LG)&=\dim IH^i(\hat X/\!\!/_{\hat L}G)\\
\label{E:IH-CorTerm1}
&-\sum_{p+q=i}\dim\left[H^p(Z_R/\!\!/_LN_0)\otimes IH^{\hat q}(\PP(\calN_x)/\!\!/R)\right]^{\pi_0N}
\end{align}
where $\hat q=q-2$ for $q\le \dim \PP(\calN_x)/\!\!/R$ and $\hat q=q$ otherwise.
Reiterating from above,  $x$ is a general point of $Z^{ss}_R$, the fiber $\calN_x$ has an action $\rho:R\to \GL(\calN_x)$ as described in~\eqref{E:rhoDef}, which  is used as the linearization, $N_0\subset N$ is the connected component of the identity, and $\pi_0N:=N/N_0$.
The action of $\pi_0N$ on $H^\bullet(Z_{R}/\!\!/_LN_0)$ is induced from the given action of $N$ on $Z^{ss}_R$, and the action on the tensor product is induced via a Leray spectral sequence (see~\eqref{E:KR1-Ep493(2)} and~\eqref{E:KR1-E2.22(2)}).

\begin{rem}\label{R:pi0TrivB}
If the action of $\pi_0N$ on the tensor product $H^p(Z_R/\!\!/_LN_0)\otimes IH^{\hat q}(\PP(\calN_x)/\!\!/R)$ is trivial on the second factor, then we can conclude from~\eqref{E:IH-CorTerm1} that~\cite[Cor.~2.28]{kirwanrational1}:
\begin{align*}
\dim IH^i(X/\!\!/_LG)&=\dim IH^i(\hat X/\!\!/_{\hat L}G)\\
&-\sum_{p+q=i}\dim H^p(Z_R/\!\!/_LN)\cdot \dim IH^{\hat q}(\PP(\calN_x)/\!\!/R)
\end{align*}
so that
\begin{align}\label{E:IP-BR(t)}
IP_t(X/\!\!/_LG)&=IP_t(\hat X/\!\!/_{\hat L}G)-\underbrace{P_t(Z_R/\!\!/_L N)\,b_R(t)}_{\text{``}B_R(t)\text{''}}
\end{align}
where $B_R(t)$ is the product indicated above, and  the coefficients of the polynomial $b_R(t)$ are essentially the shifted intersection Betti numbers of the GIT quotient $\PP(\calN_x)/\!\!/R$ defined as follows.  Denoting $c=\dim \PP(\calN_x)/\!\!/R$, and denoting these intersection Betti numbers as
\begin{equation}\label{E:IHPNR}
IP_t(\PP(\calN_x)/\!\!/R)=1+b_1t+b_2t^2+b_3t^3+\dots+b_{c-1}t^{c-1}+b_{c}t^{c}+b_{c+1}t^{c+1}+b_{c+2}t^{c+2}+\dots+t^{2c},
\end{equation}
the polynomial $b_R(t)$ is defined as
\begin{equation}\label{E:bR(t)-def}
\qquad \quad \qquad \qquad b_R(t):= t^2+b_1t^3+\dots+b_{c-3}t^{c-1}+b_{c-2}t^{c}+b_{c+1}t^{c+1}+b_{c+2}t^{c+2}+\dots+t^{2c}\\
\end{equation}
We recall here that the intersection cohomology always satisfies Poincar\'e duality, and thus we could have written $b_j=b_{2c-j}$ instead.
\end{rem}

\begin{rem}\label{R:pi0TrivExpl}
We remark that the spectral sequence argument for the morphism $(\hat{\calN}|_{\pi^{-1}Z^{ss}_R})/\!\!/_{\hat L}N_0\to Z_R/\!\!/_LN_0$
that yielded~\eqref{E:KR1-Lem2.15} also shows that if $Z_R/\!\!/_LN_0$ is simply connected, then the action of $\pi_0N$ on the tensor product  splits, and we can conclude from~\eqref{E:IH-CorTerm1} that
\begin{align*}
\dim IH^i(X/\!\!/_LG)&=\dim IH^i(\hat X/\!\!/_{\hat L}G)\\
&-\sum_{p+q=i}\dim H^p(Z_R/\!\!/_LN)\cdot \dim [IH^{\hat q}(\PP(\calN_x)/\!\!/R)]^{\pi_0N}
\end{align*}
so that
\begin{align}\label{E:IP-BR(t)-1}
IP_t(X/\!\!/_LG)&=IP_t(\hat X/\!\!/_{\hat L}G)-\underbrace{P_t(Z_R/\!\!/_L N)\,b_R(t)}_{\text{``}B_R(t)\text{''}}
\end{align}
where $B_R(t)$ is the product indicated above, and  the coefficients $b_i$ of the polynomial $b_R(t)$ are the dimensions of the $\pi_0N$-invariant subspace of the intersection cohomology: $b_i:=\dim [IH^{\hat i}(\PP(\calN_x)/\!\!/R)]^{\pi_0N}$, shifted as before in~\eqref{E:bR(t)-def}.  In other words, we replace~\eqref{E:IHPNR} with $IP_t^{\pi_0N}(\PP(\calN_x)/\!\!/R)$, and then $b_R(t)$ is obtained from this as in~\eqref{E:bR(t)-def}.
\end{rem}

\subsection{The correction terms in general}
Having reviewed the case of a single blowup, we now give the formulas  for the cohomology  utilizing the full Kirwan blowup.
We use the notation from \S~\ref{SSS:kirBlUp} and especially Remark~\ref{R:pi-r-Def}; recall in particular that $\widetilde  X/\!\!/_{\tilde L}G$ denotes the Kirwan blowup, and $\widetilde X$ and $\tilde L$ are the iterated blowup and linearization, respectively, playing the roles of $\hat X$ and $\hat L$ in the discussion of the single blowup above.
From~\eqref{E:BR(t)-intro}, we have
\begin{equation}\label{E:BR(t)-gen}
IP_t(X/\!\!/_LG)=P_t(\widetilde  X/\!\!/_{\tilde L}G)-\sum_{R\in \calR}B_R(t),
\end{equation}
and our goal is to describe the polynomials $B_R(t)$ more precisely.

The relevant formula for computing the intersection cohomology of $X/\!\!/_LG$ from that of the full Kirwan blowup  $\widetilde X^{ss}/\!\!/_{\tilde L}G$,
generalizing~\eqref{E:IH-CorTerm1},  is~\cite[Thm.~3.1]{kirwanrational1}:
\begin{align}
\nonumber  \dim IH^i(X/\!\!/_LG)&=\dim H^i(\widetilde  X/\!\!/_{\tilde L}G)\\
\label{eq:IH-FullBU}
&-\sum_{R\in \calR} \sum_{p+q=i}\dim[H^p(Z_{R,\dim R+1}/\!\!/_LN_0^R)\otimes IH^{\hat q^R}(\PP(\calN_x^R)/\!\!/R)]^{\pi_0N^R}.
\end{align}
The   notation is explained after~\eqref{E:IH-CorTerm1}, where now the superscript $R$ indicates the corresponding object with respect to the given group $R$. For instance,
$\hat q^R=q-2$ for $q\le \dim \PP(\calN_x^R)/\!\!/R$ and $\hat q^R=q$ otherwise.
The notation $Z_{R,\dim R+1}$  indicates the strict transform of $Z_R$ in $X_{\dim R+1}$ under the appropriate sequence of blowups in the inductive process (see \S~\ref{SSS:kirBlUp} and especially Remark~\ref{R:pi-r-Def}).

\begin{rem}\label{R:pi0TrivBR}
If for some $R\in \calR$ the action of $\pi_0N^R$ on the tensor product $H^p(Z_{R,\dim R+1}/\!\!/_LN_0)\otimes IH^{\hat q}(\PP(\calN_x^R)/\!\!/R)$ is trivial on the second factor, one can simplify the corresponding term in~\eqref{eq:IH-FullBU} using Remark~\ref{R:pi0TrivB}.
In particular, if for all $R\in \calR$ the action of $\pi_0N^R$ on the tensor product $H^p(Z_{R,\dim R+1}/\!\!/_LN_0^R)\otimes IH^{\hat q}(\PP(\calN_x^R)/\!\!/R)$ is trivial on the second factor, then we  have
\begin{align*}
\dim IH^i(X/\!\!/_LG)&=\dim H^i(\widetilde X/\!\!/_{\tilde L}G)\\
&-\sum_{R\in \calR}\sum_{p+q=i}\dim H^p(Z_{R,\dim R+1}/\!\!/_LN^R) \dim IH^{\hat q^R}(\PP(\calN_x^R)/\!\!/R)
\end{align*}
so that
\begin{align}\label{E:IP-BR(t)gen}
IP_t(X/\!\!/_LG)&=P_t(\widetilde X/\!\!/_{\tilde L}G)-\sum_{R\in \calR}\underbrace{P_t(Z_{R,\dim R+1}/\!\!/_L N^R)b_R(t)}_{\text{``}B_R(t)\text{''}}
\end{align}
where $B_R(t)$  is the product indicated above, and  $b_R(t)$ is defined as in~\eqref{E:bR(t)-def}. If each of the $Z_{R,\dim R+1}/\!\!/_LN_0^R$ in
\eqref{eq:IH-FullBU} is simply connected, then the direct generalization of Remark~\ref{R:pi0TrivExpl} holds.
\end{rem}

\section[The GIT quotient for cubic threefolds]{The intersection cohomology of the GIT quotient for cubic threefolds}
We now apply this in the case of the GIT moduli space of cubic threefolds $\GIT$, for which~\eqref{E:BR(t)-gen} gives
\begin{equation}\label{E:GIT-MK-B}
IP_t(\GIT)=P_t(\MK)-\sum_{R\in \calR}B_R(t).
\end{equation}
In our case $\calR=\{R_{2A_5}\cong \CC^*, R_{3D_4}\cong (\CC^*)^2, R_c\cong \PGL(2,\CC)\}$, and to compute the terms $B_R(t)$, we utilize~\eqref{eq:IH-FullBU}, for most of which the computations have in fact already been done. Indeed, as we will see, we have already checked in the previous section that the quotients  $Z_{R,\dim R+1}/\!\!/_LN_0$ are simply connected, so that applying Remark~\ref{R:pi0TrivBR}, we can utilize~\eqref{E:IP-BR(t)gen}, and thus all that remains is to compute the intersection cohomology of the GIT quotients $\PP(\calN_x^R)/\!\!/R$. We will work out the terms $B_R(t)$ in the order of descending dimension of $R$, following the Kirwan blowup process.

\subsection{The  correction term $B_R(t)$ for $R_c\cong \PGL(2,\CC)$, the chordal cubic case}
\begin{pro}[The $B_R(t)$ term for the chordal cubic]\label{P:BRt-ChC}
For the group $R_c\cong \PGL(2,\CC)$, we have
\begin{enumerate}
\item $Z_{R_c}/\!\!/_{\calO(1)} N^{R_c}$ is a point.

\item $IP_t(\PP(\calN_x^{R_c})/\!\!/R_c)=1 +t^2 + 2t^4  +2t^6 +  3t^8 + 3t^{10} + 2t^{12}+ 2t^{14} + t^{16} + t^{18}$.

\item The action of ${\pi_0N^{R_c}}$ on $IH^\bullet(\PP(\calN_x^{R_c})/\!\!/R_c))$ is trivial.
\end{enumerate}
The term $B_{R_c}(t)$ is equal to
\begin{align*}
P_t(Z_{R_c}/\!\!/_{\calO(1)} N^{R_c}) b_{R_c}(t)&\equiv t^2+t^4+2t^6+2t^8+3t^{10} \mod t^{11}.
\end{align*}
\end{pro}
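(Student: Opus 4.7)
The plan is to verify items~(1)--(3) separately, and then combine them via the simplified formula of Remark~\ref{R:pi0TrivBR}. Item~(1) is immediate from Proposition~\ref{P:ZRss}(2): since $Z^{ss}_{R_c}=\{V(F_{1,-2})\}$ is already a single point, its GIT quotient by $N(R_c)$ is a point, and $P_t(Z_{R_c}/\!\!/_{\calO(1)}N^{R_c})=1$. Item~(3) follows from Lemma~\ref{L:RcNorm}, which presents $N(R_c)$ as a split central extension of $\PGL(2,\CC)$ by $\mu_5$, so $\pi_0 N^{R_c}=\mu_5$. This $\mu_5$ is precisely the center of $\SL(5,\CC)$, which acts by scalars on $\CC^5$ and hence trivially on $X=\PP\Sym^3(\CC^5)^\vee$; therefore it acts trivially on $T_{V(F_{1,-2})}X$, on the subquotient $\calN_x$, and consequently on $\PP(\calN_x)/\!\!/R_c$ and on all of $IH^\bullet(\PP(\calN_x)/\!\!/R_c)$.

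The substantive step, and the main obstacle, is item~(2). By Lemma~\ref{L:Rc-Nx-Rep}, $\calN_x$ is the $13$-dimensional $\PGL(2,\CC)$-representation $\Sym^{12}\CC^2$, so $\PP(\calN_x)/\!\!/R_c$ is the classical GIT moduli space of $12$ unordered points on $\PP^1$, which is the largest Deligne--Mostow ball quotient. I would compute its intersection Poincar\'e polynomial by applying Kirwan's machinery (exactly as in the main body of this paper) to this simpler GIT problem: semistable configurations are those in which no point has multiplicity exceeding $6$, and there is a single strictly polystable orbit, namely binary forms projectively equivalent to $x_0^6 x_1^6$, whose stabilizer has connected component a one-parameter subgroup $\CC^*\subset\PGL(2,\CC)$ entirely analogous to the $R_{2A_5}$ case. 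One computes the Kempf stratification of the unstable locus to obtain $P_t^{\PGL(2,\CC)}((\PP^{12})^{ss})$, adds the main and extra correction terms for a single Kirwan blowup along the orbit of $x_0^6x_1^6$, and finally descends from the resulting Kirwan resolution to the intersection cohomology of the GIT quotient via another application of Remark~\ref{R:pi0TrivBR}. Since the semistable and polystable loci are simpler than in the cubic threefold case, the bookkeeping is much lighter than our main computation. Alternatively, one may invoke the Kirwan--Lee--Weintraub treatment of Deligne--Mostow ball quotients~\cite{KLW}, which covers exactly this case.

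With (1)--(3) in hand, the conclusion follows by direct substitution. Since $\dim R_c=3$ is maximal among $\calR$, the strict transform $Z_{R_c,4}^{ss}$ equals $Z_{R_c}^{ss}$, and item~(1) gives $P_t(Z_{R_c,4}/\!\!/_{\calO(1)}N^{R_c})=1$. Item~(3) allows us to apply Remark~\ref{R:pi0TrivBR}, yielding $B_{R_c}(t)=P_t(Z_{R_c,4}/\!\!/_{\calO(1)}N^{R_c})\cdot b_{R_c}(t)=b_{R_c}(t)$, where $b_{R_c}(t)$ is obtained from the intersection Betti numbers of item~(2) via the recipe~\eqref{E:bR(t)-def}. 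Truncating modulo $t^{11}$ produces the claimed polynomial $t^2+t^4+2t^6+2t^8+3t^{10}$.
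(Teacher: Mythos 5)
Your proposal matches the paper's proof: items (1) and (3) are handled identically (point, $\mu_5$ acting by scalars), and the conclusion via Remark~\ref{R:pi0TrivBR} and formula~\eqref{E:bR(t)-def} is the same. The only difference is for item (2): where you sketch a from-scratch Kirwan computation for the GIT of $12$ points on $\PP^1$ (whose outline is correct), the paper simply cites the already-tabulated result in~\cite[Table, p.~40]{kirwanhyp}.
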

\begin{proof}
For brevity, let us write $R=R_c$,  $N=N^{R_c}$, and $\calN_x^{R_c}=\calN_x$.
By Proposition~\ref{P:ZRss}(2), $Z_R$ is a point, and thus~$Z_R/\!\!/_{\calO(1)} N$ is a point, proving (1).
Now the representation $\rho:R\to \calN_x $ was worked out in
Lemma~\ref{L:Rc-Nx-Rep} to be the representation of $\PGL(2,\CC)$ induced by the $\SL(2,\CC)$-representation $\Sym^{12}\CC^2$, where $\CC^2$ is the standard two-dimensional representation.
Thus $\PP(\calN_x )/\!\!/R$ is the GIT moduli space of  $12$ \emph{unordered} points in $\PP^1$, the intersection cohomology of which was worked out in~\cite[Table, p.~40]{kirwanhyp} (see also \cite{Bri} and \cite{LSa}), giving (2).

Now, since $Z_{R}/\!\!/_{\calO(1)} N$ is a point, we obtain from~\eqref{eq:IH-FullBU} that  the correction term is equal to  $B_R(t)=\sum_{i}t^i[\dim IH^{\hat i}(\PP(\calN_x )/\!\!/R)]^{\pi_0N}$, where the action is induced by an action of $\pi_0N$ on   $\PP(\calN_x )/\!\!/R$.
It follows from Lemma~\ref{L:RcNorm} that $\pi_0N$ consists of scalar matrices of the form $\zeta^i\cdot \operatorname{Id}$, where $\zeta$ is a primitive fifth root of unity.  Their action is evidently trivial, proving (3).
Finally, by Remark~\ref{R:pi0TrivBR} we conclude that  $B_{R}(t)=b_{R}(t)$, where $b_{R}(t)$ is worked out from (2) via~\eqref{E:bR(t)-def} to be
$$
b_{R}(t)=t^2+t^4+2t^6+2t^8+3t^{10} + 2t^{12}+ 2t^{14} + t^{16} + t^{18},
$$
completing the proof.
\end{proof}

\subsection{The  correction term $B_R(t)$ for $R_{3D_4}\cong (\CC^*)^2$, the $3D_4$ case}
\begin{pro}[The $B_R(t)$ term for the $3D_4$ cubic]\label{P:BRt-3D4}  For the group $R_{3D_4}\cong (\CC^*)^2$,
we have
\begin{enumerate}
\item $Z_{R_{3D_4}}/\!\!/_{\calO(1)} N^{R_{3D_4}}$ is a point.
\item $IP_t^{\pi_0N^{R_{3D_4}}}(\PP(\calN_x^{R_{3D_4}})/\!\!/R_{3D_4})=1 +t^2 + 2t^4  +3t^6 +  3t^8 + 3t^{10} + 3t^{12}+ 2t^{14} + t^{16} + t^{18}$.
\end{enumerate}
The
 term $B_{R_{3D_4}}(t)$ is given by
\begin{align*}
B_{R_{3D_4}}(t)&\equiv t^2+t^4+2t^6+3t^8+3t^{10} \mod t^{11}.
\end{align*}
\end{pro}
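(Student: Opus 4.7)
The proposition asserts three things: (1) that $Z_{R_{3D_4}}/\!\!/_{\calO(1)} N^{R_{3D_4}}$ is a single point; (2) the claimed Poincar\'e polynomial for $IP_t^{\pi_0 N^{R_{3D_4}}}(\PP(\calN_x^{R_{3D_4}})/\!\!/R_{3D_4})$; and (3) the resulting value of $B_{R_{3D_4}}(t)$ modulo $t^{11}$. Assertion (1) is immediate from Lemma~\ref{L:R3D4Norm1}(3), which states that $N(R_{3D_4})$ acts transitively on $Z^{ss}_{R_{3D_4}}$, so $P_t(Z_{R_{3D_4}}/\!\!/N^{R_{3D_4}})=1$. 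From Lemma~\ref{L:R3D4Norm1}(1), the connected component of $N(R_{3D_4})$ is the subgroup spanned by the diagonal of $\SSS_3$ and the $\GL_2$ block, while the group of components is $S_3\times S_2$: the $S_2$ commutes with $R_{3D_4}$ and hence acts trivially on $\PP(\calN_x)$, so only the $S_3$ acts nontrivially on $IH^\bullet(\PP(\calN_x)/\!\!/R_{3D_4})$, by permutation of the three rays of weights of $\rho$ described in Lemma~\ref{L:R3D4-Nx-Rep}.

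The heart of the proof is (2). By Lemma~\ref{L:R3D4-Nx-Rep}, $\PP(\calN_x)\cong \PP^{11}$ is acted on by the $2$-torus $R_{3D_4}$ with twelve weights lying on three equally spaced rays in $\mathfrak{t}_R^\vee\cong \RR^2$, with multiplicities $1,2,1$ at heights $1,2,3$ along each ray. Mumford's numerical criterion shows that a point of $\PP(\calN_x)$ is stable iff the support of its coordinates meets all three rays, strictly polystable iff that support lies on exactly two rays, and unstable iff it lies on just one ray; this gives three $S_3$-permuted strictly polystable loci inside $\PP(\calN_x)^{ss}$. The plan is to apply the Kirwan machinery of Chapters~\ref{sec:HXss} and~\ref{S:CohKirBl-II} a second time to this auxiliary torus GIT problem: compute $P_t^{R_{3D_4}}(\PP(\calN_x)^{ss})$ via an equivariantly perfect Kempf stratification of the unstable locus using~\eqref{E:KD-(3.1)}, then compute the main and extra correction terms~\eqref{E:finMainT}--\eqref{E:finExtraT} at each of the three strictly polystable loci. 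Because $R_{3D_4}$ is abelian and the configuration is $S_3$-symmetric, the centralizers, normalizers, and fixed loci that appear are immediate, and all intermediate equivariant Poincar\'e polynomials simplify via Molien-type character calculations analogous to those used in the proof of Proposition~\ref{P:MT-3D4}. One then applies~\eqref{E:IP-BR(t)gen} one more time, now to the auxiliary GIT quotient, to descend from its Kirwan blowup to $IP_t(\PP(\calN_x)/\!\!/R_{3D_4})$; the $S_3$-invariants are extracted by tracking the permutation action of $S_3$ on the three rays through every stage of the blowup.

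Granted (1) and (2), assertion (3) follows by a direct bookkeeping from~\eqref{E:IP-BR(t)gen}. Since $P_t(Z_{R_{3D_4},3}/\!\!/N^{R_{3D_4}})=1$ by (1) and $c:=\dim_\CC \PP(\calN_x)/\!\!/R_{3D_4}=9$, the recipe~\eqref{E:bR(t)-def} shifts the coefficients of $IP_t^{\pi_0 N^{R_{3D_4}}}(\PP(\calN_x)/\!\!/R_{3D_4})$ in degrees $0,\ldots,7$ up by $2$, while leaving those in degrees $\ge 10$ unchanged (and dropping the middle two coefficients, which do not contribute mod $t^{11}$). Applied to the polynomial in (2), this yields $t^2+t^4+2t^6+3t^8+3t^{10}$ in degrees $\le 10$, as claimed. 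The main obstacle is the honest computation underlying (2): despite the $S_3$-symmetry of the three-ray configuration, the auxiliary GIT quotient is a nontrivial $9$-dimensional toric variety with three strictly polystable orbits, so Kirwan's recursion for this smaller problem does not terminate at a single step as it did in the chordal cubic case (Proposition~\ref{P:BRt-ChC}), and maintaining the $S_3$-equivariance through each round of blowups is where most of the bookkeeping lies.
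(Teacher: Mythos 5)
Your plan correctly handles (1) via Lemma~\ref{L:R3D4Norm1}(3), and the bookkeeping you describe for deducing (3) from (2) via~\eqref{E:bR(t)-def} is right. But the GIT description underlying your proposed proof of (2) is wrong, and the error makes your route significantly harder (and incorrect), whereas the paper's route is essentially a one-step computation.

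You claim that a point of $\PP(\calN_x)$ is strictly polystable iff the support of its coordinates lies on exactly two of the three rays, giving three strictly polystable orbits and hence a nontrivial Kirwan recursion for the auxiliary torus problem. This is false. The three ray directions are $v_1=(2,-1,-1)$, $v_2=(-1,2,-1)$, $v_3=(-1,-1,2)$, which satisfy $v_1+v_2+v_3=0$; in particular any two of them are linearly independent. Therefore the convex hull of any set of weights supported on just two rays (all weights being \emph{positive} multiples of the corresponding $v_i$) cannot contain the origin — a convex combination $\alpha a v_i + (1-\alpha) b v_j = 0$ with $a,b>0$ and $\alpha\in[0,1]$ forces $\alpha a = (1-\alpha)b = 0$, which is impossible. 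So a point with support on exactly two rays is \emph{unstable}, not strictly polystable. Indeed, $\PP(\calN_x)^{ss}$ has no strictly semi-stable points at all (this also follows formally from the fact that one has arrived at the full Kirwan blowup at this stage), so $\PP(\calN_x)/\!\!/R_{3D_4}$ already has at worst finite quotient singularities. The paper exploits precisely this: $IP_t(\PP(\calN_x)/\!\!/R)=P_t^R(\PP(\calN_x)^{ss})$, which is then computed in a single application of~\eqref{E:KD-(3.1)} using the unstable Kempf strata already classified in Lemma~\ref{L:R(rho)3D4p1} and Figure~\ref{F:curve-weights}. There is no second round of blowups and no auxiliary $B_R$ correction.

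A smaller inaccuracy: you write that $\pi_0 N^{R_{3D_4}}\cong S_3\times S_2$. From Lemma~\ref{L:R3D4Norm1}(1), $N(R_{3D_4})$ has an $\SSS_3$ block and a (connected) $\GL_2$ block, so $\pi_0 N\cong S_3$ only; the $S_3\times S_2$ you have in mind belongs to $\GL_{V(F_{3D_4})}$ (Lemma~\ref{L:R3D4Norm2}), which is a different group. You do note that any $S_2$ factor would act trivially on $\PP(\calN_x)$, so this does not propagate into your numbers, but it is worth getting the group right. After taking $S_3$-invariants (the dihedral representation of $S_3$ on $H^\bullet(B(\CC^*)^2)$, with generating function $(1-t^4)^{-1}(1-t^6)^{-1}$) and subtracting the three $S_3$-permuted codimension-$4$ unstable strata, one recovers the paper's $IP_t^{\pi_0 N}(\PP(\calN_x)/\!\!/R)\equiv 1+t^2+2t^4+3t^6+3t^8\ \mathrm{mod}\ t^{10}$.
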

\begin{proof}
For brevity, let us write $R=R_{3D_4}$,  $N=N^{R_{3D_4}}$, and $\calN_x^{R_{3D_4}}=\calN_x$.
We have seen in Lemma~\ref{L:R3D4Norm1}(3) that $N^{ }$ acts transitively, so that  $Z_{R_{ }}/\!\!/_{\calO(1)} N^{ }$ is a point,
while the representation $\rho:R_{ }\to \calN_x^{ }$ was worked out in
Lemma~\ref{L:R3D4-Nx-Rep}.  The quotient $\PP(\calN_x^{ })/\!\!/R_{ }$ is a projective toric variety; the intersection cohomology can be worked out either torically, or via the general Kirwan process described in \S~\ref{S:PXss}.

The latter approach is quite elementary in this case, and so we sketch that here.  First, from the description of the weights of the action in Lemma~\ref{L:R3D4-Nx-Rep}, it is clear that there are no strictly semi-stable points in $\PP(\calN_x^{ })$ (this could also be deduced from the fact that we have locally already arrived at the full Kirwan blowup).  Thus the GIT quotient has at worst finite quotient singularities.  In any case, we have $IP_t(\PP(\calN_x^{ })/\!\!/R_{ })=P_t(\PP(\calN_x^{ })/\!\!/R_{ })=P_t^{R_{ }}(\PP(\calN_x^{ })^{ss})$.
But then using~\eqref{E:KD-(3.1)}, we have
\begin{align}
\nonumber IP_t(\PP(\calN_x^{ })/\!\!/R_{ })&=P_t^{R_{ }}(\PP(\calN_x^{ })^{ss})=P_t(\PP(\calN_x^{ }))P_t(BR_{ })-\sum_{0\neq \beta' \in \calB(\rho)} t^{2d(\beta')}P_t^{R_{ }}(S_{\beta'})\\
\nonumber&=P_t(\PP^{11})P_t(B(\CC^*)^2)-\sum_{0\neq \beta' \in \calB(\rho)} t^{2d(\beta')}P_t^{R_{ }}(S_{\beta'})\\
\label{E:IP3D4}&\equiv P_t(\PP^{11})P_t(B(\CC^*)^2) -3t^8 -3\dim H^1_{(\CC^*)^2}(S_{\beta '})t^9\mod t^{10}
\end{align}
where in \eqref{E:IP3D4}, in the notation of  Lemma~\ref{L:R(rho)3D4p1}, $\beta'$ is as in case (a), one of the  exactly three  $0\ne \beta'\in \calB(\rho)$ with $d(\beta)\le  4$, with isomorphic corresponding strata; since  $\PP(\calN_x^{ })/\!\!/R_{ }$ has dimension $9$, it suffices by Poincar\'e duality to compute up to $t^{9}$.
We can in fact conclude that the coefficient $-3\dim H^1_{(\CC^*)^2}(S_{\beta '})$ of $t^9$ is zero, since it must be non-negative (but it turns out this coefficient does not contribute to the final answer anyway).

Now, since $Z_{R}/\!\!/_{\calO(1)} N $ is a point, we obtain from~\eqref{eq:IH-FullBU} that  the correction term is equal to  $B_R(t)=\sum_{q}t^q[\dim IH^{\hat q}(\PP(\calN_x)/\!\!/R)]^{\pi_0N}$, where the action is induced by an action of $\pi_0N$ on   $\PP(\calN_x)/\!\!/R$.  It follows from Lemma~\ref{L:R3D4Norm1}(1) (see also the computations in Proposition~\ref{P:App-R=C*2}(2)) that $\pi_0N\cong S_3$ acts on $\PP(\calN_x )/\!\!/R$ via permutation of the coordinates $x_0,x_1,x_2$.  In the context of~\eqref{E:IP3D4}, the action on the cohomology of $\PP^{11}$ is trivial, the action permutes the $S_{\beta '}$, and acts on the torus $(\CC^*)^2$ in the following way.  The involution $\delta$ given by $x_0\leftrightarrow x_1$ acts by $(\lambda_0,\lambda_1)\mapsto (\lambda_1,\lambda_0)$, and the cyclic permutation $\sigma$ given by $x_0\mapsto x_1\mapsto x_2\mapsto x_0$ acts by $(\lambda_0,\lambda_1)\mapsto (\lambda_0^{-1}\lambda_1^{-1},\lambda_0)$.  Thus the action of $S_3$ on the cohomology of $B(\CC^*)^2$ is induced by the action of $S_3$ on the vector space $\QQ\langle c_1^{(1)},c_1^{(2)}\rangle$ by the matrices
$$
\delta = \left (
\begin{array}{cc}
0&1\\
1&0\\
\end{array}
\right),  \ \ \
\sigma = \left (
\begin{array}{cc}
-1&1\\
-1&0\\
\end{array}
\right).
$$
Looking at the characters, we see that this is the dihedral representation of $S_3$,
which has generating function $(1-t^4)^{-1}(1-t^6)^{-1}$ (the standard representation of $S_3$ has generating function $(1-t^2)^{-1}(1-t^4)^{-1}(1-t^6)^{-1}$, and is the direct sum of the  trivial representation and the dihedral representation).

Putting this back together with~\eqref{E:IP3D4},  we obtain
\begin{align*}
\scriptstyle
IP_t^{\pi_0N}(\PP(\calN_x )/\!\!/R_{})&\equiv (1-t^2)^{-1}\cdot (1-t^4)^{-1}(1-t^6)^{-1}-\frac{1}{3}\left(3t^8-3\dim H^1_{(\CC^*)^2}(S_{\beta '})  t^9\right) \\
&\equiv1+t^2+2t^4+3t^6+3t^8-\dim H^1_{(\CC^*)^2}(S_{\beta '}) t^9 \mod t^{10}.
\end{align*}
As noted above, $\dim H^1_{(\CC^*)^2}(S_{\beta '})=0$, but we have included it here again for clarity with respect to deducing the invariants from~\eqref{E:IP3D4}.   This concludes the proof of (2).

Now, computing the numerics as in~\eqref{E:bR(t)-def}, we find finally that
$$
B_R(t)\equiv t^2+t^4+2t^6+3t^8+3t^{10} \mod t^{11}$$
completing the proof (and explaining the claim above that the coefficient of $t^9$ in~\eqref{E:IP3D4} is irrelevant).
\end{proof}

\subsection{The  correction term $B_R(t)$ for $R_{2A_5}\cong \CC^*$, the $2A_5$ case} \label{S: BR-2A5}
In this section we denote by $\widehat Z_{R_{2A_5}}$ the strict transform of $Z_{R_{2A_5}}$ in the blowup along the chordal cubic locus.
\begin{pro}[The $B_R(t)$ term for $2A_5$ cubics]\label{P:BRt-2A5}  For the group $R_{2A_5}\cong \CC^*$,
we have
\begin{enumerate}
\item $\widehat Z_{R_{2A_5}}/\!\!/_{\calO(1)} N^{R_{2A_5}}\cong \PP^1$.

\item $IP_t^{\pi_0N^{R_{2A_5}}}(\PP(\calN_x^{R_{2A_5}})/\!\!/R_{2A_5})=1 +t^2 + 2t^4  +2t^6 +  3t^8  + 2t^{10}+ 2t^{12} + t^{14} + t^{16}$.
\end{enumerate}
The term $B_{R_{2A_5}}(t)$ is given by
\begin{align*}
B_{R_{2A_5}}(t)&=  t^2+2t^4+3t^6+4t^8+4t^{10} \mod t^{11}
\end{align*}
\end{pro}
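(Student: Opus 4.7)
The proof follows the template of Propositions~\ref{P:BRt-ChC} and~\ref{P:BRt-3D4}: identify $\widehat Z_{R_{2A_5}}/\!\!/_{\calO(1)}N^{R_{2A_5}}$, compute $IP_t^{\pi_0N^{R_{2A_5}}}(\PP(\calN_x^{R_{2A_5}})/\!\!/R_{2A_5})$, and assemble $B_{R_{2A_5}}(t)$ via Remark~\ref{R:pi0TrivBR}.

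For~(1), Lemma~\ref{L:R2A5Norm}(2) yields $\widehat Z_{R_{2A_5}}\cong Z^{ss}_{R_{2A_5}}$ after the chordal cubic blowup, and by Corollary~\ref{C:ZRss} the subsequent $3D_4$-blowup is disjoint from $Z^{ss}_{R_{2A_5}}$, so the isomorphism persists through the full Kirwan construction. Combining Lemma~\ref{L:R2A5Norm}(3) (giving $Z^{ss}_{R_{2A_5}}/\TT^4\cong\PP^1$) with the observation from the proof of Proposition~\ref{P:MT-2A5} that the involution $\tau\in N^{R_{2A_5}}=\TT^4\rtimes\ZZ/2$ acts trivially on this $\PP^1$ gives $\widehat Z_{R_{2A_5}}/\!\!/_{\calO(1)}N^{R_{2A_5}}\cong\PP^1$.

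For~(2), by Lemma~\ref{L:R2A5-Nx-Rep} the representation $\rho:R_{2A_5}\cong\CC^*\to\GL(\calN_x)$ has weights $\{\pm 2,\pm 3,\pm 4,\pm 5,\pm 6\}$. Since every weight is non-zero and both signs appear, the convex hull of any weight subset either excludes $0$ or contains $0$ in its interior, so $\PP(\calN_x)$ has no strictly semistable points; the GIT quotient thus has only finite quotient singularities, and $IP_t=P_t^{R_{2A_5}}(\PP(\calN_x)^{ss})$ is computable by Kirwan's formula~\eqref{E:KD-(3.1)}. The indexing set is $\calB=\{0,\pm 2,\pm 3,\pm 4,\pm 5,\pm 6\}$; one checks via~\eqref{E:Sbcodim} that $d(\pm k)=k+3$ for $k\in\{2,\dots,6\}$, and each stratum $S_{\pm k}\cong\CC^{6-k}$ is $\CC^*$-equivariantly contractible, so $P_t^{R_{2A_5}}(S_{\pm k})=(1-t^2)^{-1}$. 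The sum collapses to
\begin{equation*}
IP_t(\PP(\calN_x^{R_{2A_5}})/\!\!/R_{2A_5})=\frac{1-t^{20}}{(1-t^2)^2}-\frac{2t^{10}(1-t^{10})}{(1-t^2)^2}=(1+t^2+t^4+t^6+t^8)^2.
\end{equation*}
To pass to $\pi_0N^{R_{2A_5}}=\langle\tau\rangle$-invariants, note that $\tau$ acts trivially on $H^\bullet(\PP(\calN_x))$ but as $c_1\mapsto -c_1$ on $H^\bullet(BR_{2A_5})$ (since $\tau$ conjugates $R_{2A_5}$ by $\lambda\mapsto\lambda^{-1}$), and swaps $S_k\leftrightarrow S_{-k}$. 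Consequently $(1-t^2)^{-1}$ is replaced by its invariant subring $(1-t^4)^{-1}$ in the main term, while the factor $2$ in the correction sum is replaced by $1$:
\begin{equation*}
IP_t^{\pi_0N^{R_{2A_5}}}(\PP(\calN_x^{R_{2A_5}})/\!\!/R_{2A_5})=\frac{1-t^{20}}{(1-t^2)(1-t^4)}-\frac{t^{10}(1-t^{10})}{(1-t^2)^2},
\end{equation*}
which expands to the claimed $1+t^2+2t^4+2t^6+3t^8+2t^{10}+2t^{12}+t^{14}+t^{16}$.

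Finally, since $\widehat Z_{R_{2A_5}}/\!\!/_{\calO(1)}N_0^{R_{2A_5}}=Z^{ss}_{R_{2A_5}}/\TT^4\cong\PP^1$ is simply connected, the full-Kirwan analogue in Remark~\ref{R:pi0TrivBR} of Remark~\ref{R:pi0TrivExpl} applies, giving $B_{R_{2A_5}}(t)=P_t(\PP^1)\,b_{R_{2A_5}}(t)=(1+t^2)\,b_{R_{2A_5}}(t)$. Reading off $b_{R_{2A_5}}(t)$ from the invariant polynomial via~\eqref{E:bR(t)-def} (with $c=8$) produces $b_{R_{2A_5}}(t)\equiv t^2+t^4+2t^6+2t^8+2t^{10}\mod t^{11}$, and multiplying by $1+t^2$ yields $B_{R_{2A_5}}(t)\equiv t^2+2t^4+3t^6+4t^8+4t^{10}\mod t^{11}$. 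The main technical subtlety lies in the $\tau$-bookkeeping in step~(2): distinguishing the trivial action on $H^\bullet(\PP(\calN_x))$ from the sign-twist on $H^\bullet(BR_{2A_5})$, and correctly interpreting the pairing of exchanged strata, so that each pair contributes only one copy of $H^\bullet_{R_{2A_5}}(S_k)$ to the $N^{R_{2A_5}}$-invariants; once this is handled, the remainder is routine generating-function manipulation.
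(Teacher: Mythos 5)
Your argument is correct and arrives at all three claimed conclusions. For part~(1) and the final assembly you follow essentially the same path as the paper. For part~(2), however, your route is genuinely different: the paper only computes $IP_t^{\pi_0 N}(\PP(\calN_x)/\!\!/R)$ modulo $t^9$, noting from Lemma~\ref{L:R2A5-Nx-Rep} that all nonzero $\beta'\in\calB(\rho)$ have $d(\beta')\geq 5$, so the unstable-strata corrections first enter in degree $10$; the remaining Betti numbers are then read off via Poincar\'e duality in dimension $8$. You instead compute the full polynomial directly by working out $d(\pm k)=k+3$ and $S_{\pm k}\cong\CC^{6-k}$, organizing the whole sum into the closed-form identity $(1-t^{10})^2/(1-t^2)^2=(1+t^2+t^4+t^6+t^8)^2$, and then passing to $\pi_0 N$-invariants by tracking the $\tau$-pairing $S_k\leftrightarrow S_{-k}$. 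This is more work but also more explicit, and gives an independent consistency check (your closed form must be Poincar\'e-dual-symmetric of degree $16$, which it is). The one step you treat a bit briskly is the replacement of the factor $2$ by $1$ in the correction sum: this amounts to the assertion that, after lumping $S_k\sqcup S_{-k}$ into a single $\tau$-invariant stratum, Kirwan's equivariant perfection descends to the $\pi_0 N$-invariant subcomplex (which holds because $\pi_0 N$ is finite, coefficients are $\QQ$, and the Gysin boundary maps that vanish do so $\pi_0 N$-equivariantly), and that $\bigl(H^\bullet_R(S_k)\oplus H^\bullet_R(S_{-k})\bigr)^{\tau}\cong H^\bullet_R(S_k)$ as graded vector spaces since $\tau$ swaps the two summands. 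Both facts are true, but worth spelling out. Finally, note that your use of $c=8$ in reading off $b_{R_{2A_5}}(t)$ from~\eqref{E:bR(t)-def} is the correct value (since $\dim\PP(\calN_x)/\!\!/R=8$), and in fact silently corrects a typographical slip in the paper's prose, which at one point writes ``with $c=9$'' and lists a degree-$18$ polynomial for $b_{R_{2A_5}}(t)$, though the paper's final multiplication by $(1+t^2)$ uses the correct coefficients and yields the same $B_{R_{2A_5}}(t)$ modulo $t^{11}$.
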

\begin{proof}
For brevity, let us write $R=R_{2A_5}$,  $N=N^{R_{2A_5}}$, and $\calN_x^{R_{2A_5}}=\calN_x$.
The same argument as used in Proposition~\ref{P:App-R=C*p2} for the proof of Lemma~\ref{L:R2A5Norm}(3) shows that    $\widehat Z_{R}/\!\!/_{\calO(1)} N $ is a rational normal projective variety of dimension $1$; i.e., $\PP^1$.
Now the representation $\rho:R\to \calN_x$ was worked out in
Lemma~\ref{L:R2A5-Nx-Rep}.  The quotient $\PP(\calN_x^{R})/\!\!/R$ is a projective toric variety, and as in the previous case it is quite elementary to use our prior computations to compute its intersection cohomology via the general Kirwan process described in \S~\ref{S:PXss}.

Indeed, from the description of the weights of the action in Lemma~\ref{L:R2A5-Nx-Rep} (or from the fact that we have arrived at the Kirwan blowup) it follows that there are no strictly semi-stable points in $\PP(\calN_x)$, so that the GIT quotient has at worst finite quotient singularities.  Thus $IP_t(\PP(\calN_x)/\!\!/R)=P_t(\PP(\calN_x )/\!\!/R)=P_t^{R}(\PP(\calN_x)^{ss})$, and using~\eqref{E:KD-(3.1)} yields
\begin{align}
\nonumber IP_t(\PP(\calN_x)/\!\!/R) =P_t^{R}(\PP(\calN_x )^{ss})&=P_t(\PP(\calN_x ))P_t(BR)-\sum_{0\neq \beta' \in \calB(\rho)} t^{2d(\beta')}P_t^{R}(S_{\beta'})\\
\nonumber &=P_t(\PP^{9})P_t(B\CC^*)-\sum_{0\neq \beta' \in \calB(\rho)} t^{2d(\beta')}P_t^{R}(S_{\beta'})\\
\label{E:IP2A5}&\equiv P_t(\PP^{9})P_t(B\CC^*) \mod t^{9}
\end{align}
since from Lemma~\ref{L:R2A5-Nx-Rep} there are no $0\ne \beta'\in \calB(\rho)$ with $d(\beta)\le  4$; since  $\PP(\calN_x )/\!\!/R$ has complex dimension $8$, it suffices by Poincar\'e duality to compute modulo $t^9$.

Now, since $\widehat Z_{R}/\!\!/_{\calO(1)} N^{R}$ is simply connected, as described in Remark~\ref{R:pi0TrivBR} the action of $\pi_0N$ splits into an action on the base and an action on the fiber,
so that to compute $B_R(t)$, it suffices to compute $IP_t^{\pi_0N}(\PP(\calN_x)/\!\!/R)$.
We have seen in Lemma~\ref{L:R2A5Norm}  (see Proposition~\ref{P:App-R=C*p2} for more details) that $\pi_0N\cong \ZZ_2$, and acts on $\PP(\calN_x )/\!\!/R$ via permutation of the coordinates $x_0,x_1,x_2,x_3,x_4\leftrightarrow x_4,x_3,x_2,x_1,x_0$.  In the context of~\eqref{E:IP2A5}, the action of this involution on the cohomology of $\PP^{9}$ is trivial,  and acts on the torus $\CC^*$ by $\lambda\mapsto \lambda^{-1}$. Thus the action of $\ZZ_2$ on $H^\bullet(B\CC^*)=\mathbb Q[c_1]$ ($\deg c_1=2$) is induced by the action of $\ZZ_2$ on the vector space $\QQ\langle c_1\rangle$ by $c_1\mapsto -c_1$ (see Example \ref{Exa:SemDirExa}).  Thus $H^\bullet (B\mathbb C^*)^{\mathbb Z_2}=\mathbb Q[c_1^2]$, with generating function  $(1-t^4)^{-1}$.

Putting this together with~\eqref{E:IP2A5} we obtain
\begin{align}\label{IPquot2A5}
\nonumber IP_t^{\pi_0N}(\PP(\calN_x )/\!\!/R_{})&\equiv  (1-t^2)^{-1}\cdot (1-t^4)^{-1}\\
&\equiv  1+t^2+2t^4+2t^6+3t^8\mod t^{9}
\end{align}
completing the proof of (2).

From this we obtain the polynomial $b_R(t)=t^2+t^4+2t^6+2t^8+3t^{10}+2t^{12}+2t^{14}+t^{16}+t^{18}$, as in~\eqref{E:bR(t)-def} (with $c=9$).
We find finally that
\begin{align*}
B_R(t)&\equiv (1+t^2)\cdot( t^2+t^4+2t^6+2t^8+2t^{10})\\
&\equiv t^2+2t^4+3t^6+4t^8+4t^{10}\mod t^{11}.
\end{align*}
\end{proof}

\section{Putting the terms together to compute the cohomology of $\GIT$}
We now put together the results in the previous sections to complete  the proof of Theorem~\ref{teo:betti} for $\GIT$.  Recall that  the intersection cohomology of $\GIT$ satisfies Poincar\'e duality.  Consequently, as $\dim \GIT=10$, it suffices to compute $IP_t(\GIT)$ up to $t^{10}$.  We have:

\begin{align*}
&IP_t(\GIT) \equiv\\
&\equiv 1+4t^2+6t^4+10t^6+13t^8+15t^{10} &\text{(Kirwan blowup, Theorem~\ref{teo:betti})}\\
&\ \ \ \ -\ (t^2+t^4+2t^6+2t^8+3t^{10}) &\!\!\!\!\!\text{(Correction term, chordal cubic, Prop.~\ref{P:BRt-ChC})}\\
&\ \ \ \ -\ ( t^2+t^4+2t^6+3t^8+3t^{10}) &\text{(Correction term, $3D_4$ cubic, Prop.~\ref{P:BRt-3D4})}\\
&\ \ \ \ -\ (t^2+2t^4+3t^6+4t^8+4t^{10}) &\text{(Correction term, $2A_5$ cubic, Prop.~\ref{P:BRt-2A5})}\\
&\equiv 1+t^2+2t^4+3t^6+4t^8+5t^{10} \mod t^{11}.\!\!\!\!\!\!\!\!\!\!\!\!\!\!\!
\end{align*}
This completes the proof of Theorem~\ref{teo:betti} for $\GIT$.

\begin{rem}
Recall from~\eqref{eq:ARcontribution} that $P_t(\MK)=P^G_t(X^{ss})+\sum_{R\in \calR}A_R(t))$, where $A_R(t)$ is the difference of the 		``main term'' and the ``extra term'' (see~\eqref{E:finMainT} and~\eqref{E:finExtraT}).  From~\eqref{E:GIT-MK-B} we have $IP_t(\GIT)=P_t(\MK)-\sum_{R\in \calR}B_R(t)$, so that finally
$$
IP_t(\GIT)=P_t^G(X^{ss})+\sum_{R\in \calR}(A_R(t)-B_R(t)).
$$
From~\cite[Thm.~2.5]{kirwanrational1}, one knows that all the coefficients of the polynomial $\sum_{R\in \calR}(A_R(t)-B_R(t))$ are non-positive (note that in examples, the individual terms $A_R(t)-B_R(t)$ may have positive coefficients, e.g.,~\cite[6.5]{kirwanhyp}).
In our situation we have
\begin{align*}
A_{R_c}(t)-B_{R_c}(t)&\equiv \ \ \ 0 \mod t^{11}\\
A_{R_{3D_4}}(t)-B_{R_{3D_4}}(t)&\equiv -t^8 -t^{10}\mod t^{11}\\
A_{R_{2A_5}}(t)-B_{R_{2A_5}}(t)&\equiv \ \ \ 0 \mod t^{11}.
\end{align*}
In other words, up to $t^{10}$ (which is all that matters due to Poincar\'e duality),  the intersection cohomology of $\GIT$ agrees with the equivariant cohomology of the semi-stable locus, except for a correction in real codimensions~$8$ and $10$, coming from the blowup of the $3D_4$ locus.
\end{rem}

\section{The intersection cohomology of $\widehat{\calM}$}
Since the space $\widehat\calM$ is an intermediate step in the Kirwan blowup, obtained after one blows up only the chordal cubic point  $\Xi\in\GIT$, its intersection cohomology appears as an intermediate stage in our computations above, simply by taking only the blowup step corresponding to $R_c$, dealt with in Propositions~\ref{P:ET-ChC} and~\ref{P:BRt-ChC}. Thus we have
\begin{align*}
  IP_t(\widehat\calM)\equiv &1+t^2+2t^4+3t^6+4t^8+5t^{10}& \text{($IP_t(\GIT)$, Theorem~\ref{teo:betti})}\\
  &\ + t^2+t^4+2t^6+2t^8+3t^{10} &\text{(Correction term $B_{R_c}$, Proposition~\ref{P:BRt-ChC})}  \\
  \equiv &1+2t^2+3t^4+5t^6+6t^8+8t^{10} \mod t^{11}.\!\!\!\!\!\!\!\!\!\!\!\!\!\!\!\!\!\!\!\!\!\!\!\!\!\!\!\!\!\!
\end{align*}
This completes the proof of Theorem~\ref{teo:betti} for $\widehat {\calM}$.

\chapter{The intersection cohomology of the ball quotient}\label{sec:IHball}
In this section we use the decomposition theorem in a different way to compute  the intersection cohomology of the Baily--Borel compactification $\BG$ of the ball quotient model $\calB/\Gamma$ of $\calM$.
For a paper addressing similar situations computing intersection cohomology of arithmetic quotients via the Kirwan blowup, see~\cite{KLW}.

\section{A special case of the decomposition theorem} \label{S:Decomp-Thm}
There is a special case of the decomposition theorem that will be quite useful for us in computing cohomology of the Baily--Borel and toroidal compactifications of the ball quotient.

We start by recalling that if  $f:\hat V\to V$ is a map from a variety $\hat V$ of dimension~$n$, smooth up to finite quotient singularities, to a possibly singular variety $V$ that is the blowup of a (not necessarily smooth) point $p\in V$ to an exceptional divisor $E\subset \hat V$, smooth up to finite quotient singularities (cf.~Diagram~\eqref{E:MV-setup}, with $C=p$),  then  the decomposition theorem gives the following. Writing $P_t(E)=\sum e_jt^j$, the decomposition theorem gives (e.g.,~\cite[Lem.~9.1]{GH-IHAg-17}):
\begin{equation}\label{eq:IHblowup}
P_t(\hat V)=IP_t(V)+e_{2n-2}t^2+e_{2n-3}t^3+\dots+e_{n+1}t^{n-1}+e_nt^n+e_{n+1}t^{n+1}+\dots +e_{2n-2}t^{2n-2}.
\end{equation}
In other words, the correction terms for $P_t(\hat V)$ in degree $\ge n=\dim \hat V$ are the Betti numbers of $E$, and in degree $\le n$ are set up so that Poincar\'e duality holds for $\hat V$.  We note that by Poincar\'e duality on $E$, we have $e_{2n-2-i}=e_i$.

\subsection{Comparing to Kirwan's computation}
We now observe that we have already seen another approach to computing $P_t(\hat V)$ in~\eqref{eq:IHblowup}.
Indeed, let $U\subseteq V$ be any open neighborhood of $p$  and let ${\hat U}:=f^{-1}(U)$  (cf.~Diagram~\eqref{E:MV-setup}, with $C=p$).   Then from~\eqref{E:IH-Lem1}, we have $IP_t(\hat V)=IP_t(V)+IP_t({\hat U})-IP_t(U)$.  Using the fact that $\hat V$ is smooth up to finite quotient singularities, we immediately get
$
P_t(\hat V)=IP_t(V)+P_t({\hat U})-IP_t(U)
$.
If we assume now that
${\hat U}$ retracts onto $E$, then we have
\begin{equation}\label{eq:IHblowup-1}
P_t(\hat V)=IP_t(V)+P_t(E)-IP_t(U).
\end{equation}

\begin{rem}
Combining~\eqref{eq:IHblowup} and~\eqref{eq:IHblowup-1} we find
\begin{equation}\label{E:BR-E-comp}
P_t(E)-IP_t(U)=e_{2n-2}t^2+e_{2n-3}t^3+\dots+e_{n+1}t^{n-1}+e_nt^n+\dots +e_{2n-2}t^{2n-2}.
\end{equation}
\end{rem}

We now compare this description of the decomposition theorem for the blowup of a point to the general setup of the Kirwan blowup machinery, which will provide us with an alternative viewpoint, and enable us to use some of the previous computations to deal with $\BG$.
To make this comparison,  let us return to Kirwan's general situation, for a single blowup, as in Chapter~\ref{SSS:IC1}.  We are further assuming that the morphism $\hat \pi_G:\hat X/\!\!/_{\hat L}G\to X/\!\!/_LG$ is such that $\hat X/\!\!/_{\hat L}G$ is smooth up to finite quotient singularities, that the center of the blowup $Z_R/\!\!/_{L}G$ is a point, and that the exceptional divisor $E/\!\!/_{\hat L}G$ is smooth up to finite quotient singularities.  In this situation,~\eqref{eq:IHblowup-1} translates to
\begin{equation}\label{E:KR1-C2.11-1}
P_t(\hat X/\!\!/_{\hat L}G)=IP_t(X/\!\!/_LG)+ \underbrace{P_t(E/\!\!/_{\hat L}G)-IP_t(\calN/\!\!/G)}_{B_R(t)}\,,
\end{equation}
where we are using the fact mentioned in deducing~\eqref{E:KR1-C2.11} from~\eqref{E:IH-Lem1}, that there is an appropriate open neighborhood $U\subseteq X/\!\!/_{L}G$ of $Z_R/\!\!/_{L}G$ with $U$ homeomorphic to $\calN/\!\!/G$.  Alternatively, this is~\eqref{E:KR1-C2.11} together with the fact
that
 $E/\!\!/_{\hat L}G\cong \hat{ \calN}/\!\!/G$~\cite[Lem.~2.15]{kirwanrational1}
 (see also~\cite[p.494]{kirwanrational1}).
We note here that in this special case, the formula~\eqref{eq:IHblowup} follows from~\eqref{E:KR1-C2.11-1} using~\eqref{E:KR1-E2.22(1)} and~\eqref{E:KR1-E2.22(2)}, and the shift in degrees mentioned after those equations.

The main  point for us, however, is the converse statement, that~\eqref{eq:IHblowup} gives an alternative approach to computing $B_R(t)$, assuming one knows $P_t(E/\!\!/_{\hat L}G)$.
Indeed, if $P_t(E/\!\!/_{\hat L}G)=\sum e_jt^j$, then combining \eqref{E:KR1-C2.11-1} and \eqref{E:BR-E-comp}, one sees that $B_R(t)= e_{2n-2}t^2+e_{2n-3}t^3+\dots+e_{n+1}t^{n-1}+e_nt^n+\dots +e_{2n-2}t^{2n-2}$, where here $n=\dim X/\!\!/_LG$.
Moreover,~\eqref{E:KR1-E2.22(2)} asserts that the cohomology of the exceptional divisor is given in our special situation as
\begin{equation}\label{E:DT-ExcD}
H^i(E/\!\!/_{\hat L}G)=\sum_{p+q=i}\left[H^p(Z_R/\!\!/_LN_0)\otimes IH^{q}(\PP(\hat{\calN}_x)/\!\!/R)\right]^{\pi_0N}.
\end{equation}
We also note that Remarks~\ref{R:pi0TrivB} and~\ref{R:pi0TrivExpl} give analogous statements for  $H^i(E/\!\!/_{\hat L}G)$ in this situation.

\section{The intersection cohomology of the ball quotient}

We start by recalling the birational maps relating the Kirwan blowup, GIT, and Baily--Borel compactifications, and introduce the notation for them (see also \S~\ref{sec:prelim}):
\begin{equation}\label{eq:diagramofmaps}
\xymatrix{
&\MK\ar[ldd]_\pi\ar[d]^f\ar[rdd]^g\\
&\widehat\calM\ar[ld]^{p}\ar[rd]_q\\
\GIT&&\BG
}
\end{equation}
Here $\pi$ is the Kirwan blowup, the geometry of which has been the focus of the paper up to this point. The space $\widehat\calM$ resolves the birational map from $\GIT$ to $\BG$, and has been described above in \S\ref{subsec:modulicubic}, following~\cite{act}. The map $p$ is the blowup of the point $\Xi\in\GIT$ that corresponds to the chordal cubic, to a divisor $D_h\subset\widehat\calM$.

Recall that~$\calT\subset \GIT$ is the rational curve of cubics with $2A_5$ singularities, which contains the point $\Xi$ corresponding to the chordal cubic. Let then $\widehat {\calT}=Z_{T,1}^{ss}\subset\widehat\calM$ be the strict  transform of the curve $\calT$ under the map $p$. The map $q$ then consists simply of blowing down the curve  $\widehat {\calT}$ to a point $c_{2A_5}$, this point being one of the two cusps of $\BG$. The other cusp $c_{3D_4}$ of $\BG$ corresponds to the $3D_4$ cubic. Thus the exceptional divisor of the map $g$ consists of two disjoint irreducible components $D_{3D_4}$ and $D_{2A_5}$, contracted to the two points that are the corresponding cusps of $\BG$.

To compute the intersection cohomology of~$\BG$, we apply the decomposition theorem to the map $g:\MK\to\BG$. The advantage of working with this map instead of with $q$ is that the domain $\MK$ is smooth up to finite quotient singularities, thus its intersection cohomology is equal to its cohomology, and the intersection complex is trivial. We will show that both exceptional divisors $D_{2A_5},D_{3D_4}\subset\MK$ of the map $g:\MK\to\BG$ are smooth up to finite quotient singularities, and thus we will be able to use~\eqref{eq:IHblowup}.

The divisor $D_{3D_4}$ is smooth up to finite quotient singularities, since it is obtained as an exceptional divisor in the Kirwan blowup process $\pi$, which is not modified after it is introduced (one can also check directly that in the divisor's description as a GIT quotient, it has no strictly semi-stable points at the stage it is first introduced, and is not modified by the subsequent blowups).
The divisor $D_{2A_5}$ is similarly seen to be smooth up to finite quotient singularities, since it is obtained as the last step of the Kirwan blowup process  (one can also check directly that in the divisor's description as a GIT quotient, it has no strictly semi-stable points at the stage it is first introduced).

We now compute the contribution to $P_t(\MK)$ due to the divisor $D_{3D_4}$.  In fact, in this case, rather than using~\eqref{eq:IHblowup}, and computing $P_t(D_{3D_4})$, we will use a slightly different approach.
Since $\pi(D_{3D_4})\in \calM^{GIT}$ and $f(D_{3D_4})\in \widehat {\calM}$  are both points and  $p$ is locally an isomorphism near those points,  and since $q$ is locally an isomorphism near the points $f(D_{3D_4})\in \widehat {\calM}$ and $c_{3D_4}\in\BG$, the term $B_{R_{3D_4}}(t)$ in Proposition~\ref{P:BRt-3D4} is precisely the contribution for the blowup $g$, over the cusp $c_{3D_4}$, namely
\begin{equation}\label{E:g3D4-contr}
 t^2+t^4   +2t^6  +3t^8  +3t^{10} \mod t^{11}
\end{equation}

For the contribution of the divisor $D_{2A_5}$ the situation is a little trickier, so that  we will have to use~\eqref{eq:IHblowup}, and compute $P_t(D_{2A_5})$.   The situation is more complicated because $f(D_{2A_5})=\widehat {\calT}$ is a curve, contracted by $q$ to the point $c_{2A_5}\in \BG$, and moreover, since the map $p$ is not an isomorphism in a neighborhood of $\calT$ and $\widehat {\calT}$ (one must take into account the blowup of the point $\Xi$).  Nevertheless, we have essentially already computed $P_t(D_{2A_5})$.  Indeed, from~\eqref{E:DT-ExcD}, Proposition~\ref{P:BRt-2A5}, and Remark~\ref{R:pi0TrivExpl} we see that
\begin{align}
\label{E:KirAlt2A5} P_t(D_{2A_5})&=P_t(\PP^1)\, IP_t^{\pi_0N}(\PP(\calN_x)/\!\!/R) & \\
\nonumber &\equiv (1+t^2)(1+t^2+2t^4+2t^6+3t^8)\  \mod t^{9} &\ \text{(From~\eqref{IPquot2A5})}\\
\label{E:g2A5-contr} &\equiv 1+2t^2+3t^4+4t^6+5t^8\ \mod t^9&.
\end{align}

Summarizing, we obtain
\begin{align*}
&IP_t(\BG) \equiv\\
&\equiv 1+4t^2+6t^4+10t^6+13t^8+15t^{10} &\text{(Kirwan blowup $\MK$, Theorem~\ref{teo:betti})}\\
&\ \ \ \                        -( t^2+t^4   +2t^6  +3t^8  +3t^{10})&\text{($D_{3D_4}$ contribution,~\eqref{E:g3D4-contr})}\\
&\ \ \ \                        -( t^2+2t^4 +3t^6  +4t^8  +5t^{10})&\text{($D_{2A_5}$ contribution,~\eqref{E:g2A5-contr},~\eqref{eq:IHblowup})}\\
&\equiv  1+2t^2+3t^4 +5t^6  +6t^8  +7t^{10}\mod t^{11}.\!\!\!\!\!\!\!\!\!\!\!\!\!\!\!
\end{align*}
\begin{rem}
We remark that thus $IP_t(\widehat\calM)-IP_t(\BG)=t^{10}$; in particular, this difference is not zero. Even though the map $q$ is small (being a contraction of a curve in a 10-fold to a point), it does not induce an isomorphism in intersection cohomology, which is possible for a small map whose domain is not smooth.
\end{rem}

\begin{rem}\label{R:Kirwan2A5}
Frances Kirwan suggested a small variation on how to establish~\eqref{E:KirAlt2A5}.
 The main claim is that the map $f|_{D_{2A_5}}:D_{2A_5}\to\widehat{\calT} =\PP^1$ is a fibration.  To see this,
one interprets this map as a GIT quotient of the locus in $\PP^{34}$ of all $2A_5$ cubics by the normalizer $N_{2A_5}$. The extra involution $\sigma$ that appears in the stabilizer of the special point (as explained in the appendix) is contained in $N_{2A_5}$ (recall that $\sigma$ acts diagonally, while the normalizer is a $\ZZ/2\ZZ$ extension of the maximal diagonal torus). Thus in thinking about the GIT quotient by $N_{2A_5}$, this extra involution plays no role.  Thus we have a fibration, with fibers isomorphic to $\PP(\calN_x)/\!\!/R)/\pi_0N$.
 In~\eqref{IPquot2A5} we have computed the invariant part of the  cohomology of these fibers. Notice, crucially, that this cohomology is zero in all odd degrees. Thus the spectral sequence computing the cohomology of $D_{2A_5}$, as a fibration over $\PP^1$, is completely degenerate, and we obtain~\eqref{E:KirAlt2A5}.
\end{rem}

\chapter{The cohomology of the toroidal compactification}\label{sec:toroidal}
In this section we will compute the cohomology of the toroidal compactification of the ball quotient, completing the proof of Theorem~\ref{teo:betti}.

The starting point of our discussion is the~\cite{act} ball quotient model $\calB/\Gamma$ for the moduli of cubic threefolds (see Chapter~\ref{sec:prelim}). We recall that this locally symmetric variety $\calB/\Gamma$ is associated to an Eisenstein lattice $\Lambda$ (see \S\ref{subsec:Eisenstein}, esp.~\eqref{eq_def_lambda}) of signature $(1,10)$, that we will review below.  Similar to the better known case of $K3$ surfaces, the cusps of the Baily--Borel compactification $(\calB/\Gamma)^*$ correspond to $\Gamma$-conjugacy classes of primitive isotropic subspaces in $\Lambda$. By~\cite{act}, there are exactly two cusps, that we label $c_{2A_5}$ and $c_{3D_4}$ respectively, in accordance with Theorem~\ref{resgitball}. The toroidal compactification, which we  denote here by~$\oBG$, is a partial resolution $\oBG\to(\calB/\Gamma)^*$ of the two cusps. Unlike in the Siegel or the orthogonal case,
there are no choices involved, and one can thus speak about {\em the} toroidal compactification $\oBG$. The reason for the uniqueness is that all cones which are used in the toroidal compactification have
dimension $1$. For this reason it also holds that  $\oBG$ only has finite quotient singularities and, therefore, its singular cohomology and intersection cohomology coincide. From the general theory (see~\cite{AMRT}; see also~\cite{beh} for the ball quotient case), the two exceptional divisors of $\oBG\to(\calB/\Gamma)^*$ are quotients of $9$-dimensional abelian varieties by finite groups (Proposition~\ref{prop_structure_tor}). In each of the two cases occurring here, the relevant abelian variety  is in fact $(E_\omega)^9$, where $E_\omega$ is the elliptic curve with $j$-invariant equal to $0$, namely the quotient of the complex numbers by the Eisenstein integers. The content of the first subsection of this section is the identification of the two finite groups $\Gamma_{2A_5}$ and $\Gamma_{3D_4}$ acting on $(E_\omega)^9$ for the two cusps $c_{2A_5}$ and $c_{3D_4}$ respectively. In the second subsection, by adapting a well-known theorem of Looijenga (see~\cite{Lroot},~\cite[Thm. 2.7]{FMW}), we are able to compute the cohomology of these two exceptional divisors $(E_\omega)^9/\Gamma_{2A_5}$ and $(E_\omega)^9/\Gamma_{3D_4}$. Finally, combining this with an application of the decomposition theorem, we conclude the  proof of Theorem~\ref{teo:betti}.

\section{The arithmetic of the two cusps of $\calB/\Gamma$}\label{S:ArithmeticCusp}
 In this subsection, we discuss the structure of the toroidal compactification for the ball quotient model $\calB/\Gamma$ for cubic threefolds. To start, we briefly recall the notion of Eisenstein lattice, and the relationship to the even $\ZZ$-lattices endowed with an order $3$ isometry. We follow with the classification of the cusps of $(\calB/\Gamma)^*$, which is closely related to the classification of the Type II boundary components for the Baily--Borel compactification for cubic fourfolds (see~\cite[\S6.1]{laza}). Using some ideas from~\cite{beh}, we can describe the two exceptional divisors of $\oBG\to(\calB/\Gamma)^*$ (Proposition~\ref{prop_structure_tor}).

\subsection{Eisenstein Lattices}\label{subsec:Eisenstein} Let $\calE$ be the ring of Eisenstein integers
$$
\calE:=\ZZ[\omega], \ \ \ \omega= e^{\frac{2 \pi i}{3}}.
$$
By an {\it  Eisenstein lattice} $\calG$ we understand a free $\calE$ module, endowed with an hermitian form taking values in $\mathcal E$.
Throughout, we will make the \emph{additional convention that the Hermitian form takes values in $\theta \calE$}, where $\theta=\omega-\omega^2(=\sqrt{-3})$, i.e.,
$$\langle -,-\rangle:\mathcal G\times \mathcal G\to \theta \mathcal E;$$
this should be understood as an analogue of  even lattices over $\ZZ$. In particular, note that then $\|x\|^2=\langle x,x\rangle\in 3\ZZ$.

Associated to an Eisenstein lattice $\calG$, there is a usual $\ZZ$-lattice, that we denote $\calG_\ZZ$. Simply, $\calG_\ZZ$ is the underlying free $\ZZ$-module. On $\calG_\ZZ$, we define a bilinear symmetric form
$$
(-,-):= - \frac{2}{3} \operatorname{Re} \langle -,- \rangle: \calG_{\ZZ} \times \calG_{\ZZ} \to \ZZ.
$$
Under our convention on the hermitian form, $\calG_\ZZ$ is an even lattice. Note that $\calG_\ZZ$ comes endowed with an order $3$ isometry $\rho\in \operatorname{O}(\calG_\ZZ)$,
namely
\begin{equation}\label{eqrho}
\rho(x):=\omega\cdot x,
\end{equation}
where the multiplication is the multiplication by scalars in the Eisenstein module $\calG$. Clearly, $\rho$ acts on $\calG_\ZZ$ fixing only the origin (i.e.,  $\rho(x)\neq x$ for any $x\neq0$).
Conversely, given an even lattice $\calG_\ZZ$ together with an order $3$ isometry $\rho$ (fixing only the origin), we can define an Eisenstein lattice $\calG$ reversing the process above. More precisely, the Eisenstein structure on $\calG_{\ZZ}$ is determined by~\eqref{eqrho}. Then, the hermitian form is given by
\begin{equation}\label{arr:hermitianform}
\operatorname{Re} \langle x,y \rangle=-\frac{3}{2}\cdot(x,y),\quad
i\cdot\operatorname{Im} \langle x,y \rangle=\frac{\theta}{2}\cdot((\rho-\rho^2)x,y).
\end{equation}
Finally, we note that an isometry $\phi$ of $\calG$ induces an isometry of $\calG_\ZZ$, commuting with $\rho$, and conversely. In other words,
\begin{equation}\label{eq_rel_iso}
\Aut(\calG,\langle-,-\rangle)=\{\phi\in \operatorname{O}(\calG_\ZZ)\mid \phi\rho=\rho\phi\}.
\end{equation}
By abuse of notation, we will denote by
$$\operatorname{O}(\calG):=\Aut(\calG,\langle-,-\rangle)$$ the group of isometries (N.B. $\operatorname{O}(\calG)$ is a unitary group, and not an orthogonal group).

We now introduce the Eisenstein lattices relevant to our discussion. First, we consider the following definite lattices (defined in terms of Gram matrices):
$$\calE_1: (3),\ \ \calE_2: \begin{pmatrix}
3&\theta\\
\bar{\theta}&3\\
\end{pmatrix}, \ \ \calE_3: \begin{pmatrix}
3&\theta&0\\
\bar{\theta}&3&\theta\\
0&\bar\theta&3
\end{pmatrix}, \ \ \calE_4: \begin{pmatrix}
3&\theta&0&0\\
\bar{\theta}&3&\theta&0\\
0&\bar{\theta}&3&\theta\\
0&0&\bar\theta&3
\end{pmatrix}\,\,.$$
The underlying $\ZZ$ lattices $(\calE_i)_\ZZ$ are $A_2(-1)$, $D_4(-1)$, $E_6(-1)$, and $E_8(-1)$ respectively. Conversely, we note that the lattices $A_2(-1)$, $D_4(-1)$, $E_6(-1)$ and $E_8(-1)$ admit (up to conjugacy) a unique order $3$ isometry fixing only the origin, and thus they admit a unique Eisenstein structure (e.g.~\cite[Lem.~3]{HKN}).

We also consider the indefinite (signature $(1,1)$) lattice $\calH$ defined by
$$\calH: \begin{pmatrix}
0&\theta\\
\bar{\theta}&0\\
\end{pmatrix}\,,$$
whose underlying $\ZZ$ lattice is $2U$ (two copies of the hyperbolic plane).

The Eisenstein lattice used by Allcock--Carlson--Toledo~\cite{act} to define the ball quotient model $\calB/\Gamma$ for the moduli of cubic threefolds is
\begin{equation}\label{eq_def_lambda}
\Lambda:=  \calE_1 +  2\calE_4 +  \calH,
\end{equation}
with associated $\ZZ$ lattice
$$
 \Lambda_{\ZZ} \cong A_2(-1) + 2E_8(-1) + 2U,
$$
which is precisely (up to a sign) the lattice of the primitive middle cohomology of a smooth cubic fourfold. Returning to the construction of $\calB/\Gamma$, we recall
$$
\calB:=\calB_{10}:=\{[z ]: z^2>0\}^+\subset \PP(\Lambda\otimes_{\calE}\CC),
$$
and $\Gamma=\operatorname{O}(\Lambda)$ acts naturally (properly discontinuously) on $\calB$.

\smallskip

Finally, let us recall some basic terminology from Nikulin's theory for even $\ZZ$-lattices that will be needed later. Let $M$ be an even non-degenerate $\ZZ$-lattice. The {\it dual lattice}
is $M^\vee=\Hom_\ZZ(M,\ZZ)$. Using the quadratic form, the dual $M^\vee$ has the following
description
$$M^\vee=\{w\in M\otimes_\ZZ \QQ\mid (v,w)\in \ZZ \textrm{ for all } v\in M\}\,,$$
in particular $M\subset M^\vee\subset M\otimes_\ZZ \QQ$. The {\it discriminant group} is the finite group $A_M:=M^\vee/M$. A key insight of Nikulin is that the quadratic form on $M$ induces a finite quadratic form
$$q_M:A_M\to \QQ/2\ZZ\,\,.$$
For example, if $M=E_6(-1)$, then $A_{M}\cong \ZZ/3$ and $q_M(\xi)=-\frac{4}{3}\in \QQ/2\ZZ$ for $\xi$ a  generator of $A_M$.
We also recall that for  $v\in M$, the {\it divisibility} $\divisore v$ is the positive generator of the ideal $(v,M)\subset \ZZ$, i.e., the biggest natural number by which all integers $(v,m)$ for $m\in M$ are
divisible. Note that $\frac{v}{\divisore{v}}\in M^\vee$, and then via the projection $M^\vee\to A_M=M^\vee/M$ we obtain an element in $A_M$. In fact, every element of $A_M$ arises in this way. If $v\in M$ is primitive, then the order of (the class of) $\frac{v}{\divisore{v}}$ in $A_M$ is precisely $\divisore{v}$. Returning to the $M=E_6(-1)$ example,  we see that $\divisore{v}\in\{1,3\}$ for $v\in M$ primitive. Furthermore, if $v$ is primitive with $\divisore{v}=3$, then (the class of) $\frac{v}{3}$ is a generator of $A_M$. Using $q_M\left(\frac{v}{3}\right)=-\frac{4}{3}\in \QQ/2\ZZ$, one concludes $v^2=-12 \pmod{18}$. For $M=E_6(-1)$, there exists indeed an element $v$ of norm $-12$ and divisibility $3$.

\begin{rem}\label{R:DualEisen}
Most of the above discussion can be adapted to the case of Eisenstein lattices. Here, for $v$ in an Eisenstein lattice $\calG$,  we  define  $\divisore v$ as the generator of the ideal $\langle \calG,v\rangle\subset \theta \calE$.
Clearly  $\theta$ divides $\divisore v$, and $\divisore v$ divides $ \|v\|^2$. Similarly,  following the conventions in~\cite[p.285]{ALeech},~\cite[p.8645]{Ma} we define $\mathcal G^*=\operatorname{Hom}_{\mathcal E}(\mathcal G,\mathcal E)$, and under the  identification $\mathcal G^*=\{\nu\in \mathcal G\otimes_{\mathcal E}\mathbb Q(\omega): \langle \lambda ,\nu\rangle \in  \mathcal E\  \forall\  \lambda \in \mathcal G\}$, we naturally obtain a Hermitian form on $\mathcal G^*$ making it an Eisenstein lattice (with our conventions).
Under the $\theta$-value assumption on the Hermitian form,
it holds that
$$\frac{1}{\theta}\calG\subset \calG^*\subset \calG\otimes_\calE\QQ(\omega)\,.$$
Thus, the natural ``unimodularity'' condition in this setup is $\theta\calG^*\cong \calG$. For the lattices considered here, $\calE_4$ and $\calH$ satisfy this condition, while $\calE_1$, $\calE_2$, and $\calE_3$ do not.  Note that under the natural identification $\mathcal G\otimes_{\mathcal E}\mathbb Q(\omega)=\mathcal G\otimes_{\mathcal E}\mathcal E\otimes_{\mathbb Z}\mathbb Q=\mathcal G\otimes_{\mathbb Z}\mathbb Q$, we have $(\theta\mathcal G^*)_{\mathbb Z}=(\mathcal G_{\mathbb Z})^\vee$~\cite[p.8645]{Ma}, so that the notions of unimodularity in the Eisenstein, and underlying integral case, agree.  Finally, we observe that under our  $\theta$-value assumption on the Hermitian form, it is typically more convenient to work with $\mathcal G':= \operatorname{Hom}_{\mathcal E}(\mathcal G,\theta \mathcal E)= \{\nu\in \mathcal G\otimes_{\mathcal E}\mathbb Q(\omega): \langle \lambda ,\nu\rangle \in \theta \mathcal E\  \forall\  \lambda \in \mathcal G\}$.  Clearly $\mathcal G'=\theta\mathcal G^*$, the unimodularity condition becomes $\mathcal G'\cong \mathcal G$, and we have $(\mathcal G')_{\mathbb Z}=(\mathcal G_{\mathbb Z})^\vee$.
\end{rem}

\subsection{Identification of the two cusps of $\BG$} 
As mentioned above, a cusp of the Baily--Borel compactification corresponds to a primitive isotropic subspace (automatically of rank $1$) $\calF\subset \Lambda$, considered up to the action of $\Gamma$. As a consequence of~\cite{act} (see Theorem~\ref{resgitball}), we know that there are precisely two cusps, and thus two possible $\calF$. Our goal here is to describe these two cases explicitly. First, we note that a standard invariant that in many cases suffices to distinguish the Baily--Borel cusps is the definite lattice $\calF^\perp/\calF$ of rank $9$, where $\calF^\perp$ denotes the orthogonal complement of $\calF$ in $\Lambda$ (N.B. since $\calF$ is isotropic, $\calF\subset \calF^\perp$).
A weaker invariant is the associated $\ZZ$-lattice $(\calF^\perp/\calF)_\ZZ$ (negative definite of rank $18$). An even weaker invariant is $R\left((\calF^\perp/\calF)_\ZZ\right)$, i.e. the sublattice of $(\calF^\perp/\calF)_\ZZ$ spanned by roots (i.e., $-2$ classes). In our situation this weak invariant suffices to distinguish the cusps, as there are only two of them.
\begin{lem}\label{lem_inv_iso}
With notation as above (e.g., $\calF$ is the isotropic subspace associated to the corresponding cusp), the following hold:
\begin{itemize}
\item[i)] for the cusp $c_{2A_5}$ of $(\calB/\Gamma)^*$, $R\left((\calF^\perp/\calF)_\ZZ\right)\cong 2E_8(-1)+A_2(-1)$;
\item[ii)] for the cusp $c_{3D_4}$ of $(\calB/\Gamma)^*$, $R\left((\calF^\perp/\calF)_\ZZ\right)\cong 3E_6(-1)$.
\end{itemize}
\end{lem}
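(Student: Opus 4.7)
The plan is to use the bijection between cusps of $(\calB/\Gamma)^*$ and $\Gamma$-orbits of primitive isotropic $\calE$-sublines $\calF\subset\Lambda$ (necessarily of $\calE$-rank $1$, as $\Lambda$ has signature $(1,10)$). The Eisenstein lattice $\calF^\perp/\calF$ of $\calE$-rank $9$ and positive signature is a well-defined invariant of the cusp. Since by Theorem~\ref{resgitball} there are exactly two cusps, it is enough to exhibit one representative $\calF$ for each orbit, compute $\calF^\perp/\calF$ up to isomorphism, and extract the root sublattice of its underlying $\ZZ$-lattice. Matching the two answers with the labels $c_{2A_5}$ and $c_{3D_4}$ will then follow automatically from the non-isomorphism of the resulting root lattices.

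For the first orbit we take the standard isotropic generator $e$ of the hyperbolic summand $\calH\subset\Lambda=\calE_1+2\calE_4+\calH$ and set $\calF_1:=\calE\cdot e$. A direct computation gives $\calF_1^\perp=\calE_1+2\calE_4+\calE\cdot e$, hence
\[
\calF_1^\perp/\calF_1\cong \calE_1+2\calE_4,
\]
whose underlying $\ZZ$-lattice is $A_2(-1)+2E_8(-1)$. This is itself a root lattice, so it coincides with its own root sublattice, giving one of the two assertions. Observing that the class of $e$ in the discriminant group $A_{\Lambda_\ZZ}\cong\ZZ/3$ is trivial identifies this orbit as the one consisting of isotropic vectors of $\ZZ$-divisibility~$1$.

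The second orbit then necessarily consists of primitive isotropic $\calE$-lines whose underlying $\ZZ$-vector has divisibility~$3$ in $\Lambda_\ZZ$ (the only remaining option, since $A_{\Lambda_\ZZ}\cong\ZZ/3$). We plan to construct such a vector explicitly by combining a primitive $\gamma\in A_2(-1)$ of norm~$-6$ and divisibility~$3$ with a carefully chosen element $\delta$ of the hyperbolic summand so that $v:=\gamma+\delta$ is isotropic, generates a $\rho$-stable $\calE$-submodule, and retains $\ZZ$-divisibility~$3$; we then set $\calF_2:=\calE\cdot v$. The heart of the argument is then the identification
\[
\calF_2^\perp/\calF_2\cong 3\calE_3,
\]
whose underlying $\ZZ$-lattice $3E_6(-1)$ is again a root lattice, completing the proof. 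We intend to establish this isomorphism by exhibiting an explicit decomposition of $\calF_2^\perp/\calF_2$ into three mutually orthogonal copies of $\calE_3$, reflecting the fact that inside $\calE_4$ an order-$3$ Coxeter-type element can have $\calE_3$-type fixed lattice.

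The main obstacle is the last computation: the explicit choice of $v$ and the identification $\calF_2^\perp/\calF_2\cong 3\calE_3$ are both delicate, since they require unravelling how the divisibility-$3$ condition couples the $A_2(-1)$-summand with the two copies of $E_8(-1)$ and the $2U$-part. If this direct approach becomes unwieldy, we will fall back on simply enumerating the $\ZZ$-roots of $\Lambda_\ZZ$ orthogonal to both $v$ and $\omega v$ (equivalently, to $\calF_{2,\ZZ}$), and showing that their image in $\calF_{2,\ZZ}^\perp/\calF_{2,\ZZ}$ spans a rank-$18$ sublattice isomorphic to $3E_6(-1)$.
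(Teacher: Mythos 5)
Your proposal takes a purely lattice-theoretic route, whereas the paper's proof is geometric. The paper observes that $\calB/\Gamma$ arises from eigenperiods of cubic fourfolds $V(f_3(x_0,\dots,x_4)+x_5^3)$, that this construction is compatible with GIT and Baily--Borel compactifications, and that a cubic threefold with $2A_5$ (resp.\ $3D_4$) singularities yields a semi-stable cubic fourfold with $2\widetilde E_8$ (resp.\ $3\widetilde E_6$) singularities; the root lattices then fall out of Laza's classification of Type~II boundary components for cubic fourfolds. That geometric input is precisely what attaches each root lattice to the correct label, since $c_{2A_5}$ and $c_{3D_4}$ are defined as images under the extended period map from $\GIT$ (Theorem~\ref{resgitball}). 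Your plan cannot supply that matching: computing that the two $\Gamma$-orbits of primitive isotropic $\calE$-lines have root invariants $2E_8(-1)+A_2(-1)$ and $3E_6(-1)$ shows the cusps carry these lattices in \emph{some} order, but the non-isomorphism of the two lattices does not tell you which cusp is which. You need a separate geometric argument (the paper's eigenperiod one, or a direct period-map degeneration computation) to make that assignment, so as it stands you prove strictly less than the lemma claims.

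There is also an error in your anticipated answer for the second cusp. You predict $\calF_2^\perp/\calF_2\cong 3\calE_3$, but that is impossible: for any primitive isotropic $\calF$, the $\ZZ$-lattice $(\calF^\perp/\calF)_\ZZ$ has the same discriminant form as $\Lambda_\ZZ$, so it lies in the genus of $2E_8(-1)+A_2(-1)$ and has discriminant group $\ZZ/3\ZZ$, while $3E_6(-1)$ has discriminant group $(\ZZ/3\ZZ)^3$. The correct identification, as the paper sets up, is $\calF^\perp/\calF\cong\widetilde{3\calE_3}$, the index-$3$ Eisenstein overlattice of $3\calE_3$; only the root sublattice $R((\calF^\perp/\calF)_\ZZ)$ equals $3E_6(-1)$. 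The paper builds this overlattice abstractly, proves $\Lambda\cong\widetilde{3\calE_3}+\calH$ (Proposition~\ref{prop_3e6tilde}) via a genus argument for $\ZZ$-lattices together with uniqueness of the Eisenstein structure, and then takes $\calF$ inside the new $\calH$ summand; this is both cleaner and more reliable than chasing an explicit isotropic vector $v=\gamma+\delta$ and computing its orthogonal complement by hand.
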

\begin{proof}
By Theorem~\ref{resgitball}, we know that $c_{2A_5}$ and $c_{3D_4}$ correspond to semi-stable cubic threefolds with $2A_5$ singularities and  $3D_4$ singularities respectively. The ball quotient model $\calB/\Gamma$ for cubic threefolds is obtained by considering the eigenperiods (see~\cite{DK}) of cubic fourfolds with a $\mu_3$ action (namely, to a cubic threefold $V(f_3(x_0,\dots,x_4))$ one associates the cubic fourfold   $V(f_3(x_0,\dots,x_4)+x_5^3)$). This construction is compatible with GIT and Baily--Borel compactifications. One immediately checks that the construction associates to a cubic threefold with $2A_5$ (resp. $3D_4$) singularities a semi-stable cubic fourfold with $2\widetilde E_8$ (resp. $3\widetilde E_6$) singularities. Now the claim follows from the classification of Type II boundary components for cubic fourfolds (and the discussion of their geometric meaning) in~\cite[\S6.1]{laza}).
\end{proof}

\begin{rem}\label{rem_comparebb}
For further reference, let us note the following. Let  $(\calD/\Gamma')^*$ be the Baily--Borel compactification for the moduli of cubic fourfolds (as discussed, this is associated to the lattice $\Lambda_\ZZ(-1)=2E_8+A_2+2U$). By construction, there exists a natural morphism $$A:(\calB/\Gamma)^*\to (\calD/\Gamma')^*\,,$$
which is generically an embedding (in fact, a normalization of the image). The two cusps of the Baily--Borel compactification $(\calB/\Gamma)^*$ map to points on the Type II components of $(\calD/\Gamma')^*$ (corresponding to the fact that $\calF_\ZZ$ is an isotropic rank $2$ subspace of $\Lambda_\ZZ$). The lemma above says that $A(c_{2A_5})\in II_{2E_8+A_2}$ and $A(c_{2A_5})\in II_{3E_6}$ respectively (where $II$ indexed by a root lattice denotes a Type II boundary component in $(\calD/\Gamma')^*$). It is well known (in full generality) that
the Type II boundary components of $(\calD/\Gamma')^*$ are modular curves, while here in fact $II_{2E_8+A_2}, II_{3E_6}\cong \mathfrak h/\SL(2,\ZZ)$. It is then clear
  (by construction) that the  $A(c_{2A_5})$ and $A(c_{3D_4})$ map to the special points on $II_{2E_8+A_2}$ and  $II_{3E_6}$ respectively corresponding to $j$-invariant equal to $0$.
\end{rem}

It remains now to identify two possibilities of primitive isotropic subspaces $\calF\subset \Lambda$ such that the associated invariant $R((\calF^\perp/\calF)_\ZZ)$ is as in Lemma~\ref{lem_inv_iso}.
The first case is immediate.

\begin{lem}[$2A_5$ cusp]
If $\calF$ is an isotropic subspace in the summand $\calH$ of  $2\calE_4+\calE_1+\calH= \Lambda$, then $\calF^\perp/\calF\cong  2\calE_4+\calE_1$. Hence $\calF$ defines the cusp $c_{2A_5}$.
\end{lem}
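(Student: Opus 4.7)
The plan is to exploit the explicit orthogonal decomposition $\Lambda=2\calE_4+\calE_1+\calH$ and compute $\calF^\perp/\calF$ by a direct calculation inside the summand $\calH$, then invoke Lemma~\ref{lem_inv_iso} to identify the cusp.

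First I would fix a basis $e_1, e_2$ of $\calH$ realizing the given Gram matrix $\begin{pmatrix} 0 & \theta \\ \bar\theta & 0\end{pmatrix}$, and observe that any isotropic line of $\calH$ is, up to the action of $\operatorname{O}(\calH)$, the line $\calF=\calE\cdot e_1$ (this is immediate from the fact that the isotropic vectors in $\calH$ form two $\operatorname{O}(\calH)$-orbits of $\calE$-lines and a change of basis swaps them; only the $\Gamma$-orbit of $\calF$ will matter in the end). Since $\calF$ lies entirely inside the summand $\calH$, orthogonality to $\calF$ in $\Lambda$ decomposes according to the direct sum, giving
\begin{equation*}
\calF^\perp \;=\; \calF^{\perp_{\calH}}\;+\;(2\calE_4+\calE_1),
\end{equation*}
where $\calF^{\perp_{\calH}}$ is the orthogonal complement of $\calF$ inside $\calH$.

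Next I would compute $\calF^{\perp_{\calH}}$ explicitly: a vector $ae_1+be_2\in\calH$ is orthogonal to $e_1$ iff $\langle e_1,ae_1+be_2\rangle=\bar\theta b=0$, i.e. $b=0$. Hence $\calF^{\perp_{\calH}}=\calF$, and consequently
\begin{equation*}
\calF^\perp/\calF\;\cong\;2\calE_4+\calE_1,
\end{equation*}
which is already the required Eisenstein lattice identification.

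Finally, to conclude that this is the cusp $c_{2A_5}$, I would pass to the underlying $\ZZ$-lattice, $(2\calE_4+\calE_1)_\ZZ\cong 2E_8(-1)+A_2(-1)$, note that this lattice is generated by its roots (so it equals its own root sublattice), and compare with Lemma~\ref{lem_inv_iso}(i). Since the two cusps of $\BG$ are distinguished by the invariant $R((\calF^\perp/\calF)_\ZZ)$, this forces $\calF$ to define $c_{2A_5}$. There is no real obstacle: the only point requiring minor care is the verification that $\calF$ is primitive in $\Lambda$, which is automatic since $e_1$ is part of a basis of the orthogonal summand $\calH$.
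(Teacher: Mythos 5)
Your proof is correct and is precisely the unwinding of what the paper's one-line proof ("This is clear.") intends: since $\calF$ sits inside the orthogonal summand $\calH$ and is its own orthogonal complement there, $\calF^\perp/\calF$ is just the remaining summand $2\calE_4+\calE_1$, and Lemma~\ref{lem_inv_iso} then pins down the cusp. Same approach; no discrepancy.
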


\begin{proof}
This is clear.
\end{proof}

For the second case (cusp $c_{3D_4}$), the argument is more lengthy, and we begin with some preliminary discussion.  We know that once we have found $\mathcal F$, then we will have $R((\calF^\perp/\calF)_\ZZ)\cong 3E_6$, and in fact $(\calF^\perp/\calF)_\ZZ$ is a lattice in the genus of $2E_8+A_2$ (see~\cite[Ch. 5]{scattone} and~\cite[\S6.1]{laza}). In other words, $(\calF^\perp/\calF)_\ZZ$ is an index $3$ overlattice of $3E_6$. As noted in Lemma~\ref{lem_inv_iso}, a semi-stable cubic threefold with $3D_4$ singularities leads (via the Allcock--Carlson--Toledo construction of adding a new monomial $x_5^3$ to the defining equation) to a semi-stable cubic fourfold with $3\widetilde E_6$ singularities, which in turns leads to the $3E_6$ sublattice in the vanishing cohomology. Thus, we see that the order $3$ isometry $\rho$ on $(\calF^\perp/\calF)_\ZZ$ defining the Eisenstein lattice $\calF^\perp/\calF$ is compatible with order $3$ isometries on each of the $E_6$ factors (giving $\calE_3$ lattices). In other words,  $\calF^\perp/\calF$ is an index $3$ overlattice of $3\calE_3$. We will now define an index $3$ overlattice $\widetilde{3\calE_3}$ of $3\calE_3$ (a posteriori, indeed $\calF^\perp/\calF\cong \widetilde{3\calE_3}$). We start with three copies of $\calE_3$, or equivalently with three copies of $E_6(-1)$ each endowed with an isometry $\rho$ of order $3$ (fixing only the origin). By Nikulin theory, see~\cite[\S1.4]{nikulin}
 there exists an index $3$ overlattice $\widetilde{3E_6(-1)}$ of $3E_6(-1)$.
Indeed, overlattices of $3E_6(-1)$ correspond to isotropic subgroups $H \subset A_{3E_6(-1)}$ of the discriminant group by taking the inverse image of $H$ under the projection $(3E_6(-1))^{\vee} \to A_{3E_6(-1)}$.
Here we take the subgroup $H$ generated by the diagonal embedding of $A_{E_6(-1)}$ into $A_{(3E_6(-1))}$. In fact, up to isometries of $3E_6(-1)$ this is the only isotropic subgroup.
Explicitly,  we can find elements $z_i\in E_6(-1)^{(i)}$ (the $i^{th}$ copy) with $z_i^2=-12$ and $\divisore z_i=3$.
Then $\widetilde{3E_6(-1)}$ is the lattice generated by $3E_6(-1)$ and $\frac{z_1+z_2+z_3}{3}$ (inside $3E_6(-1)\otimes \QQ$).  Note also that for each of the $E_6(-1)$ components, the isometry $\rho$ acts trivially on the discriminant (simply, the automorphism group of the discriminant $A_{E_6(-1)}\cong \ZZ/3$ has order $2$, while $\rho$ has order $3$).
This means  that $\rho$ (defined component-wise) on $3E_6(-1)$ extends to an isometry of $\widetilde{3E_6(-1)}$ whose only fixed point is the origin, thus giving the Eisenstein lattice $\widetilde{3\calE_3}$ (recall the hermitian form is determined as in~\eqref{arr:hermitianform})
and in fact this
is the only such overlattice.

\begin{rem}\label{rem_discr_3e6}
Note that $z_i$ as above are chosen such that $\frac{z_i}{3}$ generate the discriminant of the respective copy of $E_6(-1)$.
The condition $\divisore z_i=3$
 guarantees that $\frac{z_i}{3}\in E_6(-1)^\vee$, and then its projection into  $A_{E_6(-1)}\cong \ZZ/3\ZZ$ is a generator. Note also that $z_i$ is divisible by $\theta$
when we view $E_6(-1)(=\calE_3)$ as an Eisenstein lattice; indeed, we compute
$$\frac{1}{3} (\theta\cdot z_i) = \frac{1}{3}(\rho(z_i)-\rho^2(z_i))=\frac{\rho(z_i)}{3}-\frac{\rho^2(z_i)}{3}=0\in A_{E_6(-1)}\,,$$
or equivalently $\theta \cdot z_i=3v_i$ for some $v_i\in E_6(-1)^{(i)}$, and then $z_i=-\theta v_i$ (this relation makes sense even over $\ZZ$, by interpreting $\theta$ as the endomorphism $\rho-\rho^2$; over $\calE$, $\rho$ is the multiplication $\omega$, and thus $\rho-\rho^2$ is the multiplication by $\theta\in \calE$). Returning to $\widetilde{3E_6(-1)}$ and the companion Eisenstein lattice $\widetilde{3\calE_3}$, we note that the discriminant of $\widetilde{3E_6(-1)}$ is $\ZZ/3$ and it is generated by the class of $\frac{z_1-z_2}{3}$. Furthermore, the following hold
\begin{itemize}
\item $(z_1-z_2)^2=z_1^2+z_2^2=-24$.
\item $z_1-z_2\in \widetilde {3E_6(-1)}$ is primitive and $\divisore(z_1-z_2)=3$ (even in $\widetilde{3E_6(-1)}$).
\item $(z_1-z_2)=\theta\cdot (v_1-v_2)$.
\end{itemize}
In terms of the Hermitian norm, note that $\|z_1-z_2\|^2=36$, and then $\|v_1-v_2\|^2=12$.
\end{rem}

The lattice $\widetilde{3\calE_3}$ satisfies the following key property.

\begin{pro}\label{prop_3e6tilde}
There is an isomorphism of indefinite Eisenstein lattices
\begin{equation}
\Lambda(\cong \calE_1 + 2\calE_4 + \calH) \cong \widetilde{3\calE_3} + \calH.
\end{equation}
\end{pro}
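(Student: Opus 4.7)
The plan is to produce an isotropic Eisenstein line $\calF\subset\Lambda$ whose associated Eisenstein quotient $\calF^\perp/\calF$ is isomorphic to $\widetilde{3\calE_3}$, and to use the hyperbolic summand of $\Lambda$ to split off a copy of $\calH$. Concretely, by Theorem~\ref{resgitball} the Baily--Borel compactification $\BG$ has a cusp $c_{3D_4}$, which by definition corresponds to a $\Gamma$-orbit of primitive isotropic Eisenstein lines $\calF\subset \Lambda$. By Lemma~\ref{lem_inv_iso}(ii), the root part of the underlying $\ZZ$-lattice of $\calF^\perp/\calF$ is $3E_6(-1)$, and $(\calF^\perp/\calF)_\ZZ$ lies in the genus of $2E_8(-1)+A_2(-1)$; comparing discriminants, $(\calF^\perp/\calF)_\ZZ$ is an index $3$ overlattice of $3E_6(-1)$.

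Second, I would identify the Eisenstein structure on this overlattice. By the Nikulin overlattice analysis preceding the proposition, up to the $\operatorname{O}(3E_6(-1))$-action there is a unique isotropic subgroup $H\subset A_{3E_6(-1)}$ giving rise to an index $3$ overlattice, namely the one spanned by the diagonal embedding of $A_{E_6(-1)}$; this yields the lattice $\widetilde{3E_6(-1)}$. The Eisenstein structure on $\calF^\perp/\calF$ is induced from the $\omega$-multiplication on $\Lambda$, which restricts on each $E_6(-1)$-summand to an order $3$ isometry $\rho$ fixing only the origin (this is the standard Eisenstein involution, geometrically realized by the $\mu_3$-action $x_5\mapsto \omega x_5$ on the cubic fourfold $V(f_3(x_0,\dots,x_4)+x_5^3)$, where $f_3$ defines the $3D_4$ cubic threefold; the $E_6(-1)$-summands arise as the vanishing cycles of the three $\widetilde E_6$-singularities). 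Because $\rho$ acts trivially on each $A_{E_6(-1)}\cong \ZZ/3$, it extends to $\widetilde{3E_6(-1)}$ and endows it with precisely the Eisenstein structure $\widetilde{3\calE_3}$.

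Finally, I would split off the hyperbolic plane. Since $\Lambda$ has $\calH$ as a summand, its Eisenstein Witt index is at least $1$; one checks that the chosen $\calF$ has divisibility $\theta$ (so the primitivity is of the strongest type), which by a Witt extension argument supplies an isotropic Eisenstein line $\calF'\subset\Lambda$ with $\langle\calF,\calF'\rangle=\theta\calE$. Then $\calF+\calF'$ is a unimodular Hermitian hyperbolic plane (in the sense of Remark~\ref{R:DualEisen}), hence an Eisenstein direct summand of $\Lambda$, and its orthogonal complement is canonically isomorphic to $\calF^\perp/\calF\cong \widetilde{3\calE_3}$. This yields $\Lambda\cong \widetilde{3\calE_3}+\calH$ as required. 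The main obstacle will be step two: one must verify carefully that the order $3$ isometry on $\calF^\perp/\calF$ coming from $\omega$-multiplication on $\Lambda$ really restricts to the standard $\calE_3$-structure on each $E_6(-1)$-factor and is compatible with the diagonal overlattice extension, since this is what pins down the Eisenstein isomorphism type (the $\ZZ$-lattice isomorphism alone would not suffice). The divisibility computation in step three is a lesser but still nontrivial verification.
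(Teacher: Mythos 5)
Your proposal is correct in outline but takes a genuinely different route from the paper. The paper's proof is purely lattice-theoretic and self-contained: it first identifies the underlying $\ZZ$-lattices $A_2(-1)+2E_8(-1)+2U$ and $\widetilde{3E_6(-1)}+2U$ via Nikulin's genus theory (same signature and discriminant form, indefinite genus has one class), and then lifts this to an Eisenstein isomorphism by invoking Basak's result that an \emph{even indefinite unimodular} lattice carries at most one Eisenstein structure. To use Basak, the paper writes $\Lambda=\calE_1\oplus(\text{unimodular})$ and then locates inside $\widetilde{3\calE_3}+\calH$ an explicit vector $w$ with $\|w\|^2=3$ and $\divisore w=3$ (built from $w'=v_1-v_2$ plus a correction $\theta u$ with $u\in\calH$), so that $w^\perp$ is the required unimodular summand. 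By contrast, you produce $\calF$ from the geometric existence of the cusp $c_{3D_4}$, identify $\calF^\perp/\calF\cong\widetilde{3\calE_3}$ via the overlattice analysis, and then split off $\calH$ by a Witt extension. This buys you a shorter lattice argument but at two costs: it imports the ACT existence of the $c_{3D_4}$ cusp rather than deriving which cusp is which \emph{as a corollary} (which is how the paper uses the proposition), and it needs a Witt splitting lemma for Eisenstein lattices together with the divisibility check.

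Regarding the divisibility step you flagged as a potential obstacle: it is in fact straightforward. If $\divisore_\calE(e)=\theta a$ with $a$ a non-unit, then $e/a\in\Lambda'$ but $e/a\notin\Lambda$, so $[e/a]$ is a nonzero class in $\Lambda'/\Lambda\cong A_{\Lambda_\ZZ}\cong A_{A_2(-1)}\cong\ZZ/3$; however $\langle e/a,e/a\rangle=0$ forces $q_\ZZ([e/a])=0$, contradicting the fact that every nonzero element of $A_{A_2(-1)}$ has nonzero discriminant form value. So $\divisore_\calE(e)=\theta$, and the Eisenstein Witt splitting then goes through exactly as you sketch (one checks that adding $ce$ to $f$ for $c=m+n\omega$ changes $\langle f,f\rangle$ by $3n$, so an isotropic hyperbolic partner can be arranged).

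One conceptual remark worth keeping in mind: your argument essentially proves the Corollary (identification of the $c_{3D_4}$ cusp) \emph{simultaneously} with the Proposition, whereas the paper deliberately proves the Proposition by abstract lattice theory so that the cusp identification \emph{follows} from it. Both logical organizations are valid, but if you write this up you should be explicit that you are using the geometric input (Theorem~\ref{resgitball} and Lemma~\ref{lem_inv_iso}) essentially, since the reader who wants a pure-lattice statement will expect the paper's self-contained route.
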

\begin{proof}
Let us first note that the underlying $\ZZ$-lattices are indeed isomorphic, i.e., forgetting the Eisenstein structure, it holds that
$$
A_2(-1) + 2E_8(-1) +2U \cong \widetilde{3E_6(-1)} +2U\,.$$
Indeed, the two lattices have the same signature and isomorphic discriminant groups (together with the quadratic from on it), thus they are in the same genus (see~\cite[Cor.~1.9.4]{nikulin}). Since the signature is indefinite, this genus  contains only one element (see~\cite[Cor.~1.13.3]{nikulin}).

To lift this isometry to an isometry of Eisenstein lattices, we would need to know that (up to the action of the orthogonal group) there exists a unique Eisenstein structure on the $\ZZ$-lattice $A_2(-1) + 2E_8(-1) +2U$. We were not able to find such a result in the literature, but a related result is known: {\it an even indefinite unimodular lattice (e.g., $2E_8(-1) +2U$) admits at most one Eisenstein lattice structure} (see~\cite[Lem.~2.6]{basak}). Since $\calE_1 + 2\calE_4 + \calH$ is the direct sum of a unimodular Eisenstein lattice $2\calE_4 + \calH$ (with underlying  unimodular $\ZZ$-lattice $2E_8(-1)+2U$) and a rank $1$ lattice $\calE_1$ spanned by a norm $3$ vector $v$, it suffices to find a vector $\widetilde{3\calE_3} + \calH$ such that $\|w\|^2=3$ and $(w)^\perp_{\widetilde{3\calE_3} + \calH}$ is unimodular. This in turn is equivalent to $\|w\|^2=3$ and $\divisore w=3$.
By the discussion of Remark~\ref{rem_discr_3e6}, one sees that $w'=v_1-v_2$ (with the notations of the remark) satisfies the right divisibility condition, but not the norm condition. However, we can correct the norm by taking $w=w'+\theta u$ with $u\in \calH$ and $\|u\|^2=-3$. This completes the proof.
\end{proof}

As a consequence of the above proposition, we conclude:
\begin{cor}[$3D_4$ cusp]
If $\calF$ is an isotropic subspace of the summand $\calH$ of the sum
$\widetilde{3\calE_3}+\calH\cong \Lambda$, then $\calF^\perp/\calF\cong  \widetilde{3\calE_3}$. Hence $\calF$ defines the cusp $c_{3D_4}$.
\end{cor}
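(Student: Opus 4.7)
The proof splits naturally into two independent tasks: identifying $\calF^\perp/\calF$ with $\widetilde{3\calE_3}$, and then recognizing this lattice as the invariant attached to the cusp $c_{3D_4}$.

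For the first task, I would argue directly from Proposition~\ref{prop_3e6tilde}. Since the decomposition $\Lambda\cong\widetilde{3\calE_3}+\calH$ is orthogonal and $\calF$ lies in the $\calH$-summand, we have $\calF^\perp=\widetilde{3\calE_3}\oplus\calF^\perp_\calH$, where $\calF^\perp_\calH$ denotes the orthogonal complement of $\calF$ inside $\calH$. Because $\calH$ carries the hyperbolic Hermitian form $\begin{pmatrix}0&\theta\\\bar\theta&0\end{pmatrix}$, any rank one isotropic subspace is its own orthogonal complement in $\calH$, so $\calF^\perp_\calH=\calF$. Consequently $\calF^\perp=\widetilde{3\calE_3}+\calF$ and the quotient is $\widetilde{3\calE_3}$, as claimed.

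For the second task, I appeal to Lemma~\ref{lem_inv_iso}, which distinguishes the two cusps by the root sublattice of the associated $\ZZ$-lattice $(\calF^\perp/\calF)_\ZZ$. Thus it suffices to show that $R\bigl((\widetilde{3\calE_3})_\ZZ\bigr)=R(\widetilde{3E_6(-1)})\cong 3E_6(-1)$; in other words, no new roots appear in passing from $3E_6(-1)$ to its index-$3$ overlattice.

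The main point (the only place where anything needs to be computed) is this absence of new roots. Every element of $\widetilde{3E_6(-1)}$ can be written uniquely as $v+kw$, with $v=v_1+v_2+v_3\in 3E_6(-1)$, $k\in\{0,1,2\}$, and $w=(z_1+z_2+z_3)/3$ as in Remark~\ref{rem_discr_3e6}. For $k=0$ we recover the roots of $3E_6(-1)$. For $k\in\{1,2\}$, the vectors $v_i+kz_i/3\in E_6(-1)^\vee$ lie in nontrivial cosets of $E_6(-1)$, on which the discriminant form of $E_6(-1)$ takes the value $-4/3\pmod{2\ZZ}$. The shortest representative of such a coset has norm $-4/3$, so each component summand satisfies $(v_i+kz_i/3)^2\leq -4/3$, whence $(v+kw)^2\leq -4<-2$. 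No root therefore exists with $k\neq 0$, which proves $R(\widetilde{3E_6(-1)})=3E_6(-1)$ and by Lemma~\ref{lem_inv_iso} identifies $\calF$ with the cusp $c_{3D_4}$. The hard part, to the extent there is one, is verifying that the length bound on shortest representatives of nonzero cosets of $E_6(-1)^\vee/E_6(-1)$ really does rule out all $k\neq 0$ cases simultaneously; this is ultimately a brief explicit check with the discriminant form of $E_6$.
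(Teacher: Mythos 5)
Your proof is correct, and it takes the same overall route as the paper (identify $\calF^\perp/\calF$ by the trivial orthogonal-complement computation, then apply Lemma~\ref{lem_inv_iso}), but you make explicit a step that the paper leaves completely silent. The paper's proof of the corollary is literally ``This is clear,'' deferring the identification $R\bigl((\widetilde{3\calE_3})_\ZZ\bigr)\cong 3E_6(-1)$ to the cited genus classifications of $2E_8+A_2$ (Scattone, Laza, and the discussion preceding the corollary, where $\widetilde{3\calE_3}$ is constructed precisely to be the ``a posteriori'' answer). You instead verify directly that the index-$3$ overlattice acquires no new roots, which is the computational content that makes the invocation of Lemma~\ref{lem_inv_iso} legitimate. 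This is a genuine, worthwhile addition: it makes the corollary self-contained rather than dependent on the external genus classification.

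One simplification: the ``hard part'' you flag at the end is actually softer than you make it sound. You do not need to identify the shortest representatives of the nontrivial cosets of $E_6(-1)^\vee/E_6(-1)$. It suffices to observe that the discriminant form takes the value $q_{E_6(-1)}(\xi)\equiv -4/3\pmod{2\ZZ}$ on every nonzero coset (both $\xi$ and $2\xi$, since $q(2\xi)=4q(\xi)\equiv -4/3$), so that any non-lattice vector $u\in E_6(-1)^\vee$ has $u^2\equiv -4/3\pmod 2$; combined with negative definiteness this forces $u^2\le -4/3$ without any vector count. Your conclusion $(v+kw)^2\le -4<-2$ for $k\ne 0$ then follows exactly as you wrote.
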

\begin{proof}
This is clear.
\end{proof}

\subsection{Structure of the two boundary divisors of $\oBG$}
We denote the two boundary divisors of $\oBG$ corresponding to the cusps $c_{2A_5}$ and $c_{3D_4}$ by $T_{2A_5}$ and $T_{3D_4}$, respectively. To be able to treat both cusps simultaneously we write
\begin{equation}\label{equ:decomp}
\Lambda = \calG+ \calH
\end{equation}
where $\calG =  \calE_1 +  2\calE_4$ or $\calG=  \widetilde{3\calE_3}$, respectively. As before, we can choose a rank $1$ primitive isotropic subspace $\calF\subset \calH$, and then $\calG \cong \calF^{\perp}/\calF$.

We denote by $E_{\omega}$ the elliptic curve with an order $3$ automorphism and note that
$$
E_{\omega}= \CC/\calE.
$$
We can write
$$
(E_{\omega})^9= E_{\omega} \otimes_{\calE} \calG=\CC^9/\calG.
$$

This description defines a natural action of $\O(\calG)$ on the $9$-dimensional abelian variety $(E_{\omega})^9$. The aim of this subsection is to prove the following
\begin{pro}\label{prop_structure_tor}
The following holds:
\begin{itemize}
\item[(1)] $T_{2A_5} \cong (E_{\omega}\otimes_{\mathcal E}(\mathcal E_1+2\mathcal E_4))/\O( \calE_1 +  2\calE_4)\ \  (\cong (E_{\omega})^9/\O( \calE_1 +  2\calE_4))$;
\item[(2)] $T_{3D_4}  \cong  (E_{\omega}\otimes_{\mathcal E}\widetilde{3\mathcal E_3})/\O(\widetilde{3\calE_3}) \ \ (\cong  (E_{\omega})^9/\O(\widetilde{3\calE_3}))$.
\end{itemize}
\end{pro}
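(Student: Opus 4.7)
The plan is to appeal to the general structure theorem for toroidal compactifications of ball quotients at $0$-dimensional cusps (Ash--Mumford--Rapoport--Tai~\cite{AMRT}, specialized to the Hermitian setting by Behrens~\cite{beh}), and then use the orthogonal splittings $\Lambda = \calG + \calH$ from~\eqref{equ:decomp}, together with Lemma~\ref{lem_inv_iso} and Proposition~\ref{prop_3e6tilde}, to pin down the abelian variety and its finite quotient group explicitly in each case.

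Since the primitive isotropic subspace $\calF$ has rank~$1$ over $\calE$, the cone occurring in the construction of $\oBG$ is one-dimensional. Consequently, no choices are involved, and the boundary divisor $T$ over the cusp is canonically the quotient of a compact complex torus by a finite group. In the Siegel domain realization near the cusp $\calF$, a fiber over a point of the cusp is identified with the complex vector space $\calG \otimes_\calE \CC$ of dimension $\operatorname{rank}_\calE \calG = 9$. The lattice of translations used to form the compact torus comes from the Eichler-type unipotent elements of $\Stab_\Gamma(\calF)$ indexed by vectors $a \in \calG$; this lattice is exactly $\calG$, and the resulting abelian variety is
\[
\calG \otimes_\calE (\CC/\calE) \;=\; \calG \otimes_\calE E_\omega \;\cong\; (E_\omega)^9.
\]

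The core step is to identify the residual finite group acting on this abelian variety as $\O(\calG)$. Using the splitting $\Lambda = \calG + \calH$ together with $\calF \subset \calH$, one has the obvious embedding $\O(\calG) \times \O(\calH)_\calF \hookrightarrow \Stab_\Gamma(\calF)$ which, modulo the unipotent radical, gives a Levi decomposition. Since $\calH$ is a unimodular $\calE$-lattice of signature $(1,1)$, the subgroup $\O(\calH)_\calF$ is generated by the unipotent shears $f \mapsto f + \mu e$ (which lie in the unipotent radical and act trivially on the torus, moving only the cusp-direction coordinate) together with the scalings $e \mapsto \lambda e$, $f \mapsto \bar\lambda^{-1}f$ for $\lambda \in \calE^\times = \mu_6$. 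These scalings act on $\calG \otimes_\calE E_\omega$ only through the diagonal scalar action of $\mu_6 \subset \O(\calG)$, and are therefore already absorbed by $\O(\calG)$. Hence the effective finite group is exactly $\O(\calG)$.

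The main obstacle is the bookkeeping for the unipotent radical $W$: one must verify both that the lattice of translations descending to $\calG \otimes_\calE E_\omega$ is precisely $\calG$, with no finite-index subtleties, and that the remaining elements of $W$ act only on the cusp-direction coordinate and hence trivially on the boundary torus. Both facts are Heisenberg-group computations in the Witt decomposition of $\Lambda$, in the spirit of~\cite[\S2]{beh}, and use critically the unimodularity of $\calH$. Once these verifications are carried out, assertions~(1) and~(2) follow by specializing $\calG$ to $\calE_1 + 2\calE_4$ for the cusp $c_{2A_5}$ and to $\widetilde{3\calE_3}$ for the cusp $c_{3D_4}$, using Lemma~\ref{lem_inv_iso} and its corollary to match the cusps with the splittings.
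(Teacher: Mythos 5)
Your proposal matches the paper's proof: both use the orthogonal splitting $\Lambda=\calG+\calH$ with $\calF\subset\calH$, compute in the Siegel-domain realization at the cusp, identify the translation lattice coming from the unipotent radical with $\calG$ (so the boundary torus is $\calG\otimes_\calE E_\omega\cong(E_\omega)^9$), and observe that the residual $\calE^\times$-scalings in the Levi factor are already scalars in $\O(\calG)$. The only difference is presentational---the paper carries out the Heisenberg-group bookkeeping by explicit matrix computation with the block Gram matrix, whereas you defer it after correctly flagging that the unimodularity of $\calH$ is what prevents a finite-index discrepancy in the translation lattice (precisely the issue that does arise in the cubic-surfaces case of Appendix~\ref{sec:surfaces}, where the translation lattice is $2\calG$ rather than $\calG$).
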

\begin{proof}
We will give the proof for both cusps simultaneously. For this we pick an isomorphism as in (\ref{equ:decomp}) and  an isotropic vector $h$
 in $\calH$.
  As a matter of notation, by $F$ we will denote the cusp given by the isotropic line $\mathcal F=\mathcal Eh$.
 Now we choose $b_1:=h$ and extend this to a basis of $\Lambda$ such that the
hermitian form with respect to this basis has the Gram matrix
$$
Q=
\left(
\begin{array}{c|c|c}
0 & 0 & \theta \\ \hline
0 & B & 0\\ \hline
\bar \theta & 0 & 0\rule{0pt}{2.6ex}
\end{array}
\right)
$$
Here $b_2, \dots, b_{10}$ form a basis of $\calG$, and $B$ is the Gram matrix of $\calG$ with respect to this basis.  In order to understand the boundary we first have to determine
certain subgroups of $\O(\Lambda)$ related to the cusp $F$ (here we will only be dealing with the integral groups).
The first is the stabilizer subgroup $N(F)$ corresponding to $F$, i.e.~the subgroup of $\O(\Lambda)$ fixing the line spanned by $h$. A straightforward calculation, see~\cite[Sec.~4]{beh}, gives
\begin{equation}\label{pro:structureboundarycomp}
N(F)= \left\{ g \in \O(\Lambda): g= \left( \begin{array}{c|c|c}
u & v & w \\ \hline
0 & X & y \\ \hline
0 & 0 & s
\end{array}
\right)\right\}.
\end{equation}
Note that, in particular, this implies that $X\in \O(\calG)$. Its unipotent radical is given by
\begin{equation}
W(F)= \left\{g \in N(F): g= \left(
\begin{array}{c|c|c}
1 & v & w \\ \hline
0 & 1 & y \\ \hline
0 & 0 & 1
\end{array}
\right) \right\}
\end{equation}
and finally the center of the unipotent radical is
\begin{equation}
U(F)= \left\{g\in W(F): g= \left(
\begin{array}{c|c|c}
1 & 0 & w \\ \hline
0 & 1 & 0 \\ \hline
0 & 0 & 1
\end{array}
\right), w \in \ZZ \right\} \cong \ZZ.
\end{equation}
We have already introduced coordinates $(z_0:z_1: \dots : z_{10})$ on $\calB \subset \PP(\Lambda\otimes_{\calE}\CC)$ and we can assume that $z_{10}=1$.
The first step in the toroidal compactification is to take the partial quotient of $\calB$ by $U(F)$. This is given by
\begin{equation}
\begin{aligned} \calB &\to \CC^* \times \CC^9 \\  (z_0, \dots, z_{9}) &\mapsto (t_0=e^{2 \pi i z_0}, z_1, \dots,  z_9).
\end{aligned}
\end{equation}
Adding the toroidal boundary means adding the divisor $\{0\} \times \CC^9$, and we will use $z_1, \dots, z_9$ as coordinates on this boundary divisor. The quotient $N(F)/U(F)$ then acts on $\calB/U(F)$ and this quotient gives the
toroidal compactification of $\calB$ near the cusp $F$. Here we are only interested in the structure of the boundary divisor and hence in the action of $N(F)/U(F)$ on  $\{0\} \times \CC^9$. A straightforward calculation shows that
\begin{equation}\label{equ:action}
g=\left(
\begin{array}{c|c|c}
u & v & w \\ \hline
0 & X & y \\ \hline
0 & 0 & s
\end{array}
\right): \underline{z} \mapsto  \frac{1}{s}(X\underline{z} + y)
\end{equation}
where $\underline{z}=(z_1, \dots, z_9)$. We first look at  the normal subgroup $W(F)$, matrices whose elements act as follows
$$
g=\left(
\begin{array}{c|c|c}
1 & v & w \\ \hline
0 & 1 & y \\ \hline
0 & 0 & 1
\end{array}
\right):  \underline{z} \mapsto \underline{z} + y.
$$
Since $g\in \O(\Lambda)$ we have $y \in \calE^9$ and we claim that all vectors in  $\calE^9$ appear as entries in matrices $g\in W(F)$. Indeed a straightforward calculation, see~\cite[Sec.~4]{beh},  shows that  the condition that $g\in \O(\Lambda)$ is
$$
By+\bar{v}^t\theta=0, \quad \bar{y}^tBy+\bar{\theta}w + \theta \bar w=0.
$$
Given $y$ we define $v$ by $\bar{v}^t=-\frac{1}{\theta}By$. This is in $\calE^9$ since the coefficients of $By$ have values in $\theta \calE$.
Finally, we must check that we can  find a suitable element $w \in \calE$.
We know that $\bar{y}^tBy \in 3\ZZ$
and hence we can write $\bar{y}^tBy=3n$ for some integer $n$.
Hence we can take $w= -\frac12 + \frac{i}{2} \sqrt{3}n \in \calE$ if $n$ is odd and $w= \frac{i}{2} \sqrt{3}n \in \calE$ if $n$ is even, respectively . This shows that
$$
\CC^9/W(F)\cong (E_{\omega})^9.
$$

Next we consider the action of the subgroup
$$
\left\{ g \in \O(\Lambda): g= \left( \begin{array}{c|c|c}
1 & 0 & 0 \\ \hline
0 & X & 0 \\ \hline
0 & 0 & 1
\end{array}
\right)\right\}.
$$
which acts on $(E_{\omega})^9$ as claimed in the proposition.

It remains to consider elements of the form
$$
g=\left(
\begin{array}{c|c|c}
u & 0 & 0 \\ \hline
0 & 1 & 0 \\ \hline
0 & 0 & s
\end{array}
\right) \in N(F).
$$
The condition that such a matrix lies in $\O(\Lambda)$ is that $s\bar{u}=1$ with $s\in \calE$. Hence $s$ is a power of $\omega$ and these elements act on $(E_{\omega})^9$ by multiplication with powers of $\omega$. But by
(\ref{equ:action})  these elements are already in
$\O(\calG)$ and hence we do not get a further quotient, and the claim follows.
\end{proof}

\subsection{Isometry groups associated to the two cusps} We now discuss the isometry groups $\operatorname{O}(2\calE_4+\calE_1)$ and $\O(\widetilde{3\calE_3})$ associated to the two cusps $c_{2A_5}$ and $c_{3D_4}$ respectively (see Proposition~\ref{prop_structure_tor}). As we will discuss below, these groups are easily determined once the isometry groups of the basic lattices $\calE_3$ and $\calE_4$ are understood.
It turns out that the lattices $\calE_3$ and $\calE_4$ are special lattices, they are ``root lattices'' in the sense of Eisenstein lattices. Consequently, the associated isometry groups $\O(\calE_i)$ are essentially the complex reflections $W(\calE_i)$ generated by the roots ($W(\calE_i)$ is the analogue for Eisenstein lattices of the usual Weyl group).

To start our discussion of the isometry groups, let us recall that the role of reflections is taken by {\em triflections}. First, an {\it Eisenstein root} is an element $r\in \calG$ with $\|r\|^2=\langle r, r \rangle =3$. Note that $r$, when viewed as an element of
$\calG_\ZZ$, is then a root in the usual sense, i.e. $r^2=(r,r)=-2$.
The role of the $(-2)$-reflections is taken by {\em triflections}. To explain these, let $r$ be an Eisenstein root in the above sense and
define
$$
R_r: x \mapsto x - (1-\omega) \frac{\langle r,x \rangle }{\langle r,r \rangle }x.
$$
This defines an isometry of order $3$ (called a triflection). For an Eisenstein lattice $\calG$, we define
$$W(\calG)\subset \O(\calG)$$
to be the subgroup of isometries generated by triflections $R_r$ in all roots $r$ defined above.   Note that it follows, for instance,  that   $W(\calE_3)$ is the subgroup of $W(E_6)$ consisting of the elements that commute with $\rho$ (similar statements hold for all $\calE_n$ for $n=1,\dots,4$).

We will now discuss the isometry group of the relevant Eisenstein lattices.
Clearly
\begin{equation}\label{E:OOE1}
\O(\calE_1) = W(\calE_1) \times \ZZ/2\ZZ  \cong  \ZZ/3\ZZ \times \ZZ/2\ZZ
\end{equation}
where the factors are generated by multiplication by $\omega$ and $-1$, respectively. The (complex) Weyl groups $W(\calE_3)$ and $W(\calE_4)$ are well known complex reflection groups, typically denoted by $L_3$ and $L_4$ (see~\cite[Table 2]{Dolgachev-ref}), and described as follows.
\begin{pro}\label{propwe3}
The following holds:
\begin{itemize}
\item[(1)]
The Weyl group $W(\calE_3)$ is a group of order $2^3 \cdot 3^4=648$. It is isomorphic to $\operatorname{U}(3,\FF_4)$, respectively a semidirect product of an extra special group of order $27$ and exponent $3$ with
$\operatorname{SL}(2,\FF_3)$.
\item[(2)] $\O(\calE_3) \cong W(\calE_3) \rtimes \ZZ/2\ZZ$.
\end{itemize}
\end{pro}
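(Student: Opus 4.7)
The plan is to identify $W(\calE_3)$ with the Shephard--Todd complex reflection group $G_{25}$ (the \emph{Hessian group}) and deduce the two descriptions in (1) from standard properties of this group. Part (2) will then follow by comparing $\O(\calE_3)$ with the full isometry group of the underlying $\ZZ$-lattice $E_6(-1)$ via~\eqref{eq_rel_iso}.

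First I would enumerate the Eisenstein roots in $\calE_3$: using the given Gram matrix, a direct calculation shows that $\calE_3$ contains exactly $72$ vectors of norm $3$ (matching the $72$ roots of $E_6(-1)$), splitting under the unit group $\calE^{\times}$ into $12$ triflection axes. Thus $W(\calE_3)$ is an irreducible finite complex reflection group of rank $3$ generated by order-$3$ pseudo-reflections, and by the Shephard--Todd classification (see e.g.~\cite[Table~2]{Dolgachev-ref}) the unique such group is $G_{25}$, of order $|G_{25}| = 2^3 \cdot 3^4 = 648$. Standard properties of $G_{25}$ --- in particular its explicit structure as $3^{1+2} \rtimes \SL(2, \FF_3)$, with the extraspecial normal $3$-subgroup generated by the scalar $\omega\operatorname{Id}$ together with triflections on two commuting root axes --- can then be quoted from the literature on complex reflection groups.

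For the isomorphism with $\operatorname{U}(3, \FF_4)$ (interpreted as $\operatorname{GU}_3(\FF_4)$, of order $648$), I would reduce modulo the prime $2\calE \subset \calE$. The isomorphism $\calE/2\calE \cong \FF_4$ sends complex conjugation on $\calE$ to the Frobenius $x \mapsto x^2$ on $\FF_4$, and since the Gram determinant $\det(B) = 9$ is a unit modulo $2$, the Hermitian form on $\calE_3$ descends to a non-degenerate Hermitian form on $\FF_4^3$. This gives a homomorphism $W(\calE_3) \to \operatorname{GU}_3(\FF_4)$. Since $\omega$ reduces to a primitive element of $\FF_4^{\times}$, the central scalar $\omega\operatorname{Id}$ has nontrivial image; checking that any single triflection also has nontrivial reduction (an easy direct calculation on $\FF_4^3$) then gives injectivity, and the equality of orders $|W(\calE_3)| = |\operatorname{GU}_3(\FF_4)| = 648$ forces the map to be an isomorphism.

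For (2), the scalar $-\operatorname{Id}$ clearly lies in $\O(\calE_3)$ but not in $W(\calE_3)$, since the center of $G_{25}$ is $\langle\omega\operatorname{Id}\rangle \cong \ZZ/3$ and contains no element of order $2$. To show $-\operatorname{Id}$ generates the entire quotient $\O(\calE_3)/W(\calE_3)$, I would use~\eqref{eq_rel_iso} to identify $\O(\calE_3)$ with $Z_{\O(E_6(-1))}(\rho)$ and combine this with the standard facts $\O(E_6(-1)) = W(E_6) \times \langle -\operatorname{Id}\rangle$ (a direct product since $-\operatorname{Id}$ is central and does not lie in $W(E_6)$, the latter being special to $E_6$ among $E_n$) and $|Z_{W(E_6)}(\rho)| = 648 = |W(\calE_3)|$, this last equality coming from the natural inclusion $W(\calE_3) \hookrightarrow Z_{W(E_6)}(\rho)$ combined with an index computation. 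This yields $\O(\calE_3) \cong W(\calE_3) \rtimes \ZZ/2$, the $\ZZ/2$-factor being generated by $-\operatorname{Id}$. The main obstacle is the bookkeeping for the reduction modulo $2\calE$ and the centralizer computation in $W(E_6)$; everything else reduces to quotable facts about $G_{25}$ and the $E_6$ root system.
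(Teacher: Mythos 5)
Your proof is correct and reaches the same endpoints, but takes a different route from the paper in part (2), and is more detailed than the paper in part (1).

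For part (1), the paper simply cites Lehrer--Taylor; you instead sketch a self-contained proof (counting the $72$ Eisenstein roots to pin down $G_{25}$, and establishing $W(\calE_3)\cong\operatorname{GU}_3(\FF_4)$ by reduction mod $2\calE$). Both are fine, and your sketch is a useful expansion; you are of course quoting the Shephard--Todd classification and the standard structure $3^{1+2}\rtimes\SL(2,\FF_3)$ of $G_{25}$ rather than proving them, which matches the level of detail the paper is comfortable with.

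For part (2), the genuinely different choice is your generator of the order-$2$ quotient. The paper takes the diagram automorphism $\tau$ of $E_6$ and performs a short direct computation to check that $\tau$ commutes with $\rho$; you take $-\operatorname{Id}$, for which commutativity with $\rho$ is automatic because $-\operatorname{Id}$ is central in $\O(E_6(-1))$. Your route is a bit cleaner (no computation needed, and the non-membership of $-\operatorname{Id}$ in $W(\calE_3)$ follows at once from $Z(G_{25})\cong\ZZ/3$), and it uses the special feature of $E_6$ that $-\operatorname{Id}\notin W(E_6)$. The two choices give the same coset of $W(\calE_3)$ in $\O(\calE_3)$, since $\tau\cdot(-\operatorname{Id})=w_0\in W(E_6)$ commutes with $\rho$ (both $\tau$ and $-\operatorname{Id}$ do) and hence lies in $W(\calE_3)$; so the resulting group and its action on the discriminant $A_{E_6(-1)}$ are identical, and the downstream use in Proposition~\ref{lem:autoEisenstein} (only the diagonal involution extends to $\widetilde{3\calE_3}$) goes through unchanged, arguably more transparently since $-\operatorname{Id}$ acts by $-1$ on the discriminant for free.

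One small caveat: both your argument and the paper's ultimately rest on the identification $W(\calE_3)=Z_{W(E_6)}(\rho)$, together with the inclusion of a triflection $R_r$ into $W(E_6)$ (it is the product of the two simple reflections in the $A_2$ spanned by $r$ over $\ZZ$). The paper outsources this to Dolgachev; you only gesture at it as ``an index computation,'' which is the one place where a citation or a sentence of justification would be needed to make the argument fully watertight. But this is an acknowledged point, not a flaw in the approach.
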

\begin{proof}
For the first item see for example~\cite[Thm.~8.42]{LT}. For the second item, we recall that the underlying $\ZZ$-lattice is $E_6(-1)$. It is well known that  $\O(E_6(-1))=W(E_6(-1))\rtimes \ZZ/2\ZZ$, with the $\ZZ/2\ZZ$ factor corresponding to the outer automorphism $\tau$ given by the symmetry of the Dynkin diagram. To recover the groups $W(\calE_3)$ and $\O(\calE_3)$, we note that the Eisenstein structure on $E_6(-1)(=\calE_3)$ determines an order $3$ element $\rho\in W(E_6(-1))$. Then,  $W(\calE_3)$ can be obtained as the centralizer of $\rho$ in $W(E_6(-1))$ (e.g.~\cite[Ex. 9.5]{Dolgachev-ref}). Similarly, $\O(\calE_3)$ is the centralizer of $\rho$ in $\O(E_6(-1))$ (see~\eqref{eq_rel_iso}). Finally, a direct calculation shows that
$\tau$ commutes with the order $3$ automorphism $\rho$ which defines the Eisenstein structure on $E_6(-1)$. Item (2) follows.
\end{proof}

\begin{pro}\label{propwe4}
The following holds:
\begin{itemize}
\item[(1)] The Weyl group $W(\calE_4)$ has order $2^7 \cdot 3^5 \cdot 5=  155,520$. It is isomorphic to $\ZZ/3\ZZ \times \operatorname{Sp}(4,\FF_3)$.
\item[(2)] $W(\calE_4)=\O(\calE_4)$.
\end{itemize}
\end{pro}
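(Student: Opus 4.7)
The plan is to proceed in parallel with the proof of Proposition~\ref{propwe3}, substituting $E_8$ for $E_6$ throughout. For item~(1), I would identify $W(\calE_4)$ with the Shephard--Todd complex reflection group number~$32$ (equivalently Coxeter's $L_4$), via the standard observation that $W(\calE_4)$ is generated by the triflections in the Eisenstein roots of the rank-$4$ Hermitian lattice $\calE_4$, and that $\calE_4$ is precisely the Eisenstein root system carrying this reflection group. The order $|W(\calE_4)| = 2^7 \cdot 3^5 \cdot 5 = 155{,}520$ and the isomorphism $W(\calE_4) \cong \ZZ/3\ZZ \times \Sp(4,\FF_3)$ are then classical; a convenient reference is \cite{LT}, as already used in the preceding proposition, as well as \cite{Dolgachev-ref}. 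The central $\ZZ/3\ZZ$ factor is generated by the scalar action of $\omega$.

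For item~(2), I would argue exactly as in the proof of Proposition~\ref{propwe3}(2). By~\eqref{eq_rel_iso}, $\O(\calE_4)$ is the centralizer of the order-$3$ isometry $\rho$ (defining the Eisenstein structure) inside $\O(E_8(-1))$. Unlike in the $E_6$ case, the Dynkin diagram $E_8$ admits no nontrivial symmetries, so $\O(E_8(-1)) = W(E_8(-1))$, and consequently $\O(\calE_4)$ coincides with the centralizer $C_{W(E_8)}(\rho)$. The inclusion $W(\calE_4) \subseteq \O(\calE_4)$ is immediate since every triflection $R_r$ is $\calE$-linear and therefore commutes with $\rho$.

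For the reverse inclusion, an order comparison suffices. Since $\rho$ acts freely on the non-zero vectors of $E_8(-1)$, it is a regular element of order~$3$ in Springer's sense (in Carter's labelling of conjugacy classes in $W(E_8)$ it is the class $3A_2^2$). Its centralizer then has order $|W(E_8)|/|\rho^{W(E_8)}| = 696{,}729{,}600/4480 = 155{,}520 = |W(\calE_4)|$, which together with the inclusion above forces the equality $W(\calE_4) = \O(\calE_4)$.

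The main obstacle, such as it is, lies in pinning down the conjugacy class of $\rho$ (and hence its centralizer order) in $W(E_8)$ without resorting to explicit matrix calculations. This is handled most cleanly by Springer's theory of regular elements, which guarantees that the centralizer of a regular element of order~$d$ in a Weyl group is itself a complex reflection group whose order can be read off from Springer's formula; applied to $W(E_8)$ with $d=3$, this recovers the Shephard--Todd group $ST_{32}$, closing the argument.
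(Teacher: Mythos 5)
Your argument is correct, but it takes a genuinely different route from the paper. The paper disposes of Proposition~\ref{propwe4} in one line, citing \cite[Thm.~5.2]{ALeech} (and \cite[Thm.~8.43]{LT}) for both parts, without running the centralizer argument from Proposition~\ref{propwe3}(2). You instead transplant the proof of \ref{propwe3}(2) to the $E_8$ setting: via~\eqref{eq_rel_iso}, $\O(\calE_4)$ is the centralizer of $\rho$ in $\O(E_8(-1))$, which equals $W(E_8)$ since the $E_8$ Dynkin diagram has no symmetry, and then you close the gap between $W(\calE_4)\subseteq\O(\calE_4)$ by an order count coming from Springer's theory of regular elements. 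This buys you a self-contained explanation of \emph{why} there is no extra outer factor in the $\calE_4$ case (the vanishing of the diagram symmetry), where the paper merely asserts the equality by citation; it also nicely parallels the $\calE_3$ case and makes the contrast with $\O(\calE_3)=W(\calE_3)\rtimes\ZZ/2\ZZ$ transparent.

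One small correction: the Carter label you assign to $\rho$, written as ``$3A_2^2$,'' is not a class in $W(E_8)$. The class in question is $4A_2$ (the simultaneous Coxeter element of a $4A_2$ root subsystem of $E_8$, of full rank~$8$, which is what forces $\rho$ to fix only the origin). Its class size is $4480$ and its centralizer has order $155{,}520$, as you compute. Equivalently, and more robustly, the Springer argument you invoke does not actually need the Carter label: the degrees of $W(E_8)$ are $2,8,12,14,18,20,24,30$, those divisible by $3$ are $12,18,24,30$, and their product $12\cdot 18\cdot 24\cdot 30=155{,}520$ is the order of the centralizer, which Springer identifies as the Shephard--Todd group $ST_{32}=L_4$ acting on the $\omega$-eigenspace. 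If you keep the Carter-label remark, replace $3A_2^2$ with $4A_2$; otherwise you can drop it entirely and let the degree computation carry the argument.
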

\begin{proof} This follows from~\cite[Thm. 5.2]{ALeech} (see also~\cite[Thm.~8.43]{LT}).
\end{proof}

With these preliminaries, we can now conclude the computation of the groups of isometries occurring for the two cusps of the ball quotient model.
\begin{pro} \label{lem:autoEisenstein}
The following holds:
\begin{itemize}
\item[(1)] $\O(\calE_1 + 2 \calE_4) \cong W(\calE_1) \times (W(\calE_4)^{\times 2} \rtimes S_2)$
\item[(2)] $\O(\widetilde{3\calE_3}) \cong (W(\mathcal E_3)^{\times 3} \rtimes S_3) \rtimes \ZZ/2\ZZ$.
\end{itemize}
Here $S_n$ denotes the symmetric group in $n$ elements.
\end{pro}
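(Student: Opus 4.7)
The plan is a Krull--Schmidt argument in both cases: decompose the relevant lattice (or its root sublattice) into indecomposable Eisenstein summands, use that any isometry permutes isomorphic summands, and in case (2) account for the glue by analyzing the induced action on the discriminant group.

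For (1), the decomposition $\calE_1 + 2\calE_4$ is already into indecomposable Eisenstein summands: $\calE_1$ has $\calE$-rank one, while each $\calE_4$ is indecomposable over $\calE$ because its underlying $\ZZ$-lattice $E_8(-1)$ is irreducible. Since $\calE_1 \not\cong \calE_4$ (they have different ranks), any isometry preserves the $\calE_1$ summand and either preserves or swaps the two $\calE_4$ summands, so Krull--Schmidt gives
$$\O(\calE_1 + 2\calE_4) \;=\; \O(\calE_1) \times \bigl(\O(\calE_4)^{\times 2} \rtimes S_2\bigr).$$
The claim then follows by invoking Proposition~\ref{propwe4} to identify $\O(\calE_4) = W(\calE_4)$ together with~\eqref{E:OOE1} for $\O(\calE_1)$.

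For (2), the plan has three steps. First, I would verify that the root sublattice of $\widetilde{3\calE_3}$ is exactly $3\calE_3$, i.e., that the glue extension introduces no new Eisenstein roots. A representative outside $3\calE_3$ has the form $r = r_1 + r_2 + r_3$ with each $r_i = u_i + \tfrac{\epsilon z_i}{3}$, where $u_i \in \calE_3^{(i)}$ and a common $\epsilon \in \{\pm 1\}$ (reflecting membership in the isotropic $H$ defining the glue). Each $r_i$ lies in the nontrivial coset of $A_{\calE_3} \cong \ZZ/3\ZZ$, so by the discriminant form computation $q([z_i/3]) \equiv -\tfrac{4}{3} \bmod 2\ZZ$ (Remark~\ref{rem_discr_3e6}) together with negative-definiteness of $E_6(-1)^\vee$, one has $r_i^2 \leq -\tfrac{4}{3}$. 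Summing gives $r^2 \leq -4$, ruling out $r^2 = -2$. Second, any isometry $\phi \in \O(\widetilde{3\calE_3})$ preserves the root sublattice $3\calE_3$ and so restricts to an isometry of $3\calE_3$. Because $\calE_3$ is indecomposable (again from indecomposability of $E_6(-1)$) and its three copies are mutually isomorphic, Krull--Schmidt yields $\O(3\calE_3) = \O(\calE_3)^{\times 3} \rtimes S_3$. Third, by Nikulin's theory an element $\psi \in \O(3\calE_3)$ extends to $\widetilde{3\calE_3}$ iff its induced action on $A_{3\calE_3} = (\ZZ/3\ZZ)^{\oplus 3}$ preserves the isotropic subgroup $H = \langle(1,1,1)\rangle$ defining the glue. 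Using Proposition~\ref{propwe3}(2), write $\O(\calE_3) = W(\calE_3) \rtimes \ZZ/2\ZZ$; since $W(E_6)$ acts trivially on $A_{E_6}$, the Weyl subfactor $W(\calE_3)$ acts trivially on $A_{\calE_3}$, while the outer generator acts as $-1$. So $((\sigma_1,\sigma_2,\sigma_3),\tau)$ preserves $H$ iff the three outer signs of the $\sigma_i$ coincide, carving out exactly $(W(\calE_3)^{\times 3} \rtimes S_3) \rtimes \ZZ/2\ZZ$, with the outer $\ZZ/2\ZZ$ being the common outer sign.

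The main obstacle is the first step of (2), namely verifying that no new roots appear in the glue extension; once granted, both parts reduce to Krull--Schmidt together with a brief discriminant-group computation in (2).
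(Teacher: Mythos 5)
Your approach is essentially the same as the paper's: any isometry of a definite Eisenstein lattice preserves the set of Eisenstein roots (hence the indecomposable summands of the root sublattice) and is determined by its restriction there, and for (2) one extends to the overlattice via Nikulin's criterion on the discriminant group. Your explicit verification that the glue extension $3\calE_3\hookrightarrow\widetilde{3\calE_3}$ creates no new Eisenstein roots is a useful self-contained check; the paper instead obtains $R((\calF^\perp/\calF)_\ZZ)\cong 3E_6(-1)$ from Lemma~\ref{lem_inv_iso} (via the cubic fourfold period/GIT classification), so your step is not logically required but is more elementary and local.

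One point to reconcile in (1): your Krull--Schmidt argument yields $\O(\calE_1 + 2\calE_4)\cong \O(\calE_1)\times(\O(\calE_4)^{\times 2}\rtimes S_2)$, and substituting~\eqref{E:OOE1} gives $(W(\calE_1)\times\ZZ/2\ZZ)\times(W(\calE_4)^{\times 2}\rtimes S_2)$ --- that is, an extra $\ZZ/2\ZZ$ factor compared with the proposition's stated answer. This factor is genuinely present: the map $-\Id$ on $\calE_1$ and $\Id$ on $2\calE_4$ is $\calE$-linear and unitary, but it does not lie in $W(\calE_1)\times(W(\calE_4)^{\times 2}\rtimes S_2)$ since $-1\notin W(\calE_1)=\mu_3$. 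So ``the claim then follows'' glosses over a discrepancy; you should either flag the extra $\ZZ/2\ZZ$ or explain why it is to be discarded. (The paper's own proof exhibits the same slip. It is harmless for the downstream cohomology computation in Proposition~\ref{pro:cohomoloytorbound}, since $E_\omega/W(\calE_1)$ and $E_\omega/\O(\calE_1)$ are both $\PP^1$ with the same Betti numbers.) In case (2) there is no analogous issue: the glue condition forces the three outer $\ZZ/2\ZZ$'s coming from $\O(\calE_3)^{\times 3}$ to act diagonally, leaving exactly the one factor appearing in the statement, as your discriminant analysis correctly shows.
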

\begin{proof}

Let $\calG$ be a definite Eisenstein lattice. We start with two basic remarks. Firstly, the set of Eisenstein roots $\calR(\calG)$ is finite, and in fact (under our scaling assumptions) coincides (set-theoretically) with the set of $-2$ roots for the
$\ZZ$-lattice $\calG_\ZZ$.
Indeed, this follows from~\eqref{arr:hermitianform}.
Secondly, any isometry $\phi\in \O(\calG)$ preserves the set of roots ($\phi(\calR(\calG))=\calR(\calG)$). Furthermore, if the set of roots $\calR(\calG)$ generates  $\calG$ (over $\QQ(\omega)$), then $\phi$ is determined uniquely by the action of $\phi$ on the finite set $\calR(\calG)$. In our situation, $\calG=2\calE_4+\calE_1$ or $\calG=\widetilde{3\calE_3}$, and we know $R(\calG_\ZZ)=2E_8(-1)+ A_2(-1)$ and $R(\calG_\ZZ)=3E_6(-1)$ respectively (see Lemma~\ref{lem_inv_iso}), where we denote by $R(\calG_\ZZ)$ the sublattice of $\calG_\ZZ$ spanned by the roots $\calR(\calG_\ZZ)$. Thus, given $\phi\in \O(2\calE_4+\calE_1)$ (or $\phi\in \O(\widetilde{3\calE_3})$ respectively), after a permutation (giving the $S_2$ and $S_3$ factors above), we can assume that $\phi$ preserves the irreducible root summands $2\calE_4+\calE_1$ (and respectively each of the three $\calE_3$ in $\widetilde{3\calE_3}$).  Then the isometry $\phi$ is determined by the action of $\phi$ on the summands $\calE_1$, $\calE_4$ and $\calE_3$ respectively (which were described in Propositions~\ref{propwe4} and~\ref{propwe3}); see also~\cite[Lem.~1]{HKN} for a related argument.
This shows immediately that the isometries of $\calE_1 + 2 \calE_4$ are as claimed. We have also seen that any isometry of $\widetilde{3\calE_3}$ is the extension of an isometry of $3\calE_3$. Now, an isometry of  $3\calE_3$ lifts to
 $\widetilde{3\calE_3}$ if and only if the induced isometry on the discriminant preserves the defining subgroup $H\subset A_{3E_6(-1)}\cong (\ZZ/3)^3$. This immediately shows that $W(\mathcal E_3)^{\times 3} \rtimes S_3 \subset  \O(\widetilde{3\calE_3})$.
To complete the proof we recall from Proposition~\ref{propwe3} that $\O(\calE_3) \cong W(\calE_3) \rtimes \ZZ/2\ZZ$ where the $\ZZ/2\ZZ$-factor is generated by the involution $\tau$ given by the symmetry of the Dynkin diagram. Since $\tau$ acts by $-1$ on
$D(E_6(-1))\cong \ZZ/3\ZZ$ it follows that only the identity and the diagonal element $(\tau,\tau,\tau)$ extend to  $\widetilde{3\calE_3}$. This gives the extra factor of $\ZZ/2\ZZ$ and completes the proof.
\end{proof}

\section{The cohomology of the toroidal boundary divisors} We will compute the cohomology  of the toroidal compactification $\oBG$ by applying the decomposition theorem to the map $\oBG \to (\calB/\Gamma)^*$ and using our knowledge of the cohomology of $(\calB/\Gamma)^*$ (see \S~\ref{sec:IHball}).
For this we require the knowledge of the cohomology of the two toroidal boundary divisors $T_{2A_5}$ and $T_{3D_4}$.
Since the lattices involved in the definition of the two exceptional divisors (see Proposition~\ref{prop_structure_tor}) are essentially direct sums of $\calE_3$ and $\calE_4$ lattices, the main ingredient needed to compute the cohomology of $T_{2A_5}$ and $T_{3D_4}$ respectively is the cohomology of the spaces $(E_\omega\otimes_{\calE} \calE_k)/W(\calE_k)$ for $k=3,4$. These are quotients of abelian varieties by finite groups, and it turns out that these spaces are equivariantly isogenous to weighted projective spaces of dimension $k$ (see Proposition~\ref{thm_L_Eis} and~\eqref{E:Eisen-Looij},  below, for a precise statement), and thus they have simple cohomology, making the remaining computations routine.

\subsection{An Eisenstein analogue of Chevalley's theorem} \label{S:Eies-Chev}
The fact that $(E_\omega\otimes_{\calE} \calE_k^*)/W(\calE_k)$ for $k=3,4$  have the same cohomology as weighted projective spaces of dimension $k$ is an analogue over $\calE$ of a Chevalley type Theorem due to Looijenga~\cite{Lroot}.  Specifically, we recall that if $R$ is an irreducible ADE root lattice (we make this assumption for simplicity) and $E$ is an elliptic curve, then $(E\otimes_\ZZ R^\vee)/W(R)\cong W\PP^r$, where $W\PP^r$ denotes a weighted projective space of dimension $r$ equal to the rank of $R$ (see~\cite[Thm. 2.7]{FMW}).  In the Eisenstein case, we get that the quotients are equivariantly isogenous to weighted projective spaces; in particular we obtain:

\begin{pro}\label{thm_L_Eis}
Let $E_\omega$ be the elliptic curve with $j$-invariant $0$. Then:
\begin{itemize}
\item[(1)] $H^\bullet((E_\omega\otimes_\calE \calE_3)/W(\calE_3))\cong H^\bullet(W\PP(1,2,2,3))$
\item[(2)] $H^\bullet((E_\omega\otimes_\calE \calE_4)/W(\calE_4))\cong H^\bullet(W\PP(2,3,4,5,6))$.
\end{itemize}
\end{pro}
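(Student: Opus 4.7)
The plan is to establish a Chevalley-type theorem for the Eisenstein setting, generalizing the classical result of Looijenga and Bernstein--Schwarzman. Since the claim is purely cohomological and the rational cohomology of $W\PP(a_0, \dots, a_k)$ is $\QQ[h]/(h^{k+1})$ with $\deg h = 2$, it suffices to show that the quotient $A_k/W(\calE_k)$, where $A_k := E_\omega \otimes_\calE \calE_k$, has the rational cohomology of $\PP^k$.

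First, I would identify a natural $W(\calE_k)$-invariant ample line bundle $L$ on $A_k$ coming from the positive definite hermitian form on $\calE_k$, and consider the graded ring $R := \bigoplus_{n \ge 0} H^0(A_k, L^{\otimes n})^{W(\calE_k)}$ of $W(\calE_k)$-invariant theta functions. The key step, parallel to the classical Chevalley theorem, would be to show that $R$ is a polynomial ring freely generated by $k+1$ homogeneous elements of degrees $(1,2,2,3)$ for $k=3$ and $(2,3,4,5,6)$ for $k=4$. These numerical data should be the ``Eisenstein marks'' attached to the complex reflection groups $W(\calE_3) = G_{25}$ and $W(\calE_4) = G_{32}$, playing the role of the marks of the affine Dynkin diagram in the classical Looijenga--Bernstein--Schwarzman theorem. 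From this one would deduce that $A_k/W(\calE_k) = \operatorname{Proj}(R)$ is a weighted projective space of the required weights, and in particular has the claimed cohomology.

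For the cohomological statement alone, a more direct route is available through character theory. Since $W(\calE_k)$ is a finite group of complex reflections acting on $\CC^k$, Chevalley--Shephard--Todd implies $A_k/W(\calE_k)$ is smooth, and $H^\bullet(A_k/W(\calE_k); \QQ) = H^\bullet(A_k; \QQ)^{W(\calE_k)} = \bigl(\Lambda^\bullet H^1(A_k; \QQ)\bigr)^{W(\calE_k)}$. Complexifying, $H^1(A_k; \QQ) \otimes \CC = V \oplus \bar V$, where $V$ is the standard $k$-dimensional unitary representation of $W(\calE_k)$ and $\bar V \cong V^*$ by unitarity. The dimension of $W(\calE_k)$-invariants in each summand of $\Lambda^{j}(V \oplus \bar V) = \bigoplus_{p+q=j} \Lambda^p V \otimes \Lambda^q \bar V$ can be evaluated using: irreducibility of $V$; the non-triviality of $\det V$ (triflections have determinant $\omega$); and the known degrees of fundamental invariants of $G_{25}$ and $G_{32}$ (namely $6,9,12$ and $12,18,24,30$ respectively), which force the vanishing of any low-degree polynomial or $\det$-semi-invariants on $V$. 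Together these yield Betti numbers $(1, 0, 1, 0, \dots, 0, 1)$ of length $2k+1$, matching $\PP^k$. The ring structure is then forced by Hard Lefschetz, applied with the Kähler class induced by $L$.

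The main obstacle for the first approach is the Chevalley-type theorem itself: the classical arguments use the Weyl denominator formula and the geometry of dominant alcoves, which admit only delicate analogues for the complex reflection groups $G_{25}$ and $G_{32}$ acting on an Eisenstein lattice. For the second, character-theoretic approach, the technical heart is verifying the vanishing of invariants in the relevant exterior powers, which relies on detailed facts about the invariant theory of these two exceptional complex reflection groups (in particular on their degrees and codegrees, and on the large degree of the discriminant semi-invariant $J$).
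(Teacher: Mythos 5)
Your proposal correctly identifies the content of the statement as an Eisenstein analogue of the Looijenga / Bernstein--Schwarzman Chevalley-type theorem, but both of the routes you sketch are genuinely different from what the paper does, and both leave the hard step as an acknowledged gap rather than closing it. Your first route -- showing that the graded ring of $W(\calE_k)$-invariant theta functions on $E_\omega\otimes_\calE\calE_k$ is a polynomial algebra on generators of degrees $(1,2,2,3)$, resp.\ $(2,3,4,5,6)$ -- is precisely the content of the Bernstein--Schwarzman theorem, and the paper handles this by \emph{citing}~\cite{BS} for the general statement rather than re-proving it; as you note, re-deriving it for $G_{25}$ and $G_{32}$ would require nontrivial substitutes for the Weyl-denominator and alcove arguments of the Coxeter case. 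Your second, character-theoretic route (computing $\bigl(\Lambda^p V\otimes\Lambda^q \bar V\bigr)^{W(\calE_k)}$ via Molien/Orlik--Solomon data and Hard Lefschetz) is in principle sound, since the Hodge numbers of the quotient are exactly these invariant dimensions; but the specific inputs you list (irreducibility of $V$, nontriviality of $\det V$, and the degrees $6,9,12$ resp.\ $12,18,24,30$) are not obviously sufficient to force the needed vanishings in all bidegrees with $p\ne q$ -- one would need either a direct case-by-case character computation for $G_{25}$ and $G_{32}$, or a general theorem asserting $\frac{1}{|W|}\sum_{w\in W}\det(1+xw)\det(1+yw^{-1})=\sum_{j=0}^{\ell}(xy)^j$ for irreducible unitary reflection groups, which is again Bernstein--Schwarzman at the cohomological level. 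The paper's own sketch proceeds quite differently and more geometrically: it first establishes the classical $\ZZ$-version $(E\otimes_\ZZ E_6^\vee)/W(E_6)\cong W\PP(1,1,1,2,2,2,3)$ by identifying both sides with the moduli space $\calP_E$ of anticanonical pairs $(S,E)$ with $S$ a cubic del Pezzo surface -- using Pinkham's $\CC^*$-equivariant versal deformation theory of the cone singularity $\widetilde E_6$ for the GIT side, and Carlson's extension-of-mixed-Hodge-structure period map together with a Torelli statement for the Hodge-theoretic side -- and then passes to the Eisenstein version by restricting to the $\mu_3$-equivariant sublocus $\calP_{E_\omega}^\omega\subset\calP_{E_\omega}$ and identifying the $\omega$-eigenspace. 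This geometric argument both proves the stronger isomorphism with a weighted projective space (not merely the cohomology) and avoids the representation-theoretic verifications that your two routes would require. If you want to pursue your second route, you should either verify the $P(x,y)=\sum_j(xy)^j$ identity for $G_{25}$ and $G_{32}$ by a finite character sum, or invoke the Bernstein--Schwarzman / Looijenga result as a black box, as the paper does.
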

\begin{proof}
Chevalley type theorems for complex reflection groups acting on projective varieties were obtained by Bernstein and Schwarzman~\cite{BS}. In particular, the fact that the cohomology of $(E_\omega\otimes \calE_k)/W(\calE_k)$ agrees with that of weighted projective space (in much more generality) follow from~\cite[\S2.3]{BS}.

For the reader's convenience, we sketch a geometric proof of the two cases that are needed in our paper, following the outline of~\cite{FMW}. For  simplicity, we will discuss only the $\calE_3$ case, the other case being obtained by minor changes. Let us discuss first the situation over $\ZZ$ (i.e., the classical setup of Looijenga), namely the statement that $(E\otimes_\ZZ E_6^\vee)/W(E_6)\cong W\PP(1,1,1,2,2,2,3)$ (N.B.~for the moment, $E$ is any (fixed) elliptic curve).

We consider the moduli of anticanonical pairs\footnote{In general, {\it an anticanonical pair} $(S,D)$ is a rational surface $S$ together with a reduced anticanonical cycle $D\in |-K_S|$. Clearly, $D$ is either a smooth elliptic curve (as in our case) or a cycle of rational curves. The latter case is sometimes known also as {\it Looijenga pair}. The moduli of anticanonical pairs is well understood: the case when $D$ is smooth is essentially reviewed here, while the harder case when $D$ is singular is treated in~\cite{GHK},~\cite{F13}.}, \emph{with $E$ fixed},
$$\calP_E=\{(S,E)\mid S \textrm{ is a degree $3$ del Pezzo surface, and }E\in|-K_S|\}\,.$$
This moduli space has two different descriptions. On the one hand, as an instance of Pinkham's general theory of deformations of singularities with $\CC^*$-action (see~\cite{pinkham}), we obtain
 a GIT description as $\calP_E=B_{-}\gquot \CC^*$, where $B_{-}\cong \CC^7$ is the negative weight deformation space for the versal deformation
 of the singularity of type $\widetilde E_6$ (the affine cone over the elliptic curve $E\subset \PP^2$). (In general, the versal deformation space of a singularity is a germ. However, for singularities with $\CC^*$-action, there is an induced $\CC^*$-action, which allows one to globalize the negative weight subspace. Thus, in the case of quasi-homogeneous hypersurface singularities, one can take $B_{-}$ to be an affine space.)
 The versal deformations of $\widetilde E_6$ are easily described explicitly, and as a consequence one gets
 \begin{equation}\label{wp_iso}
 \calP_E\cong W\PP(1,1,1,2,2,2,3).
 \end{equation}
 (We refer to~\cite{rthesis} for further related discussion.) On the other hand, one gets a period map
  \begin{eqnarray}\label{pe_quot}
  \Phi_E:\calP_E&\to&\Hom_\ZZ(E_6(-1),E)/W(E_6)\\
  (S,E)&\to& H^2(S,E)\notag
  \end{eqnarray}
  which can be explained as follows: Firstly, $W(E_6)$ is the monodromy group acting on the primitive cohomology $E_6(-1)\cong H^2(S,\ZZ)_{0}$ of the del Pezzo surface $S$. Secondly, from the exact sequence of a pair one gets
$$
0\to H^1(E)\to H^2(S,E)\to H^2(S)_0\to 0,
$$
where we are using that $H^2(S)_0=\ker (H^2(S)\to H^2(E))$.  This shows that the mixed Hodge structure on $H^2(S,E)$ is an extension of  a trivial weight $2$ Hodge structure (type $(1,1)$) by the elliptic curve $E$. Carlson~\cite{carlson} showed that these type of extensions are classified by $\Hom_\ZZ(H^2(S)_0,E)$.  Since the elliptic curve $E$ is fixed, the monodromy $W(E_6)$ acts naturally on $H^2(S)_0\cong E_6(-1)$ and on $\Hom_\ZZ(H^2(S)_0,E)$. Thus, the period space (i.e., the period domain modulo monodromy) for $\Phi_E$ is $\Hom_\ZZ(E_6(-1),E)/W(E_6)$,
showing that the period map above is well defined. Let us then note that in fact the period map $\Phi_E$ has an easy geometric description. Namely, by identifying $H^2(S)_0$ with $\Pic(S)_0$, i.e., degree $0$ line bundles on $S$ with respect to the polarization $-K_S$, and $E$ with $\Pic^0(E)$, one sees that the ``period point''
$$\Psi:=\Phi_E(S,E)\in \Hom_\ZZ(H^2(S)_0,E)$$ is just the natural restriction morphism
\begin{eqnarray}
\Psi:\Pic(S)_0&\to& \Pic^0(E)\notag\\
\mathcal L&\to&\mathcal L_{\mid E}.\label{eq_psi}
\end{eqnarray}
Finally, an easy Torelli type theorem (essentially, the surface $S$ is the blow-up of $6$ points in $\PP^2$ which lie on a smooth cubic curve $C\cong E$) establishes that  $\Phi_E$ is an isomorphism. (We refer to~\cite{carlson} and~\cite{Fannals} for details of the period map construction and the Torelli theorem. In particular, we note that our case is one of the main examples in~\cite{carlson}.)  Comparing the GIT~\eqref{wp_iso} and Hodge theoretic~\eqref{pe_quot} descriptions of $\calP_E$, one obtains the claimed result $(E\otimes_\ZZ E_6^\vee)/W(E_6)\cong W\PP(1,1,1,2,2,2,3)$.

  Returning to our situation, i.e., the Eisenstein analogue of the above argument, we proceed as follows. We specialize to the case $E=E_\omega$, and we define $\calP_{E_\omega}^\omega$  to be the subspace of $\calP_{E_\omega}$ corresponding to pairs $(S,E_\omega)$ for which the $\mu_3$-action on $E_{\omega}$ extends (linearly) to $S$. It is easy to see that we can choose a normal form for $S$ as follows
  $$S=V((x^3+y^3+z^3)+a_1txy+a_2 t^2x+a_3t^2y+a_4t^3)\subset \PP^3\,,$$
 with the elliptic curve $E_\omega$ being the hyperplane at infinity $(t=0)$.
 Since $E_\omega$ is fixed, the only transformation allowed is the rescaling of $t$. This shows that $\calP_{E_\omega}^\omega\cong W\PP(1,2,2,3)$ (and this is a natural subspace of $\calP_{E_\omega}\cong W\PP(1,1,1,2,2,2,3)$).
 Let us now discuss the restriction of the period map $\Phi_{E_\omega}$ to the subspace $\calP_{E_\omega}^\omega\subset \calP_{E_\omega}$. By definition $\calP_{E_\omega}^\omega$ is the locus of pairs $(S,E_\omega)$ that admit an order $3$ automorphism $f$. Since $f$ preserves $K_S$, we see that $f^*$ acts as an order $3$ isometry, call it  $\rho$, on $H^2(S)_0\cong E_6(-1)$. On the other hand, the restriction of $f$ to $E_\omega$  acts as multiplication by $\omega$. Since $f$ acts compatibly on the pair $(S,E)$, we get $(f^*\mathcal L)_{\mid E}=f^*(\mathcal L_{\mid E})$, which in turn is equivalent to saying (compare~\eqref{eq_psi})
 $$\Psi(\rho(\mathcal L))=\omega\cdot \Psi(\mathcal L)\,.$$
 We conclude that the period domain for the restricted period map $\Phi_{E_\omega \mid \calP_{E_\omega}^\omega}$ is the $\omega$-eigenspace in $(E_\omega\otimes_\ZZ E_6^\vee)$ (w.r.t.~the action induced by $\rho$ on the second factor).
 In short we have
 \begin{equation}\label{E:Eisen-Looij}
 W\PP(1,2,2,3)\cong \calP_{E_\omega}^\omega\cong (E_\omega\otimes_\ZZ E_6^\vee)_\omega/W(\calE_3),
 \end{equation}
 where the subscript indicates the $\omega$-eigenspace.

We  now recall the identification $E_6^\vee=((\mathcal E_3)_{\mathbb Z})^\vee = (\mathcal E_3')_{\mathbb Z}$ from Remark~\ref{R:DualEisen}.  By considering the eigenspaces for the $\omega$ action on the right factor of $\mathcal E \otimes_{\mathbb Z}(\mathcal E_3')_{\mathbb Z}$, we obtain  inclusions $\mathcal E_3 \hookrightarrow \mathcal E_3'\hookrightarrow (\mathcal E\otimes_\ZZ (\mathcal E_3')_{\mathbb Z})_\omega= (\mathcal E\otimes_\ZZ E_6^\vee)_\omega$, with torsion co-kernels.  Tensoring with $E_\omega \otimes_{\mathcal E}-$, we obtain isogenies
$E_\omega\otimes_{\mathcal E}\mathcal E_3 \twoheadrightarrow E_\omega \otimes_{\mathcal E}\mathcal E_3'\twoheadrightarrow (E_\omega\otimes_{\mathbb Z} E_6^\vee)_\omega$, which are $W(\mathcal E_3)$-equivariant.  Since isogenies give isomorphisms on the cohomology of abelian varieties with rational coefficients, we have the identifications $H^\bullet (E_\omega\otimes_{\mathcal E}\mathcal E_3) /W(\mathcal E_3))=H^\bullet (E_\omega\otimes_{\mathcal E}\mathcal E_3))^{W(\mathcal E_3)}=H^\bullet ((E_\omega\otimes _{\mathbb Z}E_6^\vee )_\omega)^{W(\mathcal E_3)}=H^\bullet ((E_\omega\otimes _{\mathbb Z}E_6^\vee )_\omega/{W(\mathcal E_3)})=H^\bullet (W\PP(1,2,2,3))$.
 \end{proof}

\subsection{The cohomology of the divisors $T_{2A_5}$ and $T_{3D_4}$}
We are now ready to compute the topology of the toroidal boundary components.
The result is the following

\begin{pro}\label{pro:cohomoloytorbound}
The cohomology of the toroidal boundary divisors $T_{2A_5}$ and $T_{3D_4}$ is given by the following table:
\begin{equation}
\renewcommand*{\arraystretch}{1.3}
\begin{array}{r|ccccccccccc}
j&0&2&4&6&8&10&12&14&16&18\\\hline
\dim H^j(T_{2A_5})&1&2&3&4&5&5&4&3&2&1\\
\dim H^j(T_{3D_4})&1&1&2&3&3&3&3&2&1&1\\
\end{array}
\end{equation}
All odd cohomology vanishes.
\end{pro}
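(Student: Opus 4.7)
The plan is to use the explicit descriptions of $T_{2A_5}$ and $T_{3D_4}$ as quotients of abelian varieties given in Proposition~\ref{prop_structure_tor}, together with the structure of the groups computed in Proposition~\ref{lem:autoEisenstein}, and then to compute rational cohomology as $G$-invariants in $H^\bullet$ of the covering abelian variety. Since $T_{2A_5}$ and $T_{3D_4}$ have only finite quotient singularities, this is legitimate for $\QQ$-cohomology. In both cases the decomposition of the underlying Eisenstein lattice as a direct sum of its indecomposable summands (respectively $\calE_1 + 2\calE_4$ and, up to isogeny, $3\calE_3$) gives a product decomposition of the abelian variety compatible with the action, so that the quotient by the subgroup $W(\calE_1)\times W(\calE_4)^{\times 2}$ (resp.~$W(\calE_3)^{\times 3}$) is a product. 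Applying Proposition~\ref{thm_L_Eis}, each factor has the same rational cohomology as a weighted projective space: $(E_\omega\otimes_{\calE}\calE_4)/W(\calE_4)$ behaves like $\PP^4$, $(E_\omega\otimes_{\calE}\calE_3)/W(\calE_3)$ like $\PP^3$, and $E_\omega/W(\calE_1)$ like $\PP^1$ (the last since the $\omega$-action kills $H^1(E_\omega,\QQ)$).

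For $T_{2A_5}$, after passing to these products one still has to quotient by the symmetry $S_2$ swapping the two $\calE_4$-factors. Setting $P_X(t)=1+t^2+t^4+t^6+t^8$, the cycle-index formula gives
\[
P_{\operatorname{Sym}^2 X}(t)=\tfrac12\bigl(P_X(t)^2+P_X(t^2)\bigr)=1+t^2+2t^4+2t^6+3t^8+2t^{10}+2t^{12}+t^{14}+t^{16},
\]
and multiplying by $1+t^2$ (the Poincar\'e polynomial of the first factor) yields exactly the row of the table for $T_{2A_5}$. The extra $\ZZ/2\ZZ\subset \O(\calE_1)$ given by $-1$ acts trivially on the even-degree cohomology of $E_\omega$, so including it changes nothing.

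For $T_{3D_4}$, the Eisenstein lattice $\widetilde{3\calE_3}$ is an index $3$ overlattice of $3\calE_3$, but the induced map on abelian varieties $E_\omega\otimes_{\calE}3\calE_3 \to E_\omega\otimes_{\calE}\widetilde{3\calE_3}$ is an isogeny and is equivariant for the inclusion $\O(\widetilde{3\calE_3})\subset \O(3\calE_3)$ (every isometry of $\widetilde{3\calE_3}$ preserves the root sublattice $3\calE_3$). Isogenous abelian varieties have canonically isomorphic rational cohomology, so $H^\bullet(T_{3D_4})$ equals the $\O(\widetilde{3\calE_3})$-invariants in $H^\bullet((E_\omega\otimes_{\calE}\calE_3)^3)$. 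Quotienting first by $W(\calE_3)^{\times 3}$ and setting $P_Z(t)=1+t^2+t^4+t^6$, the $S_3$-invariants are
\[
P_{\operatorname{Sym}^3 Z}(t)=\tfrac16\bigl(P_Z(t)^3+3P_Z(t)P_Z(t^2)+2P_Z(t^3)\bigr),
\]
and a direct expansion yields precisely the row of the table for $T_{3D_4}$. It remains to check that the final diagonal $\ZZ/2\ZZ$ (generated by $(\tau,\tau,\tau)$) acts trivially on these $S_3$-invariants; this reduces to showing that $\tau$ acts trivially on $H^\bullet((E_\omega\otimes_{\calE}\calE_3)/W(\calE_3),\QQ)\cong H^\bullet(W\PP(1,2,2,3),\QQ)$. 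This last fact is immediate since the cohomology ring is $\QQ[h]/(h^4)$ with $h\in H^2$ an ample generator, and any biholomorphic automorphism must preserve the ample cone in $H^2(\QQ)$, forcing the trivial action.

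The main obstacle is bookkeeping: in $T_{3D_4}$ one must carefully distinguish $\widetilde{3\calE_3}$ from $3\calE_3$ and justify that the isogeny is $\O(\widetilde{3\calE_3})$-equivariant, and one must verify that the residual $\ZZ/2\ZZ$ acts trivially on the cohomology of the weighted projective space. Once these points are dispatched, the proof reduces to the Chevalley-type statement (Proposition~\ref{thm_L_Eis}) plus the standard cycle-index computation of symmetric-power Poincar\'e polynomials, both carried out in the even-degree setting where no signs appear.
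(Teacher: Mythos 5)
Your proof is correct and follows essentially the same route as the paper's: use Proposition~\ref{prop_structure_tor} and Proposition~\ref{lem:autoEisenstein} to realize the boundary divisors as quotients, pass to the root sublattice via an isogeny (the paper phrases this as an \'etale $3:1$ cover with covering group acting by translations), apply the Chevalley-type result Proposition~\ref{thm_L_Eis} to replace each $(E_\omega\otimes_\calE\calE_k)/W(\calE_k)$ factor by a weighted projective space, and then take $S_2$- or $S_3$-invariants. The only cosmetic differences are that you package the symmetric-power invariants via the cycle index formula where the paper enumerates monomials by hand, and that you make explicit the ample-cone justification for why the residual diagonal $\ZZ/2\ZZ$ (resp.\ $-1\in\O(\calE_1)$) acts trivially on the one-dimensional even cohomology groups, a point the paper states more tersely.
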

\begin{proof}
As discussed in Proposition~\ref{thm_L_Eis}, the quotients $(E_{\omega}\otimes_\calE \calE_i)/W(\calE_i)\cong(E_\omega)^i/W(\calE_i)$ (for $i=3,4$) have the cohomology of weighted projective spaces. Hence, as graded vector spaces, we have
\begin{equation}
H^\bullet((E_{\omega})^i/W(\calE_i),\QQ) \cong \QQ[x]/(x^{i+1}).
\end{equation}
We shall first treat the case $T_{2A_5}$. It follows from  Proposition~\ref{pro:structureboundarycomp} and Lemma~\ref{lem:autoEisenstein} that
\begin{equation}
T_{2A_5} \cong E_{\omega}/W(\calE_1) \times ((E_{\omega})^4/W(\calE_4))^{\times 2}/S_2.
\end{equation}
We shall first compute the cohomology of the second factor. For this we have to consider the $S_2$ invariant parts of a tensor product
$\QQ[x]/(x^{i+1}) \otimes \QQ[y]/(y^{i+1})$. The invariants in each degree are given by $1$ in degree $0$, $x+y$ in degree $1$, $x^2 + y^2,xy$ in degree $2$, $x^3+y^3, x^2y + xy^2$ in degree $3$, and
$x^4+y^4,x^3y + xy^3, x^2y^2$ in degree $4$. Hence, using Poincar\'e duality we see that all the odd cohomology vanishes, and that the entire cohomology is equal to
\begin{equation}
P_t\left(\left((E_{\omega})^4/W(\calE_4)\right)^{\times 2}/S_2\right)=1+t^2+2t^4+2t^6+3t^8+2t^{10}+2t^{12}+t^{14}+t^{16}.
\end{equation}
The cohomology of the first factor is that of $\PP^1$, equal to $1+t^2$, and an application of the K\"unneth formula therefore gives
\begin{equation}
\begin{aligned}
P_t(T_{2A_5})&=(1+t^2)\cdot(1+t^2+2t^4+2t^6+3t^8+2t^{10}+2t^{12}+t^{14}+t^{16})\\
&=1+2t^2+3t^4+4t^6+5t^8+5t^{10}+4t^{12}+3t^{14}+2t^{16}+t^{18}.
\end{aligned}
\end{equation}
We shall now treat the second boundary component $T_{3D_4}$. We first note that the inclusion $3\calE_3 \subset \widetilde{3\calE_3}$ gives us an \'etale $3:1$ map
\begin{equation}\label{equ:covering}
\CC^9/3\calE_3 \cong (E_{\omega})^9 \to \CC^9/ \widetilde{3\calE_3}.
\end{equation}
To compute the cohomology of $T_{3D_4}$ is equivalent to computing the invariant cohomology of  $\CC^9/ \widetilde{3\calE_3}$  under the group
$\O(\widetilde{3\calE_3}) \cong (W(\mathcal E_3)^{\times 3} \rtimes S_3) \rtimes \ZZ/2\ZZ$. Since the covering group of the \'etale $3:1$ map (\ref{equ:covering})
acts by translation on the product of elliptic curves, and hence trivially on cohomology,
this is equivalent
to computing the invariant cohomology of $\CC^9/3\calE_3 \cong (E_{\omega})^9$. We will first restrict to the subgroup $W(\mathcal E_3)^{\times 3} \rtimes S_3$.
Again using the fact that each factor $(E_{\omega})^3/W(\calE_3)$ has the cohomology of a weighted projective space, and counting invariants under the symmetry  group $S_3$ as above, we obtain for the invariant cohomology
\begin{equation}
P_t\left((E_\omega)^9\right)^{W(\mathcal E_3)^{\times 3} \rtimes S_3}=1+t^2+2t^4+3t^6+3t^8+3t^{10}+3t^{12}+2t^{14}+t^{16}+t^{18}.
\end{equation}
It remains to consider the action of the outer automorphism $\tau$
(see the proof of Proposition~\ref{lem:autoEisenstein}),
which acts diagonally on the triple product. Note, however, that  $(E_{\omega})^3/W(\calE_3)$ has $1$-dimensional cohomology
in even degree and no odd cohomology. Hence $\tau$ acts trivially on the cohomology of $(E_{\omega})^3/W(\calE_3)$, and this finishes the proof.
\end{proof}

\begin{rem}
Note that the Betti numbers of $T_{3D_4}$ and  $T_{2A_5}$  agree with those of  $D_{3D_4}$ and $D_{2A_5}$, given by formulas~\eqref{E:g3D4-contr}  and~\eqref{E:g2A5-contr}, respectively.
\end{rem}

\section{The cohomology of the toroidal compactification}
At this point, we can conclude the computation of the cohomology of the toroidal compactification $\oBG$.  This completes the proof of our main Theorem~\ref{teo:betti}:
\begin{teo}\label{teo:coh_obg}
The cohomology of the toroidal compactification  $\oBG$ of the ball quotient is given by
\begin{equation}
\renewcommand*{\arraystretch}{1.5}
\begin{array}{r|ccccccccccc}
j&0&2&4&6&8&10&12&14&16&18 &20\\\hline
\dim H^j(\oBG)&1&4&6&10&13&15&13&10&6&4&1
\end{array}
\end{equation}
All odd cohomology vanishes.
\end{teo}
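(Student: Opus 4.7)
The plan is to apply the decomposition theorem to the contraction morphism $\oBG \to \BG$ exactly in the form already packaged as equation~\eqref{eq:IHblowup} in Section~\ref{S:Decomp-Thm}. Since $\oBG$ has only finite quotient singularities, its rational cohomology equals its intersection cohomology, so what we want is $IP_t(\oBG)$. The morphism contracts the two disjoint toroidal boundary divisors $T_{2A_5}$ and $T_{3D_4}$ to the two cusps $c_{2A_5}$ and $c_{3D_4}$ of $\BG$, and is an isomorphism outside of these loci; moreover, by the discussion preceding Proposition~\ref{pro:cohomoloytorbound}, both $T_{2A_5}$ and $T_{3D_4}$ themselves have only finite quotient singularities. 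Thus for each cusp we are in the situation of~\eqref{eq:IHblowup} with $n=10$: a point blown up to a divisor smooth up to finite quotient singularities in a variety smooth up to finite quotient singularities. Because the two cusps are disjoint, we may treat the contributions separately and add them.

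Concretely, for a toroidal divisor $T$ with Poincar\'e polynomial $P_t(T)=\sum e_j t^j$, formula~\eqref{eq:IHblowup} yields the correction
\[
C_T(t)\;=\;e_{18}t^2+e_{17}t^3+\dots+e_{11}t^9+e_{10}t^{10}+e_{11}t^{11}+\dots+e_{18}t^{18}.
\]
I would plug in the two Poincar\'e polynomials from Proposition~\ref{pro:cohomoloytorbound}. For $T_{2A_5}$ this gives
\[
C_{T_{2A_5}}(t) = t^2+2t^4+3t^6+4t^8+5t^{10}+4t^{12}+3t^{14}+2t^{16}+t^{18},
\]
and for $T_{3D_4}$ it gives
\[
C_{T_{3D_4}}(t) = t^2+t^4+2t^6+3t^8+3t^{10}+3t^{12}+2t^{14}+t^{16}+t^{18}.
\]
Adding these corrections to $IP_t(\BG)$ (which, by Poincar\'e duality for intersection cohomology and the computation in Chapter~\ref{sec:IHball}, equals $1+2t^2+3t^4+5t^6+6t^8+7t^{10}+6t^{12}+5t^{14}+3t^{16}+2t^{18}+t^{20}$) yields
\[
P_t(\oBG) = 1+4t^2+6t^4+10t^6+13t^8+15t^{10}+13t^{12}+10t^{14}+6t^{16}+4t^{18}+t^{20},
\]
as claimed, with vanishing odd cohomology.

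There is really no main obstacle at this final stage: all the geometric and arithmetic work has already been absorbed into (a) the description of $\oBG \to \BG$ as a contraction of two abelian-quotient divisors, (b) the Eisenstein-lattice analysis of Section~\ref{S:ArithmeticCusp} that identifies $T_{2A_5}$ and $T_{3D_4}$ as quotients of $(E_\omega)^9$, (c) the Chevalley-type Proposition~\ref{thm_L_Eis} that reduces each $(E_\omega\otimes_\calE \calE_k)/W(\calE_k)$ to a weighted projective space, and (d) the resulting Poincar\'e polynomials of Proposition~\ref{pro:cohomoloytorbound}. The only thing to verify carefully is that~\eqref{eq:IHblowup} is legitimately applicable cusp by cusp; this follows because the two exceptional divisors are disjoint and $\BG$ is smooth away from (a neighborhood of) each cusp precisely as required by the Mayer--Vietoris-type identity~\eqref{E:IH-Lem1} on which~\eqref{eq:IHblowup} rests.
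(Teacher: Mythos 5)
Your proposal is correct and follows essentially the same route as the paper's own proof: both apply the decomposition theorem in the packaged form of~\eqref{eq:IHblowup} to the contraction $\oBG\to\BG$ of the two disjoint toroidal boundary divisors, both feed in the Betti numbers of $T_{2A_5}$ and $T_{3D_4}$ from Proposition~\ref{pro:cohomoloytorbound} and the value of $IP_t(\BG)$ from Chapter~\ref{sec:IHball}, and both arrive at the stated Poincar\'e polynomial. The only cosmetic difference is that you carry out the computation to full degree $20$, whereas the paper works modulo $t^{11}$ and invokes Poincar\'e duality implicitly.
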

\begin{proof}
We recall that the toroidal compactification $\oBG$ is smooth up to finite quotient singularities, and that the morphism $\oBG\to\BG$ is the blowup of two points. Hence we can
apply the decomposition theorem in the form of~\cite[Lem.~9.1]{GH-IHAg-17}, i.e.~we are in the special case of \S\ref{S:Decomp-Thm}, to the morphism $\oBG \to \BG$. We thus compute

\begin{align*}
&P_t(\oBG) \equiv\\
&\equiv 1+2t^2+3t^4 +5t^6    +6t^8  +7t^{10}&\text{($IP_t(\BG)$, from the previous section)}\\
&\ \ \ \                + t^2  +t^4   +2t^6    +3t^8  +3t^{10}&\text{($T_{3D_4}$ contribution, from Proposition~\ref{pro:cohomoloytorbound})}\\
&\  \ \ \               + t^2  +2t^4 +3t^6    +4t^8  +5t^{10}&\text{($T_{2A_5}$ contribution, from Proposition~\ref{pro:cohomoloytorbound})}\\
&\equiv 1+4t^2+6t^4  +10t^6  +13t^8 +15t^{10}\mod t^{11}\!\!\!\!\!\!\!\!\!\!\!\!\!\!\!\!\!\!\!\!\!\!\!
\end{align*}
by applying equation~\eqref{eq:IHblowup} to determine the contribution to the cohomology of $\oBG$ from each of the two exceptional divisors.
\end{proof}

\appendix

\chapter{Equivariant cohomology}\label{sec:equivcoh}
In this appendix we review a few basic facts from the theory of equivariant cohomology.  The first subsection, \S\ref{S:A-AB83}, is a review of~\cite[\S 13]{atiyahbott83}.
In \S\ref{S:A-G/K}, we review some results concerning  the equivariant cohomology of    Lie groups.
In \S\ref{S:A-KEC} we recall~\cite[Prop.~5.8]{kirwan84} concerning equivariant cohomology for quotients of symplectic manifolds by compact Lie groups.
These results are all standard by now, but unfortunately, we are not aware of a reference where the results are all stated. As is the case throughout the paper, for a topological space $X$,  we use the convention $H^\bullet(X)=H^\bullet (X,\mathbb Q)$.  

\section{Review of  Atiyah--Bott}\label{S:A-AB83}
For any topological group $G$, a classifying space $BG$ is defined as the base of a left principal $G$-bundle $EG\to BG$ whose total space $EG$ is contractible. A classifying space is unique up to homotopy, so that in particular $H^\bullet (BG)$ depends only on $G$.   For every topological group $G$ a classifying space exists~\cite{milnor56}.
\begin{exa}\label{E:A-BGL}
The principal $\GL(n,\CC)$-bundle induced by the universal vector bundle $E_n $ over the Grassmannian $\operatorname{Gr}(n,\CC^\infty)$ makes  $\operatorname{Gr}(n,\CC^\infty)$ into a classifying space for $\GL(n,\CC)$.  The cohomology can be described as $H^\bullet(B\GL(n,\CC),\ZZ)\cong \ZZ[c_1,\dots,c_n]$ with $c_i$ taken to have degree $2i$.  From this one can deduce that $P_t(B\GL(n,\CC)) = (1-t^2)^{-1}(1-t^4)^{-1}\dots (1-t^{2n})^{-1}$.
\end{exa}
More generally if  $G$ acts on a topological space $X$ on the right, and a choice of classifying space $BG$ has been made, then
we define $X_G:=X\times_G EG:=(X\times EG)/G$, which is
a locally trivial fibration over $BG$ with fiber $X$ and structure group $G$.  The $G$-equivariant cohomology of $X$ is defined to be the ordinary cohomology of $X_G$:
\begin{equation}\label{E:AB-EC-1}
 H^\bullet_G(X):=H^\bullet(X_G).
\end{equation}
In particular we have $H^\bullet(BG)=H^\bullet_G(\ast)$, where $\ast$ is a topological space with one point.   Moreover, it is well known (e.g.,~\cite[Thm.~6.10.5]{weibel94}) that $H^\bullet(BG)\cong H^\bullet_{\mathsf {gp}}(G,\QQ)$; i.e., that the cohomology of the classifying space is given by the group cohomology.

If  the quotient map   $X\to X/G$ is a right principal $G$-bundle, for instance if $G$ is a compact Lie group acting freely on a manifold $X$, then
\begin{equation}\label{E:AB-EC-2}
H^\bullet_G(X)\cong H^\bullet (X/G).
\end{equation}
Indeed,  since $X\to X/G$ is a right principal $G$-bundle,  applying $X\times_G -$ to  the canonical morphism $EG\to \ast$,  we obtain that the morphism $X_G=X\times_G EG \to  X\times_G \ast  \cong  X/G$  is a locally trivial fibration with contractible fiber $EG$.
Thus there is a homotopy equivalence $X_G\simeq X/G$.

Note that if $G$ is a compact Lie group acting properly on a manifold $X$, with finite stabilizers, then~\eqref{E:AB-EC-2} still holds.  In this case, the fiber of $X_G\to X/G$ over a point $[x]\in X/G$ is isomorphic to $E_G/G_x$, which satisfies $H^i(E_G/G_x)=0$, $i\ge 1$; thus one may conclude via the Leray spectral sequence.

Another useful observation is the following.  If $G$ is a  subgroup of a topological group $G'$, and $G'\to G'/G$ is a principal $G$-bundle, for instance if $G$ is a closed subgroup of a Lie group $G'$,
  then
  \begin{equation}\label{E:AB-EC-4}
  H^\bullet_G(X)\cong H^\bullet_{G'}(X\times_G G').
\end{equation}
The proof is as follows:  Since $G'\to G'/G$ is a principal $G$-bundle, we have that $EG'\to EG'/G$ is a principal $G$-bundle as well, so that we may take $EG=EG'$.  Thus we have
$
(X\times_G G')_{G'}:= (X\times_G G' ) \times_{G'} EG' \cong X\times _G EG '=X_G$.

As an immediate application, if a Lie group $G$ acts transitively on   $X$ with  $X=x\cdot G$ for some $x\in X$, and $G_x$ is the stabilizer of $x$, then $X\cong G_x\backslash G \cong x \times_{G_x}G$, so that~\eqref{E:AB-EC-4} gives
  \begin{equation}\label{E:AB-EC-5}
H_{G_x}(x)\cong H_G(X).
\end{equation}

\section{Compact  and complex Lie groups} \label{S:A-G/K}
Here we focus on the situation where $K$ is a subgroup of a topological group $G$ such that the quotient map $G\to G/K$ is a principal $K$-bundle; for instance $K$ is a closed subgroup of a Lie group $G$.    In this situation $EG\to EG/K$ is also a principal $K$-bundle, so we may take $EK=EG$.  Since  $X\times EG\to X\times BG$ is a principal $G$-bundle, we have that
$$
X_K=(X\times EG)/K \longrightarrow (X\times EG)/G=X_G
$$
is a locally trivial fibration with fiber $G/K$.

Under various assumptions on $G$ and $K$ we can  deduce some further consequences.
For instance, if $G$ is a  connected Lie group and  $K$ is a maximal compact subgroup, then
\begin{equation}\label{E:AS-EC-1}
H^\bullet_G(X)\cong H^\bullet_K(X).
\end{equation}
Indeed in this case $X_K\to X_G$ is a homotopy equivalence,  since $G/K$ is homeomorphic   to $\RR^n $ for some $n$.

\begin{exa}\label{E:BUn}
Since $\operatorname{U}(n)$ is a maximal compact subgroup of $\GL(n,\CC)$, we have from~\eqref{E:AS-EC-1}  and Example~\ref{E:A-BGL}  that $H^\bullet(B\operatorname{U}(n),\ZZ)\cong \ZZ[c_1,\dots,c_n]$ with $c_i$ taken to have degree $2i$.  From this one can deduce that $P_t(B\operatorname{U}(n)) = (1-t^2)^{-1}(1-t^4)^{-1}\dots (1-t^{2n})^{-1}$.
\end{exa}

\begin{exa}\label{E:BSUn} Identifying $S^1=\operatorname{U}(1)$ with the  group of $n\times n$ diagonal matrices with all entries equal, the surjective multiplication  homomorphism $\operatorname{SU}(n)\times S^1\to \operatorname{U}(n )$ has kernel isomorphic to the group $\mu_n$ of $n$-th roots of unity.   The Lyndon/Hochschild--Serre spectral sequence (e.g.,~\cite[Thm.~6.8.2]{weibel94}) for the normal subgroup $\mu_n$ of $\operatorname{SU}(n)\times S^1$ then degenerates, since the higher group cohomology for $\mu_n$, being torsion, vanishes with $\QQ$-coefficients (e.g.,~\cite[Cor.~6.3.5]{weibel94}), giving an isomorphism $H^\bullet_{\mathsf {gp}}(\operatorname{U}(n))\cong H_{\mathsf {gp}}^\bullet(\operatorname{SU}(n)\times S^1)$.  Finally, as $H_{\mathsf {gp}}^\bullet(\operatorname{SU}(n)\times S^1)\cong H_{\mathsf {gp}}^\bullet(\operatorname{SU}(n) )\otimes H_{\mathsf {gp}}^\bullet(  S^1)$ (e.g.,~\cite[Exe.~6.1.10]{weibel94}),
we obtain
\begin{equation}\label{E:SU-EC-1}
H^\bullet (B{\operatorname{U}(n)}) =H^\bullet (B{\operatorname{SU}(n)})\otimes H^\bullet(BS^1).
\end{equation}
Since $H^\bullet(BS^1)\cong H^\bullet(B\operatorname{U}(1))\cong \QQ[c_1]$,
we have $P_t(B\operatorname{SU}(n))=(1-t^4)^{-1}\dots (1-t^{2n})^{-1}$.   As $\operatorname{SU}(n)$ is a maximal compact subgroup of $\SL(n)$, one has $P_t(B\SL(n))=P_t(B\operatorname{SU}(n))$ and $H^\bullet (B{\GL(n,\CC)}) =H^\bullet (B{\SL(n,\CC)})\otimes H^\bullet(B\CC^*)$.   Using the short exact sequence $1\to \mu_n\to \SL(n,\mathbb C)\to \PGL(n,\mathbb C)\to 1$, similar arguments show that $H^\bullet_{\mathsf {gp}}(\mathbb \PGL(n,\mathbb C))=H^\bullet_{\mathsf {gp}}(\SL(n,\mathbb C))$, so that $H^\bullet(B\PGL(n,\mathbb C))=H^\bullet(B\SL(n,\mathbb C))$.
\end{exa}

If $K$ is a closed normal subgroup of a Lie group  $G$,  and  $G/K$ is a finite group, then
\begin{equation}\label{E:AS-EC-2}
H^\bullet_G(X)=(H^\bullet_K(X))^{(G/K)}.
\end{equation}
Indeed in this case $X_K\to X_G$ is a principal bundle for the finite group $G/K$.
Note that the action of $G/K$ on $H_K^\bullet(X)$ is induced by an action of $G/K$ on $X_K$.   As a particular example, if  $G$ is finite, one obtains as a special case
\begin{equation}\label{E:AS-EC-FinG}
H_G^\bullet(X)=H^\bullet(X)^G=H^\bullet(X/G).
\end{equation}

\begin{exa}\label{Exa:SemDirExa}  Suppose we have $G=K\rtimes F$, where $K$ is a compact Lie group and $F$ is a finite group.    Let $\phi:F\to \operatorname{Aut}(K)$ be the homomorphism associated to the semidirect product.  This induces a homomorphism $\Phi:F\to \operatorname{Aut}(BK)$, giving the action of $F$ on $H^\bullet(BK)$ such that $H^\bullet(BG)=H^\bullet(BK)^F$.
When $K=T=(S^1)^r$ is a compact torus, this can be made more explicit.  We have $\operatorname{Aut}(T)=\GL(r,\ZZ)$, and $BT=\prod^r BS^1=\prod^r\PP^\infty_{\CC}$.
The canonical action of $\operatorname{Aut}(T)$ on $BT$ is given, for each $\phi\in \operatorname{Aut}(T)$, by sending a right principal $H$-bundle $P\to B$ to $P\times_{H,\phi}H$. More concretely, $H^\bullet(BT)=\operatorname{Sym}^\bullet H^2(BT)=\operatorname{Sym}^\bullet  \QQ\langle c_1^{(1)},\dots,c^{(r)}_1\rangle = \QQ[c_1^{(1)},\dots,c^{(r)}_1]$, $\deg c_1^{(i)}=2$, $i=1,\dots,r$.   Viewing $\phi\in \operatorname{Aut}(T)=\GL(r,\ZZ)\subseteq \GL(r,\QQ)$ as a matrix,  we obtain an action of $\phi$ on  $\mathbb Q^r=  \QQ\langle c_1^{(1)},\dots,c^{(r)}_1\rangle=H^2(BT)$ by matrix multiplication. 
This induces an action of 
 $\phi$ on $H^\bullet(BT)=\operatorname{Sym}^\bullet H^2(BT)$, 
 which one can check agrees with the canonical action under these identifications.
  Similarly, if $K=T\times \Gamma$ for a finite abelian group $\Gamma$, and $T$ a compact torus as above, then $H^\bullet(BK)^F=H^\bullet(BT)^F$, where the action of $F$ on $BT$ is induced by the action of $F$ on $T$, viewing $T$ as the connected component of the identity.
\end{exa}

If $K$ is a compact connected Lie group and $T$ is a maximal torus in $K$,
\begin{equation}\label{E:AS-EC-3}
H^\bullet_T(X)=H^\bullet_K(X)\otimes H^\bullet(K/T).
\end{equation}
Indeed, in this case the fibration   $X_T\to X_K$ has fiber given by  the  flag variety
$K/T$.  A direct computation
(see e.g.,~\cite[p.35]{kirwan84})
 shows that the associated Leray spectral sequence degenerates, giving~\eqref{E:AS-EC-3}.

We focus again on the situation where $K$ is a subgroup of a topological group $G$ such that the quotient map $G\to G/K$ is a principal $K$-bundle; for instance $K$ is a closed subgroup of a Lie group $G$.
If $K$ is central and contained in the kernel of the map $G\to \operatorname{Aut}(X)$, then
\begin{equation}\label{E:K-AS-EC-cent}
H^\bullet_G(X)= H^\bullet(BK) \otimes H^\bullet_{G/K}(X).
\end{equation}
Indeed, we start with the observation that, with $G$ acting on $E(G/K)$ via the quotient map to $G/K$, we have that $EG\times E(G/K)$ is contractible with a free $G$-action. Thus we have
$$X\times_G EG\simeq X\times_G (EG\times E(G/K))=((X\times EG\times E(G/K))/K)/(G/K)$$
$$
=((X\times_K EG) \times E(G/K))/(G/K)=(X\times_KEG)\times_{G/K}E(G/K)
$$
$$
\simeq (X\times_KEK)\times_{G/K}E(G/K),
$$ 
where in the last step we are using that $EG\simeq EK$ (\S~\ref{S:A-G/K}).  Now using the fact that $K$ acts trivially on $X$, we obtain that this is equal to
$(X\times BK)\times_{G/K}E(G/K)$.  Considering $BK$ as the universal base for principal $K$-bundles, and that the action of $G/K$ on a $K$-principal bundle is given via conjugation, then the fact that $K$ is central implies that the action of $G/K$ on $BK$ is homotopic to the  trivial action.  Thus we finally arrive at
$BK\times (X\times_{G/K}E(G/K))$, completing the proof.
\footnote{
Alternatively,  as suggested to us by Frances Kirwan, one can consider the Leray spectral sequence for the fibration $X \times_G( EG \times E(G/K)) \to EG/K = BK$  with fiber $X \times_{G/K} E(G/K)$, and use Deligne's argument as in~\cite[p.35]{kirwan84} to show the spectral sequence degenerates.
}
\begin{exa}\label{E:H-PGL}
There is a central extension $1\to \mu_n\to \SL(n,\CC)\to \PGL(n,\CC)\to 1$.  Consequently, since $H^\bullet (B\mu_n)=\mathbb Q$, we have that if $\PGL(n,\CC)$ acts on $X$, then $H^\bullet _{\PGL(n,\CC)}(X)=H^\bullet_{\SL(n,\CC)}(X)$, for the induced $\SL(n,\CC)$ action.  Note, in particular, that applying this to the case where $X$ is a point gives $H^\bullet (B\operatorname{PGL}(n,\mathbb C))=H^\bullet(B\operatorname{SL}(n,\mathbb C))$ (Example \ref{E:BSUn}).  
\end{exa}

\section{Kirwan's result for compact groups acting on symplectic manifolds}\label{S:A-KEC}
For a compact symplectic manifold $X$ acted on by a compact connected Lie group $K$ such that the moment map exists,
it is shown in~\cite[Prop.~5.8]{kirwan84} that the Leray spectral sequence for the   fibration $X_K\to BK$ degenerates, giving
\begin{equation}\label{E:K-EC-1}
H^\bullet_K(X)\cong H^\bullet(BK)\otimes H^\bullet(X).
\end{equation}
If $K$ is disconnected, setting $K_0$ to be its identity component, it follows from~\eqref{E:K-EC-1} and~\eqref{E:AS-EC-2}   that  $H_K^\bullet (X)$ is the invariant part of
$  H^\bullet (BK_0)\otimes H^\bullet (X)$
under the action of the finite group $K/K_0$.

\section{Fibrations}
Suppose we have a right $G$-equivariant fibration
$$
\xymatrix@R=1em{
F\ar[r] \ar[d]& X \ar[d]^\pi\\
y\ar[r]& Y
}
$$
with $G$ acting transitively on $Y$, and with stabilizer $G_y$ of  a point $y\in Y$.   Then we have the following equality of equivariant Poincar\'e polynomials:
\begin{equation}\label{E:PGfib}
P^G(X)=P^{G_y}(F).
\end{equation}
This is straightforward from the definitions.

\chapter{Stabilizers, normalizers, and fixed loci for cubic threefolds}\label{S:Elem}

In this section we compute some stabilizers, normalizers, and fixed loci for cubic threefolds, which have appeared in the main body of the paper.  While the computations are fairly elementary, they are nevertheless somewhat lengthy, and we have included the details here for the convenience of the reader.
\section{Connected component $\CC^*$}
We recall that $2A_5$ cubics of the form $V(F_{A,B})$ are given by equations~\eqref{eq:2A5}, and that for $4A/B^2\ne 1$ such a cubic has either exactly two $A_5$ singularities, or two $A_5$ singularities and an $A_1$ singularity. We continue to denote by $\operatorname{Aut}(V(F_{A,B}))\subseteq \PGL(5,\CC)$ and by $\operatorname{Stab}(V(F_{A,B}))\subset\SL(5,\CC)$ the stabilizers of such a cubic, and recall that by definition $R_{2A5}:=\operatorname{Stab}^0(V(F_{A,B}))$ is the connected component of the stabilizer. All these are computed by the following proposition, which enhances the statement of Lemma~\ref{L:R2A5Norm}(1) with more computations.
\begin{pro}\label{P:App-R=C*p1}
For a cubic of the form $V(F_{A,B})$ with $4A/B^2\ne 1$:
\begin{enumerate}
\item
The connected component of the  stabilizer is the $1$-PS  \begin{equation}\label{E:App-R2A5}
R_{2A5}=\operatorname{diag}(\lambda^2,\lambda,1,\lambda^{-1},\lambda^{-2})\cong \CC^*
\end{equation}
(i.e. the $1$-PS with weights $(2,1,0,-1,-2)$).
For a polystable cubic $V$, we have $\operatorname{Stab}^0(V)=R_{2A5}$ (up to conjugation) if and only if $V$ is in the orbit of $V(F_{A,B})$ with $4A/B^2\ne 1$.    These are the cubics corresponding to points on the curve $(\mathcal T-\lbrace\Xi\rbrace) \subseteq \GIT$.

\item If $4A/B^2\ne 0,1,\infty$,  then the stabilizer $\operatorname{Aut}(V(F_{A,B}))\subseteq \PGL(5,\CC)$  is
$$
 \operatorname{Aut}(V(F_{A,B}))\cong R_{2A_5}\rtimes \ZZ/2\ZZ\cong \CC^*\rtimes \ZZ/2\ZZ,
$$
where the involution is  $\tau:x_i \mapsto x_{4-i}$, and the semi-direct product is given  by the homomorphism $\ZZ/2\ZZ \to \operatorname{Aut}(\CC^*)$ defined by $ \tau \mapsto (\lambda \mapsto  \lambda^{-1})$.
Furthermore, we have
$$
1\to \mu_5\to \operatorname{Stab}(V(F_{A,B}))\to  \operatorname{Aut}(V(F_{A,B}))\to 1.
$$

\item  If $4A/B^2=\infty$,  then
$$
 \Aut(V_{F_{1,0}}) \cong (\CC^*\times \ZZ/2\ZZ)\rtimes\ZZ/2\ZZ,
$$
where the second $\ZZ/2\ZZ$ factor corresponds to the automorphism $\tau$ that exists for a generic $C$ (and thus also for $C=\infty$), while the first $\ZZ/2\ZZ$ factor is given by the involution $\sigma:(x_0:x_1:x_2:x_3:x_4)\mapsto (x_0:-x_1:x_2:x_3:x_4)$, which commutes with the diagonal action of $\CC^*$.

\item  The normalizer $N(R_{2A_5})$ is equal to
$$N(R_{2A_5})\cong \TT^4\rtimes \ZZ/2\ZZ\,,$$
where $\TT^4$ is the maximal torus, and the $\ZZ/2\ZZ$ factor corresponds to the involution $\tau:x_i \mapsto x_{4-i}$.
\end{enumerate}
\end{pro}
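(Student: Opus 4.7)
My plan is to first pin down the connected component of the stabilizer by a weight computation, then identify the discrete pieces by direct substitution, and finally read off the normalizer from the action on weight spaces. For (1), I will check that every monomial appearing in $F_{A,B}$, namely $x_2^3$, $x_0x_3^2$, $x_1^2x_4$, $x_0x_2x_4$, and $x_1x_2x_3$, has weight zero under the $1$-PS with weights $(2,1,0,-1,-2)$, so that $R_{2A_5}$ sits inside $\operatorname{Stab}^0(V(F_{A,B}))$. To see equality, I will use that the stabilizer of a polystable point is reductive, so its identity component is after conjugation contained in the diagonal torus; the explicit shape of $F_{A,B}$ together with the rank of the matrix $DF_{A,B}$ from Example~\ref{Exa:D}(1) bounds $\dim\operatorname{Stab}^0 \le 1$, forcing equality with $R_{2A_5}$. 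The description of the locus of such cubics inside $\GIT$ then follows from Theorem~\ref{T:GITcub} and Remark~\ref{R:Alck-poly-form}.

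For parts (2) and (3), the discrete automorphisms will be identified by direct substitution. The involution $\tau: x_i \mapsto x_{4-i}$ swaps $x_0x_3^2 \leftrightarrow x_1^2x_4$ and fixes the other three monomials, so preserves $F_{A,B}$; and for $B=0$, the involution $\sigma: x_1\mapsto -x_1$ fixes $F_{1,0}$ because $x_1$ appears only through $x_1^2$. To see that these exhaust the discrete automorphisms, I will use that any element of $\Aut(V(F_{A,B}))$ normalizes $R_{2A_5}$; its image in $N_{\PGL(5,\CC)}(R_{2A_5})/R_{2A_5}$ must therefore permute the five one-dimensional weight spaces compatibly with the weights $(2,1,0,-1,-2)$, leaving only the identity or the weight-reversing permutation realized by $\tau$. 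For $B=0$, the extra factor $\sigma$ sits inside the centralizer $\TT^4$ modulo $R_{2A_5}$, and one must further verify using the $A_1$ normal form at $C=0$ that no additional symmetries arise. The central extension by $\mu_5$ appearing in $\operatorname{Stab}(V(F_{A,B}))$ is obtained simply as the intersection of the stabilizer with the center of $\SL(5,\CC)$.

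For (4), since $R_{2A_5}$ acts with five distinct weights, its centralizer in $\SL(5,\CC)$ preserves each one-dimensional weight space and is therefore exactly the diagonal torus $\TT^4$. The quotient $N(R_{2A_5})/C(R_{2A_5})$ embeds into $\Aut(R_{2A_5})\cong \ZZ/2\ZZ$, and the non-trivial element is realized by $\tau$ via the identity $\tau\cdot\diag(\lambda^2,\lambda,1,\lambda^{-1},\lambda^{-2})\cdot\tau^{-1}=\diag(\lambda^{-2},\lambda^{-1},1,\lambda,\lambda^{2})$, yielding $N(R_{2A_5})\cong\TT^4\rtimes\langle\tau\rangle$ with the stated action. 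The main obstacle throughout is the careful bookkeeping between the groups $\SL(5,\CC)$, $\GL(5,\CC)$, and $\PGL(5,\CC)$ and their centers, in particular making sure that $\tau$ can actually be lifted to an element of $\SL(5,\CC)$ (which requires scaling by an appropriate fifth root of unity to correct the determinant) and that the surjection onto the Weyl-type factor $\ZZ/2\ZZ$ is realized inside $\SL$ rather than just $\GL$. Once these checks are in place, all four statements follow directly from the two basic ingredients: the weight analysis of $R_{2A_5}$ on monomials and the constraint that any automorphism of $V(F_{A,B})$ must normalize its identity component.
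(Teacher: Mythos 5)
Your proposal follows the same broad outline as the paper's argument — identify $R_{2A_5}$ by a weight computation, control the discrete part through the normalizer, and read off $N(R_{2A_5})$ from the action on the coordinate lines — but there is a genuine gap in (2) and (3), and one of your supporting claims is incorrect.

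The incorrect claim is in (1): ``the stabilizer of a polystable point is reductive, so its identity component is after conjugation contained in the diagonal torus.'' Reductivity does not imply that the identity component is a torus; the chordal cubic (case $4A/B^2=1$, which sits next door in $\calT$) has $\operatorname{Stab}^0 = R_c\cong\PGL(2,\CC)$, which is reductive but certainly not diagonalizable. Fortunately this sentence is not needed: once the rank of $DF_{A,B}$ gives $\dim\operatorname{Stab}^0\le 1$, the inclusion $R_{2A_5}\subseteq\operatorname{Stab}^0$ and the fact that a $1$-dimensional connected reductive group must be $\CC^*$ already force equality. You should cut that false step and lean only on the dimension bound.

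The real gap is in (2) and (3). You correctly observe that $\operatorname{Aut}(V(F_{A,B}))$ normalizes $R_{2A_5}$ and so lands in $N(R_{2A_5})\cong\TT^4\rtimes\langle\tau\rangle$. But then you assert that ``its image in $N/R_{2A_5}$ must permute the five weight spaces compatibly with the weights, leaving only the identity or the weight-reversing permutation $\tau$.'' That only controls the image in $N(R_{2A_5})/\TT^4\cong\ZZ/2\ZZ$ — every element of $\TT^4$ trivially preserves each weight line, so the argument says nothing about the torus part of the image. To finish, you must actually compute $\operatorname{Aut}\cap\TT^4$, i.e.\ solve for which $\diag(t_0,\dots,t_4)$ scale all five monomials by a common factor: normalizing $t_2=1$ one gets $t_0t_3^2=t_1^2t_4=t_0t_4=1$, and for $B\ne 0$ the extra equation $t_1t_3=1$ forces $\diag(t_0,\dots,t_4)\in R_{2A_5}$ exactly, while for $B=0$ one only gets $(t_1t_3)^2=1$, producing the extra $\ZZ/2\ZZ$ generated by $\sigma$. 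Without this computation, your (2) does not rule out extra finite torus symmetries and your (3) does not explain where the extra $\ZZ/2\ZZ$ comes from. There is also a case mix-up: you refer to ``the $A_1$ normal form at $C=0$'' while discussing $B=0$, but $B=0$ is $C=\infty$; the $2A_5+A_1$ cubic is $C=0$ (i.e.\ $A=0$), which is a different cubic that the proposition's parts (2)--(3) deliberately exclude. (The paper handles $C=0$ separately, ``for completeness,'' by a geometric argument tracking the three singular points; the paper simply cites Allcock's Theorem~5.4 for (2) and (3), rather than re-proving them.)

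Part (4) is fine and is in fact a slightly cleaner argument than the paper's: the paper computes $N(R_{2A_5})$ by direct manipulation of the condition $nsn^{-1}=s'$, while you note that $C(R_{2A_5})=\TT^4$ because the five weights are distinct, and then $N/C\hookrightarrow\operatorname{Aut}(\CC^*)\cong\ZZ/2\ZZ$ is realized by $\tau$; both routes give the same answer. Your concern about lifting $\tau$ to $\SL(5,\CC)$ is moot here, incidentally — the permutation $(04)(13)$ is even, so the associated permutation matrix already has determinant $1$.
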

\begin{rem}
Before proceeding with the proof, we note that the above does not cover the case of automorphisms for $C=4A/B^2=1$, i.e. the case of the chordal cubic $F_{1,-2}$. Indeed, in Kirwan's machinery this is a separate blowup, and will be treated separately in the next proposition, Proposition~\ref{P:App-R=SL2} --- the proof of which uses this proposition.
\end{rem}

\begin{proof}
Allcock~\cite[Thm.~5.4]{allcock} states that the automorphism group of a general $F_{A,B}$ is as stated in (2), and that the automorphism group of $F_{1,0}$ is as stated in (3) --- we have just included an explicit description.

For completeness, we give a determination of the automorphism group for the case $C=0$, i.e.,~for the $2A_5+A_1$ case:
$$
 F_{0,1}=x_0x_3^2+x_1^2x_4-x_0x_2x_4+x_1x_2x_3.
$$
We make the following geometric observation, as mentioned in~\cite{allcock}. Any automorphism must map singularities of the cubic to singularities, and so must its inverse. One easily checks that $F_{0,1}$ has $A_5$ singularities at the points $(1:0:0:0:0)$ and $(0:0:0:0:1)$, and an $A_1$ singularity at $(0:0:1:0:0)$ (which is not there for cubics $F_{A,B}$ with $A\ne 0$). Thus for any automorphism $\gamma\in\Aut(V_{F_{0,1}})$, either $\gamma$ or $\tau\circ\gamma$ must fix each of these three points. Thus, after possibly composing with $\tau$, such an automorphism must have the form
$$
 g:=\left(\begin{smallmatrix} *&*&0&*&0\\ 0&*&0&*&0\\ 0&*&*&*&0\\ 0&*&0&*&0\\ 0&*&0&*&*\end{smallmatrix}\right).
$$
Denoting coefficients of this matrix by $a_{ij}$ for $0\le i,j\le 4$, we see for example that the coefficient of the monomial $x_0x_1^2$ in $F_{0,1}(gx)$
would be equal to $a_{00}a_{13}^2$. Since this coefficient must be zero, while $a_{00}$ cannot be zero in such an invertible matrix, it implies that $a_{13}=0$. Similarly from the coefficient of $x_4x_3^2$ in $ F_{0,1} (gx)$ being zero we deduce that $a_{31}=0$. Continuing in this way, one sees finally that the matrix $a$ must be diagonal.
Denoting this diagonal matrix then by $\diag(\lambda_0,\lambda_1,\lambda_2,\lambda_3,\lambda_4)$, and requiring the matrix to act on $F_{0,1}$ by scaling it by some $a$, we get the equations
$$
\lambda_0\lambda_3^2=a;\ \lambda_4\lambda_1^2=a;\ \lambda_0\lambda_2\lambda_4=a;\ \lambda_1\lambda_2\lambda_3=a.
$$
As we are interested in the automorphisms in $\PGL(5,\CC)$, all $\lambda_i$ are non-zero, and we can always rescale to make $\lambda_2=1$. We then express everything in terms of $\lambda_1$. From the last equation one gets $\lambda_3=a\lambda_1^{-1}$, from the second equation one gets $\lambda_4=a\lambda_1^{-2}$, substituting $\lambda_3$ in the first equation yields $\lambda_0=a\lambda_3^{-2}=a^{-1}\lambda_1^2$, and thus the third equation finally yields $a=\lambda_0\lambda_4=a^{-1}\lambda_1^2a\lambda_1^{-2}=1$, so that the matrix is diagonal of the form $\diag(\lambda^2,\lambda,1,\lambda^{-1},\lambda^{-2})$, i.e.,~lies in the generic $\CC^*$ stabilizer.

We finally prove (4), that is determine the normalizer $N=N(R_{2A_5})$.   For this we do a direct computation. Indeed, a matrix $n=(n_{ij})_{0\le i\le j\le 4}$ lies in $N$ if and only if for any $s\in \TT$ there exists an $s'\in \TT$ such that $nsn^{-1}=s'$, where we think of $s\in \TT$ as the diagonal matrix $\diag(s^2,s,1,s^{-1},s^{-2})$. If this is the case, the map $s\mapsto s'$ gives a homomorphism of the torus. Since conjugating by $n^{-1}$ gives an inverse, this homomorphism must be an isomorphism, and thus we must have either $s'=s$ or $s'=s^{-1}$. Furthermore, note that the involution $\J$ that gives the permutation of coordinates $x_i\mapsto x_{4-i}$ satisfies $\J s\J=s^{-1}$, and thus for any $n$ such that $nsn^{-1}=s^{-1}$ we have $(n\J)s(n\J)^{-1}=s$. This implies that the normalizer $N$ is a semidirect product of its subgroup $N_0$ consisting of $n$ such that $nsn^{-1}=s$ for all $s\in \TT$, and the $\ZZ/2\ZZ$ generated by $\J$. Finally, the matrix equality $ns=sn$ for any $n\in N_0$ translates into the equalities $n_{ij}s^{2-i}=n_{ij}s^{2-j}$ for all $0\le i\le j\le 4$ for the entries of the matrix, which must be valid for arbitrary $s$. Thus for $i=j$ there is no restriction on $n_{ij}$, while for $i\ne j$ we must have $n_{ij}=0$. This implies that $N_0\subset G$ consists of diagonal matrices, and is thus the maximal torus $\TT^4$, and $N=\TT^4\rtimes \ZZ/2\ZZ$, as claimed.
\end{proof}
We now describe the fixed locus, and the action of the normalizer on it, supplementing the statement of Lemma~\ref{L:R2A5Norm}(2), (3) with more details.
\begin{pro}\label{P:App-R=C*p2}
\begin{enumerate}
\item
The fixed locus $Z^{ss}_{R_{2A_5}}$~\eqref{E:ZRss} is the set of cubics defined by equations of the form
\begin{equation}\label{E:App-ZR2A5}
F=a_0x_2^3+a_1x_0x_3^2+a_2x_1^2x_4+a_3x_0x_2x_4+a_4x_1x_2x_3,
\end{equation}
with $a_1,a_2,a_3\ne 0$, $(a_0,a_4)\ne (0,0)$. For $(A,B)\ne (0,0)$ we have $V(F_{A,B})\in  Z^{ss}_{R_{2A_5}}$, and conversely every cubic in $Z^{ss}_{R_{2A_5}}$ is  projectively equivalent to a cubic of the form $V(F_{A,B})$ with $(A,B)\ne (0,0)$.

\item The orbit of the chordal cubic  meets $Z^{ss}_{R_{2A_5}}$
in the divisor defined by the equation
$$
4a_0a_1a_2+a_3a_4^2=0.
$$
\item $Z_{R_{2A_5}}^{ss}/N(R_{2A_5}) \cong \PP^1$.  We also have $Z_{R_{2A_5}}^{ss}/\TT^4\cong \PP^1$.
\end{enumerate}
\end{pro}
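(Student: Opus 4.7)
The plan is to exploit the concrete description of $R_{2A_5}=\diag(\lambda^2,\lambda,1,\lambda^{-1},\lambda^{-2})$ as a $1$-PS acting diagonally on monomials. For (1), a cubic monomial $x^I$ is $R_{2A_5}$-invariant iff the weight $(2,1,0,-1,-2)\cdot I$ vanishes; enumerating non-negative integer solutions with $|I|=3$ yields exactly the five monomials $x_2^3$, $x_0x_3^2$, $x_1^2x_4$, $x_0x_2x_4$, $x_1x_2x_3$, so that the fixed linear subspace of $\PP^{34}$ is the $\PP^4$ of cubics of the claimed form. The identification of the semi-stable locus inside this $\PP^4$ follows from Allcock's analysis (Theorem~\ref{T:GITcub} and Remark~\ref{R:Alck-poly-form}): $V(F_{A,B})$ is semi-stable for all $(A,B)\ne(0,0)$, and conversely, any $F$ with one of $a_1,a_2,a_3$ vanishing is unstable by the Hilbert--Mumford criterion applied to a suitable destabilizing $1$-PS (geometrically $F$ acquires a non-isolated or too-deep singularity), while $(a_0,a_4)=(0,0)$ makes $x_2$ a common factor of $F$, so $V(F)$ is a cone. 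To show every such $F$ lies in the $\TT^4$-orbit of some $F_{A,B}$ we use the induced action on the coefficients: under $x_i\mapsto\alpha_i x_i$ with $\prod\alpha_i=1$ the coefficients transform by
\[
a_0\mapsto a_0\alpha_2^3,\quad a_1\mapsto a_1\alpha_0\alpha_3^2,\quad a_2\mapsto a_2\alpha_1^2\alpha_4,\quad a_3\mapsto a_3\alpha_0\alpha_2\alpha_4,\quad a_4\mapsto a_4\alpha_1\alpha_2\alpha_3,
\]
and one solves for $(\alpha_0,\dots,\alpha_4)$ making $a_1=a_2=1,\ a_3=-1$; this system is solvable whenever $a_1,a_2,a_3\ne 0$, yielding $A=a_0,\ B=a_4$ (after the rescaling allowed by $R_{2A_5}\subset\TT^4$ acting trivially).

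For (2), a direct character computation shows that both $a_0a_1a_2$ and $a_3a_4^2$ transform under $\TT^4$ by the same character $\alpha_0\alpha_1^2\alpha_2^3\alpha_3^2\alpha_4$, so $\{4a_0a_1a_2+a_3a_4^2=0\}$ is a $\TT^4$-invariant Cartier divisor in $Z^{ss}_{R_{2A_5}}$, and it is preserved by the involution $\tau:x_i\leftrightarrow x_{4-i}$, which swaps $a_1\leftrightarrow a_2$ and fixes the remaining coefficients. Substituting the chordal cubic $F_{1,-2}$, i.e., $(a_0,a_1,a_2,a_3,a_4)=(1,1,1,-1,-2)$, gives $4-4=0$, so the divisor contains the chordal-cubic orbit. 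Conversely, after reducing to the normal form $F_{A,B}$ from (1), the equation reads $4A-B^2=0$, i.e., $4A/B^2=1$, which by Remark~\ref{R:Alck-poly-form} characterizes precisely the chordal-cubic orbit among the $V(F_{A,B})$.

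For (3), the effective torus $\TT^4/R_{2A_5}\cong\TT^3$ acts on the five-dimensional coefficient vector space with generic $3$-dimensional orbit, so the quotient of $Z^{ss}_{R_{2A_5}}\subset\PP^4$ is one-dimensional. An explicit quotient map is
\[
q\colon Z^{ss}_{R_{2A_5}}\longrightarrow\PP^1,\qquad (a_0:\cdots:a_4)\longmapsto[\,a_0a_1a_2:a_3a_4^2\,],
\]
which is well-defined by the character identity of (2) and by the semi-stability conditions $a_1,a_2,a_3\ne0$, $(a_0,a_4)\ne(0,0)$, and whose fibers coincide with the $\TT^4$-orbits: after reduction to $F_{A,B}$ the map sends $F_{A,B}\mapsto[4A:-B^2]$, which modulo the equivalence $(A,B)\sim(\lambda^2 A,\lambda B)$ from Remark~\ref{R:Alck-poly-form} is exactly the parameter $C=4A/B^2$ of the rational curve $\calT$. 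Finally, $\tau$ fixes both $a_0a_1a_2$ and $a_3a_4^2$, so it acts trivially through $q$, giving $Z^{ss}_{R_{2A_5}}/\TT^4=Z^{ss}_{R_{2A_5}}/N(R_{2A_5})\cong\PP^1$. The only real obstacles are the bookkeeping for the character computations and the case-by-case Hilbert--Mumford verification of instability in (1); no step presents a conceptual difficulty.
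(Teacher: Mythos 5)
Your proof follows the same overall strategy as the paper's — enumerate the weight-$0$ monomials for $R_{2A_5}$, invoke Allcock's description of the unstable locus, reduce to the normal form $F_{A,B}$ via $\TT^4$, and then analyze the chordal-cubic condition and the quotient — but you streamline parts (2) and (3) in ways worth noting. For (2), the paper solves explicitly for a diagonal matrix $\diag(s_0,\dots,s_4)$ taking $(a_0:\cdots:a_4)$ to $(A:1:1:-1:B)$ with $4A=B^2$; you instead observe that $a_0a_1a_2$ and $a_3a_4^2$ transform by the same character of $\TT^4$, so that $\{4a_0a_1a_2+a_3a_4^2=0\}$ is manifestly $N(R_{2A_5})$-invariant, after which you only need to evaluate at the chordal cubic and match the parameter $C=4A/B^2$ from Remark~\ref{R:Alck-poly-form}. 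For (3), the paper argues abstractly (the quotient is $1$-dimensional, normal, unirational, and surjected onto by the $\PP^1$ of $(A:B)$'s, hence $\cong\PP^1$), while you exhibit the quotient map $[a_0a_1a_2:a_3a_4^2]$ directly; this is cleaner and makes the triviality of the $\tau$-action on the quotient transparent, though you should still note that a bijective morphism onto the normal curve $\PP^1$ is an isomorphism. (Incidentally, your sign $4a_0a_1a_2+a_3a_4^2$ agrees with the proposition's statement but not with the ``$-$'' in Lemma~\ref{L:R2A5Norm}(2), which is a typo in the paper.)

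There is, however, one genuine error in your treatment of (1): the claim that $(a_0,a_4)=(0,0)$ ``makes $x_2$ a common factor of $F$, so $V(F)$ is a cone'' is false. With $a_0=a_4=0$ the form is $F=a_1x_0x_3^2+a_2x_1^2x_4+a_3x_0x_2x_4$, in which $x_2$ divides only the last monomial and every variable appears, so $V(F)$ is neither reducible nor a cone. The actual geometric degeneration is that $V(F)$ acquires a positive-dimensional singular locus, namely the conic $\{x_0=x_1=0,\ a_1x_3^2+a_3x_2x_4=0\}$, which by Allcock's theorem (the chordal cubic is the unique semi-stable cubic threefold with non-isolated singularities) forces instability. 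More broadly, the ``suitable destabilizing $1$-PS'' step in (1) is where the real content lies, and the paper's proof handles it carefully by checking each of Allcock's Lemma~3.1 figures against the five coefficients $a_0,\dots,a_4$; your proposal should do the same rather than substitute an incorrect heuristic.
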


\begin{proof}
We prove (1) by describing all semi-stable cubics that are stabilized by $R=R_{2A_5}=\operatorname{diag}(\lambda^{2},\lambda,1,\lambda^{-1},\lambda^{-2})$.
 To be stabilized by this torus, the monomials must all be of the same weight with respect to that torus.  If they all have the same non-zero weight, then they would be unstable with respect to the $1$-PS $R$, and therefore unstable.  So we are reduced to looking for the monomials of weight $0$ with respect to that torus.  We obtain the projective space of weight $0$ monomials for $R$:
$$
Z_R=\PP^4=\{a_0x_2^3+a_1x_0x_3^2+a_2x_1^2x_4+a_3x_0x_2x_4+a_4x_1x_2x_3=0\}.
$$

We next use Allcock's description of the unstable locus.  For this, we note that conveniently, the monomials in question are indicated in black squares in~\cite[Fig.~3.2(c)]{allcock}.  Now, returning to~\cite[Fig.~3.1]{allcock}, describing unstable cubics, we have that~\cite[Fig.~3.1(a)]{allcock} implies the cubic is unstable if $(a_1,a_3)= (0,0)$,~\cite[Fig.~3.1(b)]{allcock} implies the same if  $a_3= 0$,~\cite[Fig.~3.1(c)]{allcock} implies the same if  $a_1=0$,~\cite[Fig.~3.1(d)]{allcock} implies the same if  $(a_0,a_4)=(0,0)$,~\cite[Fig.~3.1(e)]{allcock} implies the same if  $a_2=0$, and~\cite[Fig.~3.1(f)]{allcock} implies the same if $(a_0,a_4)= (0,0)$.  Thus in summary, the cubic is unstable if at least one of $a_1,a_2,a_3$ is $0$, or if
$(a_0,a_4)= (0,0)$. Therefore, conversely, let us assume that $a_1,a_2,a_3\ne 0$, and $(a_0,a_4)\ne (0,0)$. Then using the maximal torus $\TT^4$, we can easily put the equation for the cubic in the form $F_{A,B}$, with $(A,B)\ne (0,0)$.  As Allcock has shown these are all polystable, we see that every point in $Z^{ss}_R$ is semi-stable (in fact, polystable).  Moreover, we see that every cubic in $Z^{ss}_R$  can be taken to a cubic of the form $V(F_{A,B})$ by the action of the maximal torus $\TT^4$.

(2)  We now want to identify the orbit of the chordal cubic inside of $Z_R^{ss}$. The claim is that
$$
G\cdot \{V(F_{-1,2})\} \cap Z_R^{ss}=\{4a_0a_1a_2+a_3a_4^2=0\}\subseteq Z^{ss}_R.
$$
Given a cubic in $Z_R^{ss}$, we saw in the proof of (1) that we could take it into a cubic of the form $V(F_{A,B})$ using just the maximal torus.
So to determine if $(a_0:\dots :a_4)$ defines a cubic in the orbit of the chordal cubic, it suffices to consider the maximal torus orbit, and see whether one can take the cubic into one defined by $F_{A,B}$ with $4A/B^2=1$; in other words, to see when the torus takes $(a_0:\dots :a_4)$ into $(A:1:1:-1:B)$ with $4A/B^2=1$.
The torus $\diag(s_0,\dots,s_4)$ acts on  $(a_0:\dots:a_4)$  by
\begin{align*}
a_0&\mapsto a_0s_2^3\\
a_1&\mapsto a_1s_0s_3^2\\
a_2&\mapsto a_2s_1^2s_4\\
a_3&\mapsto a_3s_0s_2s_4\\
a_4&\mapsto a_4s_1s_2s_3.
\end{align*}
It is immediate to check that if $(a_0:\dots:a_4)=(A:1:1:-1:B)$  with $4A/B^2=1$, then the full orbit satisfies the given equation (a $2A_5$ cubic is chordal if and only if $4A/B^2=1$). Conversely, let us show that if $(a_0:\dots:a_4)$ satisfies the given equations, then we can find $\diag(s_0,\dots,s_4)$ taking $(a_0:\dots:a_4)$ into the form $(A:1:1:-1:B)$.  The first thing to note is that if $a_4$ or $a_0$ is zero, then the equation $4a_0a_1a_2+a_3a_4^2=0$ implies both are zero  (since the other $a_i$ are assumed non-zero), so we can assume none of the $a_i$ are zero.
We want $s_0,\dots,s_4$ such that:
\begin{align*}
a_1s_0s_3^2&=1\\
a_2s_1^2s_4&=1\\
a_3s_0s_2s_4&=-1\\
4a_0s_2^3-a_4^2s_1^2s_2^2s_3^2&=0.
\end{align*}
Canceling $s_2^2$, we can take the last equation as $4a_0s_2-a_4^2s_1^2s_3^2=0$.
In other words, we have
\begin{align*}
s_0&=\frac{1}{a_1s_3^2}\\
s_4&=\frac{1}{a_2s_1^2}\\
s_2&=-\frac{1}{a_3s_0s_4}\\
s_2&=\frac{a_4^2s_1^2s_3^2}{4a_0}.
\end{align*}
Taking $s_1$ and $s_3$ arbitrary defines $s_0, s_4,s_2$ via the first three equations.  Then one can check that the last equation holds, since by assumption $4a_0a_1a_2+a_3a_4^2=0$.

(3)
Since $N$ is 4-dimensional, and the stabilizer of a generic point (which is contained in $N$) is $1$-dimensional, it follows that the quotient $Z^{ss}_R/N$ is $1$-dimensional. As this quotient is clearly unirational and normal (it is the quotient of a normal space by a reductive group action),
it must be an open subset of  $\PP^1$.  Since the copy of $\PP^1\subseteq Z^{ss}_{R}$ given by $V(F_{A,B})$ for $(A,B)\ne (0,0)$ surjects onto the quotient, the quotient is also compact, and is therefore isomorphic to $\PP^1$.  The identical proof works for the quotient $Z_{R_{2A_5}}^{ss}/\TT^4\cong \PP^1$.
\end{proof}
\section{Connected component $\PGL(2,\CC)$}
\begin{pro}\label{P:App-R=SL2}
For the cubic of the form $V(F_{1,-2})$~\eqref{eq:2A5}; i.e.,  the chordal cubic, the connected component of the  stabilizer is
\begin{equation}\label{E:App-Rc}
R_{c}:=\operatorname{Stab}^0(V(F_{1,-2}))\cong\PP \GL(2,\CC)
\end{equation}
given as the copy of $\PGL(2,\CC)$ embedded into $\SL(5,\CC)$ via the representation  $\Sym^4(\CC^2)$ ($\cong \CC^5$).
 For a polystable cubic $V$, we have $\operatorname{Stab}^0(V)=R_{c}$ (up to conjugation) if and only if $V$ is in the orbit of $V(F_{A,B})$ with $4A/B^2= 1$; i.e., if and only if the cubic is projectively equivalent to the chordal cubic.   These are the cubics corresponding to  the point  $\Xi \in \GIT$.
Moreover, we have
\begin{enumerate}
\item The full stabilizer group of $V(F_{1,-2})$ in $\PGL(5,\CC)$ is $\PGL(2,\CC)$, and thus there is a split central extension
\begin{equation}\label{E:App-StabCh1}
1\to \mu_5\to \operatorname{Stab}(V(F_{1,-2}))\to \PGL(2,\CC)\to 1.
\end{equation}
\item  The normalizer $N(R_c)$ is equal to the stabilizer $\operatorname{Stab}(V(F_{1,-2}))$.
\item The fixed locus is $Z^{ss}_{R_c}=\{V(F_{1,-2})\}$; i.e., it is the point corresponding to the chordal cubic.
\end{enumerate}
\end{pro}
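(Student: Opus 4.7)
\medskip

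The plan is to build everything from the classical geometric fact that the $\operatorname{Sym}^4$-representation identifies $R_c$ with $\operatorname{PGL}(2,\CC)$ acting on $\PP^4=\PP\operatorname{Sym}^4\CC^2$ so as to preserve the rational normal curve $C\subset\PP^4$ (hence its secant variety, the chordal cubic). First I would verify directly that $-I\in\SL(2,\CC)$ acts trivially on $\operatorname{Sym}^4\CC^2$ (each weight is $4-2i$, which is even), so that the $\operatorname{Sym}^4$-map descends to an injection $\operatorname{PGL}(2,\CC)\hookrightarrow\SL(5,\CC)$ (injectivity follows from irreducibility of $\operatorname{Sym}^4\CC^2$); call the image $R_c$. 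Since $R_c$ fixes the rational normal curve, it fixes its secant variety, which equals $V(F_{1,-2})$, so $R_c\subseteq \operatorname{Stab}^0(V(F_{1,-2}))$. The converse statement that $R_c$ occurs (up to conjugation) as the connected stabilizer only for cubics projectively equivalent to the chordal cubic then follows by elimination: Allcock's classification (Theorem~\ref{T:GITcub} and Remark~\ref{R:Alck-poly-form}) lists the three polystable orbit types, and the $2A_5$-type and $3D_4$-type cases are handled respectively by Propositions~\ref{P:App-R=C*p1} and~\ref{P:App-R=C*2}.

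For part (1), the key input is the classical fact that the singular locus of the chordal cubic $V(F_{1,-2})$ is exactly the rational normal curve $C$. Any projective automorphism of $V(F_{1,-2})$ must preserve its singular locus, hence $C$. Next, any element of $\operatorname{PGL}(5,\CC)$ preserving $C$ restricts to an automorphism of $C\cong\PP^1$, giving a homomorphism to $\operatorname{PGL}(2,\CC)$; the kernel is trivial because an element of $\operatorname{PGL}(5,\CC)$ fixing $5$ points in general position on a non-degenerate curve in $\PP^4$ must be the identity. Combining with the obvious reverse inclusion gives $\operatorname{Aut}(V(F_{1,-2}))=\operatorname{PGL}(2,\CC)$. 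The short exact sequence for $\operatorname{Stab}(V(F_{1,-2}))\subseteq\SL(5,\CC)$ then arises by pulling back along $\SL(5,\CC)\twoheadrightarrow\operatorname{PGL}(5,\CC)$, whose kernel is $\mu_5$, and the splitting is provided by the $\operatorname{Sym}^4$-embedding $\operatorname{PGL}(2,\CC)\hookrightarrow\SL(5,\CC)$.

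For part (2), $\operatorname{Stab}(V(F_{1,-2}))$ normalizes its identity component $R_c$, giving one inclusion. For the opposite inclusion, take $g\in N(R_c)$, so conjugation by $g$ induces an algebraic automorphism of $R_c\cong\operatorname{PGL}(2,\CC)$. Since the Dynkin diagram of $A_1$ has no symmetries, $\operatorname{Out}(\operatorname{PGL}(2,\CC))=1$, so this automorphism is inner: there exists $h\in R_c$ with $gh^{-1}$ centralizing $R_c$. Because $\CC^5=\operatorname{Sym}^4\CC^2$ is an irreducible $R_c$-module, Schur's lemma forces the centralizer of $R_c$ in $\GL(5,\CC)$ to be the scalars $\CC^*$, so in $\SL(5,\CC)$ it is $\mu_5$. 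Hence $gh^{-1}\in\mu_5\subseteq\operatorname{Stab}(V(F_{1,-2}))$, and thus $g\in\operatorname{Stab}(V(F_{1,-2}))$.

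For part (3), a point of $\PP^{34}$ is $R_c$-fixed if and only if it is the projective class of a vector on which $R_c$ acts by a character; but $\operatorname{PGL}(2,\CC)$ has no nontrivial algebraic characters, so the $R_c$-fixed locus is the projectivization of the $R_c$-invariant subspace of $\operatorname{Sym}^3(\operatorname{Sym}^4\CC^2)^\vee$. By the classical theorem of Cayley on invariants of binary quartics, the graded ring $(\operatorname{Sym}^\bullet \operatorname{Sym}^4\CC^2)^{\SL(2,\CC)}$ is a polynomial ring on generators of degrees $2$ and $3$; in particular the degree $3$ piece is one-dimensional. The unique invariant cubic hypersurface must therefore be the chordal cubic itself, and so $Z^{ss}_{R_c}=\{V(F_{1,-2})\}$. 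The main obstacles here are really only the invocations of classical facts: the identification of $\operatorname{Sing}(V(F_{1,-2}))$ with the rational normal curve, and Cayley's determination of the invariant ring of binary quartics; everything else is a short group-theoretic argument.
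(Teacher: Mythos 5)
Your proof is correct, and it takes a genuinely different and arguably slicker route through representation theory and classical invariant theory, where the paper instead grinds through explicit matrix computations. For the main statement and part~(1), the paper simply cites Allcock's Theorem~5.4 and exhibits the section coming from the $\Sym^4$ homomorphism, whereas you give a self-contained geometric argument (preservation of $\Sing V(F_{1,-2}) = C$ plus the fact that the rational normal curve determines a projective frame); a minor slip is that you need $n+2=6$ points in general position in $\PP^4$, not $5$, but since the kernel of your map fixes $C$ pointwise the conclusion is unaffected. For part~(2), the paper proves two explicit matrix ``Claims'' (first reducing to the torus via conjugacy of maximal tori, then a direct computation with symmetric powers of unipotent matrices); your replacement — $\operatorname{Out}(\PGL(2,\CC))=1$ forces conjugation by $g$ to be inner, and then Schur's lemma on the irreducible $\Sym^4\CC^2$ identifies the centralizer with $\mu_5$ — is shorter and considerably more conceptual. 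For part~(3), the paper leverages the inclusion $R_{2A_5}\subset R_c$ together with the already-established description of $Z^{ss}_{R_{2A_5}}$, a dimension count, and the normalizer statement from~(2), whereas you observe that $\PGL(2,\CC)$ has no nontrivial characters, so fixed projective points come from genuine invariant vectors in $\Sym^3(\Sym^4\CC^2)^\vee$, and then invoke the classical fact that the ring of invariants of binary quartics is $\CC[g_2,g_3]$ with $\deg g_2=2$, $\deg g_3=3$, so the degree-3 graded piece is one-dimensional. What the paper's approach buys is that everything is verified within the explicit matrix framework set up uniformly across the appendix; what your approach buys is brevity, conceptual clarity, and independence of the preceding computational propositions. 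Both establish the result.
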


\begin{proof}
The fact
\eqref{E:App-Rc} follows from~\cite{allcock}.  Indeed, the stabilizer in $\PGL(5,\CC)$ of the cubic  $V(F_{1,-2})$  is computed in~\cite[Thm.~5.4]{allcock} to be $\PGL(2,\CC)$ embedded via the $\operatorname{Sym}^4$-representation.  This immediately gives~\eqref{E:App-StabCh1}: to show that the connected component of the identity is $\PGL(2,\CC)$, it suffices to construct a section of~\eqref{E:App-StabCh1}. For this, observe that the standard representation of $\SL(2,\CC)$ on $\CC^2$ induces a homomorphism $\SL(2, \CC)\to \SL(\operatorname{Sym}^4\CC^2)$, with kernel equal to $\mu_2$; in other words, the image is $\PGL(2,\CC)$, providing the section.

For (2) it is convenient to recall the $\operatorname{Sym}^4\CC^2$ representation of $\SL(2,\CC)$ explicitly. The matrix
$$
\left(
\begin{array}{cc}
a&b\\
c&d
\end{array}
\right)\in \SL(2,\CC)
$$
acts on $\CC^2$ by sending homogeneous coordinates $(t_0:t_1)$ to $(at_0+bt_1: ct_0+dt_1)$.  Then, in terms of the standard basis for $\operatorname{Sym}^4\CC^2$:
$$
(t_0^4:t_0^3t_1:t_0^2t_1^2:t_0t_1^3:t_1^4),
$$
the action of $
\left(
\begin{array}{cc}
a&b\\
c&d
\end{array}
\right)$ is given by the rule:
\begin{align*}
t_0^4&\mapsto (at_0+bt_1)^4=a^4t_0^4+4a^3bt_0^3t_1+6a^2b^2t_0^2t_1^2+4ab^3t_0t_1^3+b^4t_1^4\\
t_0^3t_1&\mapsto (at_0+bt_1)^3(ct_0+dt_1)=\dots\\
\vdots &
\end{align*}
Thus the induced homomorphism  $\SL(2,\CC) \to \SL(5,\CC)$ is given explicitly by:
\begin{equation}\label{E:SL2RepCh}
\scriptstyle
\left(
\begin{array}{cc}
a&b\\
c&d
\end{array}
\right)\mapsto
\left(
\begin{array}{ccccc}
a^4&a^3c &a^2c^2 &ac^3&c^4\\
4a^3b&3a^2bc+a^3d&2abc^2+2a^2cd&bc^3+3ac^2d&4c^4d\\
6a^2b^2&3ab^2c+3a^2bd&b^2c^2+4abcd+a^2d^2&3bc^2d+3acd^2&6c^2d^2\\
4ab^3&b^3c+3ab^2d&2b^2cd+2abd^2&3bcd^2+ad^3&4cd^3\\
b^4&b^3d&b^2d^2&bd^3&d^4\\
\end{array}
\right).
\end{equation}
The kernel is given by the matrix $\left(
\begin{array}{cc}
-1&0\\
0&-1
\end{array}
\right)$ confirming that the image of $\SL(2,\CC)$ in $\SL(5,\CC)$ is $\PGL(2,\CC)$.

We now move on to the proof of (2). We first introduce the matrix
$\tau$ which is the matrix associated to the involution sending $x_i$ to $x_{4-i}$; note that $\tau$ is the image of the matrix $\left(
\begin{array}{cc}
0&i\\
i&0
\end{array}
\right)$ in $\SL(2,\CC)$. This will be needed in the proof of the next claim.

Since $R_c$ being the connected component is clearly normal in $\operatorname{Stab}(V(F_{1,-2}))$, we have $\operatorname{Stab}(V(F_{1,-2}))\subseteq N(R_c)=N$.  For the converse, we argue with two claims:

\vskip .2 cm \emph{Claim 1: For any $n\in N$, there is a $g\in R_c$ with $ng\in \TT^4\cap N$, where $\TT^4$ is the maximal torus.}
\vskip .2cm \noindent
Indeed, any element $n\in N$ must conjugate the standard maximal torus $\TT\subset\PGL(2,\CC)$, which is embedded into $\SL(5,\CC)$, into some torus $\TT'\subset\SL(5,\CC)$. Since all such tori are conjugate under the action of $\PGL(2,\CC)$, this means there must exist some $g'\in\PGL(2,\CC)$ such that $n':=ng'$ fixes the maximal torus $\TT$ as a set, which is simply to say that $n'$ lies in the normalizer of $R_{2A_5}$, computed in Proposition~\ref{P:App-R=C*p1} to be the subgroup generated by $\TT^4$ and $\tau$. Thus for $i\in \{0,1\}$, we have $n'':=ng'\tau^i\in \TT^4$. We may as well replace $g'$ with $g=g'\tau\in \PGL(2,\CC)$.

\vskip .2 cm \emph{Claim 2: $\TT^4\cap N\subseteq \langle \mu_5,\PGL(2,\CC)\rangle=\operatorname{Stab}(V(F_{1,-2}))$.}
\vskip .2cm \noindent
This will suffice to prove (2), since then for any $n\in N$, there is a $g\in R_c$ such that $ng=s\in \operatorname{Stab}(V(F_{1,-2}))$.  Since $R_c\subseteq \operatorname{Stab}(V(F_{1,-2}))$, we have $n\in \operatorname{Stab}(V(F_{1,-2}))$.

Thus we just need to show the claim.
For this we consider the special case of upper triangular matrices $\left(\begin{smallmatrix} 1&t\\0&1\end{smallmatrix}\right)\in\SL(2,\CC)$ for arbitrary $t\in\CC$.
The fourth symmetric power of such a matrix gives its action as an element $M_t$ of $\SL(5,\CC)$:
\begin{equation}\label{eq:sym4t}
\scriptstyle
  M_t\circ \begin{pmatrix}x_0\\ x_1\\ x_2\\ x_3\\ x_4\end{pmatrix}=\begin{pmatrix} 1&4t&6t^2&4t^3&t^4\\ 0&1&3t&3t^2&t^3\\ 0&0&1&2t&t^2\\ 0&0&0&1&t\\ 0&0&0&0&1\end{pmatrix}\circ \begin{pmatrix}x_0\\ x_1\\ x_2\\ x_3\\ x_4\end{pmatrix}=\left(\begin{array}{r} x_0+4t\ x_1+6t^2x_2+4t^3x_3+t^4x_4\\ x_1+3t\ x_2+3t^2x_3+t^3x_4\\ x_2+2t
  \ x_3+t^2x_4\\ x_3+t\ x_4\\ x_4\end{array}\right).
\end{equation}
and we need to check whether a diagonal matrix $d\in \TT^4$ (where $ \TT^4$ is the maximal torus of $\SL(5,\CC)$), can conjugate $M_t$ to the action of some element of $\SL(2,\CC)$. Since conjugating an upper triangular matrix with $1$'s on the diagonal by a diagonal matrix leaves it upper-triangular with $1$'s on the diagonal, we need to check when for any $t\in\CC$ there exists a $t'\in\CC$ such that $dM_t d^{-1}=M_{t'}$. Again, $t\mapsto t'$ is then an isomorphism of the additive group, so that it is either the identity or $t\mapsto -t$.
If the map on $t$ is the identity, i.e.,~if for any $t$ the identity $dM_td^{-1}=M_t$, holds, then the equality of the last columns of these matrices yields that each $d_i/d_4$ must be equal to one, so that all $d_i$ are equal, and thus $d$ is scalar multiplication by an arbitrary $5$th root of unity. We note that such a scalar multiplication is not an element of $\SL(2,\CC)$ because it is easily checked not to be an element of the diagonal maximal torus $\TT$ as above. On the other hand, for the case when for any $t$ the identity $dM_{t}d^{-1}=M_{-t}$ holds, looking again at the last column of these matrices shows that $d_0=d_2=d_4=-d_3=-d_1$, so that $d$ is the product of $\diag(1,-1,1,-1,1)$ and an arbitrary scalar fifth root of unity. However, the diagonal matrix $\diag(i,-i)\in\SL(2,\CC)$ gives rise precisely to the matrix $\diag(1,-1,1,-1,1)$ under the fourth symmetric power map, and thus this diagonal matrix is already accounted for by the $\SL(2,\CC)$.

(3) We now determine the set of cubics fixed by the action of $\PGL(2,\CC)$.  Let $V$ be such a cubic.  Since $R_{2A_5}\subseteq \PGL(2,\CC)$, we must have that $V\in Z^{ss}_{R_{2A_5}}$.  If $V$ were not in the orbit of the chordal cubic, then we have seen in Proposition~\ref{P:App-R=C*p1} that the stabilizer would have dimension $1$, which would be a contradiction. Thus $V$ is in the orbit of the chordal cubic, say $V=g\cdot V(F_{1,-2})$.  But then the connected component $R_V$ of the stabilizer of $V$ is equal to  $gR_cg^{-1}$.  If $V$ is fixed by $R_c$, then for dimension reasons, we must have $R_V=R_c$, so that $g$ is in the normalizer of $R_c$.  But we saw in (1) that $N(R_c)=\operatorname{Stab}(V(F_{1,-2}))$, so that $V=V(F_{1,-2})$.
\end{proof}

\begin{rem}
Recall that in the construction of the Kirwan blowup $\MK$,  one first blows up the point $\Xi \in \calM^{GIT}$ corresponding to the chordal cubic, followed by a blowup of the strict transform of the rational curve $\mathcal T$ parameterizing $2A_5$ cubics (the point $\Delta$, corresponding to the $3D_4$ cubic, can be dealt with separately).  To fix notation, let $\widehat D_c$ be the exceptional divisor of the blowup of $\Xi$, and let $\widehat {\mathcal T}$ be the strict transform of $\mathcal T$ in this blowup.   We explain here   that $\widehat {\mathcal T}$ meets $\widehat D_c$ in a single point.

On the one hand, by investigating the proof of Proposition~\ref{P:App-R=C*p2}(3), describing $\mathcal T$ as the quotient $Z^{ss}_{R_{2A_5}}/N(R_{2A_5})$, one can show that $\mathcal T$ is locally unibranched near $\Xi$, and thus
that $\widehat {\mathcal T}$ meets $\widehat D_c$ in a single point.
On the other hand, this can alternatively be seen via the identification of $\widehat D_c$ with the GIT of 12 points on $\mathbb P^1$.

More precisely,  this one point of intersection of $\widehat{\mathcal T}$ and $\widehat{D_c}$ can be identified as follows.  By construction, and smoothness of the Kirwan blowup up to finite quotient singularities, every point of intersection of  $\widehat{\mathcal T}$ with the exceptional divisor $\widehat D_c$ must have a stabilizer containing $\CC^*$.  On the other hand, since the exceptional divisor does not intersect the locus of $3D_4$ cubics, and there are no further blowups in constructing $\MK$, any point on $\widehat D_c$ with a $\CC^*$ contained in its stabilizer must be contained in  $\widehat {\mathcal T}$. Now, since $\widehat D_c$ is isomorphic to the GIT quotient of 12 points in $\PP^1$, the only strictly semi-stable points are where precisely 6 of the 12 points have come together; moreover, one can see immediately that for such a point to have an infinite stabilizer requires the remaining 6 points to also have come together. Thus the only strictly semi-stable points are where the 12 points were separated in two groups of 6.  In other words, $\widehat{\mathcal T}\cap \widehat{D_c}$ is the strictly semi-stable point of $\widehat D_c$ corresponding to the case where the 12 points were separated in two groups of 6.

Having explained that $\widehat{\mathcal T}\cap \widehat{D_c}$ consists of a single point, we now point out further that with the identification of $\widehat D_c$ as the GIT of 12 points in $\mathbb P^1$, the stabilizers of all of the points of $\widehat{\mathcal T}$ can be described uniformly (as extension of $\mathbb C^*$ as in Proposition~\ref{P:App-R=C*p1}).   Indeed, consider this point $\widehat{\mathcal T}\cap \widehat{D_c}$.
By acting by $\PGL(2,\CC)$, we can  move the two underlying points (of the pairs of 6 points)  to $0$ and $\infty\in\PP^1$, respectively, so that $\CC^*$ acts by rescaling the coordinate~$z$, and there is an extra involution $z\mapsto 1/z$, so that the stabilizer of these two 6-tuples of points (recall that the points are unlabeled) is $\CC^*\rtimes\ZZ/2\ZZ$.

Recalling that the automorphisms of the chordal cubic were identified with the automorphisms of the rational normal curve, we can describe this group as follows.
One can identify the rational normal curve explicitly in coordinates as $(t_0^4:t_0^3t_1:t_0^2t_1^2:t_0t_1^3:t_1^4)$ as done in the proof of Proposition~\ref{P:App-R=SL2}. Then the action of the involution $\tau$ is induced by the involution $(t_0:t_1)\mapsto (t_1:t_0)$ on $\PP^1$, and thus the stabilizer of the point $\widehat{\mathcal T}\cap \widehat{D_c}$ can be identified concretely as a subgroup of $\PGL(5,\mathbb C)$, and is the same as for points of $\mathcal T$ with  $C\ne 0,1,\infty$ (as described in Proposition~\ref{P:App-R=C*p1}(1)).
\end{rem}

\section{Connected component $(\CC^*)^2$}
We now give the computations for the $3D_4$ case, proving Lemmas~\ref{L:R3D4Norm1} and~\ref{L:R3D4Norm2} and providing some more information. We use the notation for the groups involved in the statements of these lemmas.
\begin{pro}\label{P:App-R=C*2}
\begin{enumerate}
\item For the cubic of the form $V(F_{3D_4})$~\eqref{eq:3D4}, i.e.,  with $3D_4$ singularities, the connected component $R_{3D4}$ of the  stabilizer in $\SL(5,\CC)$  is given by equation~\eqref{E:R3D4main}. For a polystable cubic $V$, we have $\operatorname{Stab}^0(V)=R_{3D_4}$ (up to conjugation) if and only if $V$ is in the orbit of $V(F_{3D_4})$; i.e., if and only if the cubic has exactly $3D_4$ singularities.  These are the cubics corresponding to  the point  $\Delta \in \GIT$.
\item The normalizers and stabilizers in $\SL(5,\CC)$, $\PGL(5,\CC)$, $\GL(5,\CC)$ are as given in Lemma~\ref{L:R3D4Norm2}, described as certain central extensions in terms of the group $D$ defined there.
\item The fixed locus $Z^{ss}_{R_{3D_4}}$ is the set of cubics defined by equations of the form
$$
x_0x_1x_2+P_3(x_3,x_4)
$$
where $P_3(x_3,x_4)$ is an arbitrary homogeneous cubic with three distinct roots, and the normalizer $N(R_{3D_4})$ acts on it transitively, as stated in Lemma~\ref{L:R3D4Norm1} (3).
\end{enumerate}
\end{pro}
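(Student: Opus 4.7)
All three claims reduce to explicit matrix computations, guided by the geometry of the $3D_4$ cubic and Allcock's classification (Theorem~\ref{T:GITcub}, Remark~\ref{R:Alck-poly-form}). I will describe the strategy for each part. The hardest part will be (2), chiefly because of the bookkeeping of central extensions and the explicit identification of the semidirect product structure on $\GL_{V(F_{3D_4})}$.

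For (1), first check directly that $R_{3D_4} = \diag(s, t, (st)^{-1}, 1, 1)$ fixes $F_{3D_4} = x_0 x_1 x_2 + x_3^3 + x_4^3$: under this torus $x_0 x_1 x_2$ has weight $s \cdot t \cdot (st)^{-1} = 1$, while $x_3$ and $x_4$ are pointwise fixed. For the converse, Theorem~\ref{T:GITcub} says any strictly polystable cubic is projectively equivalent to $V(F_{3D_4})$, some $V(F_{A,B})$ with $(A,B) \neq (0,0)$, or the chordal cubic; Propositions~\ref{P:App-R=C*p1} and~\ref{P:App-R=SL2} show that only the first has a connected stabilizer isomorphic to $(\CC^*)^2$.

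For (3), a monomial $x_0^{a_0}\cdots x_4^{a_4}$ has $R_{3D_4}$-weight $s^{a_0 - a_2} t^{a_1 - a_2}$, so the $R_{3D_4}$-fixed cubic monomials are exactly $x_0 x_1 x_2$ and $x_3^a x_4^b$ with $a + b = 3$. Hence the $R_{3D_4}$-fixed cubics have the form $c \cdot x_0 x_1 x_2 + P_3(x_3, x_4)$. Semi-stability then forces $c \neq 0$ (else the cubic contains the plane $x_3 = x_4 = 0$ and has non-isolated singularities) and forces $P_3$ to have three distinct roots (a multiple root produces a singularity worse than $D_4$ along the corresponding line), either via a direct Hilbert--Mumford check or by comparison with Allcock's polystable list in Remark~\ref{R:Alck-poly-form}. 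For the transitivity of the $N(R_{3D_4})$-action on $Z^{ss}_{R_{3D_4}}$, use the $\GL_2$ block acting on $(x_3, x_4)$ to move the three roots of $P_3$ to those of $x_3^3 + x_4^3$, and then use the torus in $(x_0, x_1, x_2)$ to normalize the coefficient $c$. The equality $Z^{ss}_{R_{3D_4},3} = Z^{ss}_{R_{3D_4}}$ from Lemma~\ref{L:R3D4Norm1}(2) follows from the disjointness $G \cdot Z^{ss}_{R_c} \cap Z^{ss}_{R_{3D_4}} = \emptyset$, which is a consequence of Corollary~\ref{C:ZRss}.

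For (2), the key step is to compute $\operatorname{Stab}(V(F_{3D_4})) \subseteq \SL(5,\CC)$. Geometrically, $V(F_{3D_4})$ has exactly three $D_4$ singularities, at $[e_0], [e_1], [e_2]$; any automorphism permutes them, and after composing with a generalized permutation of the first three coordinates one may assume they are individually fixed. This forces the stabilizing matrix to have block-upper-triangular form with diagonal first block $\diag(\mu_0, \mu_1, \mu_2)$ and arbitrary $2 \times 2$ second block $M'$. Comparing coefficients of the monomials $x_i x_j x_k$ for $i, j \in \{0,1,2\}$ and $k \in \{3,4\}$ in $F_{3D_4}(Tx)$ against those in $F_{3D_4}(x)$ forces the upper-right $3 \times 2$ block to vanish. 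The remaining equations on $M'$ reduce to $M' \in \SSS_2$ with nonzero entries $\lambda_3, \lambda_4$ satisfying $\lambda_3^3 = \lambda_4^3 = \mu_0 \mu_1 \mu_2$, yielding~\eqref{E:App-GF3D4}. The normalizer $N(R_{3D_4})$ from Lemma~\ref{L:R3D4Norm1}(1) is determined by the same strategy: conjugation by $n \in \SL(5,\CC)$ must preserve the $R_{3D_4}$-weight decomposition $\CC^5 = \bigoplus_{i=0}^2 \CC x_i \oplus \CC\langle x_3, x_4\rangle$, so $n$ permutes the three distinct $1$-dimensional weight lines and preserves the trivial $2$-dimensional weight space. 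Finally, the central extensions~\eqref{E:St3D4Ct} and~\eqref{E:GLSt3D4Ct} follow from the standard identifications $\SL(5,\CC)/\mu_5 = \PGL(5,\CC) = \GL(5,\CC)/\CC^*$, and the semidirect product~\eqref{E:GLStDs3s2} is read off by writing $D = (\CC^*)^3 \times \mu_3$ (parameterized by $\lambda_0, \lambda_1, \lambda_3$ together with $\lambda_4/\lambda_3 \in \mu_3$) and observing that the block-diagonal generalized permutation structure of $\GL_{V(F_{3D_4})}$ splits as the semidirect product of the diagonal subgroup $D$ with the permutation subgroup $S_3 \times S_2$ acting by permutation of entries.
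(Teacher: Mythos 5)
Your proposal is correct and follows essentially the same route as the paper's own proof: all three parts are established by the same explicit matrix/coefficient analysis, with the stabilizer in $\GL(5,\CC)$ computed first via the action on the three $D_4$ singularities, and (1) then deduced from (2) together with the polystable classification. The one cosmetic difference is your normalizer argument, which phrases the computation in terms of permutations of the $R_{3D_4}$-weight spaces of $\CC^5$ (the three distinct $1$-dimensional lines $\CC x_0,\CC x_1,\CC x_2$ and the trivial $2$-dimensional space $\CC\langle x_3,x_4\rangle$), whereas the paper arrives at the same block structure by an explicit entry-by-entry analysis of the equation $n\cdot\diag(s_1,s_2,(s_1s_2)^{-1},1,1)=\diag(t_1,t_2,(t_1t_2)^{-1},1,1)\cdot n$; both give the same answer, and your version is arguably the cleaner statement of the same idea.
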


\begin{proof} (1) and (2): We compute explicitly all the groups involved.
We first derive the stabilizer group $\GL_{V(F_{3D_4})}$ of $V(F_{3D_4})$ in   $\GL(5,\CC)$.
To begin, it is clear that the group
\begin{equation}\label{E:App-GF3D4-1}
\left\{\left(
\begin{array}{c|c}
\SSS_3&\\ \hline
&\SSS_2\\
\end{array}
\right): \lambda_0\lambda_1\lambda_2=\lambda_3^3=\lambda_4^3\right\}\subseteq \GL(5,\CC)
\end{equation}
stabilizes $V(F_{3D_4})$. We wish to show that this is all of the matrices in the stabilizer.  For this, we observe that any symmetry must permute  the $3$ singularities of the cubic, and thus permute the points $(1:0:0:0:0)$, $(0:1:0:0:0)$ and $(0:0:1:0:0)$.  This forces a matrix stabilizing $V(F_{3D_4})$ to be of the form:
$$
\left(
\begin{array}{c|c}
\SSS_3&*\\ \hline
0&\GL_2\\
\end{array}
\right).
$$
Such a transformation sends the monomial $x_0x_1x_2$ to $(\lambda_0 x_0+*x_3+*x_4)\cdot (\lambda_1 x_1+*x_3+*x_4)\cdot (\lambda_2 x_2+*x_3+*x_4)$, where all the $\lambda$'s are non-zero, and $*$ are the entries of the unknown $2\times 3$ block of the matrix. Furthermore, $x_3$ and $x_4$ are sent to linear combinations of only $x_3$ and $x_4$. Thus all entries $*$ must be equal to zero, or otherwise applying this transformation to $F_{3D_4}$ would give a cubic with non-zero coefficient of some monomial $x_ax_b x_c$ with $0\le a<b\le 2$ and $3\le c\le 4$. Thus we have deduced that the matrix stabilizing $V(F_{3D_4})$ must actually be of the form
$$
\left(
\begin{array}{c|c}
\SSS_3&0\\ \hline
0&\GL_2\\
\end{array}
\right).
$$
However, for a matrix in $\GL_2$ acting on the span of $x_3$ and $x_4$ to stabilize $x_3^3+x_4^3$, it must lie in $\SSS_2$, or some cross terms would appear, and thus the stabilizer can only contain matrices of the form
$$
\left(
\begin{array}{c|c}
\SSS_3&0\\ \hline
0&\SSS_2\\
\end{array}
\right).
$$
Finally the conditions $\lambda_0\lambda_1\lambda_2=\lambda_3^3=\lambda_4^3$ are obvious.  This completes the proof that the stabilizer group is as claimed.

We now want to describe the structure of the stabilizer group   $\GL_{V(F_{3D_4})}$ in $\GL(5,\CC)$ more precisely.
There is clearly a left exact sequence
$$
1\to D\to \GL_{V(F_{3D_4})} \to S_3\times S_2\to 1
$$
where $D$ is the subgroup of diagonal matrices in $\GL_{V(F_{3D_4})}$, and the map to $S_3\times S_2$ is the one taking a generalized permutation matrix to the associated permutation matrix.  There is an obvious section $S_3\times S_2\to \GL_{V(F_{3D_4})}$, viewing $S_3\times S_2$ as block diagonal permutation matrices. This means
$$
\GL_{V(F_{3D4})}\cong D\rtimes (S_3\times S_2)
$$
where the action of $S_3\times S_2$ on $D$ is to permute the entries.

We now wish to describe $D$.
Concretely,  $D=\{\diag(\lambda _0,\lambda _1,\lambda_2,\lambda_3,\lambda_4): \lambda_0\lambda_1\lambda_2=\lambda_3^3=\lambda_4^3\}$.
Fixing the torus $\TT^3=\diag(\lambda_0,\lambda_1,\lambda_0^{-1}\lambda_1^{-1}\lambda_3^3,\lambda_3,\lambda_3)\cong (\CC^*)^3$, we have $\TT^3\subseteq D$, and we now describe the quotient.  Given an element of $D$, then up to elements of $\TT^3$, we may assume it is of the form
$\diag(1,1,\lambda_2,1,\lambda_4)$.  But then we must have $1\cdot 1\cdot \lambda_2= 1^3=\lambda_4^3$, so that $\lambda_2=1$ and $\lambda_4$ is a $3$-rd root of unity.
Fixing the group $\mu_3=\diag(1,1,1,1,\zeta^i)\cong \ZZ/3\ZZ$ where $\zeta$ is a primitive $3$-rd root of unity, we have
$$
D=\mathbb T^3\times \mu_3.
$$

We determine the normalizer $N(R_{3D_4})$  by an explicit computation. Indeed, if a matrix $n=(n_{ij})_{0\le i\le j\le 4}$ lies in $N$, then for any $(s_1,s_2)\in \mathbb T^2$ we have
\begin{equation}\label{eq:normT2}
\diag(s_1,s_2,s_1^{-1}s_2^{-1},1,1)\cdot n=n\cdot \diag(t_1,t_2,t_1^{-1}t_2^{-1},1,1)
\end{equation}
for some $(t_1,t_2)\in \mathbb T^2$. We first observe that~\eqref{eq:normT2} immediately implies that for any $0\le i\le 2$ and $3\le j\le 4$ we must have $n_{ij}=0$. Furthermore, we note that this equality implies no restrictions whatsoever on the entries $n_{33},n_{34},n_{43},n_{44}$, which can thus be arbitrary. The map $f:(s_1,s_2)\mapsto (t_1,t_2)$ is an automorphism of $\mathbb T^2$, which is to say that $t_1=s_1^as_2^b$ and $t_2=s_1^cs_2^d$ for some matrix $\left(\begin{smallmatrix}a & b \\ c & d \end{smallmatrix}\right)\in\SL(2,\ZZ)$. By writing down the conditions for the entries $n_{ij}$ with $0\le i\le j\le 2$ of the matrix, we see that these elements can be non-zero only if the map $f$ permutes the three diagonal entries $s_1,s_2,s_1^{-1}s_2^{-1}$. Conversely, any such permutation lies in the normalizer with respect to $\GL(5,\CC)$. If this permutation, as an element of $S_3$, is even, we compose $n$ with this permutation of coordinates $x_0,x_1,x_2$; if such a permutation is odd, we compose $n$ with this permutation of $x_0,x_1,x_2$, together with changing the signs of $x_0,x_1,x_2$ (so that the resulting transformation is still in $\SL(5,\CC)$). Thus $N$ is a semidirect product of $S_3$ and of the normal subgroup $N_0\subset N$ for which $f$ is the identity map. Finally, if $f$ is the identity map, so that $t_1=s_1$ and $t_2=s_2$, then clearly~\eqref{eq:normT2} implies that the submatrix $(n_{ij})_{0\le i\le j\le 2}$ is diagonal. Thus finally $N_0$ is the intersection of $\mathbb T^3\times \GL(2,\CC)$ with $\SL(5,\CC)$, and thus $N$ is as claimed.

(3) We now describe the fixed locus $Z^{ss}_{R_{3D_4}}$.  As usual, to be semi-stable, and fixed by $R_{3D_4}\cong (\CC^*)^2$, the cubic must be defined by monomials of weight $0$ with respect to any $1$-PS in $R_{3D_4}$.  It is easy to see that the only such monomials are
$$
x_0x_1x_2+P_3(x_3,x_4)
$$
where $P_3(x_3,x_4)$ is a homogeneous cubic.
 Allcock has shown that these are semi-stable if and only if $P_3(x_3,x_4)$ has $3$ distinct roots; i.e., the cubic has exactly $3D_4$ singularities.  More precisely, as mentioned earlier,~\cite[Thm.~4.1]{allcock} shows that the orbit of $V(F_{3D_4})$ is closed in the semi-stable locus.  But any cubic as above with $P_3(x_3,x_4)$ having multiple roots is in the closure of the orbit of $V(F_{3D_4})$, but does not have $3D_4$ singularities, which is a contradiction.

Finally, the  matrices of the form
$$
\left(
\begin{array}{c|c}
\operatorname{Id}_3&0\\ \hline
0&\SL_2\\
\end{array}
\right)
$$
that lie in the normalizer clearly act transitively on $Z^{ss}_{R_{3D_4}}$.
\end{proof}

\begin{cor}\label{C:App-ZssR-rel}
We have the following relationships among the fixed loci:
\begin{equation}\label{E:App-ZssR-Rel}
Z^{ss}_{R_c}\subset Z^{ss}_{R_{2A5}}, \ \ \ \  Z^{ss}_{R_{2A_5}} \cap Z^{ss}_{R_{3D4}}=\emptyset.
\end{equation}
\end{cor}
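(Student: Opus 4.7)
The proof plan is to invoke the explicit descriptions of the three fixed loci established in Propositions~\ref{P:App-R=SL2}, \ref{P:App-R=C*p2}, and~\ref{P:App-R=C*2}, and then compare them monomial-by-monomial.

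For the inclusion $Z^{ss}_{R_c}\subset Z^{ss}_{R_{2A_5}}$, I would proceed as follows. By Proposition~\ref{P:App-R=SL2}(3), the locus $Z^{ss}_{R_c}$ consists of the single point $V(F_{1,-2})$, the chordal cubic. Writing out
\[
F_{1,-2}=x_2^3+x_0x_3^2+x_1^2x_4-x_0x_2x_4-2x_1x_2x_3,
\]
one recognizes this as a cubic of the form~\eqref{E:App-ZR2A5} with coefficients $(a_0,a_1,a_2,a_3,a_4)=(1,1,1,-1,-2)$. Since $a_1,a_2,a_3\ne 0$ and $(a_0,a_4)\ne(0,0)$, the characterization of $Z^{ss}_{R_{2A_5}}$ given in Proposition~\ref{P:App-R=C*p2}(1) applies, showing that $V(F_{1,-2})\in Z^{ss}_{R_{2A_5}}$.

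For the disjointness statement $Z^{ss}_{R_{2A_5}}\cap Z^{ss}_{R_{3D_4}}=\emptyset$, the key observation is that the two loci are supported on disjoint sets of monomials. Indeed, Proposition~\ref{P:App-R=C*p2}(1) describes $Z^{ss}_{R_{2A_5}}$ as a subset of the linear span of
\[
\{x_2^3,\ x_0x_3^2,\ x_1^2x_4,\ x_0x_2x_4,\ x_1x_2x_3\},
\]
while Proposition~\ref{P:App-R=C*2}(3) describes $Z^{ss}_{R_{3D_4}}$ as a subset of the linear span of
\[
\{x_0x_1x_2,\ x_3^3,\ x_3^2x_4,\ x_3x_4^2,\ x_4^3\}.
\]
These two sets of monomials have empty intersection, so any cubic lying in both linear spans must be identically zero, which is not a point of $\mathbb{P}^{34}$. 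Hence the two loci are disjoint.

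I expect neither step to present a genuine obstacle, since the heavy lifting has already been done in Propositions~\ref{P:App-R=SL2}, \ref{P:App-R=C*p2}, and~\ref{P:App-R=C*2}. The only small subtlety is the bookkeeping check that the explicit coefficients of $F_{1,-2}$ satisfy the openness conditions ($a_1,a_2,a_3\ne 0$ and $(a_0,a_4)\ne(0,0)$) appearing in the characterization of $Z^{ss}_{R_{2A_5}}$, but this is immediate from inspection.
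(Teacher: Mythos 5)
Your proof is correct, but it takes a genuinely different route from the paper's. For the inclusion $Z^{ss}_{R_c}\subset Z^{ss}_{R_{2A_5}}$, the paper argues abstractly from the group containment $R_{2A_5}\subset R_c$ of~\eqref{E:Rcont}: anything fixed by the larger group is fixed by the smaller one. Your approach instead verifies the claim directly for the unique point $V(F_{1,-2})\in Z^{ss}_{R_c}$ by checking that its coefficient vector $(1,1,1,-1,-2)$ satisfies the defining conditions of Proposition~\ref{P:App-R=C*p2}(1); this is correct and perfectly adequate given that $Z^{ss}_{R_c}$ is a singleton, though the abstract argument is arguably more illuminating since it explains \emph{why} the inclusion must hold. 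For the disjointness, the contrast is sharper. The paper argues by contradiction at the level of automorphism groups: a semi-stable $V$ fixed by both $R_{2A_5}$ and $R_{3D_4}$ would have $\operatorname{Aut}^0(V)$ containing a $3$-torus, but $V$ degenerates to a polystable cubic whose connected stabilizer is one of the groups in $\calR$, and none of them can accommodate a $(\CC^*)^3$. Your approach is more elementary: the monomial supports of the two fixed loci are disjoint, so no nonzero cubic can lie in both linear spans (equivalently, the openness condition $a_1,a_2,a_3\ne 0$ for $Z^{ss}_{R_{2A_5}}$ would force nonzero coefficients on monomials absent from the $Z^{ss}_{R_{3D_4}}$ support). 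Both arguments are valid; yours leans on the explicit descriptions of Propositions~\ref{P:App-R=C*p2} and~\ref{P:App-R=C*2} and is computational and transparent, whereas the paper's is more conceptual and would survive even without knowing the fixed loci explicitly.
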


\begin{proof}
The inclusion on the left follows immediately from the first inclusion in~\eqref{E:Rcont}.  For the equation on the right in~\eqref{E:App-ZssR-Rel}, suppose that $x\in Z^{ss}_{R_{2A_5}}\cap Z^{ss}_{R_{3D_4}}$, and let $V$ be the corresponding cubic.  Then $\operatorname{Aut}^0(V)\supseteq R_{2A_5}\cup R_{3D_4}$, and one can see this implies it contains a $3$-torus isomorphic to $(\CC^*)^3$.  On the other hand, $V$ degenerates to a polystable cubic, and  consequently we have that $\operatorname{Aut}^0(V)$ is contained in a conjugate of $ R$ for some $R\in \calR$. For dimension reasons, it would have to be contained in a conjugate of $R_c=\SL(2,\CC)$, but this does not contain a $3$-torus.
\end{proof}

We recall from Lemma~\ref{L:R3D4Norm1} the normalizer
$$
N=N(R_{3D_4})=
\left\{\left(
\begin{array}{c|c}
\SSS_3&\\ \hline
&\GL_2\\
\end{array}
\right)\in \SL(5,\CC)\right\}\,,
$$
and define a subgroup $N_0$:
$$
N_0:=
\left\{\left(
\begin{array}{c|c}
\TT^3&\\ \hline
&\GL_2\\
\end{array}
\right)\in \SL(5,\CC)\right\}.
$$

Recall also that the stabilizer $G_x=G_{3D_4}$ of $x=V(F_{3D_4})$ in $G=\SL(5,\CC)$ is:
\begin{equation*}
G_{F_{3D4}}=
\left\{\left(
\begin{array}{c|c}
\SSS_3&\\ \hline
&\SSS_2\\
\end{array}
\right)\in \SL(5,\CC): \lambda_1\lambda_2\lambda_3=\lambda_4^3=\lambda_5^3\right\}.
\end{equation*}
Here $\lambda_i$ is the unique non-zero entry in column $i$. We now compute the relevant stabilizers and their action, proving Lemma~\ref{L:R(rho)3D4p2} and providing more details. We record two propositions, separately for the cases when $\beta'$ correspond to the codimension 4 and codimension 5 strata, as in the cases (a) and (b) of Lemma~\ref{L:R(rho)3D4p1}, respectively.
\begin{lem}\label{L:App-R(rho)3D4a}
For $\beta'=\frac{1}{2}(-\frac{2}{3},\frac{1}{3},\frac{1}{3})$ (case (a) of Lemma~\ref{L:R(rho)3D4p1}),
we have
\begin{enumerate}
\item
$$
\operatorname{Stab}_G\beta'=
\left\{\left(
\begin{array}{c|c|c}
\CC^*&&\\ \hline
&\GL_2&\\ \hline
&&\GL_2
\end{array}
\right)\in \SL(5,\CC)\right\},
$$
$$
N\cap \operatorname{Stab}_G\beta' =
\left\{\left(
\begin{array}{c|c|c}
\CC^*&&\\ \hline
&\SSS_2&\\ \hline
&&\GL_2
\end{array}
\right)\in \SL(5,\CC)\right\}\,.
$$

\item The group $N\cap \operatorname{Stab}_G\beta'$ acts transitively on $Z^{ss}_R$.

\item The stabilizer of a point is
$$\scriptstyle{
(N\cap \operatorname{Stab}_G\beta')_x=G_{F_{3D_4}}\cap N\cap \operatorname{Stab}_G\beta' =
\left\{\left(
\begin{array}{c|c|c}
\CC^*&&\\ \hline
&\SSS_2&\\ \hline
&&\SSS_2
\end{array}
\right)\in \SL(5,\CC):\lambda_1\lambda_2\lambda_3=\lambda_4^3=\lambda_5^3\right\}.}
$$

\item The locus $Z^{ss}_{\beta'}$ is
$$
Z^{ss}_{\beta'}=\{[a:b]\in \PP\CC\langle  x_1x_3x_4,x_2x_3x_4\rangle: a\ne 0, b\ne 0 \}\cong \CC^*.
$$
\item The action of $(N\cap \operatorname{Stab}_G\beta')_x$ on $Z^{ss}_{\beta'}$ is induced by change of coordinates, via the inclusion $(N\cap \operatorname{Stab}_G\beta')_x\subseteq \SL(5,\CC)$, and the description of the loci above in terms of cubic forms. In fact
that $(N\cap \operatorname{Stab}_G\beta')_x$ acts transitively on $Z^{ss}_{\beta '}$, and the stabilizer of the point $(1:1)\in Z^{ss}_{\beta'}$ is given by

$$\scriptstyle{
((N\cap \operatorname{Stab}_G\beta')_x)_{(1:1)}
=\left\{\left(
\begin{array}{c|c|c}
\CC^*&&\\ \hline
&\SSS_2&\\ \hline
&&\SSS_2
\end{array}
\right)\in \SL(5,\CC):\lambda_0\lambda_1\lambda_2=\lambda_3^3=\lambda_4^3,\ \lambda_1=\lambda_2\right\}}.
$$
Here $\lambda_i$ is the unique non-zero entry in column $i$.
In fact we have $$((N\cap \operatorname{Stab}_G\beta')_x)_{(1:1)}\cong (\CC^*\times \mu_{15})\times (S_2\times S_2)\,,$$ where $\CC^*=\diag(\lambda^{-2},\lambda,\lambda,1,1)$, $\mu_{15}=\diag(\zeta^{3i},1,1,\zeta^i,\zeta^{-4i})$ for $\zeta$ a primitive $15$-th root of unity, and the first (resp.~second) copy of $S_2$ is the subgroup of $((N\cap \operatorname{Stab}_G\beta')_x)_{(1:1)}$ generated by the
matrix
$$
\left(
\begin{array}{ccccc}
-1&&&&\\
&0&1&&\\
&1&0&&\\
&&&-1&\\
&&&&-1\\
\end{array}
\right)
,\ \
\left(resp.~
\left(
\begin{array}{ccccc}
-1&&&&\\
&1&&&\\
&&1&&\\
&&&0&-1\\
&&&-1&0\\
\end{array}
\right)
\right).
$$
\end{enumerate}
\end{lem}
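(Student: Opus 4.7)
The entire statement reduces to explicit matrix computations, organized around the eigenspace decomposition of $\beta'$. I begin with part (1). Viewing $\beta' = \frac{1}{2}(-\frac{2}{3}, \frac{1}{3}, \frac{1}{3}, 0, 0)$ as a diagonal traceless matrix acting on $\CC^5$, its eigenvalues are $-\frac{1}{3}$ on $\langle x_0\rangle$, $\frac{1}{6}$ on $\langle x_1, x_2\rangle$, and $0$ on $\langle x_3, x_4\rangle$. Its centralizer under the adjoint action on $\SL(5,\CC)$ is thus exactly the block-diagonal group of sizes $(1,2,2)$ displayed in the statement of~(1). Intersecting with $N=N(R_{3D_4})$ described in Lemma~\ref{L:R3D4Norm1}(1), whose non-zero pattern is $(\SSS_3, \GL_2)$, forces the $\SSS_3$-block to split as $(\CC^*, \SSS_2)$ on the decomposition $\langle x_0\rangle \oplus \langle x_1, x_2\rangle$, giving the second formula.

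For (2), recall from Proposition~\ref{P:App-R=C*2}(3) that $Z^{ss}_R$ consists of the cubics $x_0x_1x_2 + P_3(x_3,x_4)$ with $P_3$ having three distinct roots. The $\GL_2$-block in $N \cap \operatorname{Stab}_G\beta'$ acts on $(x_3, x_4)$ and the compensating $\CC^*$ on $x_0$ (coupled with the $\SSS_2$ on $(x_1, x_2)$, which preserves $x_0x_1x_2$ up to scaling) preserves the shape of the cubic; since $\SL_2$ already acts transitively on cubics in $x_3,x_4$ with distinct roots, so does $N \cap \operatorname{Stab}_G\beta'$. Item (3) is then a direct intersection of $G_{F_{3D_4}}$ (described in~\eqref{E:App-GF3D4}) with the answer to~(1): the $\GL_2$ block forces itself down to $\SSS_2$, while the other blocks remain unchanged.

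For (4), I use the basis for $\calN_x$ from Lemma~\ref{L:R3D4-Nx-Rep} together with the weights computed there. A direct pairing against $\beta'$ shows $\|\beta'\|^2 = 1/6$, and the only basis monomials whose weights pair to exactly $1/6$ with $\beta'$ are $x_1x_3x_4$ and $x_2x_3x_4$; the remaining positive-pairing monomials ($x_1^3, x_2^3, x_j^2x_k$ with $j\in\{1,2\}, k\in\{3,4\}$) all give strictly larger values. Thus $Z_{\beta'}\cong\PP^1$ spanned by these two monomials. To identify $Z^{ss}_{\beta'}$, I apply~\eqref{E:HesseZss}: the function $\beta(\cdot)$ on the subtorus $\TT_R/\text{Stab}_R(\beta')$ orbit destabilizes a point $[a:b]$ precisely when $a=0$ or $b=0$ (only in this case can one find $g\in G$ such that the closest-point moves away from $\beta'$), leaving the open locus where both coordinates are non-zero.

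Finally, for (5), I write down the action of the group in (3) on the line $\PP\langle x_1x_3x_4, x_2x_3x_4\rangle$. A diagonal element $\diag(\lambda_0,\lambda_1,\lambda_2,\lambda_3,\lambda_4)$ sends $[a:b]$ to $[\lambda_1 a : \lambda_2 b]$, so transitivity on $Z^{ss}_{\beta'}\cong\CC^*$ is immediate. The stabilizer of $[1:1]$ is cut out by $\lambda_1=\lambda_2$, and then the constraints $\lambda_0\lambda_1\lambda_2 = \lambda_3^3 = \lambda_4^3$ together with $\det=1$ become: setting $\lambda_1=\lambda_2=\lambda$, one has $\lambda_0 = \lambda_3^3/\lambda^2$, $\lambda_4 = \lambda_3^{-4}$, and $\lambda_3^{15}=1$; this yields the product $\CC^*\times\mu_{15}$ displayed. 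The two $S_2$ factors are then verified one at a time: the swap of $x_1,x_2$ and the swap of $x_3,x_4$ (each paired with sign corrections to land in $\SL(5,\CC)$, as written) each preserve $x_1x_3x_4+x_2x_3x_4$. I expect the main nuisance to be the bookkeeping for the $\mu_{15}$ component, which arises from combining the three equalities $\lambda_3^3=\lambda_4^3$, $\lambda_3^4\lambda_4 = 1$, and the $\SL$ constraint; everything else is mechanical.
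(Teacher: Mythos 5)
Your proposal is correct and follows essentially the same route as the paper's proof: part (1) via centralizer-of-a-diagonal-matrix bookkeeping, part (2) by observing that a subgroup already known to act transitively on $Z^{ss}_R$ sits inside $N\cap \operatorname{Stab}_G\beta'$, part (3) as a direct intersection with $G_{F_{3D_4}}$, part (4) from the weight pairings of Lemma~\ref{L:R3D4-Nx-Rep}, and part (5) by solving the diagonal constraints to obtain the $\CC^*\times\mu_{15}$ factor and then adjoining the two sign-corrected $S_2$ generators. (In part (4), note that the failure at $a=0$ or $b=0$ is more precisely that the condition $\beta(x)=\beta'$ in \eqref{E:HesseZss} fails — the single remaining weight is not $\beta'$ — rather than a condition on some $g$ moving the closest point; your phrasing there is a bit off but the conclusion and the verification are right.)
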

The results for the codimension 5 orbits are as follows.
\begin{lem}\label{L:App-R(rho)3D4b}
For $\beta'= \frac{1}{7}(2,1,-3)$ (case (b) of Lemma~\ref{L:R(rho)3D4p1}), we have
\begin{enumerate}
\item
$$
\operatorname{Stab}_G\beta'
= N_0=
\left\{\left(
\begin{array}{c|c}
\TT^3&\\ \hline
&\GL_2
\end{array}
\right)\in \SL(5,\CC)\right\},
$$
$$
N\cap \operatorname{Stab}_G\beta' = N_0=
\left\{\left(
\begin{array}{c|c}
\TT^3&\\ \hline
&\GL_2
\end{array}
\right)\in \SL(5,\CC)\right\}.
$$

\item The group $N\cap \operatorname{Stab}_G\beta'$ acts transitively on $Z^{ss}_R$.

\item The stabilizer of a point is
$$\scriptstyle{
(N\cap \operatorname{Stab}_G\beta')_x=G_{F_{3D_4}}\cap N\cap \operatorname{Stab}_G\beta' =
\left\{\left(
\begin{array}{c|c}
\TT^3&\\ \hline
&\SSS_2\\
\end{array}
\right)\in \SL(5,\CC):\lambda_1\lambda_2\lambda_3=\lambda_4^3=\lambda_5^3\right\}}.
$$

\item The locus $Z^{ss}_{\beta'}$ is
$$
Z^{ss}_{\beta'}=\{[a:b:c]\in \PP\CC\langle  x_0x_3x_4,x_1^2x_3, x_1^2x_4\rangle: a\ne 0,\ \text{and}\  (b,c)\ne (0,0) \}\cong \AAA^2-\{0\}.
$$

\end{enumerate}
\end{lem}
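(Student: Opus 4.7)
The strategy is to handle the four items in turn, each reducing to a short group-theoretic or combinatorial computation enabled by earlier results in the appendix.

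For (1), I would first compute the $\SL(5,\CC)$-adjoint stabilizer of $\beta'=\tfrac{1}{7}(2,1,-3,0,0)\in\mathfrak{t}$. A matrix in $\SL(5,\CC)$ commutes with $\diag(2/7,1/7,-3/7,0,0)$ iff it preserves the eigenspace decomposition of that element; since the entries $2/7,1/7,-3/7$ are pairwise distinct while the last two entries coincide, the commutant is exactly $(\TT^3\times\GL_2)\cap\SL(5,\CC)=N_0$. To pass to $N\cap\operatorname{Stab}_G\beta'$, I invoke the description $N=N_0\rtimes S_3$ from Lemma~\ref{L:R3D4Norm1}(1) (worked out in Proposition~\ref{P:App-R=C*2}); the nontrivial elements of $S_3$ permute the first three coordinates, and since the first three coordinates of $\beta'$ are all distinct, only the identity permutation fixes $\beta'$. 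Hence $N\cap\operatorname{Stab}_G\beta'=N_0$.

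Item (2) is then immediate: the transitive action of $\{\operatorname{Id}_3\}\times\SL_2$ on $Z^{ss}_R$ established at the end of Proposition~\ref{P:App-R=C*2} already factors through $N_0=N\cap\operatorname{Stab}_G\beta'$. For (3), I would intersect the block descriptions: $(N\cap\operatorname{Stab}_G\beta')_x=N_0\cap\operatorname{Stab}(V(F_{3D_4}))$; intersecting $N_0$ with the formula~\eqref{E:App-GF3D4}, the first block must lie in $\TT^3\cap\SSS_3=\TT^3$, the second in $\GL_2\cap\SSS_2=\SSS_2$, and the determinantal constraint $\lambda_0\lambda_1\lambda_2=\lambda_3^3=\lambda_4^3$ is inherited directly.

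The principal computation, and the only one requiring real care, is (4). Using the explicit basis~\eqref{E:3D4-Nx-Bas} of $\calN_x$ and the weight list in Lemma~\ref{L:R3D4-Nx-Rep}, I would compute $\alpha\cdot\beta'$ for each basis weight $\alpha$, noting $\|\beta'\|^2=2/7$. A direct calculation shows exactly three weights lie on the hyperplane $\alpha\cdot\beta'=\|\beta'\|^2$, namely those of $x_0x_3x_4$, $x_1^2x_3$, and $x_1^2x_4$, identifying $Z_{\beta'}=\PP\CC\langle x_0x_3x_4,x_1^2x_3,x_1^2x_4\rangle$. For semi-stability in the torus setting, I would appeal to the convex-hull characterization~\eqref{E:HesseZss}: a point $[a:b:c]\in Z_{\beta'}$ lies in $Z^{ss}_{\beta'}$ iff $\beta'$ is the closest-to-origin point in the convex hull of the weights appearing in its support. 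The small wrinkle here is that the weights of $x_1^2x_3$ and $x_1^2x_4$ coincide, so they contribute a single vertex to the support polytope; the vertices in play are $\alpha_1:=\alpha_{x_0x_3x_4}=(2/3,-1/3,-1/3,0,0)$ and $\alpha_2:=\alpha_{x_1^2x_3}=(-2/3,4/3,-2/3,0,0)$. A one-line minimization shows the closest point to the origin on the segment $[\alpha_1,\alpha_2]$ is $\tfrac{5}{7}\alpha_1+\tfrac{2}{7}\alpha_2=\beta'$, with the coefficient strictly in $(0,1)$. Hence semi-stability forces both $a\neq 0$ and $(b,c)\neq(0,0)$, while either of those conditions failing produces a support polytope whose closest point to the origin is a single vertex different from $\beta'$. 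This gives $Z^{ss}_{\beta'}\cong\AAA^2-\{0\}$ as claimed, completing the lemma.
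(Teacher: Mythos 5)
Your proof is correct and follows the same route as the paper's (the paper treats items (1)–(4) as essentially immediate computations, which you have simply unpacked).  Part (1) is exactly the paper's observation that the commutant of a diagonal matrix is determined by coincidences of eigenvalues, and your handling of the $S_3$ component is equivalent to the paper's remark that $\operatorname{Stab}_G\beta'$ is already computed to equal $N_0$, so intersecting with $N$ changes nothing; parts (2) and (3) match the paper's citations of the transitivity and stabilizer results in Proposition~\ref{P:App-R=C*2}.  For (4), your verification that $\|\beta'\|^2=2/7$, that precisely the three listed monomials have weights on the hyperplane $\alpha\cdot\beta'=\|\beta'\|^2$, and that $\beta'=\tfrac{5}{7}\alpha_1+\tfrac{2}{7}\alpha_2$ lies in the relative interior of the segment of distinct weight vertices (using that for the torus $R_{3D_4}$ the condition~\eqref{E:HesseZss} reduces to $\beta'$ lying in the convex hull of the supporting weights) is exactly the content of the paper's ``by inspection of the previous computations.''
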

We prove both lemmas in parallel.
\begin{proof}
(1) Given $\beta'$, we are first looking at computing the stabilizer $\operatorname{Stab}_G\beta'$ for the group $G=\SL(5,\CC)$ acting by the adjoint representation; i.e.,  conjugation.  Since all of our $\beta'$ are given explicitly as diagonal matrices, this is quite easy.  Indeed, given any diagonal matrix $D=\operatorname{diag}(d_1,\dots,d_n)$ and any $n\times n$ matrix $A$, the condition that $AD=DA$ is given by $d_ia_{ij}=d_ja_{ij}$.    In other words: if $d_i=d_j$, then $a_{ij}$ may be arbitrary; if $d_i\ne d_j$, then $a_{ij}=0$.  The rest is an elementary computation.

(2) It is immediate that $N_0$ acts transitively on $Z^{ss}_R$.  Thus, since for each $\beta'$ in either case (a) or (b) we have $N_0\subseteq N\cap \operatorname{Stab}_G\beta'$, and we are done.

(3) Recall that we computed
\begin{equation*}
G_{F_{3D4}}=
\left\{\left(
\begin{array}{c|c}
\SSS_3&\\ \hline
&\SSS_2\\
\end{array}
\right)\in \SL(5,\CC): \lambda_1\lambda_2\lambda_3=\lambda_4^3=\lambda_5^3\right\}.
\end{equation*}
The rest follows immediately from the previous parts.

(4) This follows immediately from the definitions, by inspection of the previous computations.

(5) (for case (a) only) We have $\TT^2=\diag (\lambda_0,\lambda_1,\lambda_0^{-1}\lambda_1^{-1},1,1)\subseteq (N\cap \operatorname{Stab}_G\beta')_x$.
The action of $\TT^2$ on $Z^{ss}_{\beta'}$ is given by
$\diag (\lambda_0,\lambda_1,\lambda_0^{-1}\lambda_1^{-1},1,1) \cdot (a:b)=(\lambda_1a:\lambda_0^{-1}\lambda_1^{-1}b)$, thus the action of $\TT^2$ on $Z^{ss}_{\beta'}$ is transitive, and therefore the same is true of $(N\cap \operatorname{Stab}_G\beta')_x$.  The stabilizer $((N\cap \operatorname{Stab}_G\beta')_x)_{(1:1)}$ is easily worked out to be as claimed, from the previous description of $(N\cap \operatorname{Stab}_G\beta')_x$.  The direct product decomposition can be deduced as follows.  First, let $D'$ be the diagonal matrices in $((N\cap \operatorname{Stab}_G\beta')_x)_{(1:1)}$.  There is a short exact sequence $$1\to D'\to ((N\cap \operatorname{Stab}_G\beta')_x)_{(1:1)}\to S_2\times S_2\to 1\,,$$ and the matrices given above clearly define a section.  Those matrices commute, and commute with the diagonal matrices, and so we obtain a direct product $D'\times (S_2\times S_2)$.
Now we have essentially already analyzed the diagonal matrices $D'$; indeed we described a group $D\subseteq \GL(5,\CC)$ of diagonal matrices in Proposition~\ref{P:App-R=C*2}(1), with $D'\subseteq D\cong (\CC^*)^3\times \mu_3$.  One can easily deduce the structure of $D'$ from this.
For clarity, we reproduce the argument in this special case.
Assume we have a diagonal matrix $\operatorname{diag}(\lambda_1,\lambda_2,\lambda_3,\lambda_4,\lambda_5)\in D'$.  Since we are only interested up to the torus $\TT=\diag(\lambda^{-2},\lambda,\lambda,1,1)$, we may scale so that $\lambda_1=\lambda_2=1$.  We now have that $\lambda_0=\lambda_3^3=\lambda_4^3$, and $\lambda_0\lambda_3\lambda_4=1$.  Together these imply that $\lambda_3^4\lambda_4=1$.
This implies that $\lambda_4=\lambda_3^{-4}$.
This implies
$\lambda_3^3=\lambda_4^3=(\lambda_3^{-4})^3=\lambda_3^{-12}$, so that $\lambda_3^{15}=1$;
i.e., $\lambda_3$ is a $15$-th root of unity.  In other words, up to scaling by the torus, any diagonal matrix in $D'$ is of the form $\diag(\lambda_4^3,1,1,\lambda_4,\lambda_4^{-4})$
where $\lambda_4$ is a $15$-th root of unity.    We may as well write:
$$
D'=
 \left\{
\left(
\begin{array}{ccccc}
\lambda^{-2}\zeta^{3j}&&&&\\
&\lambda&&&\\
&&\lambda &&\\
&&&\zeta^{j}&\\
&&&&\zeta^{-4j}\\
\end{array}
\right): \lambda \in \CC^*,\ \zeta =e^{2\pi i/15},\  j=0,\dots,14
\right\}.
$$
This completes the proof.
\end{proof}

\chapter{The moduli space of cubic surfaces}
\label{sec:surfaces}
As a demonstration of the techniques developed in the paper, we briefly outline how one obtains analogous results for the moduli space of cubic curves and surfaces.   The new results in this appendix are the computations of the Betti numbers of the toroidal and Naruki compactifications of the moduli space of cubic surfaces (Theorem~\ref{T:CubSurfH}).

\section{The moduli space of cubic curves}
The case of cubic curves is trivial, but nevertheless we review this situation, for completeness.  The GIT moduli space $\GIT_\curv$ has stable points corresponding to smooth cubic curves, and strictly semi-stable points corresponding to cubic curves with nodes.  There is a unique strictly polystable orbit, corresponding to the cubic curve $V(x_0x_1x_2)$, the so-called $3A_1$ cubic curve.
Being a normal rational projective variety of dimension~$1$, we have $\GIT_\curv\cong \PP^1$.
The natural period map is $\calM_\curv\to \fH/\Gamma_1$, taking a cubic curve to its Jacobian, with $\Gamma_1=\SL(2,\ZZ)$; here $\calM_\curv$ is the locus of smooth cubic curves.  As the Baily--Borel compactification $(\fH/\Gamma_1)^*$ is also a normal rational projective variety of dimension~$1$, it is also  isomorphic to $\PP^1$, and the period map extends to an isomorphism $\GIT_\curv\cong (\fH/\Gamma_1)^*$.  Note also that
the boundary of the Baily--Borel compactification is already a divisor (it is simply a point on a curve), and since $(\fH/\Gamma_1)^*$ is smooth, it is its own canonical toroidal compactification $\overline{\fH/\Gamma_1}=(\fH/\Gamma_1)^*$.  Finally, since $\GIT_\curv$ has a strictly polystable point, the Kirwan blowup is not just the identity map; however, since the Kirwan blowup $\MK_\curv$ is smooth, projective and of dimension~$1$, it is also isomorphic to $\PP^1$, so that $\MK_\curv\to \GIT_\curv$ is an isomorphism.   In other words, all of the compactifications in question are isomorphic to $\PP^1$, and the cohomology is obvious.

\section{The moduli space of cubic surfaces}
The moduli of cubic surfaces has a number of compactifications constructed in a  similar way to those of cubic threefolds.  To begin with, the GIT compactification $\GIT_\surf$
 can be described as follows (see e.g.,~\cite[\S 7.2(b)]{mukai}).  A  cubic surface $V$ is:

\begin{itemize}
\item stable if and only if it has at worst $A_1$ singularities,
\item semi-stable if and only if it is stable, or has at worst $A_2$ singularities, and does not contain the axes of the $A_2$ singularities,
\item strictly polystable if  and only if it is projectively equivalent to $V(x_0x_1x_2+x_3^3)$ (the so-called $3A_2$ cubic).
\end{itemize}
Note that it is a classical result that $\GIT_\surf\cong W\PP(1,2,3,4,5)$ (see~\cite[(2.4)]{DvGK}).

By considering the  triple cover of $\PP^3$ branched along a cubic surface, one obtains a cubic threefold, and via the period map for cubic threefolds, one obtains a period map to a $4$-dimensional ball quotient $\calM_\surf\to \calB_4/\Gamma_4$ (see~\cite{ACTsurf}); here $\calM_\surf$ is the locus of smooth cubic surfaces.  This is an open embedding, and the complement of the image is the Heegner divisor $D_n=\calD_n/\Gamma_4$.   The rational period map $\GIT_\surf\dashrightarrow (\calB_4/\Gamma_4)^*$ to the Baily--Borel compactification extends to an isomorphism, taking the discriminant $D_{A_1}\subset \GIT_\surf$  to the divisor $D_n$.  Under this isomorphism, the unique strictly polystable point  $\Delta\in \GIT_\surf$ corresponding to the $3A_2$ cubic is identified with the sole cusp of $(\calB_4/\Gamma_4)^*$, which we thus denote $c_{3A_2}$.  The Kirwan blowup $\MK_\surf\to \GIT_\surf$ is a blowup with center supported at $\Delta$.

In a different direction, Naruki~\cite{naruki} has constructed a modular compactification $\widetilde \calN$ of the moduli space of marked cubic surfaces (this was subsequently reworked by~\cite{HKT09} from a different perspective). There is a natural action by $W(E_6)$ on $\widetilde\calN$, and denoting by $\overline\calN=\widetilde\calN/W(E_6)$, we get another smooth (as always, up to finite quotient singularities) compactification for the moduli space of cubic surfaces. As discussed in~\cite{DvGK}, $\overline\calN$ maps to $\GIT_\surf\cong (\calB_4/\Gamma_4)^*$, and this map contracts a divisor to the boundary point $\Delta$ (resp.~$c_{3A_2}$);
denoting this divisor $D_{\overline \calN_{3A_2}}$, this contraction induces an isomorphism  $\overline\calN-D_{\overline \calN_{3A_2}}\cong \GIT_\surf-\Delta$~\cite[\S2.10]{DvGK}.
 In summary, we have a diagram (compare~\eqref{E:BirDiagMod})
\begin{equation}\label{eq_diag_surf}
\xymatrix{
&\MK_\surf \ar[ld] \ar[rd]\ar@{<-->}[r]&\overline{\calB_4/\Gamma_4} \ar[d]\ar@{<-->}[r]&\overline{\calN}\ar[ld]\\ \GIT_\surf \ar[rr]^{\sim}&&(\calB_4/\Gamma_4)^*
}
\end{equation}
where $\overline{\calB_4/\Gamma_4} $ is the (again, unique) toroidal compactification.  The purpose of this section is to establish  that these three compactifications ($\MK_\surf$, $\overline{\calB_4/\Gamma_4}$, and  $\overline{\calN}$) have the same cohomology.
Note that all three spaces are blowups of the point $c_{3A_2}\in (\calB_4/\Gamma_4)^*$; we expect that they are all isomorphic, but this is not yet known (compare Remark~\ref{rem_possible_iso}).

In~\cite{kirwanhyp} and~\cite{ZhangCubic}
 the (intersection) Betti numbers of the spaces $\GIT_\surf\cong (\calB_4/\Gamma_4)^*$ and $\MK_\surf$ were computed\footnote{
Note there is an error in~\cite[Thm.~1.6, p.50, and 5.2]{kirwanhyp} regarding the Betti numbers of $\MK_\surf$, corrected in~\cite{ZhangCubic}. Specifically, the set $\calR$ of connected components of stabilizers consists only of $\TT^2$, and does not also include $\operatorname{SO}(3,\CC)$, as claimed in~\cite[p.59]{kirwanhyp}: the only strictly polystable orbit is the orbit of the $3A_2$ cubic surface, with connected component of the stabilizer given by $\TT^2$.  The rest of the computations in~\cite{kirwanhyp} go through unchanged, and yield $P_t(\MK_\surf)=P^G_t(X_\surf^{ss})+A_{\TT^2}(t)\equiv (1+t^2+2t^4)+t^2\equiv 1+2t^2+2t^4\mod t^5$; i.e., one simply does not add the $A_{\operatorname{SO}(3,\CC)}(t)\equiv t^2+t^4 \mod t^5$ contribution from the erroneous group $R=\operatorname{SO}(3,\CC)$.   The computation of $IP_t(\GIT_\surf)$ is then also corrected by omitting the terms corresponding to $R=\operatorname{SO}(3,\CC)$, so that one obtains $IP_t(\GIT_\surf)=P_t(\MK_\surf)-B_{\TT^2}(t)\equiv (1+2t^2+2t^4)-(t^2+t^4)\equiv 1+t^2+t^4\mod t^5$; i.e.,  the formula for $IP_t(\GIT_\surf)$ in \cite[Thm.~1.6]{kirwanhyp} is correct.
}
:
\begin{equation}\label{E:KirThmCubSurf}
\begin{array}{r|ccccc}
j&0&2&4&6&8\\\hline
\dim H^j(\MK_\surf)&1&2&2&2&1 \rule{0pt}{2.6ex}\\
\dim IH^j(\GIT_\surf)=\dim IH^j((\calB_4/\Gamma_4)^*)&1&1&1&1&1 \rule{0pt}{2.6ex}
\end{array}
\end{equation}
with all odd degree (intersection) cohomology vanishing.
Note that the bottom row is immediate, since $\GIT_\surf$ is a weighted projective space, as recalled above.

For the cohomology of the toroidal compactification of the ball quotient model $\calB_4/\Gamma_4$ of the moduli of cubic surfaces, we apply the same approach (but, of course, with easier computational details) as for cubic threefolds (see Chapter~\ref{sec:toroidal}).  As announced, we obtain that the cohomology of the toroidal compactification coincides with the cohomology of the Kirwan blowup $\MK_\surf$.

\begin{teo}\label{T:CubSurfH}
The Betti numbers of the toroidal compactification of the ball quotient model $\overline{\calB_4/\Gamma_4}$ of the moduli space of cubic surfaces are as follows:
\begin{equation}\label{E:CubSurfH}
\begin{array}{r|ccccc}
\hskip2cm j&0&2&4&6&8\\\hline
\dim H^j(\overline{\calB_4/\Gamma_4})&1&2&2&2&1 \rule{0pt}{2.8ex}
\end{array}
\end{equation}
while all the odd degree cohomology vanishes.
\end{teo}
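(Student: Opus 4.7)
The strategy mirrors the computation for cubic threefolds carried out in Chapter~\ref{sec:toroidal}: apply the special case of the decomposition theorem recorded in~\eqref{eq:IHblowup} to the morphism
\[
\overline{\calB_4/\Gamma_4}\longrightarrow (\calB_4/\Gamma_4)^*,
\]
which by construction is the blowup of the unique cusp $c_{3A_2}$ to an exceptional toroidal divisor, call it $T_{3A_2}$. Both source and target have only finite quotient singularities, so cohomology and intersection cohomology agree. Since $IP_t\bigl((\calB_4/\Gamma_4)^*\bigr)=1+t^2+t^4+t^6+t^8$ is already known from~\eqref{E:KirThmCubSurf}, the problem is reduced to computing the Betti numbers of $T_{3A_2}$ and then plugging them into~\eqref{eq:IHblowup} with $n=\dim \overline{\calB_4/\Gamma_4}=4$.

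For the geometry of the boundary divisor, I would first identify the rank~$5$ Eisenstein lattice $\Lambda_4$ of signature $(1,4)$ underlying the Allcock--Carlson--Toledo construction, and verify that it decomposes as $\Lambda_4\cong \calE_3+\calH$; in particular, choosing the isotropic line $\calF\subset \calH$ one obtains $\calF^{\perp}/\calF\cong \calE_3$. The arithmetic argument of \S\ref{S:ArithmeticCusp} (in particular the proof of Proposition~\ref{prop_structure_tor}) then goes through verbatim and gives
\[
T_{3A_2}\;\cong\;(E_{\omega}\otimes_{\calE}\calE_3)/\O(\calE_3)\;\cong\;(E_{\omega})^{3}/\O(\calE_3).
\]
By Proposition~\ref{propwe3}, $\O(\calE_3)=W(\calE_3)\rtimes \ZZ/2\ZZ$, where the $\ZZ/2\ZZ$ is generated by the outer involution $\tau$ coming from the Dynkin diagram symmetry of $E_6$.

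To compute $H^\bullet(T_{3A_2})$ I would apply the Eisenstein analogue of Looijenga's theorem recorded as Proposition~\ref{thm_L_Eis}(1): it yields
\[
H^{\bullet}\bigl((E_{\omega})^{3}/W(\calE_3)\bigr)\;\cong\;H^{\bullet}\bigl(W\PP(1,2,2,3)\bigr),
\]
so $P_t\bigl((E_{\omega})^{3}/W(\calE_3)\bigr)=1+t^2+t^4+t^6$. Exactly as in the proof of Proposition~\ref{pro:cohomoloytorbound}, the residual $\ZZ/2\ZZ$-action by $\tau$ must act trivially on cohomology because each even-degree cohomology group is one-dimensional and the odd cohomology vanishes. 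Therefore $P_t(T_{3A_2})=1+t^2+t^4+t^6$.

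Plugging these coefficients $e_0=e_2=e_4=e_6=1$ (all others zero) into~\eqref{eq:IHblowup} with $n=4$ produces a correction term
\[
e_6 t^2+e_5 t^3+e_4 t^4+e_5 t^5+e_6 t^6\;=\;t^2+t^4+t^6,
\]
so that
\[
P_t\bigl(\overline{\calB_4/\Gamma_4}\bigr)\;=\;(1+t^2+t^4+t^6+t^8)+(t^2+t^4+t^6)\;=\;1+2t^2+2t^4+2t^6+t^8,
\]
and all odd Betti numbers vanish, establishing~\eqref{E:CubSurfH}. The only genuinely nontrivial point in this argument is the identification of the Eisenstein lattice $\Lambda_4$ and of the cusp lattice $\calF^{\perp}/\calF\cong \calE_3$; once this arithmetic identification is in hand, the remaining steps are essentially automatic applications of the tools already developed in Chapters~\ref{sec:IHball} and~\ref{sec:toroidal}.
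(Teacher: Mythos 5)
Your overall strategy is the right one, and the final numbers happen to come out correctly, but the crucial lattice identification at the heart of the argument is wrong, and the paper's actual proof is quite different at exactly that point.

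You claim $\Lambda_4 \cong \calE_3 + \calH$ and hence $\calF^\perp/\calF\cong\calE_3$. Neither is true. The Allcock--Carlson--Toledo lattice for cubic surfaces is $\Lambda=\calE_1(-1)+4\calE_1$ with underlying $\ZZ$-lattice $\Lambda_\ZZ=A_2+4A_2(-1)$, whose discriminant group is $(\ZZ/3)^5$. Your candidate $\calE_3+\calH$ has underlying $\ZZ$-lattice $E_6(-1)+2U$, with discriminant group $\ZZ/3$. These lattices are therefore not isomorphic, so there is no hyperbolic summand $\calH$ in $\Lambda$ to peel off, and the decomposition you rely on does not exist. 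Correspondingly, the cusp lattice is $\calF^\perp/\calF\cong 3\calE_1$ (underlying $\ZZ$-lattice $3A_2(-1)$), not $\calE_3$ (underlying $E_6(-1)$); geometrically this reflects that the polystable boundary cubic surface has $3A_2$ singularities, contributing three $A_2$'s rather than an $E_6$. As a consequence, the claim that the proof of Proposition~\ref{prop_structure_tor} ``goes through verbatim'' also fails: because $\Lambda$ is not a direct sum with a $\calH$ summand, the paper's proof of the surface analogue (Lemma~\ref{lem_struc_tor_surf}) has to work with a $\QQ(\sqrt{-3})$-basis rather than an $\calE$-basis, and a genuine extra step (a rescaling by $2$ of the lattice, to get around a divisibility obstruction $y\in 2\calE^3$) is required to show that the quotient of the boundary fiber is still $(E_\omega)^3$. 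Once the correct identification $T_{3A_2}\cong (E_\omega)^3/\O(3\calE_1)$ is in hand, the paper's computation is in fact more elementary than yours: since $(\calE_1\otimes_\calE E_\omega)/\O(\calE_1)\cong\PP^1$ one immediately gets $T_{3A_2}\cong(\PP^1)^3/S_3\cong\PP^3$, with no need to invoke the Eisenstein Chevalley theorem (Proposition~\ref{thm_L_Eis}) or the $W(\calE_3)$-invariants. The coincidence that $H^\bullet(W\PP(1,2,2,3);\QQ)\cong H^\bullet(\PP^3;\QQ)$ is why your final Poincar\'e polynomial $P_t(T_{3A_2})=1+t^2+t^4+t^6$, and hence the theorem, still comes out right; but the lattice-theoretic input supporting it is not correct as stated.
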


\section{The proof of Theorem~C.1}
In this section, following the setup of \S\ref{S:ArithmeticCusp}, we discuss the structure of the toroidal compactification $\overline{\calB_4/\Gamma_4}$  of the ball quotient model for surfaces, and prove Theorem~\ref{T:CubSurfH}.

\subsection{The Eisenstein lattice for cubic surfaces}
The Eisenstein lattice used by Allcock--Carlson--Toledo~\cite[(2.7.1)]{ACTsurf} to define the ball quotient model $\calB_4/\Gamma_4$ for the moduli of cubic surfaces is
\begin{equation}
 \Lambda = \calE_1(-1)+4 \calE_1
\end{equation}
with the associated $\ZZ$-lattice
$$
\Lambda_\ZZ=A_2+4A_2(-1)
$$
(see~\cite[\S5, \S6]{DvGK} for a discussion of the lattice $\Lambda_\ZZ$ and its relevance to the ball quotient construction).
Returning to the construction of $\calB_4/\Gamma_4$, we recall
$$
\calB_4:=\{[z]: z^2>0\}^+\subseteq \PP(\Lambda \otimes_{\calE}\CC),
$$
and $\Gamma_4:=\operatorname{O}(\Lambda)$ acts naturally (properly discontinuously) on $\calB_4$. Let us note that one can construct a natural $W(E_6)$-cover
\begin{equation}\label{markedball}
\calB_4/\Gamma_4^m\to \calB_4/\Gamma_4
\end{equation} of the ball quotient model parameterizing marked cubic surfaces (i.e., cubic surfaces with the $27$ lines labeled). This corresponds to an arithmetically defined normal subgroup $\Gamma_4^m\subset \Gamma_4$ with $\Gamma_4/\Gamma_4^m\cong \pm1\times W(E_6)$ (with $\pm 1$ acting trivially on $\calB_4$); we refer to~\cite[\S6.10]{DvGK} and~\cite[\S3]{ACTsurf} for details.

\subsection{Identifying the cusp of $(\calB_4/\Gamma_4)^*$}

From the description above, it is elementary to find a representative isotropic line $\calF\subseteq \Lambda$ defining the cusp $c_{3A_2}$, namely the one generated by
$$
h=(1,1,0,0,0).
$$
One then sees immediately that
$$
h^\perp/h=3\calE_1,
$$
and we recall then that $(3\calE_1)_\ZZ=3A_2(-1)$.

\subsection{The isometry group of the cusp}
Clearly,  $\operatorname{O}(\calE_1)=\ZZ_3\times \ZZ_2$ (compare~\eqref{E:OOE1}), with $\ZZ_3$ acting by $\omega$ and $\ZZ_2$ acting by $-1$.
It is easy to see that
\begin{equation}\label{E:OO3EE1}
\operatorname{O}(3\calE_1)=\operatorname{O}(\calE_1)^{\times 3}\rtimes S_3=(\ZZ_3\times \ZZ_2)^{\times 3}\rtimes S_3
\end{equation}
where the semi-direct product is given by the action of $S_3$ on the three copies of  $\operatorname{O}(\calE_1)$.

\subsection{The structure of the toroidal boundary divisor}
We denote the boundary divisor of $\overline{\calB_4/\Gamma_4}$
corresponding to the cusp $c_{3A_2}$ by $T_{3A_2}$.

\begin{lem}\label{lem_struc_tor_surf}
The following holds:
$$
T_{3A_2}\cong (E_\omega\otimes_{\calE}3\calE_1)/\operatorname{O}(3\calE_1) \ \ (\cong (E_\omega^3)/\operatorname{O}(3\calE_1)).
$$
\end{lem}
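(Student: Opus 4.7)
The plan is to follow the strategy of Proposition~\ref{prop_structure_tor} almost verbatim, with $\calG = 3\calE_1$ replacing the Eisenstein lattices considered there. First, I would pick an isotropic vector $h \in \Lambda$ representing $\calF$ (for instance $h = e_0 + e_1$ in the standard basis of $\Lambda = \calE_1(-1) + 4\calE_1$) and extend to a basis $\{h, e_2, e_3, e_4, h^\ast\}$ of $\Lambda$ such that $h^\perp/h \cong 3\calE_1$ is represented by $\{e_2, e_3, e_4\}$ and $h^\ast \in \Lambda$ satisfies $\langle h, h^\ast \rangle = \delta \neq 0$ (e.g.\ $h^\ast = e_1$, giving $\delta = 3$). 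With respect to this basis the Hermitian Gram matrix of $\Lambda$ has the block form
\[
Q = \begin{pmatrix} 0 & 0 & \delta \\ 0 & B & 0 \\ \bar{\delta} & 0 & 0 \end{pmatrix},
\]
with $B$ the Gram matrix of $3\calE_1$ and $\delta$ recording the divisibility of $h$ in $\Lambda$.

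Next, I would import Behrens's determination~\cite[Sec.~4]{beh} of the parabolic $N(F) \subset \O(\Lambda)$, its unipotent radical $W(F)$, and the center $U(F) \cong \ZZ$ of the unipotent radical, exactly as done in Proposition~\ref{prop_structure_tor}. The integrality conditions analogous to $By + \bar{v}^t \delta = 0$ and $\bar{y}^t B y + \bar{\delta} w + \delta \bar{w} = 0$ need to be solved over $\calE$; a direct computation parallel to the one in Proposition~\ref{prop_structure_tor} shows that the translation part of $W(F)$ fills up the full lattice $3\calE_1$, so that the partial quotient $\calB_4/U(F)$ can be embedded in $\CC^\ast \times \CC^3$ and the toroidal boundary divisor is of the form
\[
\{0\} \times \CC^3/W(F) \cong E_\omega \otimes_{\calE} 3\calE_1 \cong E_\omega^3.
\]

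Finally, I would analyze the residual action of $N(F)/W(F)$ on this boundary divisor exactly as in the proof of Proposition~\ref{prop_structure_tor}: the Levi block $X$ in the middle of the matrix acts on $\CC^3$ via its natural action on $3\calE_1$ and thus through $\O(3\calE_1)$, while the diagonal unit block $(u, 1, s)$ with $s\bar{u} = 1$ acts by the scalar $s^{-1} \in \mu_6$, which is already contained in $\O(3\calE_1)$ via the diagonal embedding $\O(\calE_1) = \mu_6 \hookrightarrow \O(3\calE_1)$. Hence the residual action factors through $\O(3\calE_1)$, yielding the claimed identification $T_{3A_2} \cong (E_\omega \otimes_{\calE} 3\calE_1)/\O(3\calE_1)$.

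The main obstacle will be bookkeeping the integrality conditions for the (smaller) divisibility $\delta = 3$ of $h$ in $\Lambda$, rather than $\delta = \theta$ as in Proposition~\ref{prop_structure_tor}; once it is verified that $W(F)$ realizes all translations by elements of $3\calE_1$ and that no additional discrete quotient is introduced by the scaling block, the remainder of the proof is purely formal, following~\cite[\S4]{beh} and Proposition~\ref{prop_structure_tor} line by line.
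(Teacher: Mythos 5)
Your overall strategy — set up $N(F)$, $W(F)$, $U(F)$, partial-quotient by $U(F)$, and analyze the residual action on $\{0\}\times\CC^3$ — is the same as the paper's, but there are two concrete problems.

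First, your suggested $h^* = e_1$ is not isotropic: $\langle e_1,e_1\rangle = 3 \ne 0$, so the Gram matrix with respect to $\{h, e_2, e_3, e_4, e_1\}$ does \emph{not} have the block form you wrote (the bottom-right corner is $3$, not $0$). To get that block shape you need an isotropic complement $h^*$. The paper takes $b_5 = (1,-1,0,0,0)$, which is isotropic, but then $\{b_1,\dots,b_5\}$ is only a $\QQ(\sqrt{-3})$-basis of $\Lambda$, not an $\calE$-basis, and the paper explicitly flags this as a point of departure from the threefold computation.

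Second, and more substantially, you assert that the translation part of $W(F)$ ``fills up the full lattice $3\calE_1$'' by an argument ``parallel to'' Proposition~\ref{prop_structure_tor}. This is precisely where the paper says the analogy breaks down: with their basis (where $\delta = 6$) the condition $\bar v^t = -\tfrac{1}{6}By$ forces $y\in 2\calE^3$, so the translations realize only the proper sublattice $2\calE^3 \subsetneq \calE^3$. The proof then closes the gap by noting that rescaling the lattice by $2$ gives an isomorphic elliptic quotient, i.e.\ $\CC^3/W(F)\cong \CC^3/(2\calE^3)\cong E_\omega^3$. That rescaling step is the genuinely new ingredient relative to Proposition~\ref{prop_structure_tor}, and your write-up neither notices the smaller sublattice nor supplies the rescaling. (A genuine $\calE$-basis with an isotropic complement does exist — e.g.\ $h^* = e_0 - \omega^2 e_1$, giving $\delta = 3\omega^2$ — and in that coordinate system the translations do realize all of $\calE^3$ directly; but that is not the basis you chose, and you would need to verify the integrality conditions in that basis explicitly rather than appeal to the threefold case ``line by line.'')
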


\begin{proof}
The proof is analogous to that  of  Proposition~\ref{prop_structure_tor}, with a minor difference.
To make this appendix accessible to readers who are primarily interested in cubic surfaces we will give a self-contained proof here, but also comment on the differences to the previous case.
We start with $\Lambda= \calE_1(-1) + 4\calE_1$ and
the isotropic vector $b_1:=h=(1,1,0,0,0)$.
We will denote the corresponding cusp given by the isotropic line $\calF=\calE h$ by $F$.
We then add $b_2,b_3,b_4$ where each $b_i$ is a generator of a copy of $3\calE_1=h^{\perp}/h$, and complement this by
$b_5=(1,-1,0,0,0)$. The difference to Proposition~\ref{prop_structure_tor} is that this is a $\QQ(\sqrt{-3})$-basis of $\calE_1(-1) + 4\calE_1$, and not an $\calE$-basis. With respect to this
basis the hermitian form is given by
$$
Q=
\left(
\begin{array}{c|c|c}
0 & 0 & 6 \\ \hline
0 & B & 0\\ \hline
 6 & 0 & 0
\end{array}
\right)
$$
where
$$
B=
\left(
\begin{array}{c|c|c}
3 & 0 & 0 \\ \hline
0 & 3 & 0\\ \hline
0 & 0 & 3
\end{array}
\right).
$$

In order to determine the structure of the boundary one first has to understand the structure of the
stabilizer subgroup $N(F)$ corresponding to $F$, i.e.~the subgroup of $\O(\Lambda)$ fixing the line spanned by $h$. A straightforward calculation, see~\cite[Sec.~4]{beh}, gives
\begin{equation}\label{pro:structureboundarycompA}
N(F)= \left\{ g \in \O(\Lambda): g= \left( \begin{array}{c|c|c}
u & v & w \\ \hline
0 & X & y \\ \hline
0 & 0 & s
\end{array}
\right)\right\}.
\end{equation}
Note that, in particular, this implies that $X\in \O(3\calE)$. Its unipotent radical is given by
\begin{equation}
W(F)= \left\{g \in N(F): g= \left(
\begin{array}{c|c|c}
1 & v & w \\ \hline
0 & 1 & y \\ \hline
0 & 0 & 1
\end{array}
\right) \right\}
\end{equation}
and finally the center of the unipotent radical is
\begin{equation}
U(F)= \left\{g\in W(F): g= \left(
\begin{array}{c|c|c}
1 & 0 & w \\ \hline
0 & 1 & 0 \\ \hline
0 & 0 & 1
\end{array}
\right), w \in \ZZ \right\} \cong \ZZ.
\end{equation}

We have natural coordinates  coordinates $(z_0:z_1: z_2:z_3:z_{4})$ on $\calB \subset \PP(\Lambda\otimes_{\calE}\CC)$ and we can assume that $z_4=1$. Then we  obtain a map
\begin{equation}
\begin{aligned} \calB &\to \CC^* \times \CC^3 \\  (z_0, z_1, z_2,  z_{3}) &\mapsto (t_0=e^{2 \pi i z_0}, z_1, z_2,  z_3)
\end{aligned}
\end{equation}
and adding the toroidal boundary amounts to adding $\{0\} \times \CC^3$.

The quotient $N(F)/U(F)$ then acts on $\calB/U(F)$ and this quotient gives the
toroidal compactification of $\calB$ near the cusp $F$. Here we are only interested in the structure of the boundary divisor and hence in the action of $N(F)/U(F)$ on  $\{0\} \times \CC^3$.
By a straightforward calculation
\begin{equation}\label{equ:action2}
g=\left(
\begin{array}{c|c|c}
u & v & w \\ \hline
0 & X & y \\ \hline
0 & 0 & s
\end{array}
\right): \underline{z} \mapsto  \frac{1}{s}(X\underline{z} + y)
\end{equation}
where $\underline{z}=(z_1, z_2, z_3)$. We first look at  the normal subgroup $W(F)$, matrices whose elements act as follows
$$
g=\left(
\begin{array}{c|c|c}
1 & v & w \\ \hline
0 & 1 & y \\ \hline
0 & 0 & 1
\end{array}
\right):  \underline{z} \mapsto \underline{z} + y.
$$
Since $g\in \O(\Lambda)$, we must necessarily have $y \in \calE^3$ (where we now use the notation $\calE^3$ rather than $3\calE$ since we want to
emphasize the vector space structure rather than the lattice). This is where there is a difference to the case of cubic fourfolds: it is no longer true that all vectors in  $\calE^3$ appear as entries $y$ in matrices $g\in W(F)$.
Indeed by a straightforward calculation, see~\cite[Sec.~4]{beh},  the condition that $g\in \O(\Lambda)$ is
$$
By+6\bar{v}^t=0, \quad \bar{y}^tBy+6w + 6 \bar w=0.
$$
Given $y$ we want to define $v$ by $\bar{v}^t=-\frac{1}{6}By$. Since $By \in 3 \cdot \calE^3$ and $v$ must be in $\calE^3$ this requires that $y \in 2 \cdot \calE^3$. Note that we can then also find a suitable $w\in \calE$.
However, scaling the lattice  by a factor $2$ gives isomorphic quotients showing
$$
\CC^3/W(F)\cong (E_{\omega})^3.
$$
The rest of the argument is now again very close to Proposition~\ref{prop_structure_tor}.
Clearly, the subgroup
$$
\left\{ g \in \O(\Lambda): g= \left( \begin{array}{c|c|c}
1 & 0 & 0 \\ \hline
0 & X & 0 \\ \hline
0 & 0 & 1
\end{array}
\right)\right\}.
$$
acts on $(E_{\omega})^3$ as claimed in the proposition.

It remains to consider elements of the form
$$
g=\left(
\begin{array}{c|c|c}
u & 0 & 0 \\ \hline
0 & 1 & 0 \\ \hline
0 & 0 & s
\end{array}
\right) \in N(F).
$$
The condition that such a matrix lies in $\O(\Lambda)$ is that $s\bar{u}=1$ with $s\in \calE$. Hence $s$ is a power of $\omega$ and these elements act on $(E_{\omega})^3$ by multiplication with powers of $\omega$. But by
(\ref{equ:action2})  these elements are already accounted for by matrices with $u=s=1$ and
$X \in \O(3\calE)$ and hence we do not get a further quotient. Thus the claim follows.
\end{proof}

\subsection{The cohomology of the toroidal boundary divisor}
It is elementary to see from the descriptions above that
$$
(\calE_1\otimes_{\calE}E_\omega)/\operatorname{O}(\calE_1)\cong \PP^1.
$$
It follows that
$$
T_{3A_2}=(3\calE_1\otimes_{\calE}E_\omega)/\operatorname{O}(3\calE_1)=(\PP^1)^3/S_3=\PP^3.
$$
In particular, we get:
\begin{cor}\label{lem_coho_torbound_ball}
The Betti numbers of the toroidal boundary divisor $T_{3A_2}$ of $\overline{\calB_4/\Gamma_4}$ are given by
$b_0( T_{3A_2})=b_2( T_{3A_2})=b_4( T_{3A_2})=b_6( T_{3A_2})=1$ and $b_1( T_{3A_2})=b_3( T_{3A_2})=b_5( T_{3A_2})=0$.
\end{cor}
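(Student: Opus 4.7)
The strategy mirrors the proof of Theorem~\ref{teo:coh_obg} for the cubic threefold case in Chapter~\ref{sec:toroidal}, but is much simpler because there is only one cusp, the arithmetic is lighter, and the toroidal boundary divisor turns out to be a familiar variety. The plan is to apply the decomposition theorem in the form of equation~\eqref{eq:IHblowup} to the morphism $\overline{\calB_4/\Gamma_4}\to (\calB_4/\Gamma_4)^*$, which blows up the unique cusp $c_{3A_2}$ to a single boundary divisor $T_{3A_2}$. Both source and target are smooth up to finite quotient singularities (so cohomology equals intersection cohomology on the source), and $IP_t((\calB_4/\Gamma_4)^*)$ is already recorded in~\eqref{E:KirThmCubSurf}. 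The only missing input is therefore the Betti numbers of $T_{3A_2}$.

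The first step is to identify $T_{3A_2}$ arithmetically, adapting \S\ref{S:ArithmeticCusp} to the cubic surface lattice $\Lambda = \calE_1(-1)+4\calE_1$, with $\Lambda_\ZZ = A_2+4A_2(-1)$. There is a unique $\Gamma_4$-orbit of primitive isotropic lines in $\Lambda$, represented by $h=(1,1,0,0,0)$; a direct computation gives $h^\perp/h \cong 3\calE_1$. The corresponding isometry group is
\[
\O(3\calE_1) = \O(\calE_1)^{\times 3}\rtimes S_3 = (\ZZ_3\times \ZZ_2)^{\times 3}\rtimes S_3,
\]
where $S_3$ permutes the three summands. Repeating verbatim the calculation of Proposition~\ref{prop_structure_tor}, using the basis $b_1=h$, complementary vectors $b_2,b_3,b_4$ generating each copy of $\calE_1$ in $h^\perp/h$, and $b_5=(1,-1,0,0,0)$, one obtains an analogous description of $N(F)$, $W(F)$ and $U(F)$. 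The main difference from the cubic threefold case is that $\{b_1,\dots,b_5\}$ is only a $\QQ(\sqrt{-3})$-basis of $\Lambda$ rather than an $\calE$-basis, which introduces a factor of $2$ rescaling when one identifies $\CC^3/W(F)$; since scaling a lattice by $2$ produces an isomorphic abelian variety quotient, one still concludes
\[
T_{3A_2}\cong (E_\omega\otimes_\calE 3\calE_1)/\O(3\calE_1)\cong E_\omega^3/\O(3\calE_1).
\]

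The second step is the computation of $P_t(T_{3A_2})$. This is where the cubic surface case becomes essentially trivial compared to the cubic threefold case: no Eisenstein analogue of Chevalley's theorem is needed. Indeed, $\O(\calE_1) = \ZZ_3\times\ZZ_2$ acts on $E_\omega$ with quotient $\PP^1$, so
\[
T_{3A_2}\cong (\PP^1)^3/S_3 \cong \Sym^3\PP^1 \cong \PP^3,
\]
and hence $P_t(T_{3A_2}) = 1+t^2+t^4+t^6$ (equivalently Corollary~\ref{lem_coho_torbound_ball}).

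Finally, substituting into \eqref{eq:IHblowup} with $n=\dim\overline{\calB_4/\Gamma_4}=4$ and the coefficients $e_0=e_2=e_4=e_6=1$, $e_1=e_3=e_5=0$ of $P_t(T_{3A_2})$, the contribution added to $IP_t((\calB_4/\Gamma_4)^*)$ is $e_6t^2+e_5t^3+e_4t^4+e_5t^5+e_6t^6 = t^2+t^4+t^6$, giving
\[
P_t(\overline{\calB_4/\Gamma_4}) = (1+t^2+t^4+t^6+t^8)+(t^2+t^4+t^6) = 1+2t^2+2t^4+2t^6+t^8,
\]
exactly matching the table in~\eqref{E:CubSurfH}. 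The only nontrivial step is the lattice-theoretic identification of $T_{3A_2}$ in Step~1; every other ingredient is either classical (the $\Sym^3\PP^1\cong\PP^3$ isomorphism) or already established earlier in the paper.
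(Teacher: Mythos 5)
Your proof of the corollary itself (Step~2) is exactly the paper's argument: $\O(\calE_1)=\ZZ_3\times\ZZ_2$ acts on $E_\omega$ with quotient $\PP^1$, hence $T_{3A_2}\cong(\PP^1)^3/S_3\cong\Sym^3\PP^1\cong\PP^3$, which has the stated Betti numbers. Steps~1 and 3 merely recap Lemma~\ref{lem_struc_tor_surf} and the proof of Theorem~\ref{T:CubSurfH}, which are separate results in the paper, but they do not affect the correctness of your argument.
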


\subsection{The cohomology of the toroidal compactification}\label{completeproof}
We can now complete the proof of Theorem~\ref{T:CubSurfH} using the Decomposition Theorem for the morphism $\overline {\calB_4/\Gamma_4}\to (\calB_4/\Gamma_4)^*$.
We have

\begin{align*}
P_t(\overline {\calB_4/\Gamma_4}) \equiv\  &1+t^2+t^4& \text{($IP_t((\calB_4/\Gamma_4)^*)$,  from~\eqref{E:KirThmCubSurf})}\\
&\                         +( t^2  +t^4)&\text{($T_{3A_2}=\PP^3$ contribution, from Corollary \ref{lem_coho_torbound_ball})}\\
              \equiv\ & 1+2t^2+2t^4 \mod t^{5}\!\!\!\!\!\!\!\!
\end{align*}
by applying equation~\eqref{eq:IHblowup} to determine the contribution to the cohomology of $\overline {\calB_4/\Gamma_4}$ from the exceptional divisor.

\section{The cohomology of the Naruki compactification} For completeness, let us note that the cohomology of the Naruki compactification $\overline \calN$ coincides with the cohomology of toroidal and Kirwan compactifications for the moduli of cubic surfaces.
\begin{pro}\label{T:Naruki}
The Betti numbers of the Naruki compactification $\overline \calN=\widetilde \calN/W(E_6)$ of the moduli space of cubic surfaces are as follows:
\begin{equation}\label{E:CubSurfHNar}
\begin{array}{r|ccccc}
\hskip2cm j&0&2&4&6&8\\\hline
\dim H^j(\overline{\calN})&1&2&2&2&1 \rule{0pt}{2.8ex}
\end{array}
\end{equation}
while all the odd degree cohomology vanishes.
\end{pro}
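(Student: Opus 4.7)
The proof will follow the same strategy as that of Theorem~\ref{T:CubSurfH}: apply the decomposition theorem in the form \eqref{eq:IHblowup} to the morphism $\overline{\calN}\to(\calB_4/\Gamma_4)^*\cong \GIT_\surf$. By diagram~\eqref{eq_diag_surf} and \cite{DvGK}, this morphism contracts a single exceptional divisor $D_{\overline{\calN}_{3A_2}}$ to the unique cusp $c_{3A_2}$ and is an isomorphism elsewhere. Since $\overline{\calN}=\widetilde{\calN}/W(E_6)$ is the quotient of the smooth Naruki compactification by a finite group, it is smooth up to finite quotient singularities, so the hypotheses of \eqref{eq:IHblowup} are met. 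With $n=\dim\overline{\calN}=4$, the formula gives
\begin{equation*}
P_t(\overline{\calN}) = IP_t((\calB_4/\Gamma_4)^*) + e_6 t^2 + e_5 t^3 + e_4 t^4 + e_5 t^5 + e_6 t^6,
\end{equation*}
where $P_t(D_{\overline{\calN}_{3A_2}})=\sum_j e_j t^j$ and $IP_t((\calB_4/\Gamma_4)^*)=1+t^2+t^4+t^6+t^8$ is read off from \eqref{E:KirThmCubSurf}.

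The only missing input is the cohomology of $D_{\overline{\calN}_{3A_2}}$, which I would compute by pulling back to the marked compactification $\widetilde{\calN}$. The boundary of $\widetilde{\calN}$ contains a single $W(E_6)$-orbit of $3A_2$-type divisors, each isomorphic to the toric threefold $(\PP^1)^3$ (see \cite{naruki,HKT09}), with stabilizer $H\subset W(E_6)$ of type $W(A_2)^3\rtimes S_3$; the image of this orbit in the quotient $\overline{\calN}$ is precisely $D_{\overline{\calN}_{3A_2}}$, so $D_{\overline{\calN}_{3A_2}}\cong (\PP^1)^3/H$. Since each factor $W(A_2)=S_3$ acts on the corresponding $\PP^1$ with quotient $\PP^1$, and the outer $S_3$ then permutes the three factors, one obtains $D_{\overline{\calN}_{3A_2}}\cong \operatorname{Sym}^3(\PP^1)\cong \PP^3$, in complete analogy with the toroidal exceptional divisor analyzed in Lemma~\ref{lem_struc_tor_surf} and Corollary~\ref{lem_coho_torbound_ball}. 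In particular $P_t(D_{\overline{\calN}_{3A_2}})=1+t^2+t^4+t^6$.

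The main obstacle lies in this middle step: identifying the stabilizer $H\subset W(E_6)$ and tracking its precise action on $(\PP^1)^3$ within Naruki's explicit construction requires some combinatorial bookkeeping, though the geometric picture is classical. Once this is verified, substituting $e_0=e_2=e_4=e_6=1$ and $e_1=e_3=e_5=0$ into \eqref{eq:IHblowup} produces the correction $t^2+t^4+t^6$, whence
\begin{equation*}
P_t(\overline{\calN}) = (1+t^2+t^4+t^6+t^8) + (t^2+t^4+t^6) = 1+2t^2+2t^4+2t^6+t^8,
\end{equation*}
with all odd Betti numbers vanishing, as claimed.
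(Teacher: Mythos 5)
Your proof is correct and takes essentially the same route as the paper: apply the decomposition theorem \eqref{eq:IHblowup} to $\overline{\calN}\to(\calB_4/\Gamma_4)^*$, and compute the Betti numbers of the exceptional divisor $D$ via the $W(E_6)$-cover $\widetilde{\calN}\to(\calB_4/\Gamma_4^m)^*$, in which the $40$ boundary divisors $D_i\cong(\PP^1)^3$ form a single $W(E_6)$-orbit. The one thing worth noting is that the ``main obstacle'' you flag --- pinning down the stabilizer $H\subset W(E_6)$ and its precise action on $(\PP^1)^3$ --- is dispensable for the Betti number computation. The kernel of $H\to S_3$ acts by algebraic automorphisms preserving each $\PP^1$ factor, hence acts trivially on rational cohomology, so that $H^\bullet(D;\QQ)=H^\bullet((\PP^1)^3)^H=H^\bullet((\PP^1)^3)^{S_3}=H^\bullet(\PP^3)$ as soon as one knows only that $H$ surjects onto the $S_3$ permuting the three factors; the stronger claim $D\cong\PP^3$ (as a variety) is never needed. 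This is exactly the shortcut the paper takes: it cites~\cite{DvGK} for the $D_i\cong(\PP^1)^3$ structure and for the transitivity of $W(E_6)$ on the $40$ cusps, and then simply observes that the stabilizer contains the permutation $S_3$, avoiding the combinatorial bookkeeping you were worried about.
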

\begin{proof}
The Naruki compactification $\widetilde \calN$ is a modular compactification for the moduli of marked cubic surfaces. Clearly, $W(E_6)$ acts on $\widetilde \calN$, and we have defined $\overline \calN=\widetilde \calN/W(E_6)$. On the other hand, as discussed above we recall that there exists a marked ball quotient model $\calB_4/\Gamma_4^m$, which is a $W(E_6)$ cover of $\calB_4/\Gamma_4$ (see~\eqref{markedball}).
Then, there exists a ($W(E_6)$-equivariant) period map $\widetilde \calN\to (\calB_4/\Gamma_4^m)^*$ contracting $40$ divisors $D_i$ in $\widetilde \calN$ to the $40$ cusps of the Baily--Borel compactification $(\calB_4/\Gamma_4^m)^*$ (see~\cite[\S2.9]{DvGK}\footnote{In~\cite{DvGK}, the image of  the (extended) period map $\widetilde \calN\to (\calB_4/\Gamma_4^m)^*$ is denoted by $\calN$. For consistency with our notations, a better notation would be $\calN^*(=\calN)$. Of course, $\calN^*=(\calB_4/\Gamma_4^m)^*$ as the period map is surjective.}). Furthermore (cf. loc. cit.), $D_i\cong (\PP^1)^3$ (and the singularities of $(\calB_4/\Gamma_4^m)^*$ at the $40$ cusps are cones over the Segre embedding of $(\PP^1)^3$). The $40$ exceptional divisors $D_i$ are conjugated under the action of $W(E_6)$. Thus, taking the quotient by $W(E_6)$, we obtain $\overline \calN\to (\calB_4/\Gamma_4)^*$, which contracts a divisor $D$ to the unique cusp of $(\calB_4/\Gamma_4)^*$. Since $D$ is a quotient of $D_i\cong (\PP^1)^3$ by a finite group that contains $S_3$ permuting the three $\PP^1$ factors, it is immediate to see that $D$ has the rational cohomology of $\PP^3$. The claim now follows as before (see \S\ref{completeproof}).
\end{proof}

\backmatter
\bibliographystyle{amsalpha}
\bibliography{cubiccoh}

\providecommand{\bysame}{\leavevmode\hbox to3em{\hrulefill}\thinspace}
\providecommand{\MR}{\relax\ifhmode\unskip\space\fi MR }
\providecommand{\MRhref}[2]{%
  \href{http://www.ams.org/mathscinet-getitem?mr=#1}{#2}
}
\providecommand{\href}[2]{#2}
\begin{thebibliography}{CMGHL17}

\bibitem[AB83]{atiyahbott83}
M.~F. Atiyah and R.~Bott, \emph{The {Y}ang-{M}ills equations over {R}iemann
  surfaces}, Philos. Trans. Roy. Soc. London Ser. A \textbf{308} (1983),
  no.~1505, 523--615.

\bibitem[ACT02]{ACTsurf}
D.~Allcock, J.~A. Carlson, and D.~Toledo, \emph{The complex hyperbolic geometry
  of the moduli space of cubic surfaces}, J. Algebraic Geom. \textbf{11}
  (2002), no.~4, 659--724.

\bibitem[ACT11]{act}
\bysame, \emph{The moduli space of cubic threefolds as a ball quotient}, Mem.
  Amer. Math. Soc. \textbf{209} (2011), no.~985, xii+70.

\bibitem[All00]{ALeech}
D.~Allcock, \emph{The {L}eech lattice and complex hyperbolic reflections},
  Invent. Math. \textbf{140} (2000), no.~2, 283--301.

\bibitem[All03]{allcock}
\bysame, \emph{The moduli space of cubic threefolds}, J. Algebraic Geom.
  \textbf{12} (2003), no.~2, 201--223.

\bibitem[AMRT10]{AMRT}
A.~Ash, D.~Mumford, M.~Rapoport, and Y.S. Tai, \emph{Smooth compactifications
  of locally symmetric varieties}, second ed., Cambridge Mathematical Library,
  Cambridge University Press, Cambridge, 2010, With the collaboration of Peter
  Scholze.

\bibitem[Bas07]{basak}
T.~Basak, \emph{The complex {L}orentzian {L}eech lattice and the {B}imonster},
  J. Algebra \textbf{309} (2007), no.~1, 32--56.

\bibitem[BBD82]{bbdg}
A.~A. Be\u{\i}linson, J.~Bernstein, and P.~Deligne, \emph{Faisceaux pervers},
  Analysis and topology on singular spaces, {I} ({L}uminy, 1981),
  Ast\'{e}risque, vol. 100, Soc. Math. France, Paris, 1982, pp.~5--171.

\bibitem[Beh12]{beh}
N.~Behrens, \emph{Singularities of ball quotients}, Geom. Dedicata \textbf{159}
  (2012), 389--407.

\bibitem[BLMM17]{bergeron}
N.~Bergeron, Z.~Li, J.~Millson, and C.~Moeglin, \emph{The {N}oether-{L}efschetz
  conjecture and generalizations}, Invent. Math. \textbf{208} (2017), no.~2,
  501--552.

\bibitem[Bri73]{Bri}
E.~Brieskorn, \emph{Sur les groupes de tresses [d'apr\`es {V}. {I}. {A}rnold]},
  S\'{e}minaire {B}ourbaki, 24\`eme ann\'{e}e (1971/1972), {E}xp. {N}o. 401,
  Springer, 1973, pp.~21--44. Lecture Notes in Math., Vol. 317.

\bibitem[BS06]{BS}
J.~Bernstein and O.~Schwarzman, \emph{Chevalley's theorem for the complex
  crystallographic groups}, J. Nonlinear Math. Phys. \textbf{13} (2006), no.~3,
  323--351.

\bibitem[Car85]{carlson}
J.~A. Carlson, \emph{The one-motif of an algebraic surface}, Compositio Math.
  \textbf{56} (1985), no.~3, 271--314.

\bibitem[CG72]{cg}
C.~H. Clemens and P.~A. Griffiths, \emph{The intermediate {J}acobian of the
  cubic threefold}, Ann. of Math. (2) \textbf{95} (1972), 281--356.

\bibitem[CMGHL15]{cubics}
S.~Casalaina-Martin, S.~Grushevsky, K.~Hulek, and R.~Laza, \emph{Complete
  moduli of cubic threefolds and their intermediate {J}acobians},
  arXiv:1510.08891, 2015.

\bibitem[CMGHL17]{prym}
\bysame, \emph{Extending the {P}rym map to toroidal compactifications of the
  moduli space of abelian varieties (with an appendix by {M}athieu {D}utour
  {S}ikiri\'c)}, J. Eur. Math. Soc. (JEMS) \textbf{19} (2017), no.~3, 659--723.

\bibitem[CML09]{cml}
S.~Casalaina-Martin and R.~Laza, \emph{The moduli space of cubic threefolds via
  degenerations of the intermediate {J}acobian}, J. Reine Angew. Math.
  \textbf{633} (2009), 29--65.

\bibitem[DK07]{DK}
I.~V. Dolgachev and S.~Kondo, \emph{Moduli of {$K3$} surfaces and complex ball
  quotients}, Arithmetic and geometry around hypergeometric functions, Progr.
  Math., vol. 260, Birkh\"{a}user, Basel, 2007, pp.~43--100.

\bibitem[Dol08]{Dolgachev-ref}
I.~V. Dolgachev, \emph{Reflection groups in algebraic geometry}, Bull. Amer.
  Math. Soc. (N.S.) \textbf{45} (2008), no.~1, 1--60.

\bibitem[DvGK05]{DvGK}
I.~V. Dolgachev, B.~van Geemen, and S.~Kondo, \emph{A complex ball
  uniformization of the moduli space of cubic surfaces via periods of {$K3$}
  surfaces}, J. Reine Angew. Math. \textbf{588} (2005), 99--148.

\bibitem[FMW98]{FMW}
R.~Friedman, J.~W. Morgan, and E.~Witten, \emph{Principal {$G$}-bundles over
  elliptic curves}, Math. Res. Lett. \textbf{5} (1998), no.~1-2, 97--118.

\bibitem[Fri84]{Fannals}
R.~Friedman, \emph{A new proof of the global {T}orelli theorem for {$K3$}
  surfaces}, Ann. of Math. (2) \textbf{120} (1984), no.~2, 237--269.

\bibitem[Fri13]{F13}
\bysame, \emph{On the ample cone of a rational surface with an anticanonical
  cycle}, Algebra Number Theory \textbf{7} (2013), no.~6, 1481--1504.

\bibitem[GH94]{GrHa78}
P.~A. Griffiths and J.~Harris, \emph{Principles of algebraic geometry}, Wiley
  Classics Library, John Wiley \& Sons, Inc., New York, 1994, Reprint of the
  1978 original.

\bibitem[GH17]{GH-IHAg-17}
S.~Grushevsky and K.~Hulek, \emph{The intersection cohomology of the {S}atake
  compactification of {$\mathcal A_g$} for {$g \leq4$}}, Math. Ann.
  \textbf{369} (2017), no.~3-4, 1353--1381.

\bibitem[GHK15]{GHK}
M.~Gross, P.~Hacking, and S.~Keel, \emph{Moduli of surfaces with an
  anti-canonical cycle}, Compos. Math. \textbf{151} (2015), no.~2, 265--291.

\bibitem[HH09]{HH1}
B.~Hassett and D.~Hyeon, \emph{Log canonical models for the moduli space of
  curves: the first divisorial contraction}, Trans. Amer. Math. Soc.
  \textbf{361} (2009), no.~8, 4471--4489.

\bibitem[HH13]{HH2}
\bysame, \emph{Log minimal model program for the moduli space of stable curves:
  the first flip}, Ann. of Math. (2) \textbf{177} (2013), no.~3, 911--968.
  \MR{3034291}

\bibitem[HKN10]{HKN}
M.~Hentschel, A.~Krieg, and G.~Nebe, \emph{On the classification of lattices
  over {$\mathbb Q(\sqrt{-3})$}, which are even unimodular {$\mathbb
  Z$}-lattices}, Abh. Math. Semin. Univ. Hambg. \textbf{80} (2010), no.~2,
  183--192.

\bibitem[HKT09]{HKT09}
P.~Hacking, S.~Keel, and J.~Tevelev, \emph{Stable pair, tropical, and log
  canonical compactifications of moduli spaces of del {P}ezzo surfaces},
  Invent. Math. \textbf{178} (2009), no.~1, 173--227.

\bibitem[HT18]{hulektommasi}
K.~{Hulek} and O.~{Tommasi}, \emph{The topology of {$A_g$} and its
  compactifications}, Geometry of {M}oduli---the {A}bel {S}ymposium 2017 (J.~A.
  Christophersen and K.~Ranestad, eds.), Abel Symp., vol.~14, Springer, 2018,
  pp.~135--193.

\bibitem[Kir84]{kirwan84}
F.~C. Kirwan, \emph{Cohomology of quotients in symplectic and algebraic
  geometry}, Mathematical Notes, vol.~31, Princeton University Press,
  Princeton, NJ, 1984.

\bibitem[Kir85]{kirwanblowup}
\bysame, \emph{Partial desingularisations of quotients of nonsingular varieties
  and their {B}etti numbers}, Ann. of Math. (2) \textbf{122} (1985), no.~1,
  41--85.

\bibitem[Kir86]{kirwanrational1}
\bysame, \emph{Rational intersection cohomology of quotient varieties}, Invent.
  Math. \textbf{86} (1986), no.~3, 471--505.

\bibitem[Kir89]{kirwanhyp}
\bysame, \emph{Moduli spaces of degree {$d$} hypersurfaces in {${\bf P}_n$}},
  Duke Math. J. \textbf{58} (1989), no.~1, 39--78.

\bibitem[KL89a]{KL1}
F.~C. Kirwan and R.~Lee, \emph{The cohomology of moduli spaces of {$K3$}
  surfaces of degree {$2$}. {I}}, Topology \textbf{28} (1989), no.~4, 495--516.

\bibitem[KL89b]{KL2}
\bysame, \emph{The cohomology of moduli spaces of {$K3$} surfaces of degree
  {$2$}. {II}}, Proc. London Math. Soc. (3) \textbf{58} (1989), no.~3,
  559--582.

\bibitem[KLW87]{KLW}
F.~C. Kirwan, R.~Lee, and S.~H. Weintraub, \emph{Quotients of the complex ball
  by discrete groups}, Pacific J. Math. \textbf{130} (1987), no.~1, 115--141.

\bibitem[Laz09]{rthesis}
R.~Laza, \emph{Deformations of singularities and variation of {GIT} quotients},
  Trans. Amer. Math. Soc. \textbf{361} (2009), no.~4, 2109--2161.

\bibitem[Laz10]{laza}
\bysame, \emph{The moduli space of cubic fourfolds via the period map}, Ann. of
  Math. (2) \textbf{172} (2010), no.~1, 673--711.

\bibitem[LO18]{LOG2}
R.~Laza and K.~G. O'Grady, \emph{G{IT} versus {B}aily-{B}orel compactification
  for quartic {$K3$} surfaces}, Geometry of moduli (J.~A. Christophersen and
  K.~Ranestad, eds.), Abel Symp., vol.~14, Springer, Cham, 2018, pp.~217--283.

\bibitem[LO19]{LOG1}
\bysame, \emph{Birational geometry of the moduli space of quartic {$K3$}
  surfaces}, Compos. Math. \textbf{155} (2019), no.~9, 1655--1710.

\bibitem[Loo03]{l1}
E.~Looijenga, \emph{Compactifications defined by arrangements. {I}. {T}he ball
  quotient case}, Duke Math. J. \textbf{118} (2003), no.~1, 151--187.

\bibitem[Loo77]{Lroot}
\bysame, \emph{Root systems and elliptic curves}, Invent. Math. \textbf{38}
  (1976/77), no.~1, 17--32.

\bibitem[LS86]{LSa}
G.~I. Lehrer and L.~Solomon, \emph{On the action of the symmetric group on the
  cohomology of the complement of its reflecting hyperplanes}, J. Algebra
  \textbf{104} (1986), no.~2, 410--424.

\bibitem[LS07]{ls}
E.~Looijenga and R.~Swierstra, \emph{The period map for cubic threefolds},
  Compos. Math. \textbf{143} (2007), no.~4, 1037--1049.

\bibitem[LT09]{LT}
G.~I. Lehrer and D.~E. Taylor, \emph{Unitary reflection groups}, Australian
  Mathematical Society Lecture Series, vol.~20, Cambridge University Press,
  Cambridge, 2009.

\bibitem[LX19]{LiuXu}
Y.~Liu and C.~Xu, \emph{K-stability of cubic threefolds}, Duke Math. J.
  \textbf{168} (2019), no.~11, 2029--2073.

\bibitem[MFK94]{GIT}
D.~Mumford, J.~Fogarty, and F.~C. Kirwan, \emph{Geometric invariant theory},
  third ed., Ergebnisse der Mathematik und ihrer Grenzgebiete (2), vol.~34,
  Springer-Verlag, Berlin, 1994.

\bibitem[Mil56]{milnor56}
J.~Milnor, \emph{Construction of universal bundles. {II}}, Ann. of Math. (2)
  \textbf{63} (1956), 430--436.

\bibitem[Mok15]{mok}
C.~P. Mok, \emph{Endoscopic classification of representations of quasi-split
  unitary groups}, Mem. Amer. Math. Soc. \textbf{235} (2015), no.~1108, vi+248.

\bibitem[MOT15]{Ma}
S.~Ma, H.~Ohashi, and S.~Taki, \emph{Rationality of the moduli spaces of
  {E}isenstein {$K3$} surfaces}, Trans. Amer. Math. Soc. \textbf{367} (2015),
  no.~12, 8643--8679.

\bibitem[Muk03]{mukai}
S.~Mukai, \emph{An introduction to invariants and moduli}, Cambridge Studies in
  Advanced Mathematics, vol.~81, Cambridge University Press, Cambridge, 2003,
  Translated from the 1998 and 2000 Japanese editions by W. M. Oxbury.

\bibitem[Nar82]{naruki}
I.~Naruki, \emph{Cross ratio variety as a moduli space of cubic surfaces},
  Proc. London Math. Soc. (3) \textbf{45} (1982), no.~1, 1--30, With an
  appendix by Eduard Looijenga.

\bibitem[Nik79]{nikulin}
V.~V. Nikulin, \emph{Integer symmetric bilinear forms and some of their
  geometric applications}, Izv. Akad. Nauk SSSR Ser. Mat. \textbf{43} (1979),
  no.~1, 111--177, 238.

\bibitem[Pin74]{pinkham}
H.~C. Pinkham, \emph{Deformations of algebraic varieties with {$G_{m}$}
  action}, Soci\'{e}t\'{e} Math\'{e}matique de France, Paris, 1974,
  Ast\'{e}risque, No. 20.

\bibitem[Rei89]{reichstein}
Z.~Reichstein, \emph{Stability and equivariant maps}, Invent. Math. \textbf{96}
  (1989), no.~2, 349--383.

\bibitem[Sca87]{scattone}
F.~Scattone, \emph{On the compactification of moduli spaces for algebraic
  {$K3$} surfaces}, Mem. Amer. Math. Soc. \textbf{70} (1987), no.~374, x+86.

\bibitem[Wei94]{weibel94}
C.~A. Weibel, \emph{An introduction to homological algebra}, Cambridge Studies
  in Advanced Mathematics, vol.~38, Cambridge University Press, Cambridge,
  1994.

\bibitem[Yok02]{yokoyama}
M.~Yokoyama, \emph{Stability of cubic 3-folds}, Tokyo J. Math. \textbf{25}
  (2002), no.~1, 85--105.

\bibitem[Zha05]{ZhangCubic}
J.~Zhang, \emph{Geometric compactification of moduli space of cubic surfaces
  and {K}irwan blowup}, ProQuest LLC, Ann Arbor, MI, 2005, Thesis (Ph.D.)--Rice
  University.

\end{thebibliography}
\printindex

\end{document}